\documentclass[a4paper,11pt]{amsart}
\usepackage[utf8]{inputenc}

\usepackage[style=alphabetic, backend=biber ]{biblatex}
\usepackage{amsmath}
\usepackage{amssymb}
\usepackage{bbold}
\usepackage{mathtools}
\usepackage{stmaryrd}
\SetSymbolFont{stmry}{bold}{U}{stmry}{m}{n}
\usepackage{graphicx}
\usepackage{ esint }
\usepackage{bm}
\usepackage{relsize}
\usepackage{amsthm}
\usepackage[all]{xy}
\usepackage{color}
\usepackage{bm}
\usepackage{comment}
\usepackage{tikz}

\usepackage[framemethod=tikz]{mdframed}
\usepackage{lipsum}
\usetikzlibrary{patterns}
\usetikzlibrary{arrows,decorations.markings}
\usepackage{geometry}
\usepackage{comment}
\usepackage{float}
\usepackage{nicematrix}
\usepackage{standalone}
\usepackage{tikz}
\usepackage{tikz-3dplot}
\usetikzlibrary{arrows.meta}
\usetikzlibrary{patterns}
\usepackage{geometry}
\usepackage{xcolor}
\usepackage{scalerel}
\usepackage{hyperref}

\newtheorem{Example}{Example}

\newtheorem{Remark}{Remark}
\newtheorem{definition}{Definition}
\newtheorem{Theorem}{Theorem}
\newtheorem{Lemma}{Lemma}
\newtheorem{Proposition}{Proposition}
\newtheorem{Corollary}{Corollary}

\DeclarePairedDelimiterX\braket[2]{\langle}{\rangle}{#1 \delimsize\vert #2}
\DeclareMathAlphabet{\mathpzc}{OT1}{pzc}{m}{n}

\newcommand{\R}{\mathbb{R}}
\newcommand{\C}{\mathbb{C}}
\newcommand{\N}{\mathbb{N}}
\renewcommand{\ell}{\text{ell}_h}
\newcommand{\Op}{\text{Op}_h^w}
\newcommand{\supp}{\text{supp }}
\renewcommand{\d}{\mathrm{d}}
\newcommand{\jp}[1]{\left\langle #1 \right\rangle }
\renewcommand{\Re}{\operatorname{Re}}
\renewcommand{\Im}{\operatorname{Im}}

\renewcommand{\epsilon}{\varepsilon}
\author{Roméo Taboada }
\address{Laboratoire de Mathématiques d'Orsay, Universit{\'e} Paris-Saclay, 91405 Orsay cedex, France}
\email{romeo.taboada@universite-paris-saclay.fr}
\title{Long-time propagation of coherent states in a normally hyperbolic setting  }

\begin{document}
	
	\maketitle

	\begin{abstract}
		We present a method to find asymptotics for the evolution of coherent states (or Gaussian wavepackets with standard deviation $\sqrt{h}$) under semiclassical Schrödinger's equation for a given Hamiltonian. These results extend the work of Combescure and Robert, in which the evolution of coherent states can be approximated in the limit $h\to 0$ with deformed Gaussian wavepackets called squeezed coherent states. The description with squeezed states holds for times $t$ that can go to infinity as $h\to 0$, under the constraint $|t|\leq  |\log h|/(6\lambda_0)$ where $\lambda_0$ is the maximal Lyapunov exponent of the classical dynamics. The breakdown of this approximation at time $|\log h|/(6\lambda_0)$ is related to the bending of evolved wavepackets: once propagated states spread at a scale $h^{1/3}$, squeezed states no longer provide an appropriate description.
		\medbreak
		To obtain a representation of propagated states valid up to times $|t|\leq C|\log h|$ with a larger $C$ (for instance, up to Ehrenfest's time $|\log h|/(2\lambda_0)$ where spreading on macroscopic scales is allowed), we make additional assumptions on the flow $\Phi_t$ associated to the classical dynamics, imposing constraints on directions of elongation. Namely, we work in a neighborhood of a normally hyperbolic $\Phi_t$-invariant submanifold $K$, on which the dynamics is considered as slow in comparison with its transverse directions, along which $\Phi_t$ is assumed to be hyperbolic. In this context, we describe the propagated state as a WKB state in transverse directions and a squeezed state along $K$. This description emphasizes the fact that propagated states should no longer be thought of as microlocalized on a point, but rather on an isotropic submanifold (corresponding to transverse unstable directions). Guillemin, Uribe, and Wang presented a similar class of wavefunctions microlocalized on an isotropic submanifold.
	\end{abstract}
	
	\section{Introduction}
	\subsection{A few properties of coherent states}
	\subsubsection{Classical and quantum dynamics}
	The history of coherent states dates back to the early works of Schrödinger \cite{Schro} in 1926, where they were introduced to investigate links between quantum and classical mechanics. They minimize the uncertainty principle in the sense that they are as concentrated in position and momentum as a wave function can be. As a consequence, they were often used to describe the equivalent of a point in phase space.
	\medbreak
	On the one hand, classical dynamics describes the evolution of the position and momentum of a point $(x,\xi)\in \R^{2d}$ according to Hamilton's equations:
	\begin{equation*} 
		\begin{pmatrix} \dot{x}\\ \dot{\xi} \end{pmatrix}= \begin{pmatrix} \frac{\partial \mathpzc{p}}{\partial  \xi} \\ -\frac{\partial \mathpzc{p}}{\partial x} \end{pmatrix}\eqqcolon H_{\mathpzc{p}}(x,\xi),
	\end{equation*}
	for a (classical) Hamiltonian $\mathpzc{p}\colon\R^{2d}\to \R$.
	\newline 
	We will denote the time-t flow associated with the Hamiltonian vector field $H_{\mathpzc{p}}$ by $\Phi^t_{\mathpzc{p}}$. We will always assume that it is complete, in the sense that the flow is defined for all $t\in\R$.
	\medbreak
	One can check that we can obtain the usual Newton's equations for a point subject to forces derived from a potential $V$ by considering the following:
	\begin{equation*}
		\mathpzc{p}(x,\xi)=|\xi|^2+V(x).
	\end{equation*}
	\medbreak 
	On the other hand, quantum mechanics describes states with $L^2\left(\R^d\right)$ normalized functions $\psi$ with the interpretation of a density of probability for the square of its modulus. (The global phase of $\psi$ does not carry a meaningful interpretation, and as a consequence we will not try to compute them, adding a $e^{i\theta}$ where they might appear.)
	\medbreak
	Their evolution is described by introducing an operator called the (quantum) Hamiltonian $\mathpzc{p}^w(x,hD)$ obtained by quantizing functions $\mathpzc{p}$ (verifying symbol estimates) on $\R^{2d}$ thanks to Weyl's quantization (a process described, for instance, in \cite[Chapter 4]{Zwbook}). In the simpler setting of $\mathpzc{p}$ given by $|\xi|^2+V(x)$, we obtain the following differential operator:
	\begin{equation*} 
		-h^2 \Delta+V(x).
	\end{equation*}
	For more general $\mathpzc{p}$'s, the result of this process is a pseudodifferential operator, which is a (potentially) unbounded self-adjoint operator with domain on $L^2\left(\R^d\right)$.
	\medbreak
	The evolution of wavefunctions $\psi$ is then governed by Schrödinger's equation:
	\begin{equation*} 
		ih\frac{\partial \psi}{\partial t}=\mathpzc{p}^w \psi, \text{ for a given } \psi_0=\psi(t=0),
	\end{equation*}
	for which solutions are denoted by $\psi_t\eqcolon e^{-it\mathpzc{p}/h}\psi_0$. We will refer to the operator $e^{-it\mathpzc{p}/h}$ as the propagator of the equation.
	\subsubsection{Construction of coherent states}
	The designation of ``coherent states'' was popularized a few decades after Schrödinger's paper in Glauber's work on optical coherence \cite{Glaub}. A way to construct them is to study the Hamiltonian called the ``quantum harmonic oscillator''
	\begin{equation*} 
		\mathpzc{p}^w(x,hD)=-h^2 \Delta+|x|^2, \quad \text{coming from } \mathpzc{p}(x,\xi)=|\xi|^2+|x|^2,
	\end{equation*} 
	consider its ground state (eigenfunction of lowest eigenvalue)
	\begin{equation*} 
		\varphi_0(x)\coloneq \frac{1}{(\pi h)^{d/4}}\exp(-|x|^2/(2h)),
	\end{equation*} 
	and displace it in the phase space $T^*\R^d\simeq \R^{2d}$ using the Weyl-Heisenberg operators $\hat{T}_h(q,p)$:
	\begin{equation}\label{T rho}
		\hat{T}_h(q,p)u(x)\coloneq\exp\left(-\frac{i}{2h}q.p\right) \exp\left(\frac{i}{h}p.x\right) u(x-q), \text{ for } u\in L^2\left(\R^d\right)
	\end{equation} 
	to obtain $\varphi_{q,p}\coloneq\hat{T}_h(q,p)\varphi_0$.
	\newline
	We can easily see that $\varphi_{q,p}$ is concentrated around $q$ with width $\sqrt{h}$ and that its ($h-$rescaled) Fourier transform is concentrated around $p$ with width $\sqrt{h}$, hence minimizing the uncertainty principle.
	\medbreak
	Another important fact is the resolution of the identity obtained with coherent states; we can decompose any $u\in L^2\left(\R^d\right)$ the following way:	
	\begin{equation}\label{resol identity}
		u(x)=\frac{1}{(2\pi h)^{d}} \int\int \langle u, \varphi_{q,p} \rangle_{L^2} \varphi_{q,p}(x)\ \d q \d p, \quad x\in \R^d.
	\end{equation} 
	As our operators are linear, knowing how to propagate coherent states would be a first step towards understanding the evolution of general $L^2\left(\R^d\right)$ functions.
	\medbreak
	Coherent states were widely used in physics and generalized in many ways. For more information on the role of coherent states in quantum optics, one may consult, for instance, \cite{Klauder_Sudarshan} and for generalizations in mathematical physics \cite{Klauder_Skagerstam}.
	\newline
	A generalization that will be useful to us is the notion of ``squeezed state'', which allows for different uncertainties in position and momentum. Mathematically, one can obtain such states by considering Gaussians with arbitrary covariance as starting points (rather than the ground state $\varphi_0$):
	\begin{equation*} 
		\varphi_0^{(\Gamma)}(x)\coloneq(\pi h)^{-d/4} |\det(\Im \Gamma)|^{1/4} \exp\left(\frac{i}{2h} x.\Gamma x\right),
	\end{equation*} 
	where $\Gamma$ is a complex symmetric matrix verifying $\Im(\Gamma)>0$ (i.e. $\Gamma\in S\mathbb{H}_d$ the Siegel upper plane), and then consider all the states obtained by transporting them with $\hat{T}_h(q,p)$, $\varphi_{q,p}^{(\Gamma)}\coloneq\hat{T}_h(q,p)\varphi_0^{(\Gamma)}$.
	\newline
	The spread of these squeezed states is controlled by $\Gamma$, more precisely the uncertainty in position is now given by $\|\Im(\Gamma)^{-1/2}\| \sqrt{h}$ and in momentum by $\|\Im(\Gamma^{-1})^{-1/2}\| \sqrt{h}$.
	\medbreak
	These states are interesting to understand propagation of coherent states as they correspond to their evolution in some simple model cases, see below.
	\subsubsection{Exact propagation under elementary Hamiltonians}\label{elementary}
	The simplest examples of classical Hamiltonian $\mathpzc{p}_1$ one can take are linear ones 
	\begin{equation*} 
		\mathpzc{p}_1(x,\xi)= a\cdot x+b\cdot \xi, \text{ with } a,b\in \R^{d},
	\end{equation*} 
	which corresponds to constant vector fields. In this case, it is easy to see that the propagated coherent state can be obtained simply by using the Weyl-Heisenberg operator to move it along the classical dynamics ($\Phi^t_{\mathpzc{p}_1}(q,p)=(q+tb, p-ta)$):
	\begin{equation*} 
		e^{-it\mathpzc{p}_1^w/h} \varphi_{q,p}=e^{i\theta_t/h}\varphi_{\Phi^t_{\mathpzc{p}_1}(q,p)}.
	\end{equation*} 
	\medbreak
	Next, we can look at classical Hamiltonians $\mathpzc{p}_2$ that are quadratic in $(x,\xi)$, i.e. flows solutions of a linear ODE: \begin{equation*} 
		\begin{pmatrix}
			\dot{x}
			\\ \dot{\xi}
		\end{pmatrix}=M\begin{pmatrix}
			x \\ \xi
		\end{pmatrix}, \text{ with } M\in \mathfrak{sp}_{2d}(\R),
	\end{equation*} 
	whose solutions we can write as 
	\begin{equation*} \begin{pmatrix}
			x(t) \\ \xi(t)
		\end{pmatrix}=\underbrace{e^{tM}}_{\eqcolon\kappa_t}\begin{pmatrix}
			x_0 \\ \xi_0
		\end{pmatrix}\eqcolon \begin{pmatrix}
			A_t & B_t \\ C_t & D_t \end{pmatrix} \begin{pmatrix}
			x_0 \\ \xi_0
		\end{pmatrix},
	\end{equation*} 
	where $\kappa_t\in \text{Sp}_{2d}(\R)$ is a symplectic matrix.
	Quantum propagators associated with such Hamiltonians are metaplectic operators, denoted as $\mathcal{M}(\kappa_t)\coloneq e^{-it\mathpzc{p}_2^w/h}$.
	\medbreak
	This is where the introduction of squeezed states becomes handy: they form a class that is preserved under the quantum evolution $e^{-it\mathpzc{p}_2^w/h}$. Moreover, we can show that the center of the state still follows the classical trajectory but rotations and deformations in phase space might happen around the center:
	\begin{equation}\label{GammaKappa}
		e^{-it\mathpzc{p}_2^w/h} \varphi_{q,p}^{(\Gamma_0)}=e^{i\theta_t/h}\varphi_{\Phi^1_{\mathpzc{p}_2}(q,p)}^{(\Gamma_1)}
	\end{equation}
	with $\Gamma_1=(C_t+D_t\Gamma_0)(A_t+B_t\Gamma_0)^{-1}\in S\mathbb{H}_d$ ($\Gamma_0=iI_d$ corresponding to the standard coherent states). This last equality is the result of the action of symplectic matrices over the Siegel space, which is well-known generalization of the $SL_2(\R)$-action over Poincaré half-plane to symplectic geometry, see, for instance, \cite{Sieg},\cite{SiegThese}. Thanks to our computation on spreading of squeezed states, we claim that we can control the evolution of its width with the norm of the Jacobian matrix of the flow ($\kappa_t=e^{tM}$ in this setting).
	\medbreak
	This description can be extended to time-dependent quadratic Hamiltonians $\mathpzc{p}_2(t,x,\xi)$, the important property being that the solutions of the classical dynamics are given by a linear equation:
	\begin{equation}\label{assos_symp}
		\begin{pmatrix} x(t)\\ \xi(t) \end{pmatrix}= \kappa_t \begin{pmatrix} x_0 \\ \xi_0 \end{pmatrix},
	\end{equation}
	for $\kappa_t \in \text{Sp}_{2d}(\R)$ (we say that $\kappa_t$ is the symplectic matrix associated with $(\mathpzc{p}_2(s,\boldsymbol{\cdot}))_{s\in [0,t]}$). However, in this context, the notation $e^{-it\mathpzc{p}_2^w/h}$ no longer makes sense, so we will stick to a notation that refers to the propagator as a metaplectic operator $\mathcal{M}(\kappa_t)$.
	\medbreak
	Notice that in this section, we have described the propagated coherent state with only a few parameters and that their transformation is expressed only in terms of classical dynamics.
	\subsection{Propagation of coherent states in more general cases}
	\subsubsection{Approximation in the semiclassical limit}\label{CR method}
	For more general Hamiltonians $\mathpzc{p}$, we cannot hope to obtain an exact description as simple for propagated coherent states. However, we can try to describe the state by some simple approximation in the semiclassical limit $h\to 0$. In this particular limit, we can make proper sense of the connection between the quantum picture and the classical one, translating hypotheses in the classical world into information on the quantum wave function provided that $h$ is small enough.
	\medbreak
	In this article, we will work in this semiclassical setting, making assumptions on the classical dynamics, and we will look for a convenient asymptotic for the propagated coherent state. Although one can investigate this problem for times independent of $h$, we also want to consider times that can go to infinity as $h$ goes to $0$ (hence larger times in some sense), typically in a $t\leq C|\log h|$ fashion, more details on this later.
	\medbreak
	An idea that was successfully carried out in previous works was to approximate a general Hamiltonian $\mathpzc{p}$ with an order 2 function $\mathpzc{p}_{\text{approx}}$ along the classical trajectory $(q_t,p_t)=\Phi^t_{\mathpzc{p}}(q,p)$ (with starting point $(q,p)$ center of the coherent state):
	\begin{align}\label{papprox}
		\begin{split}
			\mathpzc{p}_{\text{approx}}(t,x,\xi)=\mathpzc{p}(q_t,p_t)+\left\langle\frac{\partial \mathpzc{p}}{\partial x}(q_t,p_t),(x-q_t)\right\rangle+&\left\langle \frac{\partial \mathpzc{p}}{\partial \xi}(q_t,p_t),(\xi-p_t)\right\rangle
			\\&+\frac{1}{2}\left\langle \begin{pmatrix}
				x-q_t \\ \xi-p_t
			\end{pmatrix}, \text{Hess } \mathpzc{p}(q_t,p_t) \begin{pmatrix}
				x-q_t \\ \xi-p_t
			\end{pmatrix}\right\rangle.
		\end{split}
	\end{align}
	We have seen in the previous section how to compute (exactly) the evolution of $\varphi_{q,p}$ under Schrödinger's equation associated with $\mathpzc{p}_{\text{approx}}^w$. The only thing left to do would be to control the error done in this approximation.
	\medbreak
	This method was present in the works of Hepp \cite{Hepp} and Heller \cite{Heller} and Hagedorn obtained a control of the remainder in \cite{Hag1} giving an equivalent in the limit $h\to 0$:  for any $|t|<T$ independent of $h$,
	\begin{equation*} 
		\left\|e^{-it\mathpzc{p}^w/h} \varphi_{q,p}- e^{i\theta_t/h}\hat{T}_h(\Phi^t_{\mathpzc{p}}(q,p)) \mathcal{M}(\kappa_t) \varphi_{0}\right\|_{L^2}\leq C_Th^{1/2}, 
	\end{equation*} 
	where $\kappa_t$ is the symplectic matrix given by the Jacobian matrix of the flow at the point $(q,p)$ see (\ref{assos_symp}).
	\medbreak
	In order to reach a higher precision, one would need to take into account higher order phenomena induced by the full expansion of the Hamiltonian $\mathpzc{p}$  along $(q_t,p_t)$ (instead of only order $2$ as in (\ref{papprox})). A method to do so is to add corrective terms to the approximation, using a larger class of states than squeeze states that would allow polynomials to appear next to the Gaussian factors:
	for a $P\in \C[X_1,\dots,X_d]$, we will write
	\begin{equation}\label{excited squeezed}
		\varphi^{(\Gamma,P)}_{0}(x)= (\pi h)^{-d/4} |\det(\Im \Gamma)|^{1/4} P\left(\frac{\Im(\Gamma)^{1/2}x}{\sqrt{h}}\right)\exp\left(\frac{ix\cdot\Gamma x}{h}\right).
	\end{equation}
	Such functions correspond to ``excited states'', they were studied for instance by Hagedorn in \cite{HagBonus}. The reason for this particular scaling is that we can roughly estimate their $L^2$ norm by 
	\begin{equation}\label{L2 norm gauss}
		\|\varphi^{(\Gamma,P)}_{q,p}\|_{L^2}\leq C N_\infty(P),
	\end{equation}
	where $N_\infty(P)$ is the sup norm of coefficients of $P$ and $C>0$ only depending on $\deg P$.
	\medbreak
	Using this new class of states, Hagedorn derived the full asymptotics in \cite{Hag3}, \cite{Hag4}: for any $N\in \N$ and $|t|<T$ independent of $h$,  we obtain
	\begin{equation}\label{CR}
		\left\|e^{-it\mathpzc{p}^w/h}\varphi_{q,p}-e^{i\theta_t/h}\sum_{n=0}^{N-1} h^{n/2} \varphi^{(\Gamma_t,P_t^n)}_{q_t,p_t}\right\|_{L^2}\leq C_T h^{N/2},
	\end{equation}
	where $\Gamma_t$ is associated with the Hamiltonian $(\mathpzc{p}_{\text{approx}}(s,\boldsymbol{\cdot}))_{s\in [0,t]}$ and $P_t^n$ is a polynomial of degree $3n$.
	\medbreak
	These articles give insights on how to treat finite time propagation, but longer times (e.g. times depending logarithmically on $h$) are still a challenge. In this direction, Combescure and Robert in \cite{Comb_Rob_art} and later Hagedorn-Joye \cite{Hag_Joye} explained how to precisely control the dependence on $t$ in this asymptotic, showing that we can bound the norms of polynomials $P_t^n$ the following way:
	\begin{equation}\label{control CR}
		N_\infty(P_t^n)\leq C \jp{t}^n \|\kappa_t\|^{3n}
	\end{equation}
	(recall that $\kappa_t$ is the Jacobian matrix of the flow and we want to consider $t$ depending logarithmically on $h$). A similar control for the remainder can be obtained (it is roughly given by the size of the next corrective term's $L^2$ norm, which we can infer from (\ref{L2 norm gauss}) and (\ref{control CR})).	 
	\medbreak
	As a consequence, they deduced that the asymptotics can still be used (needing more corrective terms to reach the same precision) for times that might depend on $h$ but smaller than a threshold $T_{\text{CR}}$ designed to guaranty $\|\kappa_t\|\leq C h^{-1/6+\epsilon}$ (i.e. each term of the expansion is effectively smaller than the previous one in $L^2$ norm).
	\medbreak
	The question now becomes: Is there some phenomenon happening at this time that would prevent us from obtaining a simple description of the propagated coherent state or is it simply a limitation induced by our choice of representation ?
	\subsubsection{Identification of the threshold, the role of Ehrenfest's time}\label{ellipse}
	An example of a well-known threshold in semiclassical analysis at which we have to take into account additional phenomena is Ehrenfest time.
	Generally speaking, Ehrenfest time tells us for how long we have to propagate (under classical dynamics) a cloud of points initially localized in phase space at scale $\sqrt{h}$ so that it reaches the macroscopic scale. As a consequence, for semiclassical results past this time, we should expect macroscopic effects to appear, even starting from microscopic data. This time can be expressed with the greatest Lyapunov exponent $\lambda_{\text{max}}$ of the system (provided that it is strictly positive), which controls the exponential expansion in phase space, and can be defined as:
	\begin{equation*} 
		T_{\text{Ehrenfest}}\coloneq\frac{1}{2\lambda_{\text{max}}} |\log h|.
	\end{equation*} 
	In this setting, the threshold obtained by Combescure-Robert would be $T_{\text{CR}}=T_{\text{Ehrenfest}}/3$: a shorter time than the Ehrenfest one, at which the maximum spread of the cloud of point is $h^{1/3}$; which is still microscopic. A priori, no new phenomena are perceived by the classical dynamics at $T_{CR}$, and it seems that this maximal time only comes from our choice of representation.
	\medbreak
	An explanation for this threshold is that squeezed states are associated with clouds of points that form ellipsoids: a description in terms of squeezed states as in the works of Combescure-Robert is bound to fail if the cloud of point is far from being an ellipse. Consider a case where the cloud of points spreads along a submanifold. Describing it as an ellipsoid is equivalent to approaching this submanifold by its tangent space at the center of the cloud of points. This approximation works as long as the cloud of point is not too stretched or too concentrated around the manifold.
	\begin{figure}[h]
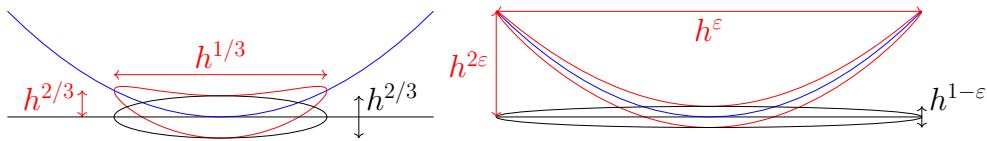

		\begin{center}
			\includegraphics[scale=0.7]{Cloud.tex}
			\includegraphics[scale=0.7]{Cloud2.tex}
		\end{center}
		\caption{Clouds of point spreading along a manifold and ellipsoids approximating them.}
	\end{figure}
	\medbreak
	In the above picture, the region in red delimits a cloud of points spread along a manifold pictured in blue. We represent in black its tangent space at the center of the cloud and draw an ellipsoid approximating the cloud of points. The approximation can only be valid as long as the vertical distance in red is smaller than the vertical distance in black. As the cloud of point does not change its volume, spreading over distances greater than $h^{1/3}$ along the manifold leads to the right-hand picture, for which the curvature must be taken into account to get a good description.
	\medbreak
	
	The goal of this article is to present a different description of propagated coherent states that would work until this Ehrenfest time, making use of additional hypotheses on classical dynamics. 
	\medbreak
	Before turning to the presentation of these assumptions, let us introduce a different class of wave functions for which time evolution is understood on these time scales: WKB states.
	\subsubsection{An alternative description with WKB states, properties, and limitations}\label{WKB description}
	WKB states (or Lagrangian states) are functions that can be written as $a(x)e^{i\frac{\phi_0(x)}{h}}$ where $a$ is usually a smooth compactly supported function and $\phi_0$ is a real-valued smooth function; they are a generalization of plane waves in the sense that they are located in phase space around the manifold
	\begin{equation}\label{Lambda_proj}
		\Lambda_0=\{(x,\nabla \phi_0 (x)), x\in \text{supp }a\}
	\end{equation}
	(while modulated plane waves like $a(x)e^{i\xi_0.x/h}$ are localized in some $\{(x,\xi_0), x\in\text{supp } a\}$). They were introduced to find asymptotics of Schrödinger's equation thanks to the WKB method, see \cite{Maslov}, \cite{Lax}, \cite{KellerBS}. An important property of these Lagrangian states is that we know how they behave under quantum evolution. 
	\medbreak
	To properly state this fact, one can assume that the image under the classical dynamics of the manifold $\Lambda_0$ (denoted $\Lambda_1$) is projectable in the $x$ variable (in the sense that it can be written as in the form (\ref{Lambda_proj}) or express the result after a change of representation (quantizing a symplectic change of coordinates making the projectability condition true). Then, the propagated state can be approximated by another Lagrangian state with manifold $\Lambda_1$ and whose amplitude is computed with transport equations \cite{VanVleck}, \cite{NZ_pression_topo}:
	\begin{equation}\label{propag_etat_lag}
		\left\|e^{-it\mathpzc{p}^w/h} a(x) e^{i\phi_0(x)/h}-\sum_{k=0}^{N-1} h^{k}b_k(x) e^{i\phi_1(x)/h} \right\|_{L^2}\leq C h^{N} \|a\|_{C^{2N}}.
	\end{equation} 
	\medbreak
	What we mean by WKB states is actually a generalization of this concept where we allow amplitudes $a$ to depend on $h$ but assume that their derivatives verifies so called ``$S_\delta$ symbol estimates'' for $\delta<1/2$ (see \cite[Section 4.4]{Zwbook}):
	\begin{equation}\label{Sdelta}
		|\partial^\alpha a(x)|\leq C_\alpha h^{-\delta |\alpha|}, \forall \alpha \in \N^{d}, \forall x\in \R^d.
	\end{equation}
	Intuitively, this assumption means that we can consider functions $a$ that might have a (essential) support of size $h^{\delta}$ and hence fluctuate rather quickly. However, we cannot deal with (essential) supports of size $\sqrt{h}$ or smaller.
	For $S_\delta$ amplitudes with $\delta<1/2$, we can still describe the quantum evolution with (\ref{propag_etat_lag}): the precision of the asymptotic, measured by the right hand term, can still be made as high as needed by taking larger values of $N$,
	\begin{equation*} 
		C h^{N} \|a\|_{C^{2N}}\leq C h^{N(1-2\delta)}, \text{ with } 1-2\delta>0.
	\end{equation*} 
	\medbreak
	At this point, we would like to see if we can write a coherent state as a WKB state and make use of this simple description. A first idea would be to isolate the imaginary part inside the exponential and consider it as the phase:
	\begin{equation*} 
		\varphi_0^{(\Gamma)}(x)= \underbrace{(\pi h)^{-d/4} |\det(\Im \Gamma)|^{1/4} e^{-x\cdot \Im \Gamma x/h}}_{a(x)} \underbrace{e^{ix\cdot \Re \Gamma x/(2h)}}_{e^{i\phi(x)/h}}.
	\end{equation*} 
	However, if we do this, we get 
	\begin{equation}\label{Reste Gamma qcq}
		\|\partial^{\alpha} a\|\leq C h^{-d/4} |\det(\Im \Gamma)|^{1/4} h^{-|\alpha|/2}\|(\Im \Gamma)^{1/2}\|^{|\alpha|}, 
	\end{equation} 
	which, for $\Im \Gamma$ independent of $h$, is insufficient to make the approximation (\ref{propag_etat_lag}) as precise as we want since the right hand term of (\ref{propag_etat_lag}) then is 
	\begin{equation*} 
		C h^{N}\|a\|_{C^{2N}}\leq Ch^{-d/4}.
	\end{equation*} 
	Nevertheless, if we were rather considering states $\varphi_0^{(\Gamma)}$ with, for instance, $\|(\Im \Gamma)^{1/2}\|<h^{\epsilon}$ with $\epsilon>0$, this argument would work: such squeezed states can be considered as WKB states and propagated as such since the right hand term becomes 
	\begin{equation*} 
		C h^{N}\|a\|_{C^{2N}}\leq C h^{-d(1-\epsilon)/4}  h^{2N\epsilon},
	\end{equation*} 
	which can be made arbitrarily small as $N$ goes to infinity.
	\medbreak
	Our idea is that under hypotheses on the classical dynamics, such states can be reached by propagating for a time $t_s \simeq \epsilon |\log h| \leq T_{\text{CR}}$ with the method of section \ref{CR method}. Having $\|\Im \Gamma^{1/2}\|<h^{\epsilon}$ requires all its eigenvalues to be of size $h^\epsilon$ or smaller, which in turn means that the state is expanded in $d$ directions and contracted in $d$ others.
	\medbreak
	This happens, for instance, for a hyperbolic fixed point, a toy-model corresponding to a quadratic hamiltonian given by $\mathpzc{p}_2(x,\xi)=x\cdot\xi$. In this case, the computations can be carried out explicitly, as explained in (\ref{GammaKappa}), resulting in the following:
	\begin{equation*} 
		e^{-it\mathpzc{p}_2^w/h} \varphi_{q,p}^{(\Gamma_0)}=(\pi h)^{-d/4} e^{-td/2} \exp(-e^{-2t}\|x\|^2/h),
	\end{equation*} 
	hence $\Gamma=ie^{-2t}I_d$, which, for $t=\epsilon |\log h|$ gives $\|\Im \Gamma\|=h^{2\epsilon}$. 
	\medbreak
	However, such a behavior is quite rare in cases coming from physics. A more reasonable model is having a hyperbolic closed trajectory.
	Say that there exists a hyperbolic periodic orbit $\gamma_0$ at energy $E_0$, then, by a stability argument, we can show that there exists a $\delta>0$ such that for any energy $E\in [E_0-\delta,E_0+\delta]$, there exists a close-by hyperbolic periodic orbit with energy $E$. Let us study the propagation of a coherent state centered on a point of the periodic orbit $\gamma_0$. 
	\medbreak
	In this new model, we have two dimensions that behave differently than in the previous example, called central directions. They correspond to movement along the periodic orbit (called time or flow direction) and change of considered periodic orbit (i.e. moving to another periodic one with a different energy, this direction is called the energy one). After a potential symplectic change of coordinate to better fit this geometry, one is led to a matrix $\Im\Gamma$ with $d-1$ eigenvalues reaching the $h^\epsilon$ scale by propagating for a time $t_s$ (directions behaving similarly to the previous example), but the remaining one would grow at most logarithmically with $h$. This logarithmic growth is the leading term when computing $\|\Im \Gamma\|$, which prevents us from making the remainder in (\ref{propag_etat_lag}) arbitrarily small with (\ref{Reste Gamma qcq}).
	\medbreak
	\medbreak
	In this article, we introduce a larger class of wavefunctions which allow for a $S_\delta$-WKB behavior in unstable directions, squeezed state behavior in the central ones. Indeed, such functions seems adequate to describe the situation as, in the central directions, we have very little expansion: we can still use a squeezed state description like Combescure-Robert's. This new class (see definition \ref{def S_delta,nu}) is in some sense a generalization of the ``isotropic semiclassical functions'' introduced by Guillemin, Uribe and Wang in \cite{GUW1}, \cite{GUW2} that allows for a squeezed state with $h$ dependent $\Gamma$'s in the central directions rather than only usual coherent state.
	\medbreak
	In the next section, we present assumptions that let us treat more complicated cases where unstable/stable manifolds are of lower dimension than $d-1$.
	\medbreak
	The fact that, in a chaotic setting, a propagated coherent state should look like a WKB state was already present in the physics literature with the work of Maia and al. \cite{Maia_phy}, identifying the spread along unstable manifolds as evidence to support this idea.
	From this observation, a different scheme for propagation of coherent states up to Ehrenfest time was investigated by Schubert, Vallejos and Toscano in \cite{Schub_phy}. They obtained a uniform description of the propagated coherent state showing the transition from a strongly localized state to a localization along a manifold. Their idea was to write coherent states as WKB ones from the start and modify the time-dependent WKB method. Indeed, they introduced a metaplectic correction to take into account the quantum dispersion, which is neglected when using the usual WKB method.
	\subsection{Assumptions and Statement of results}\label{hyp}
	\subsubsection{Assumptions on classical dynamics}
	As explained above, our main hypotheses concern the classical dynamics, i.e. the flow of $\mathpzc{p}$'s hamiltonian vector field denoted by $\Phi^t$. The first hypothesis is 
	\medbreak
	\textbf{Hypothesis 0}
	\begin{equation*}\tag{H0}\label{H0}
		\text{ The classical flow } \Phi^t \text{ is complete and  } \mathpzc{p}\in S^0(m) \text{ for } m \text{ an order function, }
	\end{equation*}
	which is mandatory to make sense of $\Phi^t(\rho)$ for any $\rho,t$ and use semiclassical analysis to study propagators. In the Newtonian setting, one can make assumptions on $V$ so that this is verified.
	\medbreak 
	Secondly, we will assume that
	\medbreak
	\textbf{Hypothesis 1}
	\begin{equation*}\tag{H1}\label{H1}
		\begin{split}
			\text{There is a compact }&\text{symplectic manifold } K^{\delta} 
			\\ \text{ transversally to which the cl}&\text{assical flow is normally hyperbolic, }
		\end{split}
	\end{equation*}
	which is a more precise assumption on the local behavior of classical trajectories. Let us detail this assumption.
	\medbreak
	We fix a small window of energy $[E_0-2\delta,E_0+2\delta]$ in which we will be working and center it at $0$ (considering $\mathpzc{p}-E_0$ instead of $\mathpzc{p}$). Then, for each energy $E$ in this window, we shall assume that we dispose of a flow-invariant compact $K_{E}\subset p^{-1}(E)$ in the neighborhood of which we wish to study the evolution of coherent states. We assume that the union of these compacts $K^{2\delta}= \bigcup_{E\in [-2\delta,2\delta]} K_E $ forms a symplectic submanifold of dimension $2d_{\scalerel*{\parallel}{\perp}}$ (called central manifold).
	\newline
	Transversally to this manifold, the dynamic is normally hyperbolic, meaning that there exist subbundles $\mathcal{V}^{u/s}$ (referred to as transverse directions) of $T_{K^{2\delta}}(\R^{2d})$ of dimension $d_\perp$ such that:
	\begin{equation}
		T_{\rho}(\R^{2d})=T_{\rho}K^{2\delta}\oplus \mathcal{V}^{u}_{\rho}  \oplus \mathcal{V}^{s}_{\rho}, \text{ for all } \rho\in K^{2\delta},
	\end{equation}
	where $\d\Phi^t$ preserves the subbundles and there exist $C>0$ and $\nu>0$ such that for all $\rho\in K^{2\delta},$
	\begin{equation*} 
		\|\d_\rho\Phi^t(v)\|\leq C e^{-\nu |t|}\|v\|, \quad \forall v\in \mathcal{V}^{u/s}_{\rho}, \quad \mp t\geq 0.
	\end{equation*} 
	\medbreak
	Such hypotheses appear in various settings and were, for instance, used in quantum chemistry see \cite{chimie} or in general relativity \cite{WZ}. Normal hyperbolicity can be understood as a generalization of hyperbolic closed trajectories, giving an interesting setting to study quantum resonances, see \cite{GeSj2}, \cite{Steph}, \cite{Dya}.
	\medbreak
	It turns out that we will need furthermore assumptions on the flow restricted to the set $K^\delta$: more precisely, we may assume 
	\medbreak
	\textbf{Hypothesis 1'}
	\begin{equation*}\tag{H1'}\label{H1'}
		\text{ The classical flow is } r\text{-normally hyperbolic, with } r\geq 3.
	\end{equation*}
	This means that expansion along the central manifold will be a lot slower than the expansion in the normal direction in the following sense:
	If $\nu_\text{min}^\perp$ is the supremum of $\nu$'s such that there exists $C>0$ verifying 
	\begin{equation}\label{nu_min}
		\sup_{\substack{\rho\in K^{2\delta} \\ p(\rho)\in[-\delta,\delta]}} \| \d_\rho \Phi^{\mp t} \vert_{\mathcal{V}^{u/s}_{\rho}}\|\leq C e^{-\nu t},\quad t>0,
	\end{equation}
	and $\lambda^{\text{c}}$ verifying
	\begin{equation}\label{lambda_c}
		\lambda^{\text{c}}=\limsup_{t\to +\infty} \sup_{\rho\in K^{2\delta}} \frac{1}{t} \log(\| d\Phi^t(\rho)\vert_{T_{\rho}K^\delta}\|),
	\end{equation}
	then we should have
	\begin{equation*} 
		\nu_\text{min}^\perp>3\lambda^{\text{c}}.
	\end{equation*} 
	A case of particular interest would be the closed hyperbolic trajectory setting mentioned above, where $K^{2\delta}$ is $2$-dimensional (each $K_E$ being a closed hyperbolic trajectory) and if $\rho\in K_E$, the central direction is given by the Hamiltonian vector field $H_\mathpzc{p}(\rho)$ and an energy direction,
	which verifies the $r$-normal hyperbolicity for any $r>0$ (in this case, we have $\lambda^{\text{c}}=0$).
	\medbreak
	An important consequence of these hypotheses is the existence of transverse stable/unstable manifolds to each point $\rho\in K^{2\delta}$ (denoted by $(W^{u/s}_\rho)_{\rho \in K^{2\delta}}\}$). They are of regularity $C^\infty$, depend in an $\alpha-$Hölder way on the base point $\rho$, the subbundles $\mathcal{V}^{u/s}$ are tangent to them, and they can be characterized by points sharply asymptotic to $\rho$ (i.e. points converging to $\rho$ at a speed at least $Ce^{-t\nu}$ for $\nu\in (\lambda^c,\nu^\perp_{\text{min}})$) \cite[Theorem 4.1]{HPS}, \cite[Theorem 5.6.1]{Wiggins}.
	\medbreak
	For more information on this refinement, one might consult \cite{Fenichel}, \cite{HPS}, \cite{Wiggins},\cite{GeSj2}, \cite{Dyalong}.
	\medbreak
	With the above hypotheses, we wish to understand the propagation of coherent states that are initially centered on $K^{\delta}$. However, we also want to describe how coherent states centered not exactly on $K^{\delta}$ but close enough will propagate. 
	\medbreak
	We are looking for such a description for times that can go beyond the Ehrenfest one: the evolved state can reach macroscopic scale. However, we are only interested in describing it close to $K^\delta$; once a portion of the state leaves this neighborhood, we will not be able to say much about it. As a consequence, we need to ensure that such portions cannot return to the vicinity of $K^\delta$, because we will not know how to describe them. From a classical point of view, this means that every point that goes out of the vicinity of $K^\delta$ cannot return there. To ensure this, we need to add extra assumptions on the classical dynamics on a global level:
	\medbreak
	\textbf{Hypothesis 2a}
	\begin{equation*}\tag{H2a}\label{H2a}
		K^{2\delta} \text{ is the (compact) trapped set of the classical flow for energy levels in }  [-2\delta, 2\delta],
	\end{equation*}
	i.e. 
	\begin{equation}\label{trapped_set}
		K^{2\delta}=\{\rho\in \R^{2d}, \mathpzc{p}(\rho)\in (-2\delta,2\delta),\ (\Phi^t(\rho))_{t\in \R} \text{ is bounded}\},
	\end{equation}
	and
	\medbreak
	\textbf{Hypothesis 2b}
	\begin{equation*}\tag{H2b}\label{H2b}
		\text{Non-trapping near infinity for energy levels in }  [-2\delta ,2\delta],
	\end{equation*}
	i.e. there exist a real-valued function $G\in C^\infty(\R^{2d})$, called escape function, $C>0$ and $\mathbf{U_0}$ an open neighborhood of $K^{2\delta}$ in $\R^{2d}$ such that 
	\begin{equation}
		H_pG\geq C \text{ on } p^{-1}(-2\delta,2\delta)\cap \mathbf{U_0}^c.
	\end{equation}
	This hypothesis (\ref{H2b}) may seem redundant with (\ref{H2a}) as assumption (\ref{H1}) also requires that $K^{2\delta}$ is compact. However, having a compact trapped set for energies $E\in(-2\delta,2\delta)$ does not seem to be enough to prevent the complication described above. Hypothesis (\ref{H2b}) can be used to obtain a neighborhood of infinity from which a trajectory cannot escape once it is reached.
	\medbreak
	These hypotheses are traditionally used to ensure the existence of quantum resonances, see \cite[Chapter 8]{HS_non_trapping}, giving an elliptic behavior near infinity to the operator. We mainly use them in section \ref{fuite_def_esc} to show that no portion of the state comes back close to $K^\delta$ once it has left its neighborhood.
	\subsubsection{Propagation of coherent states centered on $K^\delta$}
	Our description will rely on a choice of coordinates in which we can simply describe the interactions between the central and transverse directions.
	\medbreak
	Let us introduce different types of charts:
	\begin{definition}\label{K adapted}
		We say that a symplectomorphism $\kappa_\alpha:U_\alpha\subset\R^{2d} \to V_\alpha\subset\R^{2d}$ (with $U_\alpha$ neighborhood of a point of $K^\delta$ and $V_\alpha$ neighborhood of $0$) is \textbf{adapted to} $\boldsymbol{K^\delta}$ if its coordinates  $(x,y,\xi,\eta)$ verify 
		\begin{enumerate}
			\item $\kappa_\alpha(K^\delta\cap U_\alpha)=\{(x,0,\xi,0), x,\xi\in \R^{d_{\scalerel*{\parallel}{\perp}}}\}\cap V_\alpha$.
			\item For each $\rho\in K^\delta\cap U_\alpha$ identified with some $(\underline{x},0,\underline{\xi},0)$, the subspace $d\kappa_\rho(E^u_\rho)$ is $\epsilon$-close to $\{(\underline{x},y,\underline{\xi},0), y\in \R^{d_\perp}\}$.
			\item For each $\rho\in K^\delta\cap U_\alpha$ identified with some $(\underline{x},0,\underline{\xi},0)$, the subspace $d\kappa_\rho(E^s_\rho)$ is $\epsilon$-close to $\{(\underline{x},0,\underline{\xi},\eta), \eta\in \R^{d_\perp}\}$.
		\end{enumerate}
	\end{definition}
	\begin{definition}\label{coord}
		For a  point $\rho\in K^\delta$, we say that a symplectomorphism
		$\kappa_\rho:U_\rho\subset \R^{2d}\to V_\rho \subset \R^{2d}$ with $U_\rho$ neighborhood of $\rho$ and $V_\rho$ neighborhood of $0$ defines \textbf{adapted coordinates to the point} $\rho$ if its coordinates $(x,y,\xi,\eta)$, verify the following facts:
		\begin{enumerate}
			\item $\kappa_\rho(\rho)=(0,0,0,0)$.
			\item The set $K^\delta$ verifies $\kappa_\rho(K^\delta\cap U_\rho)=\{(x,0,\xi,0),\quad (x,\xi)\in \R^{2d_{\scalerel*{\parallel}{\perp}}}\}\cap V_\rho$.
			\item The unstable manifold verifies $\kappa_\rho(W^u_\rho\cap U_\rho)=\{(0,y,0,0),\quad y\in \R^{d_\perp} \}\cap V_\rho$.
			\item The stable manifold verifies $d\kappa_\rho(E^s_\rho)=\{(0,0,0,\eta),\quad \eta\in \R^{d_\perp} \}$.
		\end{enumerate}
	\end{definition}

	The existence of such coordinates is explained in the Appendix \ref{Appendix coord}. 
	\medbreak
	To translate these changes of coordinates occurring in $\R^{2d}$ (hence on the classical side of the picture) into a change of representation of our wavefunctions $u\in L^2\left(\R^d\right)$, we use h-Fourier Integral Operators: operators quantizing canonical relations (of which symplectomorphisms are a special case). We will introduce the following
	\begin{equation*} 
		\mathcal{U}_\rho \text{ a h-Fourier integral operator associated with } \kappa_\rho,
	\end{equation*} 
	(respectively $\mathcal{U}_\alpha$ for a chart only adapted to $K^\delta$). For more information on these operators, one can consult \cite{Zwbook}, \cite{HormFIO},\cite{FIO}.
	
	\tdplotsetmaincoords{75}{25}
	\begin{figure}[H]
		\begin{center}
			\begin{tikzpicture}
				\fill[orange!20] (-3,2.7,0) rectangle (3,-2.7,0);
				\draw[orange!60!black] (-3,-2.6,0) node[below] {Chart domain for $\kappa_\rho$};
				\draw [domain=-3:3, samples=80, smooth,purple, very thick] 
				plot (1.5 ,\x,{sin(1.5*40)+sin(30*\x)});
				\draw [domain=-2.7:2.7, {Latex[scale=1.5]}-{Latex[scale=1.5]}, samples=80, smooth,purple, very thick] 
				plot (1.5 ,\x,{sin(1.5*40)+sin(30*\x)});
				\draw [domain=-2:2, samples=80, smooth,black, very thick] 
				plot (1.5 ,{\x},{sin(1.5*40)-2*sin(30*\x)});
				\draw [domain=-1:1, {Latex[scale=1.5, reversed]}-{Latex[scale=1.5,reversed]}, samples=80, smooth,black,very thick] 
				plot (1.5 ,{\x},{sin(1.5*40)-2*sin(30*\x)});
				\draw [domain=-3:3, samples=80, smooth,purple, very thick] 
				plot (-1.5 ,\x,{sin(-1.5*40)+3*sin(30*\x)});
				\draw [domain=-2:2, {Latex[scale=1.5]}-{Latex[scale=1.5]}, samples=80, smooth,purple, very thick] 
				plot (-1.5 ,\x,{sin(-1.5*40)+3*sin(30*\x)});
				\draw [domain=-1.5:1.5, {Latex[scale=1.5, reversed]}-{Latex[scale=1.5,reversed]}, samples=80, smooth,black,  very thick] 
				plot (-1.5 ,{\x},{sin(-1.5*40)-sin(30*\x)});
				\draw [domain=-2:2, samples=80, smooth,black,  very thick] 
				plot (-1.5 ,{\x},{sin(-1.5*40)-sin(30*\x)});
				\draw [domain=-4:4, samples=80, smooth,blue, very thick] 
				plot (\x ,0,{sin(\x*40)});
				\draw[blue] (3.5,-0.3,{sin(4*40)}) node[below ] {$K=\{(x,0,\xi,0)\}$};
				\draw[blue, thick] (-1.5,0,{sin(-1.5*40}) node[below left] {$\rho=(0,0,0,0)$};
				\draw[blue, thick] (-1.5,0,{sin(-1.5*40}) node {$\times$};
				\draw[black, thick] (-1.5 ,-2,{sin(-1.5*40)-sin(30*-2)}) node[left] {$W^s(\rho)=\{(0,0,0,\eta)\}$};
				\draw[purple, thick] (-1.5 ,3,{sin(-1.5*40)+3*sin(30*3)}) node[left] {$W^u(\rho)=\{(0,y,0,0)\}$};
				\draw[blue, thick]  (1.5 ,0,{sin(1.5*40)}) node[above right]
				{$\tilde{\rho} =(x_0,0,\xi_0,0)$};
				\draw[blue, thick]  (1.5 ,0,{sin(1.5*40)}) node
				{$\times$};
				\draw[purple, thick]  (1.5 ,-2.7,{sin(1.5*40)-sin(30*2.7)}) node[right]
				{$W^u(\tilde{\rho})\neq \{(x_0,y,\xi_0,0)\}$};
			\end{tikzpicture}
		\end{center}
		\caption{Coordinates associated with the point $\rho\in K$}
	\end{figure}
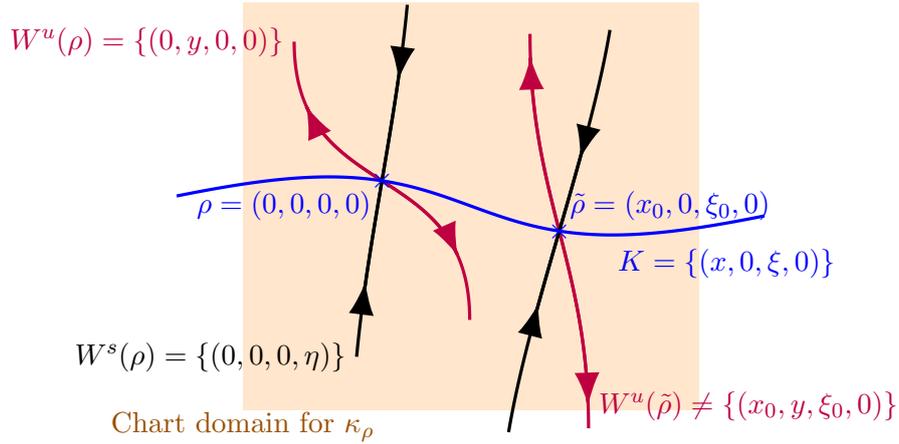
	\medbreak
	Finally, we introduce the largest Lyapunov exponent $\lambda_{\text{max}}$ to define the time scales on which our expansion will be valid:
	\begin{equation}\label{lambda_max}
		\lambda_{\text{max}}\coloneq\limsup_{t\to +\infty} \sup_{\rho\in K^\delta} \frac{1}{t} \log(\| d\Phi^t(\rho)\|).
	\end{equation}
	\medbreak
	In this article, we prove that we can write a propagated coherent state initially localized on $K^\delta$ in these particular charts as follows:
	
	\begin{Theorem}\label{th_K}
		Let $\epsilon\in (0, \frac{1}{6\lambda_{\text{max}}})$.
		\newline
		Under the hypotheses (\ref{H0}) and (\ref{H1'}), there exists $h_0>0$ such that for any $0<h<h_0$, any $\rho\in K^\delta$ and $\kappa_\alpha$ chart adapted to $K^\delta$ containing $\rho$ and for any time $t(h)\in [\epsilon|\log h|,(\frac{1}{2 \lambda_{\text{max}}}-\epsilon)|\log h|]$,
		\newline there exists a family of amplitudes $u_\gamma^{(t)}\in C^\infty(\R^{d_\perp},\C)$ for any $\gamma\in \N^{d-d_\perp}$ and a function
		$\Gamma_{\scalerel*{\parallel}{\perp}}^{(t)}\in C^\infty(\R^{d_\perp}, S\mathbb{H}_{d_{\scalerel*{\parallel}{\perp}}})$ such that for all $N\in \N$ and for all $(x,y)\in V_{\Phi^t(\rho)}$:
		\begin{equation*}
			[\mathcal{U}_{\Phi^t(\rho)} e^{-it\mathpzc{p}/h} (\mathcal{U}_\alpha)^* \varphi_{\kappa_{\alpha}^{-1}(\rho)}](x,y)=\sum_{\substack{\gamma\in \N^{d_{\scalerel*{\parallel}{\perp}}} \\|\gamma|\leq N-1}}h^{|\gamma |/2}  v_{\gamma}^{(t)}(x,y)
			+O_{L^2}(h^{N/2}(\sigma_c^{(t)})^{3N}),
		\end{equation*}
		where
		\begin{equation*}
			v_{\gamma}^{(t)}(x,y)\coloneq u_\gamma^{(t)}(y)|\det \Im\Gamma^{(t)}_{\scalerel*{\parallel}{\perp}}|^{1/4}\left(\frac{\Im(\Gamma_{\scalerel*{\parallel}{\perp}}^{(t)}(y))^{1/2}x}{\sqrt{h}}\right)^\gamma\exp\left(i\frac{x\cdot \Gamma_{\scalerel*{\parallel}{\perp}}^{(t)}(y)x}{2h}\right),
		\end{equation*}
		with $\sigma_c^{(t)}=\sup_{\rho\in K^\delta} \|d\Phi^t(\rho)\vert_{T_{\rho}K^\delta}\|$ if $\lambda^{\text{c}}>0$ (defined in (\ref{lambda_c})) and $e^{\alpha t}$ for some (small) $\alpha>0$ otherwise.
		\newline
		Moreover, we have the following estimates: there exists a $C>0$ independent of $\rho$ such that for any $\alpha\in \N^{d_\perp}$, for any $|\gamma|\leq N-1$, 
		\begin{enumerate}
			\item\label{pt 1} $\|\partial^{\alpha}\Gamma_{\scalerel*{\parallel}{\perp}}^{(t)}\|_\infty,\|\partial^{\alpha}(\Gamma_{\scalerel*{\parallel}{\perp}}^{(t)})^{-1}\|_\infty\leq C |\log h|^{|\alpha|} (\sigma_c^{(t)})^2$ and
			\newline
			$\|(\Im\ \Gamma_{\scalerel*{\parallel}{\perp}}^{(t)})^{-1/2}\|_\infty, \|(\Im\ (\Gamma_{\scalerel*{\parallel}{\perp}}^{(t)})^{-1})^{-1/2}\|_\infty\leq C(\sigma_c^{(t)})$.
			\item\label{pt 2} The region on which $u_\gamma^{(t)}$ is not $O(h^\infty)$ is of a volume smaller than $ch^{d_\perp/2-\cdot \ } J_u^{(t)}(\rho),$ 
			\newline with $J_u^{(t)}(\rho)
			=|\det (d\Phi^{t}(\rho)\vert_{\mathcal{V}_u(\rho)})|$. Moreover, the diameter of this region is smaller than $h^{1/2-\cdot\ } e^{t\lambda_{\text{max}}}$.
			\item\label{pt 3} $\|\partial_y^\alpha u_\gamma^{(t)}\|_{L^\infty(\d y)}\leq Ch^{-\delta_t|\alpha|} (J_u^{(t)}(\rho))^{-1/2} h^{-d/4},$
			with $\delta_t=\frac{1}{2}-\frac{t}{|\log h|}\nu_{\text{min}}^\perp+\cdot\ \in(0,\frac{1}{2})$. 
			\item\label{pt 4} As a consequence, we can show that 
			\begin{equation*} 
				\|\partial_y^\alpha v_\gamma^{(t)}\|_{L^2(\d x)L^\infty(\d y)} \leq C (\sigma_c^{(t)})^{3|\gamma|}  h^{-\tilde{\delta_t}|\alpha|} (J_u^{(t)}(\rho))^{-1/2} h^{-d/4},
			\end{equation*} 
			with $\tilde{\delta_t}=\max\left(\frac{1}{2}-\frac{t}{|\log h|}\nu_{\text{min}}^\perp+\cdot\ , 2\left(\frac{|\log \sigma_c^{(t)}|}{|\log h|}\right)+\cdot\ \right)\in(0,\frac{1}{2})$. 
		\end{enumerate} 
	\end{Theorem}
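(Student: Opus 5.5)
The proof splits the time interval into two phases, following the strategy of Section \ref{WKB description}. Set $t_s=\epsilon|\log h|$ and write $t=t_s+(t-t_s)$.

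\textbf{Phase one: squeezed states up to $t_s$.} On $[0,t_s]$ we use the Combescure--Robert/Hagedorn expansion (\ref{CR}) with the control (\ref{control CR}). Since $t_s\le |\log h|/(6\lambda_{\text{max}})$, we have $\|\kappa_{t_s}\|\le Ch^{-1/6+\cdot}$, so $e^{-it_s\mathpzc{p}/h}(\mathcal{U}_\alpha)^*\varphi_{\kappa_\alpha^{-1}(\rho)}$ equals a finite sum of excited squeezed states $h^{n/2}\varphi^{(\Gamma_{t_s},P^n_{t_s})}_{\Phi^{t_s}(\rho)}$, up to an error $h^{N/2}$ times a loss.

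We then conjugate by $\mathcal{U}_{\Phi^{t_s}(\rho)}$, the FIO attached to the chart adapted to the point $\Phi^{t_s}(\rho)$. Near the centre its linear part is a metaplectic operator, so the squeezed state stays squeezed, and we track the new matrix $\tilde\Gamma$. Because $d\Phi^{t_s}$ preserves the splitting $TK\oplus\mathcal{V}^u\oplus\mathcal{V}^s$, and the chart straightens these subspaces at $\Phi^{t_s}(\rho)$, $\tilde\Gamma$ is approximately block diagonal. In the $y$-block, the imaginary part has size $e^{-2\nu t_s}\sim h^{2\epsilon\nu}$. In the $x$-block, $\Gamma_\parallel$ has $\|\Im\Gamma_\parallel^{-1/2}\|\le C\sigma_c^{(t_s)}$. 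The off-diagonal terms are controlled by the $\epsilon$-closeness in Definition \ref{K adapted} together with the exponential convergence of $d\Phi^t$ towards the invariant splitting.

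Once $\tilde\Gamma$ is in this form, the state factors as
\[
a(y)e^{i\phi(y)/h}\cdot(\text{squeezed state in }x\text{ with parameter depending on }y).
\]
Here $a(y)e^{i\phi(y)/h}$ is a WKB state with $S_{\delta}$ amplitude, $\delta=\tfrac12-\epsilon\nu+\cdot$. The squeezed factor in $x$ may depend on $y$ because we absorb the cross terms $x\cdot By$ by completing the square; this is what produces $\Gamma_\parallel(y)$ and the structure of $v^{(t)}_\gamma$ in the statement.

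\textbf{Phase two: propagating the mixed class.} We propagate this mixed state (WKB in $y$, squeezed in $x$) from $t_s$ to $t$ by a hybrid ansatz. We look for
\[
\sum_\gamma h^{|\gamma|/2}u_\gamma(s,y)\,(\cdot)^\gamma\, e^{i(\phi_s(y)+x\cdot\Gamma_\parallel(s,y)x/2)/h}
\]
and solve order by order.

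We conjugate $\mathpzc{p}$ by the charts, adapted now to $K^\delta$, and Taylor expand the symbol in $(x,\xi)$ around $K$ up to order $3N$, while keeping full nonlinear dependence in $(y,\eta)$. This yields three kinds of equations:
\begin{enumerate}
\item a Hamilton--Jacobi equation for $\phi_s$, whose Lagrangian is the transported unstable leaf, which stays projectable in $y$ because it is close to $W^u$ in the adapted chart;
\item a Riccati equation for $\Gamma_\parallel(s,y)$, given by the linearized central flow along the $y$-dependent trajectories, which gives point (\ref{pt 1});
\item transport equations for $u_\gamma$, with cubic and higher terms generating the $x$-polynomials of degree $\le 3|\gamma|$, which gives the factor $(\sigma_c^{(t)})^{3|\gamma|}$ in point (\ref{pt 4}).
\end{enumerate}

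The bounds then follow from the dynamics.
\begin{itemize}
\item The support estimate in point (\ref{pt 2}) comes from volume preservation and the unstable Jacobian $J_u^{(t)}$.
\item The $L^\infty$ bound $(J_u^{(t)})^{-1/2}h^{-d/4}$ comes from $L^2$ conservation.
\item The loss $h^{-\delta_t}$ per $y$-derivative comes from the contraction of the inverse flow on $\mathcal{V}^u$ at rate $\nu^\perp_{\text{min}}$, which improves the initial $S_\delta$ regularity as time goes on.
\item The $|\log h|^{|\alpha|}$ factor for derivatives of $\Gamma_\parallel$ comes from the Hölder/$C^r$ dependence of the central linearization along the leaves, integrated over a time of order $|\log h|$.
\end{itemize}
The $r$-normal hyperbolicity with $r\ge3$ from (\ref{H1'}) ensures that the $y$-derivatives of the central data grow more slowly than the transverse contraction, so all these quantities remain $S_{\tilde\delta_t}$ with $\tilde\delta_t<1/2$. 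The condition $t\le(\frac1{2\lambda_{\text{max}}}-\epsilon)|\log h|$ keeps the support inside the chart $V_{\Phi^t(\rho)}$.

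\textbf{Closing the error.} We compare the ansatz with the true evolution by Duhamel's formula. The ansatz solves the equation up to $O(h^{N/2}(\sigma_c)^{3N})$ times an $h^{-\cdot}$ loss, and the propagator is unitary, so integrating this over a time $O(|\log h|)$ gives the claimed remainder.

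\textbf{Main obstacle.} We expect the main difficulty to be the second phase: a uniform version of the Riccati and transport analysis in which $\Gamma_\parallel$ depends on $y$. The coupling between $x$ and $y$ in the Hessian of the conjugated symbol, and the fact that the charts change with $\Phi^s(\rho)$, must be controlled uniformly over logarithmic times. To handle the chart change, we would first cut the time into unit steps and re-chart at each step via the FIOs $\mathcal{U}_{\Phi^{s}(\rho)}\mathcal{U}_{\Phi^{s-1}(\rho)}^*$. We would then check that the mixed class is stable under each step, with constants that multiply only through $\sigma_c$ and the transverse contraction.
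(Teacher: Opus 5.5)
Your overall plan matches the paper's. You propagate squeezed states up to $t_s=\epsilon|\log h|$, switch to the mixed class $S_{\delta,\nu}$ (WKB in $y$, squeezed in $x$) read in charts adapted to the points $\Phi^{s}(\rho)$, and iterate over unit steps, with $y$-regularity improved by transverse contraction and central growth measured by $\sigma_c$. Using Combescure--Robert globally and conjugating once, instead of iterating Proposition \ref{iter iso} chart by chart as in Proposition \ref{succ_1st}, is immaterial.

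The genuine difference is the engine of one step in phase two.
You propose a continuous-time ansatz: Hamilton--Jacobi, a Riccati PDE for $\Gamma_\parallel(s,y)$ along the $y$-characteristics, transport equations, and Duhamel. The paper instead writes each step $T_p$ of (\ref{def Tk}) as an FIO with a generating function. It does stationary phase in $(y^0,\eta^0)$ and a Taylor expansion of the integrand in $(x^0,\xi^0)$ (Proposition \ref{Qjkl}). There the invariance $\mathfrak F(\mathcal I_m)=\mathcal I_m$ kills the first-order terms, and the quadratic central phase is identified with $d^c\mathfrak F(y)$ (Lemma \ref{compare dcF}).

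Your version describes $\Gamma_\parallel(s,y)$ more explicitly and closes cleanly by Duhamel. However, your ansatz is centred at $x=\xi=0$, so it presupposes charts in which the moving leaf $W^u(\Phi^s(\rho))$ has vanishing $(x,\xi)$-components. In a chart merely adapted to $K^\delta$, as you first write, this fails, and you would need the drift functions of Theorem \ref{th_trapped_set}. If you instead re-chart at discrete times, each step becomes the FIO $\mathcal U_{\Phi^{s}(\rho)}e^{-i\mathpzc p/h}\mathcal U^*_{\Phi^{s-1}(\rho)}$ between charts at two distinct base points. You are then computing exactly what Proposition \ref{Qjkl} computes, with the bookkeeping of Lemma \ref{norm_2ndmethod}.

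One step of phase one is wrong as stated: for $\rho\in K^\delta$ the cross block $\Gamma_{xy}$ (your $B$) is not approximately zero but exactly zero.
Both $\kappa_\alpha$ and $\kappa_{\Phi^{t_s}(\rho)}$ send $K^\delta$ onto $\{y=\eta=0\}$. Hence they send $TK^\delta$ onto the $(x,\xi)$-plane and, being symplectic, $(TK^\delta)^{\perp_\omega}$ onto the $(y,\eta)$-plane. Since $d\Phi^t$ preserves $TK^\delta$, it also preserves its symplectic orthogonal. So the linearization of $\kappa_{\Phi^{t_s}(\rho)}\circ\Phi^{t_s}\circ\kappa_\alpha^{-1}$ is block diagonal for $(x,\xi)\oplus(y,\eta)$, and so is $\Gamma$; this is the argument of Lemma \ref{1st_methodK}. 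The tolerance $\epsilon$ of Definition \ref{K adapted} only concerns the position of $E^{u/s}$ inside the $(y,\eta)$-plane. It produces the entries $G,H$ of the hyperbolic block, which do not prevent $\Gamma_{\mathrm{hyp}}=O(e^{-2\nu t_s})$.

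Consequently $\Gamma_\parallel$ is independent of $y$ at the switching time. Completing the square is not needed, and it is not where $\Gamma_\parallel(y)$ comes from: it only moves the centre of the $x$-Gaussian by the complex amount $-\Gamma_\parallel^{-1}\Gamma_{xy}y$, and never makes $\Gamma_\parallel$ depend on $y$. The $y$-dependence is created in phase two, through the dependence of $d^c\mathfrak F(y)$ on the point $(0,y,0,0)$ of the leaf.

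Exactness matters here. With a genuinely $O(\epsilon)$ cross block, completing the square would add $-\Gamma_{xy}^T\Gamma_\parallel^{-1}\Gamma_{xy}$ to the $y$-block. Its imaginary part is of size $\epsilon^2$ up to powers of $\sigma_c^{(t_s)}$, far larger than $\Im\Gamma_{\mathrm{hyp}}\sim e^{-2\nu t_s}$. The $y$-factor would then not be an $S_\delta$ amplitude with $\delta<1/2$, and the $x$-factor would carry a complex centre, so your factorization would break down.

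Two smaller corrections.
First, the essential use of $r\ge3$, which your sketch does not state, is the bound $\sigma_c^{(t)}\le Ch^{-1/6+\epsilon}$ on the whole time window. It follows from $\lambda_{\max}\ge\nu^\perp_{\min}>3\lambda^c$ and $t\le|\log h|/(2\lambda_{\max})$. This bound is what makes the expansion in $(x,\xi)$ at each step, and equally your hierarchy of transport equations, a genuine asymptotic expansion.
Second, the $L^\infty$ bound of point \ref{pt 3} does not come from $L^2$ conservation, which only controls averages. It comes from the accumulated Jacobian factors $|\det\partial y^{s}/\partial y^{s+1}|^{1/2}$ produced by the transport (Lemma \ref{det y}).
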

	The notation $\alpha+\cdot$ (resp. $\alpha-\cdot$) means that the estimate is true for any $\alpha+\tilde{\epsilon}$ (resp $\alpha-\tilde{\epsilon}$) with $\tilde{\epsilon}>0$.
	\begin{Remark} 
		\begin{itemize}
			\item The fact that the initial state is given by
			$(\mathcal{U}_\alpha)^* \varphi_{\kappa_{\alpha}^{-1}(\rho)}$ instead of simply $\varphi_\rho$ means that we still have a wavepacket centered at $\rho$ but the Gaussian is deformed so that the $x$ coordinates (and their Fourier dual) describe the function in the central direction, whereas the $y$ coordinates (and their Fourier dual) describe the transverse direction (unstable, resp. stable). This is not really restriction: for instance, when we want to study general $L^2$ functions $u$ on a manifold (in our case on $\R^{2d}$ with its submanifold $K^\delta$), we have to take charts before using the resolution of the identity (\ref{resol identity}). In the context of the theorem, we take one of these charts to be $\kappa_{\alpha}$. 
			\item For the final state, we also have the interpretation for the $x$ coordinates (and their Fourier dual) describe the function in the central direction, and the $y$ coordinates (and their Fourier dual) describe the transverse direction.
			\item The theorem only gives information for times larger than $\epsilon |\log h|$, for a description involving smaller time scales, one should use Combescure-Robert's theorem; see (\ref{CR}) and (\ref{control CR}) or proposition \ref{succ_1st} for a version in charts. Recall that this method consists in approximating the propagated state $[\mathcal{U}_{\Phi^t(\rho)} e^{-it\mathpzc{p}/h} (\mathcal{U}_\alpha)^* \varphi_{\kappa_{\alpha}^{-1}(\rho)}]$ by a sum of squeezed excited coherent states of the form
			\begin{equation*} 
				\varphi^{(\Gamma,P)}_{0}(x,y)= (\pi h)^{-d/4} |\det(\Im \Gamma)|^{1/4} P\left(\frac{\Im(\Gamma)^{1/2}(x,y)}{\sqrt{h}}\right)\exp\left(\frac{i(x,y)\cdot\Gamma (x,y)}{h}\right).
			\end{equation*} 
			On time scales for which the two descriptions coexist, they are compatible. From our choice of coordinates $\kappa_{\alpha}$ and $\kappa_{\Phi^t(\rho)}$, one can actually show that the matrix $\Gamma$ obtained is block-diagonal (with a block corresponding to the $x$ coordinates and one associated with the $y$'s). It turns out that the squeezed excited coherent states $\varphi^{(\Gamma,P)}_{0}$ appearing in the approximation all have a tensorial-product structure:
			\begin{equation}\label{tensor}
				u(y) \left|\det \Im(\Gamma_{\scalerel*{\parallel}{\perp}}) \right|^{1/4} h_{\gamma}^{{\scalerel*{\parallel}{\perp}}}\left(\frac{\Im(\Gamma_{\scalerel*{\parallel}{\perp}})^{1/2} x}{\sqrt{h}}\right) \exp\left(\frac{ix\cdot\Gamma_{\scalerel*{\parallel}{\perp}} x}{h}\right),
			\end{equation}
			with \begin{equation*} 
				u(y)=h^{-d/4}\left|\det \Im(\Gamma_\text{hyp}) \right|^{1/4} h_{\gamma}^{\text{hyp}}\left(\Im(\Gamma_\text{hyp})^{1/2}h^{-1/2} y\right) e^{\frac{i}{h} y\cdot \Gamma_\text{hyp} y },
			\end{equation*} 
			which fits the framework of theorem \ref{th_K} (with $\Gamma_{\scalerel*{\parallel}{\perp}}$ independent of $y$), see the proof of lemma \ref{1st_methodK} for more details.
			\item In this theorem, the WKB part of our state corresponds to $u_\gamma$, the propagated state is microlocalized on the unstable manifold to $\Phi^t(\rho)$, which is described in $\Phi^t(\rho)$'s coordinates by 
			\begin{equation}\label{iso_model}
				\mathcal{I}_m=\kappa_{\Phi^t(\rho)}(W^u(\Phi^t(\rho)))=\{(0,y,0,0), y\in \R^{d_\perp}\}\cap V_{\Phi^t(\rho)}.
			\end{equation}
			This manifold is isotropic, it corresponds to the model isotropic manifold introduced in \cite{GUW2}.
			\medbreak
			We prove estimates on the $L^\infty$ norm of $u_\gamma$, its derivatives and the size of their essential support (points \ref{pt 2} and \ref{pt 3} of this theorem), as they are the relevant quantities to control propagation of usual WKB states. 
			\item In central directions, we stick to $L^2$ norms (point \ref{pt 4}), as it is the space in which we control the remainder term. Recall that $L^2$ norms of excited squeezed states can be estimated with the norm of the associated polynomial, see (\ref{L2 norm gauss}). From the estimates on $\Im \Gamma_{\scalerel*{\parallel}{\perp}}^{(t)},\Im(\Gamma_{\scalerel*{\parallel}{\perp}}^{(t)})^{-1}$ in point \ref{pt 1}, we can see that the spread in these directions is governed by $\sigma_c^{(t)}$ which is the analogous of $\|\kappa_t\|$ that was involved in the Combescure-Robert threshold $T_{CR}$ see (\ref{control CR}) and below. Thanks to the $r$-normal hyperbolicity with $r\geq 3$, we can check that for times considered here, we always have 
			\begin{equation}\label{borne sigma c}
				\sigma_c^{(t)}\leq C h^{-1/6+\epsilon},
			\end{equation} as required for a squeezed state description, see the end of section \ref{CR method}.
			\item Notice that $\delta_t$ gets closer to $0$ as $t$ increases in point \ref{pt 3}: there is a smoothing effect of the amplitude in the unstable direction $u_\gamma^{(t)}$ due to expansion. However, when it comes to the smoothness in $y$ of the full state $v_\gamma^{(t)}$ in point \ref{pt 4}, we have to compare $\delta_t$ with a growing quantity: when computing derivatives in $y$ some of them can hit $\Gamma_{\scalerel*{\parallel}{\perp}}^{(t)}$ and have to be estimated in a different way, involving $\sigma^{(t)}_c$.
		\end{itemize}
	\end{Remark}
	\medbreak
	\subsubsection{Propagation of coherent states centered in a neighborhood of $K^\delta$}
	Let us fix some $\tau>0$ and introduce an arbitrary Riemannian metric in a neighborhood of $K^\delta$ in $\R^{2d}$. We consider $\rho$ a point at distance less than $h^{\tau}$ of $K^\delta$. We want to investigate the propagation of the coherent state $\varphi_\rho$.
	\medbreak
	We will only describe the portion of the propagated state that lands in the $\epsilon_1$ neighborhood of $K^{3\delta/2}$ (denoted by $K^{3\delta/2}(\epsilon_1)$), for some fixed $\epsilon_1>0$. We choose $\epsilon_1$ small enough so that $K^{3\delta/2}(\epsilon_1)$ is covered by charts $\kappa_\alpha$ that are adapted to $K^{3\delta/2}$ in the sense of definition \ref{K adapted}.
	\newline 
	More precisely, in order to only look at the portion of the propagated state located in $K^{3\delta/2}(\epsilon_1)$, we introduce a cutoff $\chi\in S_{0}$ supported in the set $K^{3\delta/2}(\epsilon_1)\cap p^{-1}(-3\delta/2,3\delta/2)$ such that $\chi=1$ on $K^{\delta}(\epsilon_1/2)\cap p^{-1}(-\delta,\delta)$.
	\medbreak
	Adding the trapped set hypotheses (\ref{H2a}) and (\ref{H2b}), we obtain a description valid up to a maximal time that only involves Lyapunov exponents of the central dynamics (\ref{lambda_c}); hence for times that might be much larger than in theorem \ref{th_K}:
	\begin{Theorem}\label{th_trapped_set}
		Consider the case $\lambda_c>0$ and fix $\epsilon>0$.
		\newline
		Under the assumption (\ref{H0}), (\ref{H1'}) and (\ref{H2a}), (\ref{H2b}), there exists a $h_0>0$ such that for any $0<h<h_0$, any $\rho$ in a $h^{\tau}$ neighborhood of $K^\delta$, $\tilde{\rho}$ reaching this distance, $\kappa_\alpha$ chart adapted to $K^\delta$ containing $\rho$, any time $t(h)\in [\epsilon|\log h|,(\min(\frac{\tau}{\lambda_c}, \frac{1}{6\lambda_c})-\epsilon)|\log h|]$  (or $[\epsilon |\log h|,C|\log h|]$ with any $C>0$ if $\lambda^{\text{c}}=0$) and $\kappa_\beta$ chart adapted to $K^\delta$ containing $\Phi^t(\tilde{\rho})$,
		\newline There exists a family of amplitudes $u_\gamma^{(t)}\in C^\infty(\R^{d_\perp},\C)$ for any $\gamma\in \N^{d-d_\perp}$, a function $\Gamma_{\scalerel*{\parallel}{\perp}}^{(t)}\in C^\infty(\R^{d_\perp}, S\mathbb{H}_{d_{\scalerel*{\parallel}{\perp}}})$ and $\overline{x}\phantom{}^{(t)},\overline{\xi}\phantom{}^{(t)}\in C^\infty(\R^{d_\perp},\R^{d_{\scalerel*{\parallel}{\perp}}})$, $\phi^{(t)}\in C^\infty(\R^{d_\perp},\R)$ such that for all $N\in \N$ and for all $(x,y)\in V_{\Phi^t(\tilde{\rho})}$:
		\begin{align} \label{DL th 2}
			\begin{split}
				[\mathcal{U}_{\beta}& \chi^w e^{-it\mathpzc{p}/h}  (\mathcal{U}_\alpha)^* \varphi_{\kappa_{\alpha}^{-1}(\rho)}](x,y)
				\\&=\sum_{\substack{\gamma\in \N^{d_{\scalerel*{\parallel}{\perp}}} \\|\gamma|\leq M(N)}}h^{|\gamma|/2} \exp\left(\frac{i}{h} (\phi^{(t)}(y)+\overline{\xi}\phantom{}^{(t)}(y) (x-\overline{x}\phantom{}^{(t)}(y)/2))\right)v_{\gamma}^{(t)}(x,y)+O_{L^2}(h^{N/2}(\sigma_c^{(t)})^{3N}),
			\end{split}
		\end{align}
		where 
		\begin{equation*} 
			v_{\gamma}^{(t)}(x,y)= u_\gamma^{(t)}(y)\left(\frac{\Im(\Gamma_{\scalerel*{\parallel}{\perp}}^{(t)}(y))^{1/2}(x-\overline{x}\phantom{}^{(t)}(y))}{\sqrt{h}}\right)^\gamma\exp\left(i\frac{(x-\overline{x}\phantom{}^{(t)}(y))\cdot \Gamma_{\scalerel*{\parallel}{\perp}}^{(t)}(y)(x-\overline{x}\phantom{}^{(t)}(y))}{h}\right),
		\end{equation*} 
		and $\sigma_c^{(t)}=\sup_{\rho\in K^\delta} \|d\Phi^t(\rho)\vert_{T_{\rho}K^\delta}\|$.
		\newline
		Moreover, $\overline{x}\phantom{}^{(t)},\overline{\xi}\phantom{}^{(t)},\phi^{(t)}$ describe an isotropic manifold 
		\begin{equation*} 
			\mathcal{I}^{(t)}=\biggl\{ \biggl(\overline{x}\phantom{}^{(t)}(y),y,\overline{\xi}\phantom{}^{(t)}(y),\underbrace{\nabla \phi^{(t)}(y)+\frac{(\nabla \overline{\xi}\phantom{}^{(t)}(y)).\overline{x}\phantom{}^{(t)}(y)-\overline{\xi}\phantom{}^{(t)}(y).(\nabla \overline{x}\phantom{}^{(t)}(y))}{2}}_{\coloneq\overline{\eta}\phantom{}^{(t)}(y)}\biggr), y\in D_{\epsilon_1}\biggr\},
		\end{equation*} 
		such that there exists $C>0$ independent of $t$ such that
		\begin{equation*} 
			\|\partial^\alpha \overline{x}\phantom{}^{(t)}\|_\infty,\|\partial^\alpha\overline{\xi}\phantom{}^{(t)}\|_\infty,\|\partial^\alpha \overline{\eta}\phantom{}^{(t)}\|_\infty\leq C, \text{ for any } \alpha\in \N^{d_\perp},
		\end{equation*} 
		and verifying \begin{equation*} 
			\mathcal{I}^{(t)}\subset \left[\kappa^\beta \Phi^{(t-t_\epsilon)} (\kappa^{\beta_{\epsilon}})^{-1}\right] \left(\mathcal{I}^{(t_\epsilon)}\right),
		\end{equation*} 
		where $t_\epsilon\coloneq\epsilon |\log h|$, $\kappa^{\beta_{\epsilon}}$ some chart adapted to $K^\delta$ containing $\Phi^{t_\epsilon}(\tilde{\rho})$ and $\mathcal{I}^{(t_\epsilon)}$ the isotropic manifold given by the theorem in that case.
		\newline
		Moreover, we have the estimates of points \ref{pt 1},\ref{pt 2},\ref{pt 3} and \ref{pt 4} as listed in theorem \ref{th_K} (with $J_u^{(t)}(\tilde{\rho})$ instead of $J_u^{(t)}(\rho)$).
	\end{Theorem}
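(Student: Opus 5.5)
The plan is to split the evolution at the intermediate time $t_\epsilon=\epsilon|\log h|$. We write $e^{-it\mathpzc{p}/h}=e^{-i(t-t_\epsilon)\mathpzc{p}/h}e^{-it_\epsilon\mathpzc{p}/h}$.

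\textbf{Step 1 (short time).} On $[0,t_\epsilon]$, with $\epsilon$ small enough that $\|d\Phi^{t_\epsilon}\|\le h^{-1/6+\cdot}$, we apply the Combescure--Robert expansion in charts (proposition \ref{succ_1st}). This approximates $e^{-it_\epsilon\mathpzc{p}/h}(\mathcal{U}_\alpha)^*\varphi_{\kappa_\alpha^{-1}(\rho)}$ by excited squeezed states centred at $\Phi^{t_\epsilon}(\rho)$.

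Since $\rho$ lies within $h^\tau$ of $K^\delta$, we compare with $\tilde\rho$. The trajectory $\Phi^s(\rho)$ is tracked by a point on the stable leaf of a point of $K^\delta$ (the shadowing given by the stable foliation, which is $\alpha$-Hölder). Its transverse unstable component grows like $h^\tau e^{s\nu}$, and its central drift is bounded by $h^\tau e^{s\lambda^c}$.

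In a chart $\kappa^{\beta_\epsilon}$ adapted to $K^\delta$ (definition \ref{K adapted}), we check that the Gaussian matrix is block diagonal up to $O(\epsilon)$ coupling. Indeed, in the $y$-block, $\Im\Gamma_{\rm hyp}$ is of size $h^{2\epsilon\nu}$, so the $y$-factor $u(y)$ is an $S_\delta$ WKB amplitude. We then rewrite the state in the tensorial form (\ref{tensor}), obtaining the form (\ref{DL th 2}) at time $t_\epsilon$. Its isotropic manifold $\mathcal{I}^{(t_\epsilon)}$ is the graph over $y$ of the centre $(\overline{x},\overline{\xi})$, which varies linearly in $y$, and $\overline\eta$ is fixed by the phase. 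This is essentially lemma \ref{1st_methodK} applied at a shifted centre.

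\textbf{Step 2 (long time).} For $s\in[t_\epsilon,t]$ we propagate states of the class in definition \ref{def S_delta,nu}, i.e. WKB in $y$ and squeezed in $x$ fibred over $y$. The ansatz is the one in (\ref{DL th 2}) with time-dependent $\overline{x}(y),\overline{\xi}(y),\phi(y),\Gamma_{\scalerel*{\parallel}{\perp}}(y)$. We obtain it by conjugating $\mathpzc{p}$ by the FIOs $\mathcal{U}_\beta$ and Taylor-expanding the symbol in $(x,\xi)$ around $(\overline{x}(y),\overline{\xi}(y))$ while treating $(y,\eta)$ semiclassically. The equations are then as follows:
\begin{itemize}
\item the order-$0$ and order-$1$ terms give Hamilton's equations for $\mathcal{I}^{(s)}$, whence $\mathcal{I}^{(t)}\subset\kappa^\beta\Phi^{t-t_\epsilon}(\kappa^{\beta_\epsilon})^{-1}(\mathcal{I}^{(t_\epsilon)})$;
\item the quadratic terms in $x$ give a Riccati equation for $\Gamma_{\scalerel*{\parallel}{\perp}}(y)$ driven by $d\Phi|_{TK}$;
\item the remaining terms give transport equations for $u_\gamma$ with polynomial corrections in $x$ of degree $3|\gamma|$.
\end{itemize}
We control the remainder by Duhamel's formula and unitarity, exactly as in the proof of theorem \ref{th_K}. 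The $x$-errors are of size $h^{N/2}(\sigma_c^{(s)})^{3N}$, and the $y$-errors are $O(h^{N(1-2\delta_s)})$, which is harmless.

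\textbf{Step 3 (chart changes and leaving).} Because the time is logarithmic, the unstable leaf spreads to macroscopic size. We therefore cover $K^{3\delta/2}(\epsilon_1)$ by finitely many adapted charts $\kappa_\beta$, change charts with $\mathcal{U}_\beta\mathcal{U}_{\beta'}^*$, and use a partition of unity in $y$. Portions of the state exiting $K^{3\delta/2}(\epsilon_1)$ are cut off by $\chi^w$. Using (\ref{H2a}) and (\ref{H2b}) with the escape function $G$ (section \ref{fuite_def_esc}), we show they never re-enter the support of $\chi$, so their contribution after $\chi^w$ is $O(h^\infty)$.

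The needed uniform bounds are obtained as follows:
\begin{itemize}
\item The $C^k$ bounds on $\overline{x},\overline{\xi},\overline\eta$ follow because, by the inclination lemma for normally hyperbolic $K$ (\ref{H1'}), forward images of a transverse unstable disc converge in $C^k$ to unstable leaves $W^u$, which are uniformly smooth graphs over $y$ in adapted charts.
\item The bounds on $\Gamma_{\scalerel*{\parallel}{\perp}}$ come from the Siegel action of $d\Phi|_{TK}$, which is controlled by $\sigma_c^{(t)}$. The $|\log h|^{|\alpha|}$ loss comes from differentiating in $y$ a product of $O(|\log h|)$ cocycle factors.
\item The bounds on $u_\gamma$ (points \ref{pt 2}, \ref{pt 3}) follow from the Jacobian $J_u$ in the transport equation and the contraction $e^{-\nu s}$ of $d\Phi^{-s}$ on unstable leaves.
\end{itemize}

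\textbf{Main obstacle.} The time restriction $t\le\min(\tau/\lambda_c,1/(6\lambda_c))|\log h|$ is used twice. First, $\sigma_c^{(t)}\le h^{-1/6+\epsilon}$ keeps the squeezed expansion valid. Second, the central offset of $\rho$ from $K$, of size $h^\tau$ amplified by $e^{t\lambda_c}$, must stay $o(1)$ so that $\mathcal{I}^{(t)}$ remains inside the adapted charts with bounded $\overline{x},\overline{\xi}$. The hardest step is the uniform $C^k$ control of $\mathcal{I}^{(t)}$, together with $\Gamma_{\scalerel*{\parallel}{\perp}}(y)$, across chart changes for logarithmic times, i.e. a quantitative inclination lemma with constants independent of $h$. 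I would first try to prove it by induction over unit time steps, using the $r\ge3$ spectral gap to dominate the central growth in the graph transform.
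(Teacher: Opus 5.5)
Your Step~1 is essentially the paper's: Proposition~\ref{succ_1st} for $\epsilon_s|\log h|$ time, then the rewriting of Lemma~\ref{1st_methodK}. The only difference there is that the paper controls the off-$K$ coupling by Lemma~\ref{compare_linear} and a perturbative expansion, not by stable-leaf shadowing.

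Steps~2--3 take a genuinely different route. The paper never writes evolution equations for $\overline{x},\overline{\xi},\phi,\Gamma,u_\gamma$ and never uses Duhamel, neither for Theorem~\ref{th_trapped_set} nor for Theorem~\ref{th_K}, which is just the case $\tilde\rho=\rho$ of the same scheme. Instead it does the following:
\begin{itemize}
\item It discretizes $t=nt_0$ and inserts $\chi^w$ between consecutive steps (Proposition~\ref{cutoff_each_step}).
\item It works in charts adapted to the base points $\tilde\rho^k=\Phi^{kt_0}(\tilde\rho)$ (Definition~\ref{coord}).
\item It computes the action of each fixed-time FIO $T_k$ on $S_{\delta,\nu}$ directly from its oscillatory integral. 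After conjugating by $\hat T_{\mathcal I}$, the canonical map preserves $\mathcal I_m$. Stationary phase in $(y^0,\eta^0)$ and Taylor expansion in $(x^1,\xi^0)$ then produce $\mathcal M_{\rm c}(d^c\mathfrak F)$ and the polynomials $Q^{(k,l)}$ (Proposition~\ref{Qjkl}).
\item The long-time result is bookkeeping over $O(|\log h|)$ compositions (Lemma~\ref{norm_2ndmethod}), plus one more use of Proposition~\ref{Qjkl} for $\mathcal U_\beta(\mathcal U^n)^*$.
\end{itemize}

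What the discrete route buys is that nothing has to be solved or commuted. There is no transport--Riccati PDE in $(s,y)$, no evaluation of $\mathpzc{p}^w$ on Gaussian--WKB states, and no separate chart-change lemma. All growth is read off from products of explicit one-step linear maps and Jacobians.

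Your route buys explicit evolution equations and a single error integral, at three costs:
\begin{itemize}
\item Computing $(ih\partial_s-\mathpzc{p}^w)u$ on your ansatz is the same mixed stationary-phase/Taylor computation as Section~\ref{section2nd}.
\item Each of the $O(|\log h|)$ changes $\mathcal U_\beta\mathcal U_{\beta'}^*$ needs an FIO-action lemma like Proposition~\ref{Qjkl}, unless you use time-dependent charts.
\item The commutator of $\mathpzc{p}^w$ with the cutoff built into your ansatz is $O(1)$ after the $h^{-1}$ of Duhamel. So the cutoff must sit where trajectories provably never re-enter $\supp\chi$, and you need the non-return argument uniformly for logarithmic times.
\end{itemize}

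To reach the exact bounds in the statement, three points of your sketch need the paper's finer inputs.

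First, $\Gamma(y)$ is not the Siegel action of $d\Phi|_{TK}$. It is driven by the central block of $d\Phi$ along backward orbits of points of $\mathcal I^{(s)}$, which lie off $K$: transversally up to $\epsilon_1$, centrally up to $h^\tau e^{s\lambda^{\rm c}}$. To get $\sigma_c^{(t)}$ rather than $\sigma_c^{(t)}e^{C\epsilon_1 t}$, you need the comparison of Lemma~\ref{compare dcF}. You also need the summability along orbits of these distances, which comes from Lemma~\ref{proof_dyn_class}; this is the argument of Section~\ref{estim_metap}.

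Second, an $\alpha$-H\"older stable foliation only gives a central drift $h^{\alpha\tau}e^{s\lambda^{\rm c}}$, hence a time limit $\alpha\tau/\lambda^{\rm c}$. You need Lipschitz dependence, which is available under the spectral gap. Alternatively, use the paper's direct induction in point-adapted charts, which avoids foliations altogether.

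Third, your transverse errors are not only $h^{N(1-2\delta_s)}$. Coupling $\partial_y u_\gamma\sim h^{-\delta_s}$ with the central width $h^{1/2-\nu_s}$ gives terms of relative size $h^{1/2-\delta_s-\nu_s}$. Convergence therefore needs $\delta_s+\nu_s<1/2$, which holds because $\nu_{\min}^\perp>3\lambda^{\rm c}$.
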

	\begin{Remark} \label{rem th 2}
		This second theorem is quite similar to the first one (case $\rho\in K$), but there are a few differences:
		\begin{itemize}
			\item Just as in theorem \ref{th_K}, the description given by theorem \ref{th_trapped_set} is compatible with Combescure-Robert's. Despite no longer getting a clear tensor product structure as in equation (\ref{tensor}), we can get a approximate one, see lemma \ref{1st_methodK} and its proof for a more precise statement.
			\item In this new setting, the propagated coherent state is also microlocalized along an isotropic manifold $\mathcal{I}^{(t)}$ but now its expression is more complicated than (\ref{iso_model}). Directions of expansion are linked to the behavior of classical dynamics close to $\Phi^t(\rho)\notin K^\delta$ but we can only adjust our charts as in definition \ref{coord} for points on $K^\delta$. The result is that we need to use a more general description of the isotropic manifold $\mathcal{I}^{(t)}$ (which is still projectable on the $y$ variable).
			\item To elaborate further on the need for this new description, one could look at the expression of a propagated coherent state located at $\Phi^t(\rho)\in K^\delta$ (setting of theorem \ref{th_K}) in a different (yet close) adapted chart $\mathcal{U}_{\Phi^t(\tilde{\rho})}$. In that context, the isotropic manifold $\mathcal{I}^{(t)}$ is $W^u(\Phi^t(\rho))$ rather than $W^u(\Phi^t(\tilde{\rho}))$, hence the different description. In the general case (i.e. $\rho\notin K^\delta$), $W^u(\Phi^t(\rho))$ does not exist.
			\item In general, isotropic manifolds projectable in $y$ can be described using smooth functions $\tilde{x}, \tilde{\xi}\colon\R^{d_\perp} \to \R^{d_{\scalerel*{\parallel}{\perp}}}$ and $\tilde{\phi}:\R^d\to \R$ as follows
			\begin{equation*} 
				\left\{ \left(\tilde{x}(y),y,\tilde{\xi}(y),\nabla \tilde{\phi}(y)+\frac{(\nabla \tilde{\xi}(y)).\tilde{x}(y)-\tilde{\xi}(y).(\nabla \tilde{x}(y))}{2}\right), y\in D_{\epsilon_1}\right\}.
			\end{equation*} 
			Indeed, to obtain such a description, one should start from the parametrization 
			\begin{equation*} 
				\{(\tilde{x}(y),y,\tilde{\xi}(y),\tilde{\eta}(y))\},
			\end{equation*} 
			apply the symplectic transformation $f(x,y,\xi,\eta)= (x-\tilde{x}(y),y,\xi,\eta+\nabla \tilde{x}(y)\cdot \xi )$ to cancel the $x$ component
			and notice that any isotropic manifold with null $x$ component (projectable in $y$) can be written as $\{(0,y,\xi(y),\nabla\phi_0(y)), y\in D_{\epsilon_1}\}$ (with a smooth $\xi$). Coming back to $\mathcal{I}$ by inverting the symplectic transformation yields 
			\begin{equation*} 
				\mathcal{I}=\{\tilde{x}(y),y,\tilde{\xi}(y), \nabla \phi_0(y)-\nabla \tilde{x}(y) \cdot \tilde{\xi}(y), y\in D_{\epsilon_1}\},
			\end{equation*} 
			which gives the result by taking $\tilde{\phi}(y)=\phi_0(y)-\tilde{x}(y)\cdot \tilde{\xi}(y)/2$.
			\item Recall that in the first theorem, we had a description for a state microlocalized along the manifold $\mathcal{I}_m=\{(0,y,0,0), y\in D_{\epsilon_1}\}$ as some 
			\begin{equation*} 
				v^{(t)}(x,y)=u^{(t)}(y)P^{(t)}\left(\frac{\Im(\Gamma_{\scalerel*{\parallel}{\perp}}^{(t)}(y))^{1/2}x}{\sqrt{h}}\right)\exp\left(i\frac{x\cdot \Gamma_{\scalerel*{\parallel}{\perp}}^{(t)}(y)x}{h}\right).
			\end{equation*} 
			In order to know what a state microlocalized (at scale $h^{1/2+\cdot}\|\Im (\Gamma_{\scalerel*{\parallel}{\perp}}^{(t)})^{1/2}\|$) along $\mathcal{I}^{(t)}$ does look like, one should find a symplectomorphism mapping $\mathcal{I}_m$ to $\mathcal{I}^{(t)}$, quantize it into a Fourier Integral Operator $\hat{T}_{\mathcal{I}^{(t)}}$ and apply it to any $v^{(t)}$ given as above.
			\newline
			The symplectomorphism we are looking for was actually obtained above in the parametrization of $\mathcal{I}_m$: it is the composition $F_2\circ F_1$
			\begin{equation*} 
				F_1:(x,y,\xi,\eta)\mapsto (x,y,\xi+\overline{\xi}(y),\eta+\nabla \phi(y)+\nabla(\overline{x}(y)\cdot\overline{\xi}(y))/2),
			\end{equation*} 
			and 
			\begin{equation*} 
				F_2:(x,y,\xi,\eta)\mapsto (x+\overline{x}(y),y,\xi,\eta-\nabla\overline{x}(y)\cdot\xi).
			\end{equation*} 
			Note that if $\overline{x},\overline{\xi},\phi$ have small $C^1$ norms, $F_2\circ F_1$ is close to identity.
			\medbreak
			These can be quantized as the following h-Fourier Integral Operators:
			\begin{equation*} 
				\mathcal{T}_1u(x,y)=\exp\left( \frac{i}{h}\left(\overline{\xi}(y)x+\phi(y)+\frac{\overline{x}(y)\cdot\overline{\xi}(y)}{2}\right)\right)u(x,y),
			\end{equation*} 
			and 
			\begin{equation*} 
				\mathcal{T}_2 u(x,y)=u(x-\overline{x}(y),y),
			\end{equation*} 
			which yields the following h-Fourier Integral Operator:
			\begin{equation*}
				\hat{T}_{\mathcal{I}}u(x,y)=\mathcal{T}_2\mathcal{T}_1u(x,y)= \exp\left( \frac{i}{h} [(\overline{\xi}(y))\cdot(x-\overline{x}(y)/2)+\phi(y)]\right) u(x-\overline{x}(y),y).
			\end{equation*} 
			Then, applying this operator to a $v^{(t)}$ given as in theorem \ref{th_K} gives a state as in theorem \ref{th_trapped_set}.
			\begin{figure}[h]
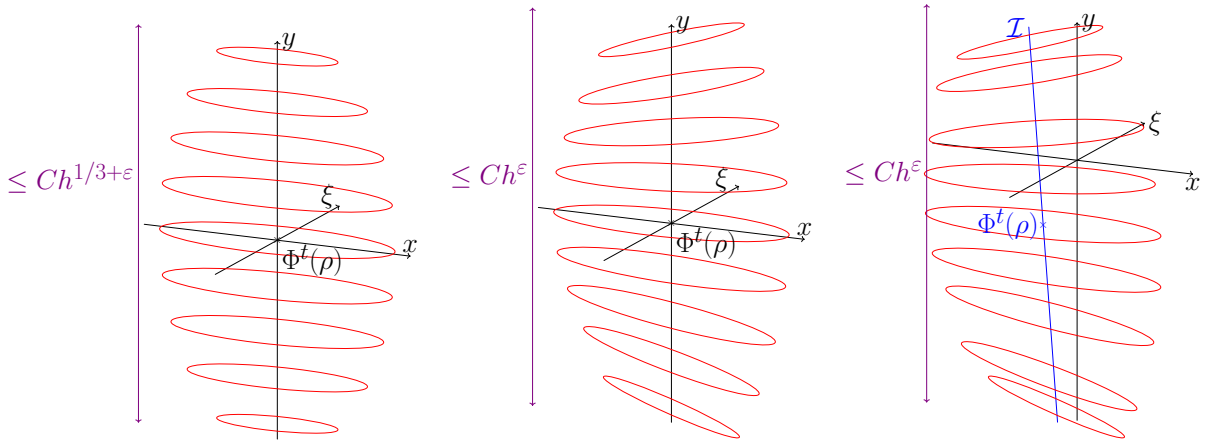

				\begin{center}
					\includegraphics[scale=0.42]{plat.tex}
					\includegraphics[scale=0.42]{CR.tex}
					\includegraphics[scale=0.42]{general.tex}
				\end{center}
				\caption{Schematic representation of the propagated state described by the different theorems. When we fix the value of $y$ (and $\eta$) and draw the region in the central direction ($x$ and $\xi$) in which the propagated state is not $O(h^\infty)$, we obtain the red ellipsoids drawn above.}
			\end{figure}
			\medbreak
			Left picture: Case described in section \ref{CR method} (previous method). In this case, the evolved state is described as a squeezed coherent state with a tensorial structure between the central and hyperbolic direction. As a consequence, all the red ellipsoids can be seen as slices of an ellipsoid in $(x,\xi,y,\eta)$ whose shape is determined by a matrix $\Gamma$ (acting on $x$ and $y$ coordinates) of the evolved state. The red ellipsoids all have the same shape, their surfaces shrinks as $y$ gets farther from $0$ due to Gaussian dependence in $y$.
			\medbreak
			Center picture: Case of theorem \ref{th_K}. With this new description designed for propagation with longer times, we lose the tensorial structure and the coherent state behavior in $y,\eta$. As a consequence, all the red ellipsoids can have different shapes linked to the different matrices $\Gamma_{\scalerel*{\parallel}{\perp}}(y)$ (acting on $x$ only). Nonetheless, all these ellipsoids are still centered at $x=0,\xi=0$, which is consistent with the microlocalization along $\mathcal{I}_m$ obtained in the theorem.
			\medbreak
			Right picture: Case of theorem \ref{th_trapped_set}. This description can be used to evolve coherent state that are not centered on
			$K^\delta$: in this picture $\rho\notin K^\delta$. The main difference with the center picture is that the red ellipsoids are centered
			on $\mathcal{I}$ rather than $\mathcal{I}_m$.
			\medbreak
			The diameter of every ellipsoid drawn in the previous pictures are bounded by $Ch^{1/3}$ (due to the bounds on $\sigma_c^{(t)}$ and hence on the $\Gamma_{\scalerel*{\parallel}{\perp}}$'s, see point \ref{pt 1}).
			\item Theorem \ref{th_trapped_set} sometimes tell us that the remainder is the predominant term. This might happen, for instance, if $\rho$ is too far away from the weak-stable manifold: assume that the ball of size $\sqrt{h}$ around $\rho$ does not intersect the weak-stable manifold to $K^\delta$, then all the points of this ball go to infinity as time grows (see figure \ref{Wsc}, case 2). As a consequence, we expect the entire propagated coherent to have left the $\epsilon_1$ neighborhood of $K^\delta$ for times larger than Ehrenfest's one (in the sense that theorem  \ref{th_trapped_set} would give us $\mathcal{U}_{\beta} \chi^w e^{-it\mathpzc{p}/h}  (\mathcal{U}_\alpha)^* \varphi_{\kappa_{\alpha}^{-1}(\rho)}=O(h^\infty)$).
			\medbreak
			However, whenever $\Phi^t(\rho)$ is in the domain of $\kappa_\beta$, we know that a portion of the propagated coherent state is still in $K^\delta(\epsilon_1)$ (see figure \ref{Wsc}, case 1) so that the estimate from point \ref{pt 3} of theorem \ref{th_trapped_set} is sharp for $\alpha=0$ and $\gamma=0$. Beware that even if $\Phi^t(\rho)$ is not in the domain of $\kappa_\beta$, there might be a portion of the propagated coherent state in $K^\delta(\epsilon_1)$, see figure \ref{Wsc} case 3 and the remark associated with (\ref{tau/lambda c}).
			\begin{figure}[h]\label{Wsc}
				\begin{center}
					\includegraphics[scale=0.8]{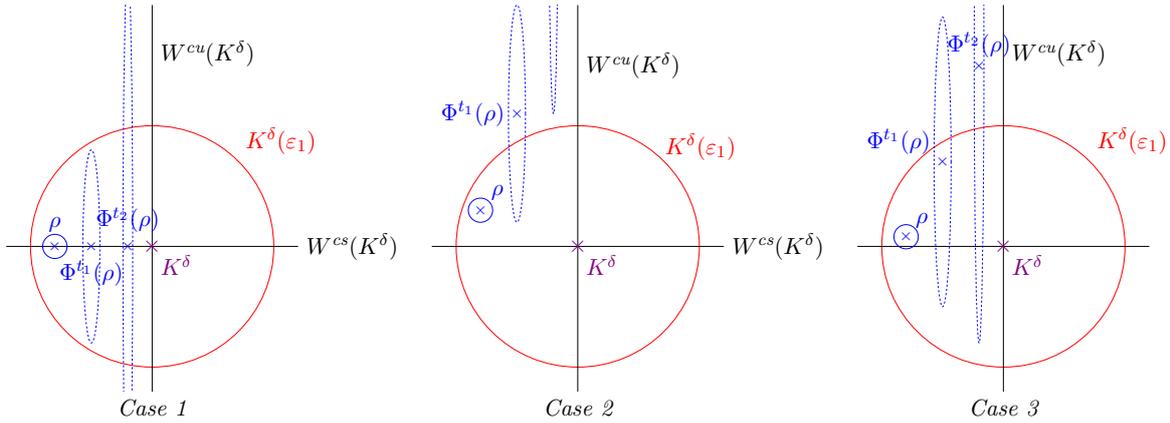}
				\end{center}
				\caption{Schematic representation of the evolution of a ball of radius $\sqrt{h}$ around $\rho$ (transversally to $K^\delta$).}
			\end{figure}
		\end{itemize}
	\end{Remark}

	\begin{Remark}
		Let us elaborate on the time scales considered.
		\begin{itemize}
			\item The first restriction is that we consider times 
			\begin{equation}\label{Ehren central/6}
				t\leq \left(\frac{1}{6\lambda_c}-\epsilon\right)|\log h|.
			\end{equation} 
			As can be observed in (\ref{DL th 2}), the limitation (\ref{Ehren central/6}) ensures the convergence of our expansion since it gives $\sigma_c^{(t)}\leq C h^{-1/6+\epsilon}$. Recall that this limitation also appeared in theorem \ref{th_K}, see (\ref{borne sigma c}) and the associated remark. 
			\item The second restriction is given by 
			\begin{equation}\label{tau/lambda c}
				t\leq \left(\frac{\tau}{\lambda_c}-\epsilon\right) |\log h|.
			\end{equation}
			To make sense of it, let us give some information on the proof of theorem \ref{th_trapped_set}. The theorem is actually obtained by expressing the propagated coherent state in a chart adapted to $\Phi^{t}(\tilde{\rho})\in K^\delta$, with $\tilde{\rho}$ being the point that reaches the distance $d(\rho,K^\delta)$, for example. The limitation (\ref{tau/lambda c}) guaranties that this chart is still relevant to describe the portion of the propagated state in $K^\delta(\epsilon_1)$ even if $\Phi^t(\rho)$ and $\Phi^{t}(\tilde{\rho})$ might get far apart. 
			\medbreak 
			Indeed, it is not obvious that the chart associated with $\Phi^{t}(\tilde{\rho})$ is relevant for our study, as initially $\rho$ is at a distance at most $h^\tau$ from $K^\delta$ and $t$ might be greater than $\frac{\tau}{\lambda_{\text{max}}} |\log h|$: $\Phi^t(\rho)$ might not even be in $K^\delta(\epsilon_1)$. It turns out that the relevant quantity to decide whether the chart adapted to $\Phi^{t}(\tilde{\rho})$ is appropriate to describe the propagated state is the distance between $\mathcal{I}_t$ and $\Phi^{t}(\tilde{\rho})$ rather than the one between $\Phi^{t}(\rho)$ and $\Phi^{t}(\tilde{\rho})$.
			The growth of this distance can be related to the drift in the central direction, it is bounded by $h^{\tau}e^{t\lambda_c}$ so that our assumption (\ref{tau/lambda c}) tells us that it remains smaller than $h^\epsilon$.
			\medbreak
			\begin{figure}[h]\label{dist It rhot}
				\begin{center}
					\includegraphics[scale=1]{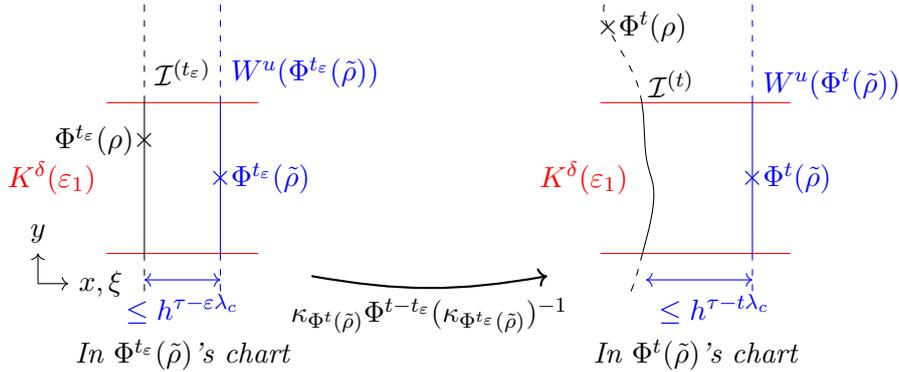}
				\end{center}
				\caption{Schematic representation of the evolution of the distance between the isotropic manifolds $\mathcal{I}^{(t)}$ and $\Phi^t(\tilde{\rho})$.}
			\end{figure}
			\medbreak
			Let us briefly explain why we need the distance $d(\mathcal{I}_t,\Phi^{t}(\tilde{\rho}))$ to be small. We have seen that our propagated coherent state is microlocalized along the isotropic manifold $\mathcal{I}^{(t)}$. The estimates on the functions $u_\gamma, \Gamma_{\scalerel*{\parallel}{\perp}}, v_\gamma$ that we are looking for should involve quantities for the classical dynamics just as in theorem \ref{th_K}, see points \ref{pt 1},\ref{pt 2},\ref{pt 3} and \ref{pt 4} of said theorem. But these dynamical quantities only make sense on $K^\delta$, not on $\mathcal{I}^{(t)}$. This is the reason why we need a close-by point $\Phi^t(\tilde{\rho})$, for which those quantities make sense, to compare with.
			\item If we stick to times smaller than $\min\bigl(\frac{1}{2}-\cdot \,\tau-\cdot\ \bigr)\frac{|\log h|}{ \lambda_{\text{max}}}$, we can remove the cutoff $\chi^w$ and still obtain the same description (in this case, the state stays on microscopic scale, adding $\chi^w$ or not only changes the result by a $O(h^\infty)$).
			\item In the case $\lambda_c=0$, the theorem is valid for times $t(h)\in[\epsilon |\log h|,C|\log h|]$ for any $C>0$, setting 
			$\sigma_c^{(t)}=e^{\alpha t}$ for $\alpha>0$ as in theorem \ref{th_K}.
		\end{itemize}
		
	\end{Remark}

	\subsection{Organization of the paper}
	After some reminders on coherent states and semiclassical analysis, in section \ref{section1st} we adapt the method of propagation introduced by \cite{Lucas} that worked up to $T_\text{CR}=T_{\text{Ehrenfest}}/3$ time: it consists in writing the propagator as a product of $n=C|\log h|$ Fourier Integral Operators, each of them propagating for a finite time, and compute their effect on our state while controlling the norms of the polynomials appearing.
	\medbreak
	While we will use this technique to propagate up to a time $\epsilon |\log h|$, as explained above, we will need to introduce an alternate description of our state, translating a mix between a squeezed state and a WKB state (see definition \ref{def S_delta,nu}), to propagate further. This new space can be used to describe the propagated coherent state in specific coordinates that describe exactly the transverse stable and unstable manifolds passing the center of our coherent state. Then, we explain a method for finite-time propagation of those states in section \ref{section2nd}, by doing a mix of stationary phase and expansion of the integral defining the Fourier Integral Operators.
	\medbreak
	Once methods for finite propagation are obtained, we control the repeated action of those on coherent states up to Ehrenfest time in section \ref{iter_method} and show that they can still be described as a mix of squeezed and WKB state in adapted coordinates.
	\section{Preliminaries}
	
	\textbf{Notations: }
	We will refer to the central direction composed of the $x$ and $\xi$ variables with a ``$\text{c}$'' index, while we will mention the hyperbolic coordinates $y$ and $\eta$ with a ``$\text{hyp}$'' index. For the points on which our coherent states will be centered, we will rather use the notation $\rho$ (and $\tilde{\rho}$ for the center of the charts) when referring to them in the ambient space $\R^{2d}$ and we will use bold symbols to refer to them in the charts of $\tilde{\rho}$:
	\begin{equation*} 
		\boldsymbol{\rho}=\kappa_{\tilde{\rho}}(\rho)=(q_x,q_y,p_x,p_y)\in \R^{2d}.
	\end{equation*} 
	
	\subsection{Classical dynamics (trapped set setting)}\label{preli_dyn_clas}
	In this section, we want to treat the case where $K^\delta$ is a trapped set, see (\ref{H2a}) and (\ref{H2b}). We aim at a control of our state for times at which it can be macroscopic in the transverse direction.
	\medbreak
	
	\subsubsection{Final exit lemma}\label{fuite_def_esc}
	The goal of the next two lemmas is to show that every departure from $K^\delta$ is definitive: no point starting near $K^\delta$ that moves away from $K^\delta$ can return in a neighborhood of $K^\delta$.
	\begin{Lemma}\label{def exit micro}
		Let $V_1$ be an open neighborhood of $K^\delta$ and $\rho\in p^{-1}([-\delta,\delta])\setminus (V_1\cup \mathbf{U_0}^c)$ where $\mathbf{U_0}$ is an open neighborhood of $K^{2\delta}$ . Then, there exist times $T_{\pm}=T_{\pm}(\rho),\ T_-<0<T_+$ such that 
		\begin{align*}
			\Phi^{T_+} (\rho)\in \mathbf{U_0}^c &\text{ or } \Phi^{T_-}(\rho) \in \mathbf{U_0}^c,
			\\\Phi^{T_\pm}(\rho) &\in V_1 \cup \mathbf{U_0}^c.
		\end{align*}
	\end{Lemma}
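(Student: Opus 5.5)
We argue by contradiction using the trapped set characterization (\ref{H2a}), compactness, and the escape function of (\ref{H2b}). Fix $\rho\in p^{-1}([-\delta,\delta])\setminus(V_1\cup\mathbf{U_0}^c)$. Since the flow preserves $p$, the whole orbit stays in $p^{-1}([-\delta,\delta])\subset p^{-1}(-2\delta,2\delta)$, so both (\ref{H2a}) and (\ref{H2b}) apply along it. Note also that $\rho\notin K^{2\delta}$: otherwise $\rho\in K_{p(\rho)}\subset K^{\delta}\subset V_1$. By (\ref{H2a}), the full orbit $(\Phi^t(\rho))_{t\in\R}$ is therefore unbounded.

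\textbf{Step 1: the orbit reaches $\mathbf{U_0}^c$ in at least one time direction.} Suppose instead that $\Phi^t(\rho)\in\mathbf{U_0}$ for all $t\in\R$. Shrinking $\mathbf{U_0}$ if necessary (which only strengthens (\ref{H2b})), we may take $\overline{\mathbf{U_0}}$ compact. Then the whole orbit is bounded, contradicting the unboundedness obtained above. Hence there is some $t_0\neq 0$ with $\Phi^{t_0}(\rho)\in\mathbf{U_0}^c$.

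\textbf{Step 2: producing $T_\pm$.} Assume $t_0>0$; the case $t_0<0$ is symmetric. Set $T_+=t_0$, so that $\Phi^{T_+}(\rho)\in\mathbf{U_0}^c\subset V_1\cup\mathbf{U_0}^c$. For $T_-<0$ we need some negative time at which the orbit lies in $V_1\cup\mathbf{U_0}^c$. If the backward orbit ever leaves $\mathbf{U_0}$, take $T_-$ to be such a time.

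Otherwise the backward orbit stays in the compact set $\overline{\mathbf{U_0}}\cap p^{-1}([-\delta,\delta])$, so its $\alpha$-limit set $L$ is nonempty, compact, and invariant. Every point of $L$ has a bounded full orbit and energy in $[-\delta,\delta]$, so by (\ref{H2a}) we get $L\subset K^{2\delta}\cap p^{-1}([-\delta,\delta])\subset K^\delta\subset V_1$. Since $V_1$ is open and $L\subset V_1$, $\Phi^{t}(\rho)\in V_1$ for some sufficiently negative $t$, and we take this as $T_-$. This yields both required properties.

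\textbf{Main obstacle.} The delicate point is the compactness used in Step 1, together with the identification $K^{2\delta}\cap p^{-1}([-\delta,\delta])\subset K^\delta$. The first needs $\mathbf{U_0}$ relatively compact, which I would secure by shrinking it. The second follows from the definition $K^\delta=\bigcup_{|E|\le\delta}K_E$, using that each $K_E\subset p^{-1}(E)$. If shrinking $\mathbf{U_0}$ is not allowed, I would instead use $H_pG\ge C$ on $\mathbf{U_0}^c$: once an orbit enters $\mathbf{U_0}^c$ far out, $G$ increases linearly along it. Combined with properness of $G$ near infinity, this gives a trapping region at infinity, and boundedness of orbits confined away from it then recovers the compactness argument.
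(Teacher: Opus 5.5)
Your argument is correct in substance and is the standard one. Since $\rho\notin K^{2\delta}$, (\ref{H2a}) forces the orbit out of $\mathbf{U_0}$ in one time direction. In the other direction, the orbit either also leaves $\mathbf{U_0}$, or it has a nonempty compact invariant $\alpha$- (resp. $\omega$-)limit set, which (\ref{H2a}) places inside $K_{p(\rho)}\subset K^\delta\subset V_1$. The paper gives no proof of its own; it defers to the proof of \cite[Lemma 4.6]{Steph}, which runs this same dichotomy.

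The step you should repair is the reduction ``shrinking $\mathbf{U_0}$ if necessary''. The conclusion of the lemma is stated in terms of $\mathbf{U_0}^c$, and shrinking $\mathbf{U_0}$ enlarges $\mathbf{U_0}^c$. So reaching the complement of the smaller set does not give $\Phi^{T_\pm}(\rho)\in\mathbf{U_0}^c$. Also, shrinking makes (\ref{H2b}) a stronger hypothesis that is not given to you, not a free one.

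The right observation is that relative compactness of $\mathbf{U_0}$ (at least of $\mathbf{U_0}\cap p^{-1}([-\delta,\delta])$) is an implicit hypothesis of the lemma. Without it the lemma is false: for $\mathbf{U_0}=\R^{2d}$ one has $\mathbf{U_0}^c=\emptyset$. The paper does rely on this hypothesis. The proof of Lemma \ref{def_exit} uses compactness of $p^{-1}(-\delta,\delta)\setminus(V_1\cup\mathbf{U_0}^c)$, and the lemma is later applied with $\mathbf{U_0}=K^{2\delta}(\epsilon_1)$. With that hypothesis stated, your Step 1 works as written.

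You should drop your fallback via the escape function. (\ref{H2b}) gives no properness of $G$, and no argument can handle an unbounded $\mathbf{U_0}$, since the statement fails there.
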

	The proof of this lemma is done in the proof of \cite[Lemma 4.6]{Steph}.
	\medbreak
	In addition to this lemma, we use classical constructions with escape functions to obtain a neighborhood of $K^\delta$ from which the departures are definitive. Let us first recall a result obtained by Gerard and Sjöstrand in the Appendix of \cite{GeSj}:
	\begin{Proposition}\cite[Proposition A.6.]{GeSj}\label{tilde G}
		Given our escape function $G$, we can construct a new smooth escape function $G_1$, equal to $G$ outside of a compact set, such that on $p^{-1}(-\delta,\delta)$, we have $G_1=0$ in a neighborhood of $K^\delta$ and such that locally uniformly on $p^{-1}(-\delta,\delta)$:
		\begin{equation*} 
			H_p(G_1)\geq C_0^{-1} |\nabla G_1|. 
		\end{equation*} 
	\end{Proposition}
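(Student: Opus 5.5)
The statement to prove is Proposition \ref{tilde G}, quoted from \cite[Proposition A.6]{GeSj}: from the escape function $G$ of (\ref{H2b}) we want $G_1$, equal to $G$ outside a compact set, vanishing near $K^\delta$ on $p^{-1}(-\delta,\delta)$, and satisfying $H_pG_1\geq C_0^{-1}|\nabla G_1|$ there. The plan is to build $G_1$ by gluing $G$ near infinity to a function constructed from finite-time averages along the flow on the compact region $\overline{\mathbf{U_0}}\cap p^{-1}([-\delta,\delta])$. The only dynamical inputs are Lemma \ref{def exit micro} and the trapped set characterization (\ref{H2a}).

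\textbf{Local escape functions.} Fix a small neighborhood $V_1$ of $K^\delta$. Take $\rho$ in the compact set $\overline{\mathbf{U_0}}\cap p^{-1}([-\delta,\delta])\setminus V_1$. By (\ref{H2a}) its trajectory is unbounded in one time direction, and by Lemma \ref{def exit micro} it reaches $\mathbf{U_0}^c$, say forward in time. There $G$ increases at rate at least $C$, so the trajectory escapes to infinity.

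Near $\rho$ we use flow-box coordinates, in which $H_p=\partial_s$. In these coordinates set $g_\rho=\chi_\rho(\text{transverse variables})\,f(s)$, with $f'>0$ on the relevant interval. The cutoff $\chi_\rho$ is supported in a thin tube around the trajectory segment from $\rho$ out to $\mathbf{U_0}^c$. Such a function satisfies $H_pg_\rho=\chi_\rho f'\geq0$, and it has $H_pg_\rho>0$ near $\rho$.

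The transverse derivatives of $\chi_\rho$ break the inequality $H_pg\geq c|\nabla g|$. We handle this as in Gérard–Sjöstrand. Replace $\chi_\rho$ by a function of the form $e^{-F}$ arranged so that $|\nabla\chi_\rho|$ is dominated by $\chi_\rho f'$. Alternatively, and more simply, add a large multiple of the transported function $G\circ\Phi^{-T}$, which controls gradients in the tube.

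\textbf{Gluing.} By compactness, finitely many tubes cover the compact set. Sum the local functions and add $M\,G\,\psi$, where $\psi$ is a cutoff equal to $1$ on $\mathbf{U_0}^c$ away from a compact set. Choosing the weights successively (the classical convexification argument), each new term's bad region, where its $H_p$ is negative or small relative to its gradient, is absorbed by the terms already present.

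Near $K^\delta$ nothing has been added, so $G_1=0$ there and the inequality holds trivially. Outside a compact set $G_1=G$. Where $G_1=G$, the inequality $H_pG\geq C_0^{-1}|\nabla G|$ needs $|\nabla G|$ to be bounded. This holds if $G$ is taken in a symbol class, e.g. $G$ built from $x\cdot\xi$ in the standard non-trapping setting; if needed we normalize $G$ accordingly.

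\textbf{Main obstacle.} The hardest step is obtaining the inequality with the gradient, not merely $H_pG_1\geq0$, uniformly across the overlaps of the tubes. The sign on the boundary layer of a cutoff is the issue. I would first try the Gérard–Sjöstrand trick: take logarithms of positive escape functions, or work with $G_1$ defined through $\sup$/$\max$ of local escape functions followed by smoothing, for which the ratio $H_pG_1/|\nabla G_1|$ is inherited from the pieces. The statement is only claimed ``locally uniformly'', which leaves room to lose constants near the boundary of $p^{-1}(-\delta,\delta)$.
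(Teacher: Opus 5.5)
The paper gives no proof of this proposition (it quotes \cite{GeSj}), so your sketch has to stand on its own. It breaks exactly at the step you flag as the main obstacle.

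\textbf{Where the construction fails.} The real content of the statement is the estimate $H_pG_1\geq C_0^{-1}|\nabla G_1|$ in the layer where $G_1$ stops being identically $0$ next to $K^\delta$. Your tube functions violate it there, and nothing in your construction repairs this.
\begin{itemize}
\item For $g=\chi(z)f(s)$ in flow-box coordinates, $H_pg/|\nabla g|\leq (f'/f)\big/(|\nabla\chi|/\chi)$.
\item Since $\log\chi\to-\infty$ at the edge of $\operatorname{supp}\chi$ within finite distance, $|\nabla\chi|/\chi$ is unbounded there.
\item Meanwhile $f'/f$ stays bounded for $s$ away from the start of the ramp, and $f'=0$ past it.
\end{itemize}
At the edge of the union of supports of your pieces facing $K^\delta$, every piece tends to $0$ together with its $H_p$-derivative. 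So no term with $H_p$ bounded below is available to absorb $\varepsilon_jf|\nabla\chi_j|$. Your ``convexification'' only works where some piece is already robustly increasing; adding tubes for points closer to $K^\delta$ just moves the defect inward.

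The fixes you list do not help either:
\begin{itemize}
\item For increasing $\varphi$, $H_p(\varphi\circ g)/|\nabla(\varphi\circ g)|=H_pg/|\nabla g|$, so taking logarithms changes nothing.
\item A smoothed maximum inherits the bad ratio of whichever piece realizes it.
\item Writing $\chi=e^{-F}$ only rewrites $|\nabla\chi|/\chi$ as $|\nabla F|$, which must blow up at the edge.
\item $G\circ\Phi^{-T}$ does not vanish near $K^\delta$, so it needs a cutoff that recreates the problem.
\end{itemize}
Separately, you may not renormalize $G$, since $G_1$ must equal the given $G$ at infinity. You also do not need to: the estimate is only required locally uniformly, and $|\nabla G|$ is bounded on compact sets.

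\textbf{The missing idea.} The edge of $\{G_1=0\}$ must be a level set crossed transversally by the flow. Suppose $g$ is smooth with $H_pg\geq c_L>0$ on $\{|g|\geq a\}\cap L$ for every compact $L$ in the energy window. Let $F\in C^\infty(\R)$ vanish on $[-a,a]$ with $F'>0$ elsewhere. Then on $L$,
$H_p(F\circ g)=F'(g)H_pg\geq (c_L/\sup_L|\nabla g|)\,|\nabla(F\circ g)|$,
and there are no lateral edges at all. So the dynamical input should go into producing an auxiliary $g$ with three properties: it is non-decreasing along the flow, strictly increasing outside a small neighbourhood $V$ of $K^\delta$, and satisfies $|g|<a$ on $V$. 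This object carries no gradient condition, so a tube-and-glue argument like yours can legitimately produce it.

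Recovering $G$ exactly outside a compact set then requires switching the flattening off, and the same difficulty reappears. On $\{|g|\leq a\}$ one has $\theta F(g)+(1-\theta)g=(1-\theta)g$. Hence, wherever the edge of $\{\theta=1\}$ meets $\{0<|g|\leq a\}$, the switching cutoff must itself satisfy $-\operatorname{sgn}(g)\,H_p\theta\geq c|\nabla\theta|$. This has to be arranged explicitly; it cannot be absorbed.
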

	Hence the following 
	\begin{Corollary}\label{def exit macro}
		There exists $V_2$ a compact neighborhood of $K^\delta$ inside $p^{-1}(-\delta,\delta)$ such that if $t_1<t_2\in \R$ and $\rho\in p^{-1}(-\delta,\delta)$ are such that 
		\begin{equation*} 
			\Phi^{t_1}(\rho)\in V_2 \text{ and }  \Phi^{t_2}(\rho) \notin V_2,
		\end{equation*} 
		then 
		\begin{equation*} 
			\forall s\geq t_2, \Phi^{s} (\rho) \notin V_2.
		\end{equation*} 
	\end{Corollary}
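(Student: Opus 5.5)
We take $V_2$ to be a sublevel set of the modified escape function $G_1$ from Proposition \ref{tilde G}, intersected with an energy shell. Concretely, we fix a small $c>0$ and set
\begin{equation*}
V_2=\{\rho\in p^{-1}([-\delta',\delta']),\ G_1(\rho)\leq c\}\cap \overline{W},
\end{equation*}
where $\delta'<\delta$ and $W$ is a bounded neighborhood of $K^\delta$ on which the estimate $H_pG_1\geq C_0^{-1}|\nabla G_1|$ holds uniformly. Since $H_p p=0$, the energy shell $p^{-1}([-\delta',\delta'])$ is invariant. Only escaping through the condition $G_1\le c$ or through $\partial W$ can therefore occur.

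The key monotonicity property is that $H_pG_1\geq 0$ on $p^{-1}(-\delta,\delta)$. Near $K^\delta$ this is immediate because $G_1=0$ there. In the region covered by the local estimate it follows from $H_pG_1\ge C_0^{-1}|\nabla G_1|\geq0$. Far away, $G_1=G$, and (\ref{H2b}) gives $H_pG\geq C>0$ outside $\mathbf{U_0}$. Hence $s\mapsto G_1(\Phi^s(\rho))$ is nondecreasing along every trajectory in the energy window. If $\Phi^{t_2}(\rho)\notin V_2$ because $G_1(\Phi^{t_2}(\rho))>c$, then $G_1(\Phi^s(\rho))>c$ for all $s\ge t_2$, and the trajectory never re-enters $V_2$.

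The main obstacle is excluding an exit through $\partial W$ while $G_1\le c$ still holds. We handle it by choosing $c$ small enough that $\{G_1\le c\}\cap p^{-1}([-\delta',\delta'])$ is compactly contained in $W$. For this we need $G_1>0$ away from a small neighborhood of $K^\delta$ inside the energy shell, so that small sublevel sets shrink to a neighborhood of $K^\delta$. To justify this, we first normalize $G_1$ by adding a constant or shifting so that $G_1\ge0$ near the trapped set. We then use that $G_1$ is increasing along every non-trapped trajectory: by (\ref{H2a}), every $\rho$ in the shell outside $K^\delta$ has an unbounded orbit in at least one time direction, and Lemma \ref{def exit micro} shows it reaches $\mathbf{U_0}^c$, where $G_1=G$ grows linearly. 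Combining this with compactness, we expect a uniform positive lower bound for $G_1$ on the part of $W$ away from $K^\delta$, and this lower bound is precisely the delicate point. If it turns out to be difficult, the fallback is to define $V_2$ directly as the set of points whose forward orbit has $G_1\le c$. Monotonicity of $G_1$ makes this set forward-closed under the required exit property, and compactness follows from the same argument.

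With this choice, once a trajectory leaves $V_2$ it has $G_1>c$, and by monotonicity it keeps $G_1>c$ for all later times. This is exactly the statement of the corollary.
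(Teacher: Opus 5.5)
Your monotonicity step is right: $H_pG_1\ge C_0^{-1}|\nabla G_1|\ge 0$ on $p^{-1}(-\delta,\delta)$, so an exit through $\{G_1>c\}$ is permanent. You also correctly identify exits through $\partial W$ as the real issue. But the way you propose to rule them out fails. The set $\{G_1\le c\}\cap p^{-1}([-\delta',\delta'])$ is \emph{not} compactly contained in a bounded $W$, for any $c\ge 0$:
\begin{itemize}
\item If the forward orbit of $\rho$ in the shell is bounded, it accumulates on $K^\delta$, where $G_1=0$. Monotonicity then gives $G_1(\rho)\le 0$.
\item The set of such $\rho$ (stable manifolds of points of $K^\delta$ and their backward images) is unbounded, because by (\ref{H2a}) these points are not backward trapped.
\end{itemize}
The same points show there is no positive lower bound for $G_1$ on $W$ away from $K^\delta$. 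Your heuristic runs in the wrong time direction: growth of $G_1$ along escaping orbits makes $G_1$ large at late times, and small at early times (typically very negative; think of $G=x\cdot\xi$). Adding a constant to $G_1$ changes nothing. So nothing in your construction prevents a trajectory from leaving $\overline W$ with $G_1<c$ and re-entering $V_2$ after a short excursion during which $G_1$ increases by less than $c-G_1$.

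The fallback does not help either. The set of points with $G_1(\Phi^s(\rho))\le c$ for all $s\ge 0$ contains no point whose forward orbit escapes to infinity, since there $G_1=G$ grows at rate at least $C$. Every neighborhood of $K^\delta$ contains such points (anything off the weak-stable manifold of $K^\delta$), so this set is not a neighborhood of $K^\delta$.

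The missing idea is to use the \emph{level set} $\{G_1=0\}$ rather than a sublevel set, and to cut it off only where $G_1$ strictly increases. The paper takes $V_2=\{G_1=0\}\cap(\mathbf{U_0}\cup W)$, with $W$ now denoting the compact set outside of which $G_1=G$. If $\Phi^{t_1}(\rho)\in V_2$, then $G_1(\Phi^s(\rho))\ge 0$ for $s\ge t_1$. If $\Phi^{t_2}(\rho)\notin V_2$, there are two cases:
\begin{itemize}
\item $G_1(\Phi^{t_2}(\rho))>0$, which persists by monotonicity;
\item $G_1(\Phi^{t_2}(\rho))=0$ at a point outside $\mathbf{U_0}\cup W$. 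There $H_pG_1=H_pG\ge C>0$ by (\ref{H2b}), so $G_1$ becomes positive immediately after $t_2$ and stays positive.
\end{itemize}
Because the level is exactly the value taken near $K^\delta$, any increase of $G_1$, however small, is a permanent exit. This is what makes leaving the compact cutoff harmless, and it is precisely what a sublevel set with $c>0$ (or one allowing negative values) loses.
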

	\begin{proof}
		Consider $V_2$ to be $\{G_1=0\}\cap (\mathbf{U_0}\cup W)$ with $\mathbf{U_0}$ the neighborhood of $K^{2\delta}$ defined in the assumption \ref{H2b} and $W$ the compact outside of which $\tilde{G}=G$ in proposition \ref{tilde G}.
	\end{proof}
	\medbreak
	We can now obtain the following
	\begin{Lemma}\label{def_exit}
		Let $\rho$ be a point in $K^\delta(\epsilon_1/2)$ and assume that $\Phi^t(\rho)$ also is in $K^\delta(\epsilon_1/2)$ for a $t>2t_0$, then $\forall s\in [t_0,t-t_0]$, $\Phi^{s}(\rho)$ is in $K^\delta(\epsilon_1/2)$.
	\end{Lemma}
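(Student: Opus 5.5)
The plan is to combine the ``no return'' property of Corollary \ref{def exit macro} with a compactness argument based on the trapped set hypothesis (\ref{H2a}). Here $t_0$ is a time that will be chosen depending only on $\epsilon_1$ (and on the dynamics), not on $\rho$ or $t$. I also assume, as the surrounding text implicitly does, that $\epsilon_1$ is small enough that $K^\delta(\epsilon_1/2)$ is contained in the interior of the compact neighborhood $V_2$ given by Corollary \ref{def exit macro}. If needed, we intersect with the energy window $p^{-1}(-\delta,\delta)$ so that the corollary applies; energy is conserved along the flow, so this intersection is harmless.

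\textbf{Step 1: the whole trajectory stays in $V_2$.} Since $\rho\in K^\delta(\epsilon_1/2)\subset V_2$, suppose $\Phi^{s_0}(\rho)\notin V_2$ for some $s_0\in(0,t)$. Corollary \ref{def exit macro}, applied with $t_1=0$ and $t_2=s_0$, gives $\Phi^{s}(\rho)\notin V_2$ for all $s\geq s_0$. In particular $\Phi^t(\rho)\notin V_2$, which contradicts $\Phi^t(\rho)\in K^\delta(\epsilon_1/2)\subset V_2$. Hence $\Phi^s(\rho)\in V_2$ for all $s\in[0,t]$.

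\textbf{Step 2: long sojourns in $V_2$ force proximity to $K^\delta$.} The key claim is the following. There exists $t_0>0$ such that, for every $\rho'\in V_2$ with $\Phi^{\sigma}(\rho')\in V_2$ for all $\sigma\in[-t_0,t_0]$, we have $d(\rho',K^\delta)<\epsilon_1/2$.

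I would prove this by contradiction. Otherwise there are sequences $T_n\to\infty$ and $\rho_n\in V_2$ with $\Phi^\sigma(\rho_n)\in V_2$ for $|\sigma|\leq T_n$, but $d(\rho_n,K^\delta)\geq\epsilon_1/2$. Since $V_2$ is compact, extract a subsequence with $\rho_n\to\rho_\infty\in V_2$. By continuity of the flow, for each fixed $\sigma$ we get $\Phi^\sigma(\rho_\infty)=\lim_n\Phi^\sigma(\rho_n)\in V_2$, as $V_2$ is closed. So the full orbit of $\rho_\infty$ is bounded. Moreover $p(\rho_\infty)\in p(V_2)\subset(-\delta,\delta)$, because $V_2$ is a compact subset of $p^{-1}(-\delta,\delta)$. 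Hypothesis (\ref{H2a}) then gives $\rho_\infty\in K_{p(\rho_\infty)}\subset K^\delta$. This contradicts $d(\rho_\infty,K^\delta)\geq\epsilon_1/2$, which follows by passing to the limit.

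\textbf{Step 3: conclusion.} For $s\in[t_0,t-t_0]$, the point $\rho'=\Phi^s(\rho)$ satisfies $\Phi^\sigma(\rho')=\Phi^{s+\sigma}(\rho)\in V_2$ for $|\sigma|\leq t_0$, since $s+\sigma\in[0,t]$ and Step 1 applies. Step 2 then gives $\Phi^s(\rho)\in K^\delta(\epsilon_1/2)$. The assumption $t>2t_0$ only guarantees that this interval is nonempty.

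\textbf{Main obstacle.} The delicate point is the bookkeeping of neighborhoods and energy windows rather than any dynamics. One must make sure that $K^\delta(\epsilon_1/2)\subset V_2$, which is a smallness condition on $\epsilon_1$ that I would impose once and for all. One must also make sure that limit points have energy in the range where (\ref{H2a}) identifies bounded orbits with $K^\delta$. If $V_2$ were only known to lie in $p^{-1}([-\delta,\delta])$, the limit would land in $K^{2\delta}$ with energy in $[-\delta,\delta]$, which is still $K^\delta$ by definition. So the argument is robust to this detail.
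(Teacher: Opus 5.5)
Your proof is correct. It follows the same overall logic as the paper, but it replaces the cited escape lemma with a self-contained compactness argument.

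\textbf{The paper's route.} It argues by contradiction at a single time. If $\Phi^s(\rho)\notin K^\delta(\epsilon_1/2)$ for some $s\in[t_0,t-t_0]$, it invokes the pointwise escape result Lemma~\ref{def exit micro} (quoted from \cite{Steph}). This produces a time $s+T_+$ or $s+T_-$ at which the orbit lies outside $V_2$. Defining $t_0$ as the supremum of $|T_\pm|$ over a compact region puts that time in $[0,t]$. Corollary~\ref{def exit macro} then contradicts $\Phi^t(\rho)\in V_2$.

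\textbf{Your route.} You apply Corollary~\ref{def exit macro} first, to confine the whole segment $\Phi^{[0,t]}(\rho)$ to $V_2$. You then prove the needed uniform escape statement yourself: limits of points with ever longer two-sided sojourns in $V_2$ have bounded orbits, hence lie in $K^\delta$ by (\ref{H2a}). Your Step 2 is exactly a uniform version of Lemma~\ref{def exit micro}.

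\textbf{What each buys.} Your route is self-contained, and it produces the uniform time $t_0$ rigorously. The paper's claim that $\sup|T_\pm(\rho)|$ is finite ``as this region is compact'' needs an extra openness or nested-neighbourhood argument, since the lemma only provides $T_\pm(\rho)$ pointwise and the target set $\mathbf{U_0}^c$ is closed. The paper's route, in exchange, reuses a published result and records the time direction of escape.

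\textbf{Shared assumption.} Both arguments rely on $K^\delta(\epsilon_1/2)\subset V_2$. The paper uses this tacitly when it applies the corollary to $\rho$ and $\Phi^t(\rho)$. So stating it as a smallness condition on $\epsilon_1$, together with the energy-window bookkeeping, is appropriate and adds no assumption beyond what the paper already uses.
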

	\begin{proof}
		Consider $U_0$ to be the complementary of $V_2$ mentioned in corollary \ref{def exit macro}, $V_1$ the $\epsilon_1/2$ neighborhood of $K^\delta$ and $t_0$ the supremum of all $|T_\pm(\rho)|$ for $\rho\in p^{-1}(-\delta,\delta)\setminus (V_1\cup \mathbf{U_0}^c)$ (which is finite as this region is compact).
		\medbreak
		By contradiction, assume that there exists some $s\in [t_0,t-t_0]$ such that $\Phi^{s}(\rho)\notin K^\delta(\epsilon_1/2)$. Let us find some $s_0\in[0,t]$ such that $\Phi^{s_0}(\rho )\in V_2$ then corollary \ref{def exit macro} will give the contradiction. 
		\medbreak
		If $\Phi^{s}(\rho )\in V_2$, we can conclude. If not, we can apply the lemma \ref{def exit micro} to $\Phi^{s}(\rho)$ and conclude that $\Phi^{s+T_+}(\rho)$ or $\Phi^{s+T_-}(\rho)$ is in $V_2$, which gives a $s_0\in[0,t]$ due to the definition of $t_0$.
	\end{proof}
	We write $t=n_{\text{max}}t_0+\delta$ with $n_{\text{max}}\in \N$ and $0\leq \delta<t_0$. As mentioned in the introduction, we will decompose the propagation for a time $t$ into $n_{\text{max}}$ propagations of duration $t_0$, or more precisely $n_{\text{max}}-1$ of duration $t_0$ and one of duration $t_0+\delta$. The fact that the last propagation is longer in duration does not affect the computations done below (as we can increase $t_0$ if needed). For the sake of simplicity, from now on, we will assume  that $t=n_{\text{max}}t_0$.
	\medbreak
	Finally, let us mention the construction of a refinement of the escape function in proposition \ref{tilde G} for which we can choose the region on which $H_pG>0$.
	\begin{Lemma}\cite[Lemma 4.6.]{Steph}\label{G2}
		Let $V$ be an open neighborhood of $K^{2\delta}$, then there exists $G_2\in C^\infty(\R^{2d})$ such that 
		\begin{itemize}
			\item $G_2=0$ on a neighborhood of $K^{2\delta}$,
			\item $H_pG_2 \geq C$ on $p^{-1}(-\delta,\delta)\setminus V$,
			\item $H_pG_2 \geq0$ on $p^{-1}(-\delta,\delta)$.
		\end{itemize}
	\end{Lemma}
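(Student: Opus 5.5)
The plan is the classical escape-function construction of Gérard–Sjöstrand: start from $G_1$ of Proposition \ref{tilde G}, add finitely many local nonnegative functions living in flow tubes, and combine them as
\begin{equation*}
G_2 = M G_1 + \sum_{j=1}^{J} g_j
\end{equation*}
with a large constant $M$.

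First, apply Proposition \ref{tilde G} with the energy window slightly enlarged, using the fact that (\ref{H2a}), (\ref{H2b}) hold on $[-2\delta,2\delta]$. This gives $G_1$ with two properties. It vanishes near $K^{2\delta}$ within $p^{-1}(-\delta,\delta)$, and it satisfies $H_pG_1\geq C_0^{-1}|\nabla G_1|\geq 0$ there. Outside a compact set $W$ it coincides with $G$, so $H_pG_1\geq C$ off $W\cup\mathbf{U_0}$. Set
\begin{equation*}
\mathcal{K}=\overline{p^{-1}(-\delta,\delta)}\cap W\setminus V,
\end{equation*}
which is compact. It remains to manufacture positivity of $H_p$ on $\mathcal{K}$ without destroying the other properties.

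Now fix $\rho_0\in\mathcal{K}$. Since $\rho_0\notin K^{2\delta}$, hypothesis (\ref{H2a}) says its trajectory is unbounded, so it escapes in the forward or the backward direction. Say forward; the backward case is symmetric. By (\ref{H2b}), Corollary \ref{def exit macro} and Lemma \ref{def exit micro}, there is a time $T_0$ after which $\Phi^s(\rho_0)$ stays in the region where $H_pG_1\geq c>0$. Moreover, the whole forward piece $\{\Phi^s(\rho_0), s\in[-1,T_0+1]\}$ stays at positive distance from $K^{2\delta}$, by Lemma \ref{def_exit} with $V$ shrunk if necessary.

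Take a small hypersurface $\Sigma$ through $\rho_0$ transverse to $H_p$. Flow-box coordinates $(s,z)$, defined by $\Phi^s(z)$ with $z\in\Sigma$, cover a thin tube around this trajectory piece. In these coordinates define
\begin{equation*}
g=\psi(z)\,\chi(s)\,\theta(s),
\end{equation*}
where $\psi\geq 0$ is a bump on $\Sigma$, $\chi$ is nondecreasing and goes from $0$ near $s=-1$ to $1$ near $s=0$, and $\theta$ equals $1$ on $(-\infty,T_0]$ and decreases to $0$ on $[T_0,T_0+1]$. Then $H_pg=\psi\,(\chi\theta)'$. This is nonnegative except on the end piece $s\in[T_0,T_0+1]$, where it is bounded below by $-C_g$. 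On that piece $H_pG_1\geq c$ once the tube is thin enough. Finally, $H_pg\geq c_{\rho_0}>0$ on a neighbourhood of $\rho_0$, which is the set where $\chi'>0$ and $\psi>0$. Also $g$ vanishes near $K^{2\delta}$, and $g$ is smooth and compactly supported.

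By compactness, $\mathcal{K}$ is covered by finitely many such neighbourhoods, giving $g_1,\dots,g_J$. Choose $M\geq c^{-1}\sum_j C_{g_j}+1$. Then on $p^{-1}(-\delta,\delta)$ we get $H_pG_2\geq 0$ everywhere, because the negative parts of the $g_j$ are absorbed by $MH_pG_1$. We also get $H_pG_2\geq\min(\min_j c_j,\,MC)>0$ on $p^{-1}(-\delta,\delta)\setminus V$. Indeed, on $\mathcal{K}$ some $g_j$ contributes positively while the rest are $\geq 0$ or absorbed, and off $W\cup\mathbf{U_0}$ we use $H_pG_1\geq C$. The points of $\mathbf{U_0}\setminus(W\cup V)$ have to be included in $\mathcal{K}$ as well. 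For this, define $\mathcal{K}$ as $\overline{p^{-1}(-\delta,\delta)}\cap(W\cup\overline{\mathbf{U_0}})\setminus V$, which is still compact because $\mathbf{U_0}$ can be taken relatively compact. Finally, $G_2=0$ near $K^{2\delta}$ because every term vanishes there.

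The main obstacle is the end of each tube, where $H_pg$ becomes negative. The whole argument hinges on placing that cut-off inside the region where $H_pG_1$ is uniformly positive, and on checking that the tube avoids $K^{2\delta}$. Both facts come from the definitive-exit results, Corollary \ref{def exit macro} and Lemma \ref{def_exit}, together with the forward/backward alternative of Lemma \ref{def exit micro}.
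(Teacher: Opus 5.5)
Your construction is correct and is essentially the standard argument behind the cited \cite[Lemma 4.6]{Steph}. The paper gives no proof of its own; it only cites that lemma, and places the proof of Lemma \ref{def exit micro} inside it. The argument is: put local escape functions in thin flow tubes along escaping trajectory segments, take finitely many by compactness, and add a large multiple of the G\'erard--Sj\"ostrand function $G_1$ of Proposition \ref{tilde G}.

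Two small repairs are needed:
\begin{itemize}
\item Choose $\chi$ with $\chi'>0$ across $s=0$. If $\chi\equiv 1$ near $s=0$, then $H_pg$ vanishes at $\rho_0$.
\item $G_1$ is only known to vanish near $K^{2\delta}$ inside an energy window. So replace $G_2$ by $f(p)G_2$, with $f=1$ on $[-\delta,\delta]$ and $f$ supported in that window. This changes nothing on $p^{-1}(-\delta,\delta)$, because $H_p(f(p)G_2)=f(p)H_pG_2$.
\end{itemize}
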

	This lemma will become useful in section \ref{section reduction}, when we show the quantum analogues of lemma \ref{def_exit}
	\subsubsection{Study of the classical dynamics close to $K^\delta$}\label{section dyn class}
	In this section, we pick a point $\rho$ at a distance lower than $h^{\tau}$ of $K^\delta$. Our first objective is to choose the charts that will be used to express the propagated coherent state at an intermediate time $0\leq s\leq t$. Using charts that are adapted to some points on $K^\delta$ in the sense of definition \ref{coord}, rather than charts that are just adapted to $K^\delta$ in the sense of definition \ref{K adapted}, would make it easier to use the normal hyperbolicity assumption.
	\medbreak
	However, at a time $s$, we cannot use adapted coordinates to $\Phi^s(\rho)$ to express the state, as those only exist for points exactly on $K^\delta$. As a consequence, we will instead pick a point $\tilde{\rho}\in K^{\delta}$ that is initially close to $\rho$ and use the adapted coordinates to $\Phi^s(\tilde{\rho})$, base point which we expect to be close enough to $\Phi^s(\rho)$ so that these adapted coordinates are somewhat still relevant for the description.
	\medbreak
	Let us make this more precise, recall that we have assumed that $t=n_{\text{max}}t_0$ for a $n_{\text{max}}>0$ and $t_0$ defined in section \ref{fuite_def_esc}. We will discretize the time interval considered $[0,t_{\text{max}}]$ with multiples of $t_0$ and consider the following charts:
	\begin{definition}\label{nth chart}
		Consider a point $\rho$ at a distance lower than $h^{\tau}$ of $K^\delta$ and let $\tilde{\rho}$ be a point of $K^\delta$ minimizing this distance. We say that the \textbf{chart associated with} $\boldsymbol{\rho}$ \textbf{at time}  $nt_0$ is given by 
		\begin{equation*} 
			\kappa^{(n)}\coloneq\kappa_{\tilde{\rho}^n}:U_{\tilde{\rho}^n} \to V_{\tilde{\rho}^n}
		\end{equation*} 
		which a chart adapted to the point is associated with the point $\tilde{\rho}^n\coloneq\Phi^{nt_0}(\tilde{\rho})$ in the sense of definition \ref{coord}.
	\end{definition}
	We also assume, for now, that at a time $t_{\text{max}}=n_{\text{max}}t_0$, $\Phi^{t_{\text{max}}}(\rho)$ is in $K^{\delta}(\epsilon_1)$ so that we can apply the lemma \ref{def_exit}.
	\medbreak
	The next step is to study how far apart the points $\Phi^{nt_0}(\rho)$ and $\Phi^{nt_0}(\tilde{\rho})$ can be. As we will see in the beginning of section \ref{section1st}, we will need these estimates so that we can control error terms.
	\medbreak
	We recall that $\rho^n=\Phi^{nt_0}(\rho)$ and $\tilde{\rho}^n$ have the following coordinates in the charts:
	\begin{equation}\label{notation coord}
		\boldsymbol{\rho^n}\coloneq\kappa^{(n)}(\rho^n)\coloneq(q^n_x,q^n_y,p^n_x,p^n_y), \quad \boldsymbol{\tilde{\rho}^n}\coloneq\kappa^{(n)}(\tilde{\rho}^n)=(0,0,0,0).
	\end{equation}
	We are looking for bounds involving $\lambda^c$ in the central direction (\ref{lambda_c}) and $\nu_{\text{min}}$ (\ref{nu_min}) in the transverse ones. To do so, we may choose $t_0$ larger if necessary so that for $t\geq t_0,$
	\begin{equation} \label{lambda max et t0}
		\left|\sup_{\rho\in K^\delta} \frac{1}{t} \log(\| d\Phi^t(\rho)\|)-\lambda_{\text{max}}\right|\leq \epsilon_2/3, \text{ for a fixed } \epsilon_2>0,
	\end{equation}
	and similarly for $\lambda^c$ and $\nu_{\text{min}}$.
	\medbreak
	We want to prove by induction the following estimates:
	\begin{Lemma}\label{proof_dyn_class}
		Let $\rho$ be a point in a $h^{\tau}$ neighborhood of $K^\delta$ such that $\Phi^{n_\text{max}t_0}(\rho)\in K^\delta(\epsilon_1/2)$. Consider its iterates by the flow $\rho^n=\Phi^{nt_0}(\rho)$, with $n\leq n_\text{max}=\frac{\tau \log(1/h)}{t_0\lambda^c}$
		\medbreak
		Then $\boldsymbol{\rho^n}$'s coordinates defined in (\ref{notation coord}) verify:
		\begin{equation*} 
			\|q^n_x\|,\|p^n_x\|,\|p^n_y\|\leq (1+\epsilon_1) e^{(\lambda^{\text{c}}+2\epsilon_2/3)nt_0} h^{\tau}, 
		\end{equation*} 
		\begin{equation*} 
			\|q^n_y\|\leq (1+\epsilon_1)\max\bigl( e^{-(n_{\text{max}}-n)t_0(\nu_{\text{min}}^\perp-2\epsilon_2/3)},e^{(\lambda^{\text{c}}+2\epsilon_2/3)nt_0} h^{\tau}\bigr) .
		\end{equation*} 
	\end{Lemma}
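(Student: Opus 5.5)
We argue by induction on $n$, following the coordinates of $\rho^n$ in the successive charts $\kappa^{(n)}$. The central components $q_x,p_x$ and the stable component $p_y$ will be propagated \emph{forward}, while the unstable component $q_y$ will be controlled \emph{backward} from the final time $n_{\text{max}}t_0$, using that $\Phi^{n_{\text{max}}t_0}(\rho)$ lies in $K^\delta(\epsilon_1/2)$. Lemma \ref{def_exit} guarantees that $\rho^n\in K^\delta(\epsilon_1/2)$ for all intermediate $n$, so every $\rho^n$ lies in the domain of $\kappa^{(n)}$ (after shrinking $\epsilon_1$ if needed). For $n=0$, $\tilde\rho$ realizes the distance, so $\|\boldsymbol{\rho^0}\|\leq (1+\epsilon_1)h^\tau$, since $\kappa^{(0)}$ is close to an isometry near $\tilde\rho$.

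The key object is the transition map $F_n:=\kappa^{(n+1)}\circ\Phi^{t_0}\circ(\kappa^{(n)})^{-1}$. It sends $0$ to $0$ and preserves $\{y=\eta=0\}$, the set $\{x=\xi=\eta=0\}$ (the unstable manifold, by point (3) of Definition \ref{coord}), and the tangent space $\{x=\xi=y=0\}$ at $0$. Its differential at $0$ is therefore block-diagonal, $dF_n(0)=\mathrm{diag}(A_c,A_u,A_s)$. By (\ref{lambda max et t0}) and the analogous choices for $\lambda^c$ and $\nu_{\text{min}}^\perp$, the blocks satisfy $\|A_c\|\leq e^{(\lambda^c+\epsilon_2/3)t_0}$, $\|A_s\|\leq e^{-(\nu^\perp_{\text{min}}-\epsilon_2/3)t_0}$ and $\|A_u^{-1}\|\leq e^{-(\nu^\perp_{\text{min}}-\epsilon_2/3)t_0}$. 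We then Taylor expand $F_n$ at $0$ with uniform $C^2$ bounds; these hold because $K^\delta$ is compact and the charts can be chosen from a uniformly smooth family (Appendix \ref{Appendix coord}).

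\textbf{Forward step for $(q_x,p_x,p_y)$.} Write $z_n=(q^n_x,p^n_x,p^n_y)$. The invariance properties kill the linear dependence of $z_{n+1}$ on $q_y$, so the pure $q_y$ contribution is only quadratic. The invariance of the unstable manifold sharpens this: on $\{x=\xi=\eta=0\}$ the $z$-component of $F_n$ vanishes identically, so the Taylor remainder in $z_{n+1}$ is $O(\|z_n\|(\|z_n\|+\|q^n_y\|))$. This gives
\begin{equation*}
\|z_{n+1}\|\leq e^{(\lambda^c+\epsilon_2/3)t_0}\|z_n\|\,(1+C\|z_n\|+C\|q_y^n\|).
\end{equation*}
The factor $(1+C\|z_n\|+\ldots)$ is absorbed into the extra $e^{\epsilon_2 t_0/3}$ as long as $\|z_n\|$ and $\|q^n_y\|$ are small, i.e.\ smaller than $\epsilon_1$. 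The bound on $z_n$ remains below $h^{\epsilon}$-type sizes precisely because $n\leq n_{\text{max}}=\tau|\log h|/(t_0\lambda^c)$, up to the $\epsilon_2$ loss. The $p_y$ component contracts even faster, so the central rate dominates for it as well.

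\textbf{Backward step for $q_y$.} We apply the same reasoning to $F_n^{-1}$, for which the unstable block contracts at rate $e^{-(\nu^\perp_{\text{min}}-\epsilon_2/3)t_0}$. Since $\boldsymbol{\rho^{n_{\text{max}}}}$ has $\|q_y\|\leq 1+\epsilon_1$ in suitable units, i.e.\ it stays in the chart, we obtain
\begin{equation*}
\|q^n_y\|\leq e^{-(\nu-\epsilon_2/3)t_0}\|q^{n+1}_y\|+C\|z_n\|.
\end{equation*}
Here the coupling term is again linear in $z$ only, because $\{y=\eta=0\}$ is invariant. Iterating downward from $n_{\text{max}}$ yields the claimed maximum of the two exponentials, with the $z$-contributions summing geometrically to a multiple of $e^{(\lambda^c+2\epsilon_2/3)nt_0}h^\tau$.

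The main obstacle is that the forward and backward inductions are coupled: each needs a priori smallness of the other quantity. We would resolve this with a bootstrap. Assume both bounds with constant $2(1+\epsilon_1)$, run both inductions, and recover the constant $(1+\epsilon_1)$ by choosing $t_0$ large and $\epsilon_1$ small, so that all quadratic errors are absorbed into the $\epsilon_2/3$ margins. Uniformity of the $C^2$ norms of the chart transitions over $K^\delta$ is the technical input on which this relies.
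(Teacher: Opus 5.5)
Your proposal is correct and is essentially the paper's proof: a priori smallness of $q_y$ from Lemma \ref{def_exit}, a forward induction for $(q_x,p_x,p_y)$ closed by taking $t_0$ large and then $\epsilon_1$ small, and the same argument run backward with $\Phi^{-t_0}$ for $q_y$. Your identity $z_{n+1}=\mathrm{diag}(A_c,A_s)z_n+O(\|z_n\|(\|z_n\|+\|q^n_y\|))$, which comes from the vanishing of the $(x,\xi,\eta)$-component of $F_n$ on $\{x=\xi=\eta=0\}$, is a tidier substitute for the paper's use of (\ref{d2nd_stab})--(\ref{d2nd_instab}) and its comparison with the linearized cocycle. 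Two small caveats remain.

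\emph{Range of validity.} At $n=n_{\max}$ your forward bound is of size $h^{-2\tau\epsilon_2/(3\lambda^c)}$, not small. So the smallness your bootstrap needs (which the paper simply assumes) only closes for $nt_0\le(\tau/\lambda^c-\epsilon)|\log h|$ with $\epsilon$ large compared with $\epsilon_2$. This is the range actually used in Theorem \ref{th_trapped_set}.

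\emph{Constant for $q_y$.} To get the constant $(1+\epsilon_1)$ in the $q_y$ bound, note that $\partial_\eta$ of the $y$-component of $F_n^{-1}$ vanishes at $0$. Hence the coupling term is $O(\epsilon_1\|p^{n+1}_y\|)$ rather than $C\|z_n\|$, and it can be absorbed by choosing $\epsilon_1$ small.
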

	
	\begin{Remark}
		The important aspect of those estimates is to show that the central Lyapunov exponent is sufficient to control the dynamics along the central direction, $\lambda_\text{max}$ (defined in (\ref{lambda_max})) is never involved in the estimates. 
	\end{Remark}
	\begin{proof}
		\begin{itemize}
			\item We start by estimating roughly $\|q^n_y\|$ by $\epsilon_1/2$ with lemma \ref{def_exit} to deduce estimates on $\|q^n_x\|$, $\|p^n_x\|$, $\|p^n_y\|$, then we will study more closely $\|q^n_y\|$.
			\item 
			\textbf{Expansion in the transverse direction}
			\newline
			By definition, $\boldsymbol{\rho^{n+1}}=\kappa^{(n+1)}\Phi^{t_0}(\kappa^{(n)})^{-1}(\boldsymbol{\rho^n})=\boldsymbol{\Phi^{(n)}}(q^n_x,q^n_y,p^n_x,p^n_y)$ where we have introduced $\boldsymbol{\Phi^{(n)}}=\kappa^{(n+1)}\Phi^{t_0}(\kappa^{(n)})^{-1}$.
			\medbreak
			By Taylor expansion (coming back to scalar-valued functions), for all $i\in \llbracket 1, 2d \rrbracket$, there exists a $\theta_i\in ]0,1[$ such that 
			\begin{align*}
				\boldsymbol{\Phi^{(n)}}(q^n_x,q^n_y,p^n_x,p^n_y)_i&=\boldsymbol{\Phi^{(n)}}(q^n_x,0,p^n_x,0)_i+d_{\text{hyp}\to\text{i}} \boldsymbol{\Phi^{(n)}}(q^n_x,0,p^n_x,0).\Bigl[q^n_y,p^n_y\Bigr]
				\\&+\frac{1}{2}d^2_{\text{hyp,hyp}\to\text{i}} \boldsymbol{\Phi^{(n)}}(q^n_x,\theta_i q^n_y, p^n_x, \theta_i p^n_y). \Bigl[q^n_y,p^n_y\Bigr]^{\otimes 2}.        
			\end{align*}
			Here, the subscript $i$ means the $i$-th coordinate, $d_{\text{hyp}\to\text{i}} \boldsymbol{\Phi^{(n)}}(q^n_x,0,p^n_x, 0)$ is the part of a row of the Jacobian containing the derivatives of $i$-th components of the flow with respect to $y$ and $\eta$. Similarly,
			$d^2_{\text{hyp,hyp}\to\text{i}}\boldsymbol{\Phi^{(n)}}$ can be seen as a quadratic form in the transverse coordinates, see an example below in (\ref{d2nd_stab}).
			\medbreak
			\textbf{Controlling the quadratic part}
			\newline
			So far, we don't know precisely the form of the matrix $d^2_{\text{hyp,hyp}\to\text{i}} \boldsymbol{\Phi^{(n)}}(q^n_x,\theta_i q^n_y, p^n_x, \theta_i p^n_y)$. However, we have some information for the situation when $q^n_x=p^n_x=0$ and we are on the stable/unstable manifold.
			\newline
			Indeed, as the dynamics preserves the stable distributions, we know that, at these points, the matrix $d^2_{\text{hyp,hyp}\to\text{i}}\boldsymbol{\Phi^{(n)}}$ must have a special form: every time we derivate with respect to $\eta$, non zero coefficients can only appear for $\eta$ coordinates, which gives this form:
			\begin{multline}\label{d2nd_stab}
				d^2_{\text{hyp,hyp}\to i}\boldsymbol{\Phi^{(n)}}(0,0,0,\theta_i p^n_y).\Bigl[q^n_y,p^n_y\Bigr]^{\otimes 2}=
				\\\left\{
				\begin{aligned}
					&a_{1,i}(\theta_i).\Bigl(q^n_y,q^n_y\Bigr) +   a_{2,i}(\theta_i).\Bigl(q^n_y,p^n_y\Bigr) 
					\\&\quad \text{ if $i$ is a } x,y \text{ or } \xi \text{ coordinate }
					\\&a_{1,i}(\theta_i).\Bigl(q^n_y,q^n_y\Bigr) +   a_{2,i}(\theta_i ).\Bigl(q^n_y,p^n_y\Bigr)+ a_{3,i}(\theta_i ).\Bigl(p^n_y,p^n_y\Bigr)
					\\&\quad \text{ if $i$ is a } \eta \text{ coordinate }
				\end{aligned}\right.
			\end{multline}
			with $a_{k,i}(\theta_i)$ bilinear forms.
			\medbreak
			On the other hand, looking at the unstable manifold gives a slightly different form:
			
			\begin{multline}\label{d2nd_instab}
				d^2_{\text{hyp,hyp}\to i}\boldsymbol{\Phi^{(n)}}(0,\theta q^n_y,0,0).\Bigl[q^n_y,p^n_y\Bigr]^{\otimes 2}=
				\\\left\{
				\begin{aligned}
					&    a_{2,i}(\theta_i). \Bigl(q^n_y,p^n_y\Bigr) + a_{3,i}(\theta_i).\Bigl(p^n_y,p^n_y\Bigr)
					\\& \quad\text{ if $i$ is a } x,\xi \text{ or } \eta \text{ coordinate }
					\\&a_{1,i}(\theta_i) .\Bigl(q^n_y,q^n_y\Bigr) +   a_{2,i}(\theta_i ).\Bigl(q^n_y,p^n_y\Bigr)+ a_{3,i}(\theta_i ).\Bigl(p^n_y,p^n_y\Bigr)
					\\&\quad \text{ if $i$ is a } y \text{ coordinate }
				\end{aligned}\right.
			\end{multline}
			Now to understand $d^2_{\text{hyp,hyp}\to i} \boldsymbol{\Phi^{(n)}}(q^n_x, \theta_i q^n_y,p^n_x, \theta_i p^n_y)$ we will simply write it as a quadratic form:
			\begin{align*}
				d^2_{\text{hyp,hyp}\to i} \boldsymbol{\Phi^{(n)}}(q^n_x,\theta_i q^n_y&,p^n_x, \theta_i p^n_y).\Bigl[q^n_y,p^n_y\Bigr]^{\otimes 2}=a_{1,i}(q^n_x,\theta_i q^n_y,p^n_x, \theta_i p^n_y) .\Bigl(q^n_y,q^n_y\Bigr) 
				\\&+ 2  a_{2,i}(q^n_x,\theta_i q^n_y,p^n_x, \theta_i p^n_y). \Bigl(q^n_y,p^n_y\Bigr)+ a_{3,i}(q^n_x,\theta_i q^n_y,p^n_x, \theta_i p^n_y).\Bigl(p^n_y,p^n_y\Bigr)
			\end{align*}
			and estimate $a_{1,i},a_{2,i},a_{3,i}$ by comparing to their values along the unstable/stable manifolds.
			\medbreak
			We have the following estimates (where we have used that iterates stay in $K^\delta(\epsilon_1/2)$) for the central direction (ie $i\in \llbracket 1,d_{\scalerel*{\parallel}{\perp}}\rrbracket$ or $\llbracket d+1,d+d_{\scalerel*{\parallel}{\perp}}\rrbracket$ ), using (\ref{d2nd_stab}) for $a_{1,i}$ and (\ref{d2nd_instab}) for $a_{3,i}$:
			\begin{align*}
				\|d^2_{\text{hyp,hyp}\to \text{i}}\boldsymbol{\Phi^{(n)}}(q^n_x,\theta_i q_y^n,p^n_x,\theta_i p_y^n).\Bigl[q^n_y,p^n_y\Bigr]^{\otimes 2}\|&\leq  \|d^3 \boldsymbol{\Phi^{(n)}}\|_{\infty,K^\delta(\epsilon_1/2)}  \|(q^n_x,0,p^n_x,p^n_y)\|\|q^n_y\|^2
				\\+2\|d^3 \boldsymbol{\Phi^{(n)}}\|_{\infty,K^\delta(\epsilon_1/2)}\|(q^n_x,0,p^n_x,p^n_y)\|\|q^n_y\|\|p^n_y\|&+\|d^3 \boldsymbol{\Phi^{(n)}}\|_{\infty,K^\delta(\epsilon_1/2)}  \|(q^n_x,q^n_y,0,p^n_x)\|\|p^n_y\|^2 
			\end{align*} 
			for the unstable and stable ones, we get similarly (with $i\in \llbracket d_{\scalerel*{\parallel}{\perp}}+1, d\rrbracket$):
			\begin{multline*}
				\|d^2_{\text{hyp,hyp}\to i}\boldsymbol{\Phi^{(n)}}(q^n_x,\theta_i q_y^n,p^n_x,\theta_i p_y^n).\Bigl[q^n_y,p^n_y\Bigr]^{\otimes 2}\|\leq \|d^3 \boldsymbol{\Phi^{(n)}}\|_{\infty,K^\delta(\epsilon_1/2)} \| \|(q^n_x,q^n_y,p^n_x,0)\|\|p^n_y\|^2  
				\\+2\| d^2 \boldsymbol{\Phi^{(n)}}\|_{\infty,K^\delta(\epsilon_1/2)}\|q^n_y\|\|p^n_y\|+\|d^2 \boldsymbol{\Phi^{(n)}}\|_{\infty,K^\delta(\epsilon_1/2)} \|q^n_y\|^2
				\quad  
				\\\|d^2_{\text{hyp,hyp}\to i+d}\boldsymbol{\Phi^{(n)}}(q^n_x,\theta_{i+d} q_y^n,p^n_x,\theta_{i+d} p_y^n).\Bigl[q^n_y,p^n_y\Bigr]^{\otimes 2}\|\leq  \|d^3 \boldsymbol{\Phi^{(n)}}\|_{\infty,K^\delta(\epsilon_1/2)}  \|(q^n_x,0,p^n_x,p^n_y)\|\|q^n_y\|^2
				\\+2\| d^2 \boldsymbol{\Phi^{(n)}}\|_{\infty,K^\delta(\epsilon_1/2)}\|q^n_y\|\|p^n_y\|+\|d^2 \boldsymbol{\Phi^{(n)}}\|_{\infty,K^\delta(\epsilon_1/2)} \|p^n_y\|^2  .
			\end{multline*}
			Consequently, we obtain that:
			\begin{multline}\label{center}(q^{n+1}_x,p^{n+1}_x)=\boldsymbol{\Phi^{(n)}}(q^n_x,0,p^n_x,0)_c +d_{\text{hyp}\to\text{c}}\boldsymbol{\Phi^{(n)}}(q^n_x,0,p^n_x,0).\Bigl[q^n_y,p^n_y\Bigr]
					\\\quad+O(\|(q^n_x,p^n_x,0,p^n_y)\|\|q^n_y\|^2+\|q^n_y\|\|p^n_y\|+\|(q^n_x,p^n_x,q^n_y,0)\|\|p^n_y\|^2)
			\end{multline}
			\begin{equation}\label{unstab} q^{n+1}_y=d_{\text{hyp}\to y} \boldsymbol{\Phi^{(n)}}(q^n_x,0,p^n_x,0).\Bigl[q^n_y,p^n_y\Bigr]+O(\|q^n_y\|^2+\|q^n_y\|\|p^n_y\|+\|(q^n_x,q^n_y,p^n_x,0)\|\|p^n_y\|^2) 
			\end{equation}
			\begin{equation}\label{stab}
				 p^{n+1}_y =d_{\text{hyp}\to \eta} \boldsymbol{\Phi^{(n)}}(q^n_x,0,p^n_x,0).\Bigl[q^n_y,p^n_y\Bigr]+O(\|(q^n_x,0,p^n_x,p^n_y)\|\|q^n_y\|^2+\|q^n_y\|\|p^n_y\|+\|p^n_y\|^2).
			\end{equation}
			Here, it's important to notice that the constants linked to the $O$ can be taken independent of $n$ (hence of $h$). As mentionned above, we will only use the estimates in the center (\ref{center}) and the stable direction (\ref{stab}) as we will estimate the unstable one in a different way.
			\medbreak
			\textbf{Comparing with values at the base point of the chart}
			\newline
			Let us control the two first terms of the right hand side of (\ref{center}): with a Taylor expansion between $(q^n_x,0,p^n_x,0)$ and the origin, we obtain for the central component (ie $i\in \llbracket 1,d_{\scalerel*{\parallel}{\perp}}\rrbracket$ or $\llbracket d+1,d+d_{\scalerel*{\parallel}{\perp}}\rrbracket$ ):
			\begin{equation*} \boldsymbol{\Phi^{(n)}}(q^n_x,0,p^n_x,0)_i=\underbrace{\boldsymbol{\Phi^{(n)}}(0,0,0,0)_i}_{=0}+d_{\text{c}\to\text{c}}\boldsymbol{\Phi^{(n)}} (0,0,0,0).\Bigl[q^n_x,0,p^n_x,0\Bigr]+O(\|q^n_x,p^n_x\|^2).
			\end{equation*} 
			\medbreak
			For the linear parts $d_{\text{hyp}\to\text{c}}\boldsymbol{\Phi^{(n)}}(q^n_x,0,p^n_x,0).\Bigl[q^n_y,p^n_y\Bigr]$ that appears in (\ref{center}), we get:
			\begin{align*} 
				d_{\text{hyp}\to\text{c}}\boldsymbol{\Phi^{(n)}}(q^n_x,0,p^n_x,0).\Bigl[q^n_y,p^n_y\Bigr]&=d_{\text{hyp}\to\text{c}}\boldsymbol{\Phi^{(n)}}(0,0,0,0).\Bigl[q^n_y,p^n_y\Bigr]+O(\|q^n_x,p^n_x\|\|q^n_y,p^n_y\|)
				\\&=O(\|q^n_x,p^n_x\|\|q^n_y,p^n_y\|)
			\end{align*}
			where we have used the fact $d_{\text{hyp}\to\text{c}}\boldsymbol{\Phi^{(n)}}(0,0,0,0)=0$.
			\medbreak
			Combining those facts, we get 
			\begin{equation}\label{r^n_{center}}
				(q^{n+1}_x,p^{n+1}_x)=d_{\text{c}\to\text{c}}\boldsymbol{\Phi^{(n)}}(0,0,0,0).\Bigl[q^n_x,0,p^n_x,0\Bigr]+r^n_{\text{center}},
			\end{equation}
			with
			\begin{multline*} r^n_{\text{center}}=O(\|(q^n_x,p^n_x,0,p^n_y)\|\|q^n_y\|^2+\|q^n_y\|\|p^n_y\|+\|(q^n_x,p^n_x,q^n_y,0)\|\|p^n_y\|^2)
				\\+O(\|q^n_x,p^n_x\|\|q^n_y,p^n_y\|+\|q^n_x,p^n_x\|^2).
			\end{multline*} 
			\medbreak
			Similar computations in the stable direction gives:
			\begin{equation}\label{r^n_{stab}}
				p^{n+1}_y=d_{\eta\to\eta}\boldsymbol{\Phi^{(n)}}(0,0,0,0).\Bigl[0,0,0,p^n_y\Bigr]+r^n_{\text{stab}},
			\end{equation}
			with $r^n_{\text{stab}}=O(\|q^n_x,p^n_x\|\|q^n_y,p^n_y\|+\|(q^n_x,0,p^n_x,p^n_y)\|\|q^n_y\|^2+\|q^n_y\|\|p^n_y\|+\|p^n_y\|^2)$.
			\medbreak
			\textbf{Adding errors by induction}
			\newline
			Our goal is now to prove by induction that:
			\begin{align}
				\|(q^n_x,p^n_x)-d_{\text{c}\to \text{c}} [\boldsymbol{\Phi^{(n)}}\text{\dots}\boldsymbol{\Phi^{(0)}}](0,0,0,0).\Bigl[q^0_x,p^0_x\Bigr]\|&\leq \epsilon_1 e^{n(\lambda_c+2\epsilon_2/3)} h^{\tau}\label{induc_center}
				\\\|p^n_y-d_{\eta \to \eta} [\boldsymbol{\Phi^{(n)}}\text{\dots}\boldsymbol{\Phi^{(0)}}](0,0,0,0). \Bigl[p^0_y\Bigr]\|&\leq \epsilon_1 e^{n(\lambda_c+2\epsilon_2/3)} h^{\tau}\label{induc_stab}.
			\end{align}
			Assume that this holds for some $n\leq n_{\text{max}}$, then we know that (with rough estimates on the linear part, especially in the stable direction)
			\begin{equation*} 
				\|(q^n_x,p^n_x)\|\leq (1+\epsilon_1)e^{n(\lambda_c+2\epsilon_2/3)} h^{\tau},
			\end{equation*} 
			\begin{equation*} 
				\|p^n_y\|\leq (1+\epsilon_1)  e^{n(\lambda_c+2\epsilon_2/3)} h^{\tau},
			\end{equation*} 
			then (\ref{r^n_{center}}) gives 
			\begin{align*}
				\|(q^{n+1}_x,p^{n+1}_x)-d_{\text{c}\to \text{c}} [\boldsymbol{\Phi^{(n)}}\dots&\boldsymbol{\Phi^{(0)}}](0,0,0,0).\Bigl[q^0_x,p^0_x\Bigr]\|\leq \epsilon_1 e^{-\epsilon_2/3} e^{(n+1)(\lambda_c+2\epsilon_2/3)} h^{\tau}
				\\&+\tilde{C}(1+\epsilon_1)(3\epsilon_1+2\epsilon_1^2) e^{n(\lambda_c+2\epsilon_2/3)} h^{\tau}, 
			\end{align*}
			Where $\tilde{C}$ is the implicit constant of the $O$ controlling $r^n_{\text{center}}$, and we have used the a priori estimate $\|q^n_x,q^n_y,p^n_x,p^n_y\|\leq \epsilon_1$. Then, by choosing $t_0$ larger if necessary so that $e^{-\epsilon_2t_0/3}+3\tilde{C}e^{-(\lambda_c+2\epsilon_2/3)t_0}<1$, we can take $\epsilon_1>0$ small enough so that $e^{-\epsilon_2t_0/3}\epsilon_1+\tilde{C}e^{-(\lambda_c+2\epsilon_2/3)t_0}(1+\epsilon_1)(3\epsilon_1+2\epsilon_1^2)\leq \epsilon_1$ which gives (\ref{induc_center}) for $n+1$.
			\medbreak
			For (\ref{induc_stab}), we use (\ref{r^n_{stab}}) to obtain 
			\begin{align*}
				\|p^{n+1}_y-d_{\eta \to \eta} [\boldsymbol{\Phi^{(n)}}\text{\dots}\boldsymbol{\Phi^{(0)}}](0,0,0,0) \Bigl[p^0_y\Bigr]\|&\leq e^{-\epsilon_2t_0/3} \epsilon_1 e^{(n+1)(\lambda_c+2\epsilon_2/3)} h^{\tau}
				\\&+\tilde{C}(1+\epsilon_1)(3\epsilon_1+\epsilon_1^2)e^{n(\lambda_c+2\epsilon_2/3)} h^{\tau},
			\end{align*}
			which gives also the result for $\epsilon_1$ small enough. 
			\item Finally, to obtain the estimate on $\|q^n_y\|$, we have to do the same argument for $\Phi^{-t_0}$ (i.e starting from $n$ and going to $0$) and compare similarly the $y$ component to its linearized model. 
		\end{itemize} 
	\end{proof}
	
	\begin{Remark}
		Notice that in the stable direction, we cannot see the effect of the contraction due to the choice of coordinates: the error term coming from \ref{induc_stab} is the leading order term in the stable direction.
	\end{Remark}
	\begin{Remark}\label{dyn class pas rho}
		As we have seen in the introduction, our assumption that $\Phi^{nt_0}(\rho)\in K^\delta(\epsilon_1)$ is not necessarily verified, see figure \ref{dist It rhot} and the associated remark. However, in this same remark, we have explained that the points that should be relevant from a classical point of view are the points of $\mathcal{I}^{(t)}$, and all of these should stay in a $\epsilon_1$ vicinity of $K^\delta$. Hence we will need to apply lemma \ref{proof_dyn_class} to points $\underline{\rho} \in \mathcal{I}^{(t)}$ instead of $\rho$, but they verify the  assumptions.
	\end{Remark}
	Finally, the estimates of lemma \ref{proof_dyn_class} let us compare the linearized dynamics:
	\begin{Lemma}\label{compare_linear}
		For all $n\in \llbracket 0,n_\text{max}\rrbracket,$
		\begin{equation*} 
			\left\|d\boldsymbol{\Phi^{n}}(\boldsymbol{\tilde{\rho}})-d\boldsymbol{\Phi^{n}}(\boldsymbol{\rho})\right\|\leq C e^{nt_0(\lambda_{\text{max}}+2\epsilon_2/3) } e^{nt_0 (\lambda^{\text{c}}+\epsilon_2/3)} h^{\tau},
		\end{equation*} 
		where we have set $\boldsymbol{\Phi^{n}}=\boldsymbol{\Phi^{(n)}}\text{\dots}\boldsymbol{\Phi^{(0)}}$.
	\end{Lemma}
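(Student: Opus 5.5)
The plan is a telescoping argument via the chain rule. Write $d\boldsymbol{\Phi^{n}}(\boldsymbol{\rho})=d\boldsymbol{\Phi^{(n)}}(\boldsymbol{\rho^n})\cdots d\boldsymbol{\Phi^{(0)}}(\boldsymbol{\rho^0})$, and similarly at $\boldsymbol{\tilde\rho}$, where all factors are evaluated at the origin since $\boldsymbol{\tilde\rho^k}=0$. Set $A_k=d\boldsymbol{\Phi^{(k)}}(0)$ and $B_k=d\boldsymbol{\Phi^{(k)}}(\boldsymbol{\rho^k})$. Then
\begin{equation*}
B_n\cdots B_0-A_n\cdots A_0=\sum_{k=0}^{n} B_n\cdots B_{k+1}\,(B_k-A_k)\,A_{k-1}\cdots A_0 .
\end{equation*}
Each term is bounded by three factors: a norm of a product of linearized flows after step $k$, the single-step error $\|B_k-A_k\|$, and a norm of a product of linearized flows before step $k$.

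The single-step error is handled by the mean value theorem. Since the maps $\boldsymbol{\Phi^{(k)}}=\kappa^{(k+1)}\Phi^{t_0}(\kappa^{(k)})^{-1}$ have $C^2$ norms bounded uniformly in $k$ on $K^\delta(\epsilon_1/2)$ (compactness, $t_0$ fixed), we get $\|B_k-A_k\|\leq C\|\boldsymbol{\rho^k}\|$. Lemma \ref{proof_dyn_class} controls $\|\boldsymbol{\rho^k}\|$. The central and stable coordinates are at most $(1+\epsilon_1)e^{(\lambda^c+2\epsilon_2/3)kt_0}h^\tau$. The coordinate $q^k_y$ is controlled by the max in that lemma. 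The plan is to use the bound $e^{(\lambda^c+\cdot)kt_0}h^\tau$, which is the relevant one, in the regime where the points in question lie on $\mathcal{I}^{(t)}$ as in Remark \ref{dyn class pas rho}. There I would argue that the unstable component is also controlled by the drift. If that fails, I would keep the max and absorb the other branch into the final exponent.

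For the flanking products, the product $A_{k-1}\cdots A_0=d\boldsymbol{\Phi^{k}}(\boldsymbol{\tilde\rho})$ is the linearized flow on $K^\delta$ in charts. By (\ref{lambda max et t0}) its norm is at most $Ce^{kt_0(\lambda_{\max}+\epsilon_2/3)}$, the chart Jacobians being uniformly bounded. For $B_n\cdots B_{k+1}$ I would not rely directly on an a priori estimate. I would run an induction on $n$, simultaneously proving the claimed bound and the crude bound $\|B_m\cdots B_{j}\|\leq Ce^{(m-j)t_0(\lambda_{\max}+2\epsilon_2/3)}$. This follows by writing $B$-products as $A$-products plus the difference and using that the accumulated difference stays $O(h^{\epsilon})$-relatively small. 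Alternatively, $\|B_j\|\leq \|A_j\|+C\|\boldsymbol{\rho^j}\|$, and products of blocks of length $t_0$ at nearby points have growth rate $\lambda_{\max}+\epsilon_2/3$ plus a perturbation of size $\epsilon_1$. Taking $t_0$ larger and $\epsilon_1$ smaller, this fits inside the extra $\epsilon_2/3$.

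Summing, each term is at most $Ce^{(n-k)t_0(\lambda_{\max}+2\epsilon_2/3)}\,e^{kt_0(\lambda^c+2\epsilon_2/3)}h^\tau\, e^{kt_0(\lambda_{\max}+\epsilon_2/3)}$. This is bounded by $Ce^{nt_0(\lambda_{\max}+2\epsilon_2/3)}e^{kt_0(\lambda^c+2\epsilon_2/3)}h^\tau$. The geometric sum over $k\leq n$ is dominated by its last term up to a constant, giving the claimed bound. Matching the exact $\epsilon_2/3$ versus $2\epsilon_2/3$ bookkeeping is done by the choice of $t_0$ in (\ref{lambda max et t0}).

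The main obstacle I expect is controlling the perturbed products $B_n\cdots B_{k+1}$ uniformly over $\sim|\log h|$ steps, together with the $q_y$ component of $\boldsymbol{\rho^k}$. The latter is not small by $h^\tau$ near the final time unless one works with points on $\mathcal{I}^{(t)}$.
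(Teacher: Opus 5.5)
Your proposal is correct and is essentially the paper's proof. It has the same ingredients: chain-rule telescoping, the mean-value bound $\|B_k-A_k\|\leq C\|\boldsymbol{\rho^k}\|$ combined with Lemma \ref{proof_dyn_class}, exponential bounds on the two flanking products, and a geometric sum dominated by its last term. The paper places the $\tilde{\rho}$-orbit products after step $k$ and the $\rho$-orbit products before, which is immaterial. Your single-step perturbation bound on the $\rho$-orbit product (first take $t_0$ large, then $\epsilon_1$ small) justifies a bound the paper uses without proof.

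The paper also stops at the $\max$ coming from $q^k_y$. Passing to $\mathcal{I}^{(t)}$ would not help, since its points have $y$-coordinates filling $D_{\epsilon_1}$. Your fallback of absorbing that branch does work at once. Because $n_{\text{max}}=\tau\log(1/h)/(t_0\lambda^{\text{c}})$ gives $h^{\tau}=e^{-\lambda^{\text{c}}n_{\text{max}}t_0}$, the logarithm of $e^{-(n_{\text{max}}-k)t_0(\nu^\perp_{\text{min}}-2\epsilon_2/3)}\big/\bigl(e^{(\lambda^{\text{c}}+2\epsilon_2/3)kt_0}h^{\tau}\bigr)$ is affine in $k$. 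It is nonpositive at both $k=0$ and $k=n_{\text{max}}$ once $\nu^\perp_{\text{min}}\geq\lambda^{\text{c}}+2\epsilon_2/3$, hence for every $k$ in between.
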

	\begin{proof}
		Using the chain rule, we check that
		\begin{equation*} 
			d_{\boldsymbol{\tilde{\rho}}}\boldsymbol{\Phi^{n}}-d_{\boldsymbol{\rho}}\boldsymbol{\Phi^{n}}=\sum_{k=0}^{n-1} d_{\boldsymbol{\Phi^{k+1}}(\tilde{\rho})} [\boldsymbol{\Phi^{(n)}\text{\dots}\Phi^{(k+1)}}]\circ [d_{\boldsymbol{\Phi^{k}}(\boldsymbol{\rho})}\boldsymbol{\Phi^{(k)}}-d_{\boldsymbol{\Phi^{k}}(\boldsymbol{\tilde{\rho}})}\boldsymbol{\Phi^{(k)}}]\circ d_{\boldsymbol{\rho}}[\boldsymbol{\Phi^{(k-1)}\text{\dots}\Phi^{(0)}}].
		\end{equation*} 
		By Taylor expansion, we obtain the following
		\begin{align*}
			\|d_{\boldsymbol{\Phi^{(k)}}(\boldsymbol{\rho})}\boldsymbol{\Phi^{k}}-d_{\boldsymbol{\Phi^{k}}(\boldsymbol{\tilde{\rho}})}\boldsymbol{\Phi^{(k)}}\|&\leq C \|\boldsymbol{\Phi^{k}}(\boldsymbol{\rho})-\boldsymbol{\Phi^{k}}(\boldsymbol{\tilde{\rho}})\|
			\\&\leq C (1+\epsilon_1)\max( e^{-(n_{\text{max}}-k)t_0(\nu_{\text{min}}^\perp-2\epsilon_2/3)},e^{(\lambda^{\text{c}}+2\epsilon_2/3)kt_0} h^{\tau}),
		\end{align*}
		with the estimate of lemma \ref{proof_dyn_class}.
		\newline
		Similarly, we obtain
		\begin{align*} 
			\|d_{\boldsymbol{\Phi^{k+1}}(\tilde{\rho})} [\boldsymbol{\Phi^{(n)}\text{\dots}\Phi^{(k+1)}}]\|&\leq C \|d_{\boldsymbol{\Phi^{k+1}}(\tilde{\rho})} [\boldsymbol{\Phi^{(n)}\text{\dots}\Phi^{(k+1)}}]\| \sum_{j=k}^n d(\boldsymbol{\Phi^{j+1}}(\tilde{\rho}),\boldsymbol{\Phi^{j+1}}(\rho))
			\\&\leq C\|d_{\boldsymbol{\Phi^{k+1}}(\tilde{\rho})} [\boldsymbol{\Phi^{(n)}\text{\dots}\Phi^{(k+1)}}]\|
			\\&\leq C e^{(n-k)t_0(\lambda_{\text{max}}+2\epsilon_2/3) }.
		\end{align*}
		Finally, coming back to the chain rule, we obtain:
		\begin{align*}
			\|d_{\boldsymbol{\tilde{\rho}}}\boldsymbol{\Phi^{n}}-d_{\boldsymbol{\rho}}\boldsymbol{\Phi^{n}}\|&\leq Ce^{(n)t_0(\lambda_{\text{max}}+2\epsilon_2/3) }  \sum_{j=0}^n d(\boldsymbol{\Phi^{j+1}}(\tilde{\rho}),\boldsymbol{\Phi^{j+1}}(\rho))
			\\&\leq Ce^{(n)t_0(\lambda_{\text{max}}+2\epsilon_2/3) }\max( e^{-(n_{\text{max}}-n)t_0(\nu_{\text{min}}^\perp-2\epsilon_2/3)},e^{(\lambda^{\text{c}}+2\epsilon_2/3)nt_0} h^{\tau}).
		\end{align*}
	\end{proof}
	\begin{Remark}
		This corollary is only pertinent for small values of $n$ (i.e. below Ehrenfest time) as the right hand side might be a negative power of $h$.
		\medbreak
		Unfortunately, it is not easy to obtain an estimate when restricting the differential to the tangent of $K^\delta$ that only involves the central Lyapunov exponent; the fact that we are not exactly on $K^\delta$ prevents us from ruling out contributions of transverse directions.
	\end{Remark}

	\subsection{Semiclassical analysis}
	For some reminders on pseudo-differential operators, symbol classes and microlocalization, one can see \cite{Zwbook}, \cite{DimSjo} for instance.
	\subsubsection{Integral representation formula of Fourier Integral Operators}
	Let us recall some facts about the local theory of Fourier Integral Operators. Consider $\Omega_0$ and $\Omega_1$ two open sets of $T^*\R^d=\R^{2d}$ small enough, a symplectomorphism $F \colon \Omega_0 \to \Omega_1$ and two points $\rho_0\in \Omega_0$, $\rho_1=F(\rho_0)$. We define the twisted graph of $F$
	\begin{equation*} 
		\text{Gr}'(F)\coloneq\Bigl\{(X^1,\Xi^1;X^0,-\Xi^0),\ (X^1,\Xi^1)=F(X^0,\Xi^0),\ (X^0,\Xi^0)\in \Omega_0\Bigr\}\subset \Omega_1\times \Omega_0.
	\end{equation*} 
	We want to represent this graph in $(X^1,\Xi^0)$ coordinates. However, the projection 
	\begin{equation*} 
		(X^1,\Xi^1,X^0,\Xi^0) \in \text{Gr}'(F) \mapsto (X^1,\Xi^0)
	\end{equation*} 
	might not be of full rank $2d$ near $(\rho_1,\rho_0)$ but of rank $2d-\mathpzc{k}$ (which can be assumed constant provided that $\Omega_0$, $\Omega_1$ are small enough), with $0\leq \mathpzc{k} \leq d$. Nonetheless there always exists a subset $J$ of $\llbracket 1,d\rrbracket$ of cardinal $\mathpzc{k}$ such that $\text{Gr}'(F)$ can be represented in the $(\widetilde{X^1},\overline{\Xi^1},\Xi^0)$ coordinates. Without loss of generality, let us assume that $J=\llbracket 1,\mathpzc{k}\rrbracket$ and denote the associated coordinates by 
	\begin{equation*}
		\overline{X}=(X_j)_{j\in J},\ \overline{\Xi}=(\Xi_j)_{j\in J},\ \widetilde{X}=(X_j)_{j\notin J}, \ \widetilde{\Xi}=(\Xi_j)_{j\notin J}.
	\end{equation*}
	The fact that $\text{Gr}'(F)$ can be represented in the $(\widetilde{X^1},\overline{\Xi^1},\Xi^0)$ coordinates means that there exists $\phi\in C^\infty(\R^{d-\mathpzc{k}}\times\R^{\mathpzc{k}}\times \R^{d},\R)$ such that 
	\begin{multline*}
		\text{Gr}'(F)=
		\\\Bigl\{\left(-d_{\overline{\Xi^1}} \phi\left(\widetilde{X^1},\overline{\Xi^1},\Xi^0\right), \widetilde{X^1},\overline{\Xi^1},d_{\widetilde{X^1}} \phi\left(\widetilde{X^1},\overline{\Xi^1},\Xi^0\right);d_{\overline{\Xi^0}}\phi\left(\widetilde{X^1},\overline{\Xi^1},-\Xi^0\right), \overline{\Xi^0}\right)
		,\ (\widetilde{X^1},\overline{\Xi^1},\Xi^0) \Bigr\}.
	\end{multline*}
	and 
	\begin{equation*}
		\text{ the matrix } \begin{pmatrix} \partial^2_{\Xi^0,\widetilde{X^1}} \psi , \partial^2_{\Xi^0,\overline{\Xi^1}} \psi \end{pmatrix} \text{ is invertible}.
	\end{equation*}
	Then, by introducing auxiliary variables $\Theta=(\theta_1,\dots, \theta_k)$ (which will play the role of $\overline{\Xi^1}$), we can define a generating function of the transformation $F$ in the $(X^1,\Xi^0)$ coordinates $\psi\in C^\infty(\R^d\times \R^d \times \R^\mathpzc{k})$  as follows:
	\begin{equation*}
		\psi(X^1,\Xi^0,\Theta)=\Theta\cdot \overline{X^1}+\phi\left(\widetilde{X^1},\Theta,\Xi^0\right),
	\end{equation*}
	which verifies:
	\begin{align}\label{hyp psi}
		\begin{split}
			\text{Gr}'(F)&=\Bigl\{ (X^1,d_{X^1}\psi(X^1,\Xi^0,\Theta); d_{\Xi^0} \psi(X^1,\Xi^0;\Theta), -\Xi^0),\ (X^1,\Xi^0,\Theta)\in C_\psi \Bigr\},
			\\C_\psi&=\Bigl\{(X^1,\Xi^0,\Theta),\ d_\Theta \psi(X^1,\Xi^0,\Theta)=0\Bigr\},
			\\ d^2_{\overline{X^1},\Theta} \psi&=I_{\mathpzc{k}} \text{ and } \begin{pmatrix}  \partial^2_{\Xi^0,\widetilde{X^1}} \psi, \partial^2_{\Xi^0,\Theta} \psi , \end{pmatrix} \text{ is invertible}.
		\end{split}
	\end{align}
	In the case $\mathpzc{k}=0$, we find the usual condition $\partial^2_{\Xi^0,X^1}\psi$ invertible.
	\medbreak
	Such a generating function allows for a local representation of compactly microlocalized Fourier Integral operator with an oscillating integral (see, for instance, \cite[Theorem 10.4]{Zwbook}): if $T$ is a Fourier Integral Operator quantizing $F$ microlocalized in a sufficiently small neighborhood $U_1\times U_0\subset T^*\R^d\times T^* \R^d$ then
	\begin{multline*} 
		Tu(X^1)=
		\\\frac{1}{(2\pi h)^{(\mathpzc{k}+2d)/2}} \int_{\R^\mathpzc{k}} \int_{\R^{2d}} e^{\frac{i}{h} \bigl(\psi(X^1,\Xi^0,\Theta)-X^0.\Xi^0\bigr)} a(X^1,\Xi^0,\Theta) u(X^0) \d X^0 \d \Xi^0 \d \Theta+O(h^\infty)_S \|u\|_{H^{-M}},
	\end{multline*} 
	for any $M$, where the $O$ is with respect to any Schwartz semi-norm. 
	\newline Here, $a\in C^\infty_c(\R^{2d+\mathpzc{k}})$ admits an expansion in powers of $h$. If we also assume that $T$ is microlocally unitary on some compact region $A$ of the phase space, then the leading term $a_0$ verifies  
	\begin{equation}\label{a0}
		\bigl|a_0(X^1,\Xi^0,\Theta)\bigr|^2=\left|\det \begin{pmatrix}  \partial^2_{\Xi^0,\widetilde{X^1}} \psi, \partial^2_{\Xi^0,\Theta} \psi , \end{pmatrix}\right|\coloneq D_\psi(X^1,\Xi^0,\Theta) \text{ on }A.
	\end{equation}
	\subsubsection{Metaplectic operators}
	Metaplectic operators are a special type of Fourier Integral Operators whose phases $\psi$ are quadratic forms. It can be shown that quantizations of linear symplectomorphisms $\kappa$ are an example of such operators. More precisely, the metaplectic group is a double-covering of the symplectic group $\text{Sp}_{2d}(\R)$. As a consequence, a metapletic operator associated with a linear symplectomorphism is defined up to a phase. We will denote such a metaplectic operator by $\mathcal{M}_h(F)$ (or $\mathcal{M}(F)$) and perform computations up to a global phase when it is involved.
	\medbreak
	They are simple Fourier Integral Operators encoding a non-trivial dynamic. 
	The best way to understand them is by looking at their actions on coherent states, see section \ref{elementary} or proposition \ref{Metap_phi_0} for more details.
	\begin{Example}
		\begin{itemize}
			\item The $h-$Fourier transform is an example of metaplectic operator, it quantizes the linear symplectic map \begin{equation*} J=\begin{pmatrix}
					0 & I_d \\ -I_d & 0
				\end{pmatrix}.
			\end{equation*} 
			Partial Fourier transform are another example of such operators
			\item Let $A$ be a linear invertible transformation of $\R^{d}$, then $u\mapsto |\det A^{-1}|^{1/2}u(A^{-1}x)$ is a metaplectic operator associated with the linear symplectomorphism 
			\begin{equation*} 
				\kappa=\begin{pmatrix}
					A & 0
					\\ 0 & (A^T)^{-1}
				\end{pmatrix}.
			\end{equation*} 
			\item Let $A$ be a real symmetric matrix, then $u\mapsto e^{i Ax\cdot x/2} u$ is a metaplectic operator associated with the linear symplectomorphism
			\begin{equation*} \kappa=\begin{pmatrix}
					I_d & 0
					\\ A & I_d
				\end{pmatrix}.
			\end{equation*} 
		\end{itemize}
	\end{Example}
	\begin{Remark}
		In fact, these transformations generate the metaplectic group, see \cite[Section 7]{De_Gosson}.
	\end{Remark}
	\medbreak
	An explicit formula can be used to link the phase $\psi$ and $\kappa$ assuming the invertibility of the coefficient $\partial^2_{X,\Xi}\psi$ (i.e. when no auxiliary variable $\Theta$ is needed):
	\begin{equation}\label{kappa psi}
		\kappa=\begin{pmatrix}
			(\partial^2_{X,\Xi}\psi)^{-1} & -(\partial^2_{X,\Xi} \psi)^{-1} \partial^2_{\Xi,\Xi}\psi
			\\ \partial^2_{X,X}\psi(\partial^2_{X,\Xi}\psi)^{-1} & (\partial^2_{X,\Xi} \psi)^T-\partial^2_{X,X} \psi (\partial^2_{X,\Xi} \psi)^{-1} \partial^2_{\Xi,\Xi} \psi
		\end{pmatrix}.
	\end{equation}
	In the general case with auxiliary variable $\Theta$, the formula is more involved.
	\medbreak
	Another useful fact is the intertwining with Weyl-Heisenberg operators $\hat{T}(\rho)$ defined in (\ref{T rho}):
	\begin{Proposition}\label{intertwining}
		Let $\rho\in \R^{2d}$ and $\kappa$ be a linear symplectomorphism then,
		\begin{equation*} 
			\mathcal{M}(\kappa)\hat{T}(\rho)=\hat{T}(\kappa(\rho))\mathcal{M}(\kappa).
		\end{equation*} 
	\end{Proposition}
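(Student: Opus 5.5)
The plan is to write $\hat{T}(\rho)$ as the exponential of the Weyl quantization of a linear symbol, and then use exact Egorov for metaplectic operators. For $\rho=(q,p)$, set
\begin{equation*}
\ell_\rho(x,\xi)\coloneq p\cdot x-q\cdot\xi=\sigma\bigl(\rho,(x,\xi)\bigr),
\end{equation*}
where $\sigma$ is the standard symplectic form. The symbol $\ell_\rho$ is linear, so its Weyl quantization is $\ell_\rho^w=p\cdot x-q\cdot hD_x$. The first step is to check that
\begin{equation*}
\hat{T}(\rho)=\exp\bigl(\tfrac{i}{h}\ell_\rho^w\bigr).
\end{equation*}
This follows from the Baker--Campbell--Hausdorff formula: the commutator $[p\cdot x,\,-q\cdot hD]=-ih\,q\cdot p$ is a scalar, so
\begin{equation*}
e^{\frac{i}{h}(p\cdot x-q\cdot hD)}=e^{-\frac{i}{2h}q\cdot p}\,e^{\frac{i}{h}p\cdot x}\,e^{-iq\cdot D}.
\end{equation*}
Since $e^{-iq\cdot D}u(x)=u(x-q)$, this is exactly the formula (\ref{T rho}). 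Equivalently, $\hat{T}(\rho)$ is the time-one propagator of the linear Hamiltonian $-\ell_\rho$, in the spirit of section \ref{elementary}.

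The second step is the exact Egorov property of metaplectic operators: for every linear symbol $a$,
\begin{equation*}
\mathcal{M}(\kappa)\,a^w\,\mathcal{M}(\kappa)^*=(a\circ\kappa^{-1})^w.
\end{equation*}
I would prove this on the generators of the metaplectic group listed in the Example above, as allowed by the Remark that follows it. For the $h$-Fourier transform, conjugation sends $x\mapsto -hD$ and $hD\mapsto x$. For the change of variables $u\mapsto|\det A|^{-1/2}u(A^{-1}x)$, conjugation sends $x\mapsto A^{-1}x$ and $hD\mapsto A^{T}hD$. For multiplication by $e^{iAx\cdot x/(2h)}$, conjugation sends $hD\mapsto hD-Ax$ and leaves $x$ fixed. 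In each case one matches these with $\kappa^{-1}$ for the corresponding $\kappa$ in the Example. The identity then extends to arbitrary $\kappa$ by composing, since both sides are multiplicative in $\kappa$. The global phase ambiguity of $\mathcal{M}(\kappa)$ cancels in the conjugation, so it causes no trouble.

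Finally, conjugation by a unitary commutes with the functional calculus of the self-adjoint operator $\ell_\rho^w$. Hence
\begin{equation*}
\mathcal{M}(\kappa)\hat{T}(\rho)\mathcal{M}(\kappa)^*=\exp\bigl(\tfrac{i}{h}(\ell_\rho\circ\kappa^{-1})^w\bigr).
\end{equation*}
By symplectic invariance of $\sigma$,
\begin{equation*}
\ell_\rho\circ\kappa^{-1}(X)=\sigma(\rho,\kappa^{-1}X)=\sigma(\kappa\rho,X)=\ell_{\kappa\rho}(X),
\end{equation*}
so the right-hand side equals $\hat{T}(\kappa\rho)$. Multiplying on the right by $\mathcal{M}(\kappa)$ gives the claim, with exact equality and no phase.

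The only delicate point is the sign and convention bookkeeping: whether conjugation by $\mathcal{M}(\kappa)$ pulls symbols back by $\kappa$ or by $\kappa^{-1}$, and the sign convention for $\sigma$. I would fix these by checking the case of a linear change of variables directly against the formula $\Gamma_1=(C+D\Gamma_0)(A+B\Gamma_0)^{-1}$ of (\ref{GammaKappa}), which pins down the convention that $\mathcal{M}(\kappa)$ transports phase-space points by $\kappa$.
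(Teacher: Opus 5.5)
Your proof is correct, and there is no proof in the paper to compare it with: the paper quotes this proposition as a standard fact of the metaplectic calculus. In much of the literature the intertwining relation even characterizes $\mathcal{M}(\kappa)$ up to a phase, by irreducibility of the Weyl--Heisenberg representation. The paper later relies on the same key step you use, the exact Egorov property of metaplectic operators, in the proof of Proposition \ref{iter iso}.

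Your route is a legitimate self-contained proof. It consists of writing $\hat{T}(\rho)=e^{\frac{i}{h}L_\rho^w}$ with $L_\rho(x,\xi)=p\cdot x-q\cdot\xi$, conjugating by $\mathcal{M}(\kappa)$ via Egorov, and using $L_\rho\circ\kappa^{-1}=L_{\kappa\rho}$. Your BCH phase $e^{-\frac{i}{2h}q\cdot p}$ is correct. Your three generator checks match the pairs in the paper's Example, and so fix the convention that $\mathcal{M}(\kappa)$ transports points by $\kappa$.

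Three small remarks.

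\textbf{The sign of $\sigma$.} The convention you flag is immaterial. Both $\pm\sigma$ are preserved by a symplectic $\kappa$, so only the choice between $\kappa$ and $\kappa^{-1}$ matters, and your generator checks already settle it.

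\textbf{Unbounded operators.} The BCH and functional-calculus steps need a word of justification. The cleanest way is to check directly that $t\mapsto\hat{T}(t\rho)$ is a strongly continuous unitary group; the phase $e^{-\frac{i}{2h}t^2q\cdot p}$ is exactly what makes it a group. Its generator on $\mathcal{S}(\R^d)$ is $\frac{i}{h}L_\rho^w$. Since $\mathcal{M}(\kappa)$ maps $\mathcal{S}$ onto itself, the groups $\mathcal{M}(\kappa)\hat{T}(t\rho)\mathcal{M}(\kappa)^*$ and $\hat{T}(t\kappa\rho)$ have skew-adjoint generators that agree on the invariant dense subspace $\mathcal{S}$. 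That subspace is a core, so the two groups coincide.

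\textbf{From generators to all $\kappa$.} This passage uses that $\kappa\mapsto\mathcal{M}(\kappa)$ is a projective representation. Granting that, you could bypass Egorov and verify the intertwining itself on each generator. For instance, for $u\mapsto|\det A|^{-1/2}u(A^{-1}x)$ a one-line computation gives $\hat{T}(Aq,A^{-T}p)$, because $Aq\cdot A^{-T}p=q\cdot p$. You would then compose. This is slightly more elementary, while your route has the advantage of exhibiting the underlying mechanism $L_\rho\circ\kappa^{-1}=L_{\kappa\rho}$.
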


	\subsection{Coherent states}
	\subsubsection{Usual coherent states}\label{coherent_state}
	We start by defining the semiclassical coherent state centered at $0$ by:
	\begin{equation*} 
		\varphi_0(x)=\frac{1}{(\pi h)^{d/4}} e^{-\|x\|^2/(2h)}.
	\end{equation*} 
	It is often used to describe a particle located at position $0$ and momentum $0$ as it verifies $\text{WF}_h(\varphi_0)=\{(0,0)\}$.
	\medbreak
	Then we can define the coherent state centered at $\rho=(q,p)\in \R^{2d}$ thanks to Weyl-Heisenberg operators (\ref{T rho}):
	\begin{equation*} 
		\varphi_\rho(x)\coloneq\hat{T}(\rho)\varphi_0(x)=\frac{e^{-ip\cdot q/(2h)}}{(\pi h)^{d/4}}\exp\left(i\frac{p\cdot x}{h}\right) \exp\left(-\frac{\|x-q\|^2}{2h})\right) .
	\end{equation*} 
	It will also be useful to zoom to the microscopic scale for proofs. To do so, we introduce the unitary scaling operator $\Lambda_h$ defined by 
	\begin{equation*} 
		\Lambda_hu(x)=h^{-1/4} u\left(\frac{x}{\sqrt{h}}\right), \text{ for } u\in L^2(\R^d),
	\end{equation*} 
	and we notice that
	\begin{equation*} 
		\varphi_0=\Lambda_h \Psi_0,
	\end{equation*} 
	where $\Psi_0(x)=\frac{1}{\pi^{1/4}} e^{-x^2/2}$ is the usual Gaussian.
	Similarly, we can also change the scaling of metaplectic operators using $\Lambda_h$:
	\begin{equation*} 
		\mathcal{M}_h(\kappa)\Lambda_h=\Lambda_h \mathcal{M}_1(\kappa).
	\end{equation*} 
	\medbreak
	We end this part by explaining how we can turn the properties of operators acting on coherent states into properties of operators acting on general $L^2\left(\R^d\right)$ states.
	\begin{definition}
		Let us define the Fourier-Bargmann transform. For $u\in L^2\left(\R^d\right)$, we set:
		\begin{equation*} 
			u^\#(\rho)=(2\pi h)^{-d/2} \langle u, \varphi_\rho\rangle, \quad \rho\in \R^{2d}.
		\end{equation*} 
	\end{definition}
	
	\begin{Proposition}\cite[Proposition 4]{Comb_Rob}
		The Fourier-Bargmann transform $u\mapsto u^\#$ is an isometry between $L^2\left(\R^d\right)$ and $L^2(\R^{2d})$.
	\end{Proposition}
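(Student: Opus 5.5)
The plan is to exhibit the Fourier–Bargmann transform as the inner products of $u$ against the normalized family $\varphi_\rho$, and derive the isometry from the resolution of the identity (\ref{resol identity}). This is equivalent to the Plancherel formula. First, for $u\in\mathcal{S}(\R^d)$, I will write out the transform explicitly. With $\rho=(q,p)$, the definition of $\varphi_\rho$ gives
\begin{equation*}
u^\#(q,p)=(2\pi h)^{-d/2}(\pi h)^{-d/4}e^{ip\cdot q/(2h)}\int u(x)e^{-\frac{\|x-q\|^2}{2h}}e^{-\frac{i}{h}p\cdot x}\,\d x .
\end{equation*}
For fixed $q$, this is, up to the unimodular factor $e^{ip\cdot q/(2h)}$, the $h$-Fourier transform $\mathcal{F}_h$ in the variable $x$ of the function $g_q(x)=(\pi h)^{-d/4}u(x)e^{-\|x-q\|^2/(2h)}$. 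Here I use the normalization $\mathcal{F}_hf(p)=(2\pi h)^{-d/2}\int f(x)e^{-ip\cdot x/h}\,\d x$, which is unitary on $L^2(\R^d)$.

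Second, I will apply Plancherel in $p$ for each fixed $q$:
\begin{equation*}
\int |u^\#(q,p)|^2\,\d p=\int |g_q(x)|^2\,\d x=(\pi h)^{-d/2}\int |u(x)|^2 e^{-\|x-q\|^2/h}\,\d x .
\end{equation*}
Third, I will integrate in $q$ and use Fubini, which is legitimate since the integrand is nonnegative. Because $\int e^{-\|x-q\|^2/h}\,\d q=(\pi h)^{d/2}$, this gives $\|u^\#\|_{L^2(\R^{2d})}^2=\|u\|_{L^2(\R^d)}^2$.

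Finally, I will extend from $\mathcal{S}$ to all of $L^2$ by density. The map $u\mapsto u^\#$ is well defined pointwise for $u\in L^2$, since $\varphi_\rho\in L^2$. It is also continuous for pointwise convergence, because $|u^\#(\rho)|\le (2\pi h)^{-d/2}\|u\|$. So the isometric extension from $\mathcal{S}$ coincides with the pointwise definition, by Fatou or by uniqueness of the limit.

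The only delicate point is keeping the normalizing constants consistent: the factor $(2\pi h)^{-d/2}$ in the definition of $u^\#$ must combine with the Gaussian $L^1$ norm. I will check this directly in the calculation above rather than quote (\ref{resol identity}). That identity is really the polarized version of the same statement, $\langle u,v\rangle=\langle u^\#,v^\#\rangle$, and it follows by polarization once the isometry is proved.
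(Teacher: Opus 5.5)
Your proof is correct: for fixed $q$, $u^\#(q,\cdot)$ is, up to the unimodular factor $e^{ip\cdot q/(2h)}$, the unitary $h$-Fourier transform of $g_q$, so Plancherel in $p$, then Fubini and $\int e^{-\|x-q\|^2/h}\,\d q=(\pi h)^{d/2}$ give $\|u^\#\|_{L^2(\R^{2d})}=\|u\|_{L^2(\R^d)}$ (the density step can even be dropped, since $g_q\in L^1\cap L^2$ for every $u\in L^2$). The paper gives no proof of its own and only cites Combescure--Robert; your Plancherel-then-Fubini computation is the standard proof of that cited fact (the Gaussian-window case of the orthogonality relation for the short-time Fourier transform), so there is nothing to add.
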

	
	\begin{Proposition}\cite[Section 1.2.3]{Comb_Rob}
		$(\varphi_{\rho})_{\rho\in \R^{2d}}$ is an over-complete system of $L^2\left(\R^d\right)$ which means that for all $u\in L^2\left(\R^d\right),$
		\begin{equation*}  
			u=(2\pi h)^{-d} \int_{\R^{2d}} \langle u, \varphi_\rho\rangle \varphi_\rho \d \rho.
		\end{equation*} 
	\end{Proposition}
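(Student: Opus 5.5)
The statement to be proved is the resolution of the identity by coherent states, i.e. that $u=(2\pi h)^{-d}\int\langle u,\varphi_\rho\rangle\varphi_\rho\,\d\rho$ in $L^2(\R^d)$, with the integral understood weakly. The plan is to deduce it from the isometry property of the Fourier--Bargmann transform stated just before, by polarization. For $u,v\in L^2(\R^d)$ the isometry gives $\langle u,v\rangle=\langle u^\#,v^\#\rangle_{L^2(\R^{2d})}$, since a linear isometry between Hilbert spaces preserves inner products. Writing out the right-hand side,
\begin{equation*}
\langle u^\#,v^\#\rangle=(2\pi h)^{-d}\int_{\R^{2d}}\langle u,\varphi_\rho\rangle\overline{\langle v,\varphi_\rho\rangle}\,\d\rho=\Bigl\langle (2\pi h)^{-d}\int_{\R^{2d}}\langle u,\varphi_\rho\rangle\varphi_\rho\,\d\rho,\ v\Bigr\rangle .
\end{equation*}
The second equality is the definition of the weak (Pettis) integral $w:=(2\pi h)^{-d}\int\langle u,\varphi_\rho\rangle\varphi_\rho\,\d\rho$. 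This integral is well defined because $\rho\mapsto\langle u,\varphi_\rho\rangle\overline{\langle v,\varphi_\rho\rangle}$ is integrable, being a product of two $L^2(\R^{2d})$ functions. Moreover $v\mapsto\int\langle u,\varphi_\rho\rangle\overline{\langle v,\varphi_\rho\rangle}\,\d\rho$ is a bounded conjugate-linear functional, so $w$ exists by the Riesz representation theorem. Since $\langle u,v\rangle=\langle w,v\rangle$ for every $v$, we get $w=u$.

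Should one prefer not to rely on the cited isometry, the plan is to prove it directly by first reducing to $h=1$. This reduction uses the scaling operator $\Lambda_h$ together with $\varphi_{\rho}=\Lambda_h\,\hat T_1(\rho/\sqrt h)\Psi_0$, followed by the change of variables $\rho\mapsto\sqrt h\rho$; that change of variables is exactly what accounts for the factor $h^{-d}$. At $h=1$, with $\rho=(q,p)$, one computes $\langle u,\varphi_{q,p}\rangle=e^{iqp/2}\,\mathcal F\bigl(u\,\overline{\Psi_0(\cdot-q)}\bigr)(p)$, where $\mathcal F$ is the (unnormalized) Fourier transform. Plancherel in the $p$ variable then gives $\int|\langle u,\varphi_{q,p}\rangle|^2\,\d p=2\pi\int|u(x)|^2|\Psi_0(x-q)|^2\,\d x$. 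Integrating in $q$ and using $\|\Psi_0\|_{L^2}=1$ yields $(2\pi)^d\|u\|^2$, which is the isometry for $h=1$.

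The only delicate point is making sense of the integral, whose integrand is not Bochner integrable for general $u$. This is why the identity is stated weakly, or equivalently as the limit in $L^2$ of integrals over the balls $|\rho|\le R$ as $R\to\infty$. Both readings follow from the computation above: the weak statement directly, and the truncated one by dominated convergence applied to $\|u^\#\mathbb 1_{|\rho|>R}\|_{L^2}$.
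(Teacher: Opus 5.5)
Your proof is correct. It is essentially the standard argument of the cited source, on which the paper relies without giving a proof (it states the result right after the isometry of $u\mapsto u^\#$): prove the isometry via Plancherel in $p$ and integration in $q$, then polarize to obtain the identity in the weak sense. The only slip is cosmetic: in dimension $d$, Plancherel gives $\int|\langle u,\varphi_{q,p}\rangle|^2\,\d p=(2\pi)^d\int|u(x)|^2|\Psi_0(x-q)|^2\,\d x$, not a factor $2\pi$, and $(2\pi)^d$ is indeed the constant you use in the next step.
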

	\begin{Remark}
		This tells us that one can obtain $u$ from knowledge of $u^\#$. The process is quite similar to Fourier's transform, but represents the function with $2d$ variables (i.e. in phase space) rather than $d$ variables (i.e. in frequency domain).
	\end{Remark}
	
	Recall that the function $\varphi_0$ introduced above corresponds to the ground state of the harmonic oscillator $-h^2 \Delta+x^2$. The other eigenfunctions of this operator will also have an important role in our computations, they are called excited states and can be obtained by applying creation operators $a_j^\dagger=\frac{1}{\sqrt{2h}}(-h\partial_{x_j}+x_j)$ a certain number of times:
	\begin{equation*} 
		\varphi_{0}^{\gamma}\coloneq (\textbf{a}^\dagger)^\gamma\varphi_0 = (a_1^{\dagger})^{\gamma_1}\dots (a_d^\dagger)^{\gamma_d} \varphi_0=H_\gamma\left(x/\sqrt{h}\right)\varphi_0, \quad \gamma\in\N^d,
	\end{equation*} 
	where $H_\gamma$ is a product of Hermite polynomials of total degree $|\gamma|$.
	\medbreak
	Considering linear combination of these states, suggest introducing, for $P\in \C[X]$,
	\begin{equation*} 
		\varphi_0^{(P)}(x)\coloneq P\left(x/\sqrt{h}\right)\varphi_0,
	\end{equation*} 
	that we will still refer to as an excited state.
	\newline
	Similarly, we define the excited state centered at point $\rho$ as $\varphi_{\rho}^{(P)}=\hat{T}(\rho)\varphi_{0}^{(P)}$.
	\medbreak
	These excited states will let us express the action of a pseudodifferential operator on a coherent state as a semiclassical expansion in excited states centered at the same point:
	
	\begin{Proposition}\cite[Lemma 14]{Comb_Rob}\label{cas_iso}
		Assume that $a\in S(m)$, then for every $N\geq 1$, we have 
		\begin{equation*} 
			\Op (a) \varphi_{\rho}= \sum_{|\gamma|\leq N} h^{|\gamma|/2} c_{\gamma} \varphi_{\rho}^{(X^\gamma)}+O_{L^2}\left(h^{(N+1)/2}\right),
		\end{equation*} 
		with $c_{\gamma}$ involving derivatives of $a$ at point $\rho$ and the estimate on the remainder being uniform in $L^2\left(\R^d\right)$ norm for $\rho$ in a compact of $\R^{2d}$.
	\end{Proposition}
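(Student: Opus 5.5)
The statement to prove is Proposition \ref{cas_iso}, the expansion of $\Op(a)\varphi_\rho$ in excited states centered at $\rho$. The plan is to reduce to $\rho=0$ and to the microscopic scale, and then Taylor expand the symbol.

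\textbf{Reduction to $\rho=0$.} By the exact Weyl covariance of the quantization, $\hat{T}(\rho)^*\Op(a)\hat{T}(\rho)=\Op(a(\cdot+\rho))$. It therefore suffices to study $\Op(a_\rho)\varphi_0$ with $a_\rho(X)=a(\rho+X)$, and then apply $\hat{T}(\rho)$, which is unitary and maps $\varphi_0^{(P)}$ to $\varphi_\rho^{(P)}$.

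\textbf{Rescaling.} Next, conjugate by $\Lambda_h$. One has $\Lambda_h^*\Op(b)\Lambda_h=\text{Op}_1^w(b(\sqrt{h}\,\cdot))$, so
\[
\Lambda_h^*\Op(a_\rho)\varphi_0=\text{Op}_1^w\bigl(a(\rho+\sqrt{h}X)\bigr)\Psi_0 .
\]

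\textbf{Taylor expansion.} Expand $a(\rho+\sqrt{h}X)$ at order $N$:
\[
a(\rho+\sqrt{h}X)=\sum_{|\beta|\le N}\frac{h^{|\beta|/2}}{\beta!}\,\partial^\beta a(\rho)\,X^\beta+h^{(N+1)/2}R_N(X),
\]
where $R_N(X)=\sum_{|\beta|=N+1}X^\beta\int_0^1 c_\beta(1-s)^N\,\partial^\beta a(\rho+s\sqrt{h}X)\,\d s$.

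For the polynomial part, the Weyl quantization of a monomial $x^{\beta_1}\xi^{\beta_2}$ is a symmetrized product of the operators $x$ and $-i\partial_x$. Applied to $\Psi_0$, it gives $Q(x)\Psi_0$ with $Q$ a polynomial of degree at most $|\beta|$. Undoing the scaling, this becomes exactly $\varphi_0^{(Q)}$. Regrouping the monomials yields the terms $h^{|\gamma|/2}c_\gamma\varphi_\rho^{(X^\gamma)}$, where the $c_\gamma$ are linear combinations of derivatives of $a$ at $\rho$. Note that $h$-powers are not mixed: each monomial of degree $|\beta|$ produces a polynomial of parity $|\beta|$. To exhibit the exact form $h^{|\gamma|/2}c_\gamma X^\gamma$, one may simply absorb lower-degree terms into the coefficients $c_\gamma$, since the statement only asks for some expansion with $c_\gamma$ built from derivatives.

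\textbf{Remainder (the main point).} It remains to show that $\|\text{Op}_1^w(R_N)\Psi_0\|_{L^2}$ is bounded uniformly in $h\le 1$ and in $\rho$ in a compact set. The symbol $R_N$ grows polynomially in $X$, but its $X$-derivatives are controlled uniformly. Indeed, $\partial_X^\alpha$ of $\partial^\beta a(\rho+s\sqrt{h}X)$ produces factors $h^{|\alpha|/2}$ and is bounded by $C\,m(\rho+s\sqrt hX)$. For an order function, $m(\rho+Y)\le C\langle Y\rangle^{M}m(\rho)$, so
\[
\bigl|\partial_X^\alpha R_N(X)\bigr|\le C_\alpha\langle X\rangle^{N+1+M}
\]
with $C_\alpha$ uniform in $h$. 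Hence $R_N\in S(\langle X\rangle^{N+1+M})$ uniformly. A Weyl operator with such a symbol maps the Schwartz space continuously: for instance, $\langle x\rangle^{-k}\langle D\rangle^{-k}\circ\text{Op}_1^w(R_N)$ is bounded on $L^2$ by Calderón–Vaillancourt with symbolic calculus. Therefore $\|\text{Op}_1^w(R_N)\Psi_0\|_{L^2}\le C$ uniformly, which gives the $O_{L^2}(h^{(N+1)/2})$ bound after rescaling back. The only delicate bookkeeping is this uniformity in $h$ and $\rho$, and it follows from the order-function property combined with compactness of the set of $\rho$.
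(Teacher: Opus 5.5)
Your argument is correct and follows the standard proof of the cited Combescure--Robert lemma, which the paper quotes without proving: Weyl covariance under $\hat{T}(\rho)$, rescaling by $\Lambda_h$, Taylor expansion of $a(\rho+\sqrt{h}X)$, and an $h$-uniform bound on $\|\text{Op}_1^w(R_N)\Psi_0\|_{L^2}$ by finitely many seminorms of $R_N$ in $S(\langle X\rangle^{N+1+M})$ (the same weighted estimate the paper later imports from Vacossin). Two small fixes are needed.

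\begin{itemize}
\item[(i)] The weight must sit on the right. Use that $\text{Op}_1^w(R_N)\langle D\rangle^{-k}\langle x\rangle^{-k}$ is bounded for $k\geq N+1+M$, together with $\langle x\rangle^{k}\langle D\rangle^{k}\Psi_0\in L^2$. With the weight on the left, as you wrote it, the bound only controls a weaker weighted norm of $\text{Op}_1^w(R_N)\Psi_0$, unless you apply it to $\overline{R_N}$ and take adjoints.
\item[(ii)] Absorbing lower-degree monomials into $c_\gamma$ makes $c_\gamma$ polynomial in $h$. It is cleaner to keep the terms as $\frac{h^{|\beta|/2}}{\beta!}\partial^\beta a(\rho)\,\hat{T}(\rho)\Lambda_h\text{Op}_1^w(X^\beta)\Psi_0$.
\end{itemize}
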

	\subsubsection{Action of metaplectic operators on coherent states}
	Let us start by propagating our states when the underlying dynamics is linear: compute the action of metaplectic operators. It will turn out that, in this setting, we can compute the result exactly.
	\medbreak
	Using proposition \ref{intertwining} on the intertwining between Weyl-Heisenberg and metaplectic operators, we can reduce our study to coherent states centered at $0$.
	\medbreak
	\paragraph{Propagation of the ground state}
	\begin{Proposition}\cite[Chapter 3]{Comb_Rob}\label{Metap_phi_0}
		Let $\kappa=\begin{pmatrix}
			A & B \\ C & D
		\end{pmatrix}\in Sp(n)$ be a symplectic linear map and $\mathcal{M}_h(\kappa)$ be a metaplectic operator associated with $\kappa$ and define $M\coloneq A+iB, N\coloneq C+iD$.
		\newline 
		Then, $\det M\neq 0$ and 
		\begin{equation*} 
			\mathcal{M}_h(\kappa)\varphi_0(x)=(\pi h)^{-n/4} \left|\det (\Im(\Gamma_\kappa))\right|^{1/4} e^{i \frac{x\cdot\Gamma_\kappa x}{2h}}
		\end{equation*} 
		where $\Gamma_\kappa=NM^{-1}=(C+iD)(A+iB)^{-1}$. Moreover $\Gamma_\kappa$ is a complex symmetric matrix verifying
		\begin{equation*} 
			\Im(\Gamma_\kappa)=M^{-T} \overline{M}\phantom{}^{-1}
		\end{equation*} 
		which is positive definite. We say that $\Gamma_\kappa$ is a matrix in Siegel's space and write $\Gamma_\kappa \in S\mathbb{H}_d$.
	\end{Proposition}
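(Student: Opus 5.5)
The plan is to reduce to the rescaled picture and compute with the Schwartz kernel of the metaplectic operator directly. By the relation $\mathcal{M}_h(\kappa)\Lambda_h=\Lambda_h\mathcal{M}_1(\kappa)$ and $\varphi_0=\Lambda_h\Psi_0$, it suffices to treat $h=1$ and then substitute $x\mapsto x/\sqrt{h}$. This substitution produces exactly the factors $(\pi h)^{-n/4}$ and $e^{ix\cdot\Gamma_\kappa x/(2h)}$.

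Since $\kappa$ is only defined up to the metaplectic double cover and we work up to a global phase, I would use the fact (Remark after the Example) that $\mathrm{Sp}_{2n}(\R)$ is generated by three kinds of maps:
\begin{itemize}
\item the rotation $J$ (the $h$-Fourier transform);
\item the dilations $\begin{pmatrix}A&0\\0&A^{-T}\end{pmatrix}$;
\item the shears $\begin{pmatrix}I&0\\S&I\end{pmatrix}$ with $S$ real symmetric.
\end{itemize}
Define the class $\mathcal{G}=\{c\,e^{ix\cdot\Gamma x/2},\ \Gamma\in S\mathbb{H}_n\}$. The key claim is the following \emph{group-action statement}:
\[
\mathcal{M}(\kappa)\bigl(e^{ix\cdot\Gamma x/2}\bigr)=c\,e^{ix\cdot\kappa\star\Gamma\, x/2},\qquad \kappa\star\Gamma=(C+D\Gamma)(A+B\Gamma)^{-1}.
\]
I would check this on each generator:
\begin{itemize}
\item For a shear, the multiplication operator gives $\Gamma\mapsto\Gamma+S$, which agrees with the formula.
\item For a dilation, $u\mapsto|\det A|^{-1/2}u(A^{-1}x)$ gives $\Gamma\mapsto A^{-T}\Gamma A^{-1}$, again agreeing.
\item For $J$ (where $A=D=0$, $B=I$, $C=-I$), the Fourier transform of a complex Gaussian gives $\Gamma\mapsto-\Gamma^{-1}$. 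The Gaussian integral is justified because $\Im\Gamma>0$ implies $\Gamma$ is invertible, and $\Im(-\Gamma^{-1})=\overline{\Gamma}^{-1}\,\Im\Gamma\,\Gamma^{-1}>0$.
\end{itemize}
Because $\star$ is a group action (the standard Möbius-type cocycle computation), the claim extends to products of generators. Taking $\Gamma_0=iI$ gives $\Gamma_\kappa=(C+iD)(A+iB)^{-1}=NM^{-1}$, once we know $M=A+iB$ is invertible.

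Next I establish the algebraic facts. Symplecticity gives $A^TC=C^TA$, $B^TD=D^TB$ and $A^TD-C^TB=I$. If $Mv=0$, then
\[
0=\Im\bigl(\overline{Mv}^{\,T}Nv\bigr)=\tfrac12\,\bar v^{\,T}\bigl(M^T\ \text{vs}\ N\ \text{symplectic pairing}\bigr)v,
\]
which, using these identities, reduces to $\bar v^{\,T}v=0$. Hence $\det M\neq0$. Symmetry of $\Gamma_\kappa$ follows from $M^TN=N^TM$, which is again a direct consequence of the symplectic relations. For the imaginary part I compute
\[
\overline{M}^{\,T}N-\overline{N}^{\,T}M=2iI,
\]
which in turn gives
\[
\Gamma_\kappa-\overline{\Gamma_\kappa}=M^{-T}\bigl(M^TN-\dots\bigr)\overline{M}^{-1}=2i\,M^{-T}\overline{M}^{-1}.
\]
So $\Im\Gamma_\kappa=M^{-T}\overline{M}^{-1}$. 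Since $\Gamma_\kappa$ is symmetric this equals $(\overline{M}^{-1})^{*}\cdots$, i.e.\ it is of the form $W^*W$ with $W=\overline{M}^{-1}$, hence positive definite.

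Finally the modulus of the constant $c$ is fixed by unitarity of $\mathcal{M}(\kappa)$. We have $\|\varphi_0\|=1$, and the $L^2$ norm of $e^{ix\cdot\Gamma x/(2h)}$ is $(\pi h)^{n/4}|\det\Im\Gamma|^{-1/4}$. Together these force $|c|=(\pi h)^{-n/4}|\det\Im\Gamma_\kappa|^{1/4}$, and the remaining phase of $c$ is absorbed into the global-phase ambiguity.

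I expect the main obstacle to be the Fourier-transform step. It requires the complex Gaussian integral with a correct branch of $\det(\Gamma/i)^{-1/2}$ and verification that the composition law on $\Gamma$ is compatible with the $\star$ action (the cocycle identity). If the generator approach becomes messy, the fallback is to use the kernel representation with quadratic phase $\psi$ from the previous subsection together with (\ref{kappa psi}), and do a single Gaussian integral in $X^0$, treating the degenerate case $\det B=0$ by composing with a Fourier transform.
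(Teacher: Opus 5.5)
Your argument is correct, but it takes a different route from the one behind the statement. The paper gives no proof and quotes Combescure--Robert, where the formula is obtained dynamically. There, $\kappa$ is the endpoint of a path $\kappa_t$ generated by a time-dependent quadratic Hamiltonian. A Gaussian ansatz $a_t\,e^{ix\cdot\Gamma_t x/(2h)}$ turns the Schr\"odinger equation into a matrix Riccati equation for $\Gamma_t$ and a linear equation for $a_t$. One then checks these are solved by $\Gamma_t=(C_t+iD_t)(A_t+iB_t)^{-1}$ and $a_t\propto\det(A_t+iB_t)^{-1/2}$; this is the picture sketched in the introduction around (\ref{GammaKappa}).

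You instead factor $\kappa$ into the generators of the paper's Example, check each one on complex Gaussians, and let the group property of $\kappa\star\Gamma=(C+D\Gamma)(A+B\Gamma)^{-1}$ do the bookkeeping. This is purely algebraic and avoids realising $\kappa$ as a flow. It also gives $\det M\neq 0$ for free. The frame identity $\kappa\left(\begin{smallmatrix} I\\ \Gamma\end{smallmatrix}\right)=\left(\begin{smallmatrix} I\\ \kappa\star\Gamma\end{smallmatrix}\right)(A+B\Gamma)$, composed along the factorisation starting from $\Gamma=iI$, exhibits $A+iB$ as a product of invertible factors. Each factor is $I$, an invertible dilation matrix, or an element of $S\mathbb{H}_n$.

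What your route loses is the phase of the prefactor. Unitarity fixes only $|c|$, whereas the dynamical argument produces $\det(A+iB)^{-1/2}$ with its branch fixed by continuity along the path, which is exactly the double-cover information. Since the paper works modulo global phases, this costs nothing here. For the same reason, the branch of the square root in the Fourier step, which you flag as the main obstacle, is irrelevant: there you only need the exponent $-\Gamma^{-1}$ and a nonzero constant.

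Two lines should be written precisely.
\begin{itemize}
\item For invertibility, the identity you want is $\Im\bigl(\overline{Mv}^{\,T}Nv\bigr)=\frac{1}{2i}\,\bar v^{\,T}\bigl(\overline{M}^{\,T}N-\overline{N}^{\,T}M\bigr)v=|v|^2$, which vanishes if $Mv=0$.
\item For the imaginary part, write $\Gamma_\kappa=M^{-T}N^T$ (by symmetry) and $\overline{\Gamma_\kappa}=\overline{N}\,\overline{M}^{-1}$. The bracket is then $N^T\overline{M}-M^T\overline{N}$, which equals $2iI$ by taking the complex conjugate of your identity $\overline{M}^{\,T}N-\overline{N}^{\,T}M=2iI$.
\end{itemize}
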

	\begin{Example} 
		If $\kappa$ is diagonal, $\kappa=\begin{pmatrix}
			\lambda I_d &0 \\ 0 & \lambda^{-1}I_d
		\end{pmatrix}$ then 
		\begin{equation*} 
			\mathcal{M}_h(\kappa)\varphi_0(x)=\frac{\lambda^{-1/2}}{(\pi h)^{n/4}} e^{-\lambda^{-2}\|x\|^2/(2h)},
		\end{equation*} 
		which means that $\Gamma_\kappa\in iS_d(\R)$. 
	\end{Example}

	\paragraph{Propagation of excited states}
	We can in fact go even further and compute the propagation of excited states $\varphi_{0}^{(P)}$ rather than only the ground state $\varphi_0$.
	\medbreak
	To do so, we follow \cite{Hag4},\cite{HagBonus} 
	
	\begin{Proposition}\label{propag excited}
		Let $\kappa=\begin{pmatrix}
			A & B \\ C & D
		\end{pmatrix}\in Sp(2d)$ be a symplectic linear map and $\mathcal{M}_h(\kappa)$ be a metaplectic operator associated with $\kappa$. Then,
		\begin{equation*} 
			\mathcal{M}_h(\kappa)\varphi_{0}^{(P)}(x)=(\pi h)^{-n/4} \left|\det \Im(\Gamma_\kappa)\right|^{1/4} Q(P)\left(h^{-1/2}\Im (\Gamma_\kappa)^{1/2} x\right) e^{i\frac{x.\Gamma_\kappa x}{2h}},
		\end{equation*} 
		where $\Gamma_\kappa=(C+iD)(A+iB)^{-1}$ and $Q(P)$ polynomial of same degree as $P$, verifying $N_\infty\bigl(Q(P)\bigr)$ \newline$\leq C_{\deg(P)}N_\infty(P)$, with $N_\infty$ the sup norm of coefficients. 
	\end{Proposition}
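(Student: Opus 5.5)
The plan is to reduce to the case of the ground state via the scaling operator and then compute with creation operators. Using $\varphi_0^{(P)}=\Lambda_h \bigl(P(x)\Psi_0\bigr)$ and $\mathcal{M}_h(\kappa)\Lambda_h=\Lambda_h\mathcal{M}_1(\kappa)$, it suffices to treat $h=1$. There we write $P(x)\Psi_0$ as a finite linear combination of Hermite functions $(\mathbf{a}^\dagger)^\gamma\Psi_0$ with $|\gamma|\leq \deg P$. The change of basis between monomials and Hermite polynomials is triangular and depends only on $\deg P$, so the coefficients are bounded by $C_{\deg P}N_\infty(P)$. By linearity, it then remains to compute $\mathcal{M}_1(\kappa)(\mathbf{a}^\dagger)^\gamma\Psi_0$.

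The key step is the exact Egorov property of metaplectic operators for linear symbols. Since $a_j^\dagger$ is the Weyl quantization of the linear form $\ell_j(x,\xi)=(x_j-i\xi_j)/\sqrt2$, we have
\begin{equation*}
\mathcal{M}_1(\kappa)\,\ell_j^w\,\mathcal{M}_1(\kappa)^{-1}=(\ell_j\circ\kappa^{-1})^w .
\end{equation*}
This holds because linear symplectic conjugation is exact; one can also derive it from Proposition \ref{intertwining} by differentiating in $\rho$. Hence
\begin{equation*}
\mathcal{M}_1(\kappa)(\mathbf{a}^\dagger)^\gamma\Psi_0=\prod_j (\ell_j\circ\kappa^{-1})^w\,\mathcal{M}_1(\kappa)\Psi_0 .
\end{equation*}
By Proposition \ref{Metap_phi_0}, $\mathcal{M}_1(\kappa)\Psi_0=\pi^{-d/4}|\det\Im\Gamma_\kappa|^{1/4}e^{ix\cdot\Gamma_\kappa x/2}$. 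Each $(\ell_j\circ\kappa^{-1})^w$ has the form $\alpha\cdot x+\beta\cdot D_x$ with $\alpha,\beta$ given by the entries of $\kappa^{-1}$. Acting on $q(x)e^{ix\cdot\Gamma x/2}$, it gives $\bigl((\alpha+\Gamma\beta)\cdot x\,q-i\beta\cdot\nabla q\bigr)e^{ix\cdot\Gamma x/2}$. So we obtain a polynomial $\tilde Q$ of the same degree, which we then rewrite as $Q(\Im(\Gamma_\kappa)^{1/2}x)$. The degree is preserved: it cannot exceed $\deg P$, and the top-degree part is nonzero because $\mathcal{M}_1(\kappa)$ is injective and the lower-degree terms are orthogonal in the appropriate Hermite basis.

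The main obstacle is the uniform coefficient bound $N_\infty(Q(P))\leq C_{\deg P}N_\infty(P)$, independent of $\kappa$. For large $\kappa$, the vectors $\alpha+\Gamma\beta$ and the matrix $\Im\Gamma^{-1/2}$ are individually unbounded. To handle this, I would follow Hagedorn and use that $\mathcal{M}_1(\kappa)(\mathbf{a}^\dagger)^\gamma\Psi_0$ equals $\mathbf{A}_\kappa^{\dagger\gamma}\mathcal{M}_1(\kappa)\Psi_0$, where $\mathbf{A}^\dagger_\kappa$ are the Hagedorn raising operators. These are written with $M=A+iB$ via $\mathbf{A}^\dagger=\frac{1}{\sqrt2}(\overline M^T\! x\ldots)$. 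After the substitution $z=\Im(\Gamma_\kappa)^{1/2}x$, the raising operator becomes $U\cdot(z-\partial_z)/\sqrt2$-type up to a unitary matrix $U$. The needed identity $\Im\Gamma=M^{-T}\overline M^{-1}$, from Proposition \ref{Metap_phi_0}, is what shows $\overline M^{T}\Im(\Gamma_\kappa)^{-1/2}$ is unitary. In the $z$ variables, therefore, $Q(P)$ is obtained from $P$ by a unitary linear change of variables inside Hermite-type polynomials. Such a change acts on the finite-dimensional space of polynomials of degree $\leq\deg P$ with operator norm bounded by a constant depending only on $\deg P$. The last check is to verify this unitarity computation carefully, including the symmetric choice of square root, which may instead require a polar decomposition.
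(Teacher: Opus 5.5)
Your route is essentially the paper's: the paper gives no proof of this proposition and defers to Hagedorn's raising-operator construction, which is what you reconstruct (rescaling to $h=1$, Hermite expansion, exact Egorov for linear symbols, then uniform control of the transformed raising operators). It works, but the matrix you single out as unitary is the wrong one. With $M=A+iB$, $N=C+iD$ and $D_x=-i\nabla$, the transformed raising operators are $\mathcal{M}_1(\kappa)a_j^\dagger\mathcal{M}_1(\kappa)^{-1}=\frac{i}{\sqrt2}\bigl(\overline{N}^Tx-\overline{M}^TD_x\bigr)_j$. The coefficient of $x$ is $\overline{N}^T$, not $\overline{M}^T$.

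Using $\overline{M}^TN-\overline{N}^TM=2iI$ (symplecticity), these operators send $q\,e^{ix\cdot\Gamma_\kappa x/2}$ to $\frac{1}{\sqrt2}\bigl(2M^{-1}x\,q-\overline{M}^T\nabla q\bigr)e^{ix\cdot\Gamma_\kappa x/2}$. Set $S=\Im(\Gamma_\kappa)^{1/2}$, the symmetric positive root, and $z=Sx$. Both matrices then become $W=\overline{M}^TS=M^{-1}S^{-1}$. This $W$ is unitary because $\Im(\Gamma_\kappa)^{-1}=M\overline{M}^T$ gives $W^*W=SM\overline{M}^TS=I$.

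So the unitary matrix is $\overline{M}^T\Im(\Gamma_\kappa)^{1/2}$, not $\overline{M}^T\Im(\Gamma_\kappa)^{-1/2}$. For $d=1$ and $\kappa=\mathrm{diag}(\lambda,\lambda^{-1})$ with $\lambda>0$, the latter equals $\lambda^2$. With the symmetric square root, no polar decomposition is needed.

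Note also what happens in the $z$ variables: the $j$-th raising operator acts on polynomial parts as $\sum_k W_{jk}\frac{1}{\sqrt2}\bigl(2z_k-\partial_{z_k}\bigr)$. For complex $W$ this is not literally a substitution $z\mapsto Wz$ inside Hermite polynomials, since $WW^T\neq I$ in general. The uniform bound $N_\infty(Q(P))\leq C_{\deg P}N_\infty(P)$ still follows directly from $|W_{jk}|\leq 1$.
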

	\paragraph{Norm estimation in $H_1\bigl(\jp{\rho}^K\bigr)$}
	Here we would like to introduce norms for squeezed states that can perceive the deformation induced by $\kappa$. The first remark is that the $L^2$ norm cannot be used to do this because our states are always normalized. A natural extension is to look at weighted Sobolev spaces $H_h(\jp{\rho}^K)$, defined as follows:
	\begin{equation*} 
		H_h\left(\jp{\rho}^K\right)\coloneq\Op\left(\jp{\rho}^{-N}\right)\left(L^2\bigl(\R^d\bigr)\right)\subset \mathcal{S}'\left(\R^d\right).
	\end{equation*} 
	For $N\in \N$, we have a characterization of the elements of this space as functions $u\in \mathcal{S}'\left(\R^d\right)$ such that 
	\begin{equation*} 
		x^{\alpha} (h\partial)^\beta u\in L^2\left(\R^d\right),\ \forall |\alpha|+|\beta| \leq N,
	\end{equation*} 
	and the following norm:
	\begin{equation}\label{norm eq}
		\|u\|^2_{H_h\left(\jp{\rho}^K\right)}\coloneq \sup_{|\alpha|+|\beta|\leq N} \left\|x^\alpha (h\partial)^\beta u\right\|^2_{L^2}.
	\end{equation}
	We recall a useful boundedness property of pseudodifferential operators with appropriate symbols on those spaces:
	\begin{Proposition}\cite[Proposition 2.3]{Lucas}
		Let $K\in \mathbb{N}$, there exist $M\in \N$ and $C>0$ such that for any $a\in S_\delta\left(\jp{\rho}^K\right), \Op(a) \colon H_{h}\left(\jp{\rho}^K\right)\to L^2\left(\R^d\right)$ is uniformly bounded and 
		\begin{equation*} 
			\|\Op(a)\|_{H_{h}(\jp{\rho}^K)\to L^2\left(\R^d\right)}\leq C \sup_{|\alpha|\leq M} h^{|\alpha|/2} \left\|\jp{\rho}^{-K} \partial^\alpha a\right\|_{\infty}
		\end{equation*} 
	\end{Proposition}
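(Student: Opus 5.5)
The statement is the boundedness of $\Op(a)$ from $H_h(\jp{\rho}^K)$ to $L^2$ for $a\in S_\delta(\jp{\rho}^K)$, with a bound involving only finitely many derivatives of $a$ weighted by $h^{|\alpha|/2}$. The plan is to factor out the weight and reduce to the Calderón--Vaillancourt theorem in its $h^{1/2}$-rescaled form.

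\textbf{Step 1: factor out the weight.} Since $\jp{\rho}^{K}$ is an order function, there is an elliptic operator $\Op(b)$, $b\in S(\jp{\rho}^{K})$, such that $\|u\|_{H_h(\jp{\rho}^K)}\simeq\|\Op(b)u\|_{L^2}$ by definition of the space. For $K$ even one can take $b=\jp{\rho}^K$, or the symbol of $(1+|x|^2-h^2\Delta)^{K/2}$, consistent with the characterization (\ref{norm eq}). Its parametrix $\Op(b)^{-1}=\Op(\tilde b)$ has $\tilde b\in S(\jp{\rho}^{-K})$ for $h$ small, uniformly.

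We then write
\begin{equation*}
\Op(a)=\Op(a)\Op(\tilde b)\,\Op(b).
\end{equation*}
It therefore suffices to bound $\Op(a)\Op(\tilde b)=\Op(a\#\tilde b)$ on $L^2$.

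\textbf{Step 2: expand the Moyal product.} We expand $a\#\tilde b$ to a finite order $N$ and treat the remainder with the usual integral formula. Each term $\partial^\alpha a\,\partial^\beta \tilde b\, h^{|\alpha|}$ is controlled by $\jp{\rho}^{-K}|\partial^\alpha a|$ times bounded factors. The remainder is controlled by finitely many seminorms of $\jp{\rho}^{-K}\partial^\alpha a$, using the standard symbolic-calculus estimates in $S_\delta$ with $\delta\le 1/2$.

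\textbf{Step 3: apply Calderón--Vaillancourt in rescaled form.} Conjugating by the scaling $\Lambda_h$ turns $\Op(c)$ into $\mathrm{Op}_1^w(c(\sqrt h\,\cdot))$. The unscaled Calderón--Vaillancourt theorem then gives
\begin{equation*}
\|\Op(c)\|_{L^2\to L^2}\le C\sup_{|\alpha|\le M} h^{|\alpha|/2}\|\partial^\alpha c\|_\infty .
\end{equation*}
We apply this to $c=a\#\tilde b$. Its derivatives are bounded by $\sum h^{|\alpha|/2}\|\jp{\rho}^{-K}\partial^{\alpha}a\|_\infty$, because derivatives falling on $\tilde b$ only gain weight $\jp{\rho}^{-K}$ and bounded factors.

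\textbf{Main obstacle.} The delicate point is Step 2: making sure the Moyal expansion for an $S_\delta$ symbol with $\delta$ up to $1/2$ loses no powers of $h$. This is exactly why the bound is expressed through $h^{|\alpha|/2}\partial^\alpha a$ rather than through $S_\delta$ seminorms. I would handle it by performing the composition entirely at the rescaled level, after conjugation by $\Lambda_h$, where $a(\sqrt h\,\cdot)$ has bounded derivatives times weights and the $h=1$ composition calculus applies directly.
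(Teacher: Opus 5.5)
The paper does not prove this statement; it is imported from \cite{Lucas}, so there is no in-paper argument to compare against. Your proof is correct, and it is the standard argument: precompose with the inverse weight, conjugate by $\Lambda_h$, and apply Calder\'on--Vaillancourt at $h=1$.

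Two simplifications are worth making.

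First, the Moyal expansion of Step 2 is not needed. All you use is the seminorm continuity of $\#\colon S_{1/2}(\jp{\rho}^{K})\times S(\jp{\rho}^{-K})\to S_{1/2}(1)$, together with the fact that the $S_{1/2}(\jp{\rho}^K)$ seminorms of $a$ are exactly $\sup_{|\alpha|\le k}h^{|\alpha|/2}\|\jp{\rho}^{-K}\partial^\alpha a\|_\infty$. That continuity is precisely what your rescaled argument delivers. At $h=1$, $a(\sqrt h\,\cdot)$ lies in $S(\jp{\sqrt h X}^{K})$ with those seminorms. The weights $\jp{\sqrt h X}^{\pm K}$ are order functions with $h$-independent constants, since $\jp{\sqrt h X}\le\sqrt2\,\jp{\sqrt h Y}\jp{X-Y}$ for $h\le 1$. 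So the $h=1$ theorem $S(m_1)\#S(m_2)\subset S(m_1m_2)$ applies uniformly in $h$, because its seminorm bounds depend only on those constants.

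Second, with the paper's definition $H_h(\jp{\rho}^K)=\Op(\jp{\rho}^{-K})(L^2)$, you can take $\tilde b=\jp{\rho}^{-K}$ directly. Write $u=\Op(\jp{\rho}^{-K})v$ with $\|u\|_{H_h}=\|v\|_{L^2}$; then no parametrix or exact inverse is needed. This also removes the small-$h$ restriction in your Step 1. That matters, because the paper applies the estimate at $h=1$, for instance to bound $\|\text{Op}_1(\tilde r^{\text{type }1}_N)\|_{H_1(\jp{\rho}^{3N})\to L^2}$.

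Uniform boundedness for $a\in S_\delta(\jp{\rho}^K)$ then follows from the remark that the right-hand side is $O(1)$ when $\delta\le 1/2$.
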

	We can estimate the norm of squeezed states in these spaces:
	\begin{Lemma}\cite[Lemma 2.3]{Lucas}
		There exists a family of universal constants $(C_{K,k})_{K\in \N,k\in\N^d}$ such that the following holds: let $K\in \N, k\in \N^d$ and $\kappa$ be a symplectic linear map. Then for all $0<h\leq1,$
		\begin{equation*} 
			\left\|\mathcal{M}_h(\kappa)(\varphi_0^{(P)})\right\|_{H_h(\jp{\rho}^K)}\leq C_{K,\text{deg }P} N_\infty(P) \sum_{l=0}^K h^{(l+|k|)/2} \|\kappa\|^l,
		\end{equation*} 
		where $N_\infty(P)$ is the sup norm of coefficients of $P$.
	\end{Lemma}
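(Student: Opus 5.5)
We reduce to the case $P=1$ after a polynomial bookkeeping step, and then estimate everything by direct computation on the explicit Gaussian given by Proposition \ref{propag excited}. The plan is to use the characterization (\ref{norm eq}) of the $H_h(\jp{\rho}^K)$ norm. It is enough to bound $\|x^\alpha(h\partial)^\beta \mathcal{M}_h(\kappa)\varphi_0^{(P)}\|_{L^2}$ for $|\alpha|+|\beta|\le K$.

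The first step is to move the operators $x^\alpha(h\partial)^\beta$ through the metaplectic operator. By the exact Egorov property of metaplectic operators (the linear analogue of Proposition \ref{intertwining}), we have
\[
\mathcal{M}_h(\kappa)^{*}\,x_j\,\mathcal{M}_h(\kappa)=\ell_j^{\kappa}(x,hD), \qquad \mathcal{M}_h(\kappa)^{*}\,hD_j\,\mathcal{M}_h(\kappa)=m_j^{\kappa}(x,hD),
\]
where $\ell_j^\kappa, m_j^\kappa$ are linear forms on $\R^{2d}$. Their coefficients are entries of $\kappa^{-1}$, whose norm equals $\|\kappa\|$ because $\kappa$ is symplectic. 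Since $\mathcal{M}_h(\kappa)$ is unitary, the quantity to bound equals $\|\prod \ell^\kappa_{j}(x,hD)\,\varphi_0^{(P)}\|_{L^2}$, a product of at most $l\le K$ linear operators.

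The second step is to rescale with $\Lambda_h$. Write $x=\sqrt h\,X$ and $hD=\sqrt h\,D_X$. Each linear form then becomes $\sqrt h$ times a linear combination of $X_i$ and $D_{X_i}$ with coefficients $O(\|\kappa\|)$. Moreover, $\varphi_0^{(P)}=\Lambda_h\bigl(P(X)\Psi_0\bigr)$, and $P(X)\Psi_0$ is independent of $h$. Applying $l$ such operators to $P(X)\Psi_0$ gives a finite combination of Hermite-type functions $X^{\mu}\Psi_0$ with $|\mu|\le \deg P+l$. Their coefficients are bounded by $C\,N_\infty(P)\,\|\kappa\|^l$, so the $L^2$ norm is at most $C_{K,\deg P}N_\infty(P)\,h^{l/2}\|\kappa\|^l$. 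Summing over $l=0,\dots,K$ gives the stated bound, with the extra factor $h^{|k|/2}$ carried through unchanged.

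The main point to check is that the Egorov conjugation is exact and that the coefficient bound uses $\|\kappa^{-1}\|=\|\kappa\|$. Both hold by symplecticity, since $\kappa^{-1}=-J\kappa^TJ$. The factor $h^{|k|/2}$ presumably refers to the $h^{|\gamma|/2}$ prefactor attached to excited states, and it passes through the estimate trivially. Everything else is routine counting of polynomial degrees.
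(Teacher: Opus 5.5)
The paper gives no proof of this statement; it quotes it from \cite{Lucas}. The nearest in-paper argument is the proof of Lemma \ref{control_xy} (the $h=1$ analogue). That proof works on the explicit Gaussian $|\det\Im\Gamma|^{1/4}\tilde P(\Im(\Gamma)^{1/2}x)e^{ix\cdot\Gamma x/2}$: it differentiates, changes variables by $\Im(\Gamma)^{1/2}$, and bounds everything through $M=A+iB$, $N=C+iD$ using $|\Gamma\Im(\Gamma)^{-1/2}|_2=|N|_2$.

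Your route is different and cleaner, and it is correct. It combines three ingredients:
\begin{itemize}
\item exact Egorov for linear symbols (which indeed follows from Proposition \ref{intertwining} by differentiating in $\rho$);
\item unitarity of $\mathcal{M}_h(\kappa)$, together with the rescaling $\Lambda_h^*x\Lambda_h=\sqrt h\,X$, $\Lambda_h^*hD\Lambda_h=\sqrt h\,D_X$;
\item the identity $\|\kappa^{-1}\|=\|\kappa\|$.
\end{itemize}
This needs neither Proposition \ref{propag excited} nor any Siegel-space algebra, and setting $h=1$ it also yields Lemma \ref{control_xy}. What the $\Gamma$-computation buys in exchange is finer control in terms of $M$, $M^{-1}$ and $N$ separately. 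It also puts the bounds in terms of the matrices $\Gamma$ that the paper later has to differentiate in $y$ (condition (\ref{GammadGamma}), Lemma \ref{derive squeezed}); Egorov alone does not give that. Your opening sentence announces a reduction to $P=1$ and a computation via Proposition \ref{propag excited}. You do neither, so that sentence should go.

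The one step that is not right as written is the claim that the factor $h^{|k|/2}$ is ``carried through unchanged''. With the paper's normalization $\varphi_0^{(P)}=\Lambda_h(P\Psi_0)$, nothing on the left-hand side depends on $k$. Moreover,
\[
\|\mathcal{M}_h(\kappa)\varphi_0^{(P)}\|_{H_h(\jp{\rho}^K)}\geq \|P\Psi_0\|_{L^2},
\]
which is independent of $h$. For $|k|\geq 1$ and fixed $\kappa$, however, the right-hand side tends to $0$ as $h\to0$. So the displayed inequality is false unless $k=0$. Your argument proves exactly the $k=0$ bound $C_{K,\deg P}N_\infty(P)\sum_{l\leq K}h^{l/2}\|\kappa\|^l$, which is the correct statement in this normalization. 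The factor $h^{|k|/2}$ belongs to a convention where the excited state is the unrescaled $x^k\varphi_0=h^{|k|/2}\Lambda_h(X^k\Psi_0)$. In that reading your rescaling step produces the factor, since $x^k=h^{|k|/2}X^k$. You should say which version you are proving rather than asserting that the factor passes through trivially.
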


	Notice that as long as $\|\kappa\|\leq C h^{-1/2+\cdot},$ the right-hand term in the above lemma is dominated by the term $l=0$, i.e. the $L^2$ norm. Hence, for squeezed states such as in section \ref{CR method} (where $\|\kappa\|\leq C h^{-1/6+\cdot}$), these norms do not observe the impact of $\kappa$. 
	\medbreak
	In order to see its effect, one would have to zoom up to a microscopic scaling:
	\begin{Lemma}\cite[Corollary 2.1]{Lucas}\label{control_xy}
		There exists a family of constants $C_{K,k}$ such that for all $P\in \C[X]$, for all linear symplectic maps $\kappa$ and for all $K\in \N,$
		\begin{equation*} 
			\left\|\mathcal{M}_1(\kappa) (P\Psi_0)\right\|_{H_1(\jp{\rho}^K)}\leq C_{K,\text{deg} P} N_\infty(P) \|\kappa\|^K,
		\end{equation*} 
		where $N_\infty(P)$ is the sup norm of coefficients of $P$.
	\end{Lemma}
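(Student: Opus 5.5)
Since this statement is quoted from \cite[Corollary 2.1]{Lucas}, I would reprove it by reducing to an explicit Gaussian computation at scale $h=1$. By Proposition \ref{propag excited} with $h=1$,
\begin{equation*}
\mathcal{M}_1(\kappa)(P\Psi_0)(x)=\pi^{-d/4}|\det \Im\Gamma_\kappa|^{1/4}\, Q(P)\bigl((\Im\Gamma_\kappa)^{1/2}x\bigr)\, e^{i x\cdot\Gamma_\kappa x/2},
\end{equation*}
with $N_\infty(Q(P))\le C_{\deg P}N_\infty(P)$. The norm (\ref{norm eq}) only involves $\|x^\alpha\partial^\beta u\|_{L^2}$ for $|\alpha|+|\beta|\le K$, so it suffices to bound each of these quantities by $C\,N_\infty(P)\|\kappa\|^{K}$. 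Note that $\|\kappa\|\ge 1$ for a symplectic matrix, so lower powers are absorbed.

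\textbf{Derivatives.} Each $\partial_{x_j}$ applied to the state produces one of two kinds of term. Differentiating the exponential gives a factor $i(\Gamma_\kappa x)_j$. Differentiating the polynomial gives a factor $(\Im\Gamma_\kappa)^{1/2}$ together with a polynomial of lower degree. After $|\beta|$ derivatives and multiplication by $x^\alpha$, we therefore obtain a finite sum, with universal combinatorial coefficients, of terms of the form
\begin{equation*}
(\text{entries of }\Gamma_\kappa)^{a}\,\bigl(\text{entries of }(\Im\Gamma_\kappa)^{1/2}\bigr)^{b}\; x^{\mu}\,\widetilde Q\bigl((\Im\Gamma_\kappa)^{1/2}x\bigr)\,e^{ix\cdot\Gamma_\kappa x/2},
\end{equation*}
where $a+b\le|\beta|$ and $|\mu|\le |\alpha|+a$.

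\textbf{Change of variables.} Set $z=(\Im\Gamma_\kappa)^{1/2}x$. This substitution exactly absorbs the normalisation $|\det\Im\Gamma_\kappa|^{1/4}$ in $L^2$. Each $x$ factor becomes $(\Im\Gamma_\kappa)^{-1/2}z$. The $L^2$ norm of each term is then bounded by
\begin{equation*}
\|\Gamma_\kappa\|^{a}\,\|(\Im\Gamma_\kappa)^{1/2}\|^{b}\,\|(\Im\Gamma_\kappa)^{-1/2}\|^{|\mu|}\,C\,N_\infty(P),
\end{equation*}
since polynomials times a Gaussian have finite $L^2$ norm controlled by their coefficients.

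\textbf{Estimating the matrices.} The main step is to bound $\Gamma_\kappa$, $(\Im\Gamma_\kappa)^{\pm1/2}$ in terms of $\|\kappa\|$, with the bound homogeneous of total degree $K$. Proposition \ref{Metap_phi_0} gives $\Im\Gamma_\kappa=M^{-T}\overline M^{-1}$ with $M=A+iB$. Hence
\begin{equation*}
\|(\Im\Gamma_\kappa)^{-1/2}\|\le\|M\|\le C\|\kappa\|.
\end{equation*}
For the other bounds I would use symplecticity, $\kappa^{-1}=-J\kappa^TJ$, which gives $\|\kappa^{-1}\|=\|\kappa\|$. From the identity $M^*M\ge$ (the smallest singular value of $\kappa$) I would deduce $\|M^{-1}\|\le\|\kappa^{-1}\|=\|\kappa\|$. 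Concretely, $M^{-1}$ can be read off from the complex structure, since $\kappa(e+ iJe)$-type vectors yield $|Mv|\ge \|\kappa\|^{-1}|v|$. This then gives
\begin{equation*}
\|(\Im\Gamma_\kappa)^{1/2}\|\le\|M^{-1}\|\le C\|\kappa\|,\qquad \|\Gamma_\kappa\|\le\|N\|\,\|M^{-1}\|\le C\|\kappa\|^2.
\end{equation*}

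\textbf{Closing the degree count.} The crude bound $\|\Gamma_\kappa\|\le C\|\kappa\|^2$ would only give $\|\kappa\|^{2|\beta|+\dots}$. To obtain exactly $\|\kappa\|^K$, I would avoid estimating $\Gamma_\kappa$ separately. Instead, note that $\Gamma_\kappa x=\Gamma_\kappa(\Im\Gamma_\kappa)^{-1/2}z$. I expect that $\|\Gamma_\kappa(\Im\Gamma_\kappa)^{-1/2}\|=\|N\,M^{-1}M^{T}\cdots\|$ reduces to $\|N\overline{M}^{T}\|$-type products, of size $\|\kappa\|$. A cleaner alternative is the intertwining $x_j\mathcal{M}_1(\kappa)=\mathcal{M}_1(\kappa)\,\mathrm{Op}(\ell_j\circ\kappa)$ and $\partial_j\mathcal{M}_1(\kappa)=\mathcal{M}_1(\kappa)\,\mathrm{Op}(\ell'_j\circ\kappa)$ with linear symbols. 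Using this, $x^\alpha\partial^\beta\mathcal{M}_1(\kappa)(P\Psi_0)=\mathcal{M}_1(\kappa)L(P\Psi_0)$, where $L$ is a product of $K$ linear operators whose coefficients are entries of $\kappa^{-1}$, hence bounded by $\|\kappa\|$. Unitarity of $\mathcal{M}_1(\kappa)$ then gives
\begin{equation*}
\|x^\alpha\partial^\beta\mathcal{M}_1(\kappa)(P\Psi_0)\|_{L^2}\le C_{K,\deg P}\,N_\infty(P)\,\|\kappa\|^K,
\end{equation*}
since $L(P\Psi_0)$ is a polynomial of degree at most $\deg P+K$ times $\Psi_0$, with coefficients bounded by $\|\kappa\|^K N_\infty(P)$. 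This intertwining argument is the route I would actually take, because it gives the sharp power directly.
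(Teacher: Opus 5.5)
Your proof is correct, but the route you commit to in the last paragraph (exact Egorov intertwining) differs from the paper's.

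The paper's proof is essentially your first, discarded route carried through. It writes $\mathcal{M}_1(\kappa)(P\Psi_0)$ explicitly via Proposition \ref{propag excited} and rescales by $z=(\Im\Gamma)^{1/2}x$. Everything then reduces to $\|(\Im\Gamma)^{-1/2}\|=\|M\|$, $\|(\Im\Gamma)^{1/2}\|=\|M^{-1}\|$ and, crucially, the trace identity (\ref{GammaImGamma}), i.e. $\|\Gamma(\Im\Gamma)^{-1/2}\|_2=\|N\|_2$ in Hilbert--Schmidt norm. That identity is exactly the step you only ``expect'', and it is what replaces the lossy bound $\|\Gamma\|\lesssim\|\kappa\|^2$.

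Your intertwining argument bypasses this Siegel-space algebra entirely. Exact Egorov for linear symbols gives $x^\alpha\partial^\beta\mathcal{M}_1(\kappa)=\mathcal{M}_1(\kappa)L$, with $L$ a product of at most $K$ first-order operators whose coefficients are $O(\|\kappa\|)$. In the convention of Section \ref{section1st} these coefficients are entries of $\kappa$, not $\kappa^{-1}$; this is immaterial since $\|\kappa^{-1}\|=\|\kappa\|$. Unitarity, the rule $\partial_k(Q\Psi_0)=(\partial_kQ-x_kQ)\Psi_0$ and $\|\kappa\|\geq 1$ then give the sharp power without even invoking Proposition \ref{propag excited}.

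What the paper's route buys is reusability. The $\Gamma$-bookkeeping and (\ref{GammaImGamma}) are needed again when the parameter $\kappa^{\mathrm{c}}(y)$ depends on $y$ (Lemma \ref{derive squeezed}, Lemma \ref{1st_methodK}, Appendix \ref{proof_GammadGamma}). There $\partial_y$ does not pass through $\mathcal{M}_{\mathrm{c}}(\kappa^{\mathrm{c}}(y))$, so no Egorov argument is available.

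One caveat on the discarded route: your justification of $\|M^{-1}\|\lesssim\|\kappa\|$ is not an argument as written. A clean one uses the relation $\overline{M}^TN-\overline{N}^TM=2iI$ (the sign depends on conventions), which gives $|v|^2=|\Im\langle Mv,Nv\rangle|\leq|Mv|\,|Nv|$ and hence $\|M^{-1}\|\leq\|N\|$. The paper also uses this bound without comment.
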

	\begin{proof}
		We give a different proof of this result than in \cite{Lucas}, based on properties of matrices $\Gamma$. We use the equivalent norm introduced in (\ref{norm eq}) to perform computations.
		\medbreak
		To compute some $\|x^\alpha \partial^\beta \mathcal{M}_1(\kappa) [P\Psi_0]\|_{L^2}$, we write 
		\begin{equation*} 
			\mathcal{M}_1(\kappa) [P\Psi_0]=\left|\det \Im (\Gamma)\right|^{-1/4} \tilde{P}\left( \Im(\Gamma)^{1/2}x\right) e^{ix \Gamma x/2},
		\end{equation*}  which gives, using the notations of proposition \ref{Metap_phi_0}
		\begin{align*}
			\|x^\alpha \partial^\beta \mathcal{M}_1(\kappa) [P&\Psi_0]\|_{L^2}^2\leq \sum_{\beta_1+\beta_2=\beta} \binom{\beta}{\beta_1} \int \frac{|x|^{2\alpha} \left|\Im \Gamma\right|^{\beta_1} \left|\Gamma x\right|^{2\beta_2}}{\sqrt{\left|\det \Im (\Gamma)\right|}} \tilde{P}^{(\beta_1)} \left(\Im(\Gamma)^{1/2}x\right)^2 e^{-x\Im(\Gamma)x} \d x 
			\\& \leq C\sup_{\beta_1+\beta_2=\beta} \left|\Im(\Gamma)^{-1}\right|^{\alpha}\left|\Im(\Gamma)\right|^{\beta_1} \left|\Gamma \Im(\Gamma)^{-1/2}\right|^{2\beta_2}\int |y|^{2(\alpha+\beta_2)} \tilde{P}^{(\beta_2)}(y)^2e^{-y^2}\d y
			\\&\leq C N_\infty(P)\sup_{\beta_1+\beta_2=\beta} |M|^{2\alpha}|M^{-1}|^{2\beta_1} |N|^{2\beta_2}
			\\&\leq C N_\infty(P)\|\kappa\|^{2|\alpha+\beta|}.
		\end{align*}
		Where we have used the fact that:
		\begin{align}\label{GammaImGamma}
			\begin{split}
				\left|\Gamma \Im(\Gamma)^{-1/2}\right|_2^2&=\text{Tr} \left(\Im(\Gamma)^{-1/2}\overline{\Gamma} \Gamma \Im(\Gamma)^{-1/2}\right)
				\\&=\text{Tr} \left(\overline{\Gamma}\Gamma \Im(\Gamma)^{-1}\right)
				\\& =\text{Tr} \left(\overline{M}^{-T}\overline{N}^T N M^{-1} M \overline{M}^{T}\right)
				\\&=\text{Tr} \left(\overline{N}^T N\right)
				\\&= |N|_2^2.
			\end{split}
		\end{align}
	\end{proof}

	\section{\texorpdfstring{Propagating for small $\log(1/h)$ times: expansion of the integrand}{Propagating for small log(1/h) times: expansion of the integrand}}\label{section1st}
	\subsection{Context of the computation} \label{context}
	Here, we will show how to propagate a squeezed state during a time $t_0$ independent of $h$ with the propagator $e^{-it_0\mathpzc{p}^w/h}$. In the application, the initial data will be given by a coherent state propagated over a time $t_s \leq \epsilon_s T_{\text{Ehrenfest}}$. 
	\medbreak 
	We want to show that after a new propagation over a time $t_0$, the state can still be written in a similar fashion with adequate estimates. This method, along with the one developed in the next section, will be the elementary steps that we will repeat some $C|\log h|$ times in section \ref{iter_method}.
	\medbreak
	Results will be expressed in the charts described in definition \ref{coord} so that they fit some base points $\tilde{\rho}^0$ and $\tilde{\rho}^1=\Phi^{t_0}(\tilde{\rho}^0)$.
	\medbreak
	Let us denote by $U^{t_0}$ the propagator associated to $\mathpzc{p}$ seen in these charts:
	\begin{equation} \label{propagator}
		U^{t_0}=\mathcal{U}_{\tilde{\rho}^1}e^{-it_0\mathpzc{p}^w/h} \mathcal{U}_{\tilde{\rho}^0}^{-1},
	\end{equation}
	which is a compactly microlocalized Fourier integral operator. It quantizes the symplectomorphism 
	\begin{equation}\label{def F}
		F=\kappa_{\tilde{\rho}^1} \Phi^{t_0} (\kappa_{\tilde{\rho}^0})^{-1},
	\end{equation} 
	which corresponds to the flow $\Phi^{t_0}$ written between two charts $U_{\tilde{\rho}^0}$, $U_{\tilde{\rho}^1}$. Due to the construction of the charts, we have a special form for the Jacobian matrix of $F$ at $0$:
	\begin{equation}\label{dF0}
		d_0F= \begin{pNiceArray}{cccc}[last-row,last-col]
			A &0 & E & 0 & x
			\\0 & B & 0 & 0 & y
			\\F & 0 & C & 0 & \xi 
			\\0 & 0 & 0 & D & \eta
			\\ x & y & \xi & \eta &
		\end{pNiceArray},
	\end{equation}
	with $\|B^{-1}\|,\|D\|\leq e^{(\nu_{\text{min}}-\epsilon_2/3)t_0}\coloneq\nu<1$ and $\|A\|,\|E\|,\|F\|,\|C\|\leq e^{(\lambda_{\text{c}}-\epsilon_2/3)t_0}\coloneq\mu<\nu^{-1}$ thanks to our choice of $t_0$ in (\ref{lambda max et t0}). 
	\newline	
	In this first method, we will not use any specificity of those coordinates, but they will be useful in the next section.
	\medbreak 
	In these coordinates, we choose the initial data to be some 
	\begin{equation*} 
		u= \hat{T}\left(\boldsymbol{\rho^0}\right)\mathcal{M}_h(\kappa)\Lambda_h(P\Psi_0),
	\end{equation*} 
	with $\boldsymbol{\rho^0}=\kappa_{\tilde{\rho}^0}(\rho^0)$ and a $\kappa$ that might have a norm going to infinity as $h$ goes to $0$ (in a $h^{-\epsilon}$ fashion) but verifies $\|\kappa\|\leq Ch^{-1/6+\cdot}$ as required in section \ref{CR method}.
	\medbreak
	We write the integral representation of $U^{t_0}$ as a compactly microlocalized Fourier Integral Operator:
	\begin{equation}\label{rep int}
		U^{t_0}u(X^1)=(2\pi h)^{-(2d+\mathpzc{k})/2}\int_{\R^{2d}}\int_{\R^{\mathpzc{k}}} e^{\frac{i}{h}\left(\psi(X^1,\Xi^0,\Theta)-X^0\Xi^0\right)}a\left(X^1,\Xi^0,\Theta\right)u(X^0)\d X^0 \d \Xi^0\d \Theta,
	\end{equation}
	with $\psi$ a generating function of $F$ with $\mathpzc{k}$ auxiliary variables, $a$ an amplitude in $C^\infty_c(\R^{2d+\mathpzc{k}})$ and $u\in L^2\left(\R^d\right)$.
	\medbreak
	We aim at proving a generalization of \cite[Proposition 2.10]{Lucas} to a $d$-dimensional (and $\mathpzc{k}\neq 0$) setting: 
	\begin{Proposition}[Isotropic case]\label{iter iso}
		Let $\kappa$ be a symplectic linear map and $P\in \C[X]$, then there exists a family of polynomials $(Q_k(P))_{k\in \N}$ such that:
		\begin{itemize}
			\item $Q_0(P)=P,$ 
			\item $Q_k(P)$ is a polynomial of degree $\deg P+3k$ and the map $P\mapsto Q_k(P)$ is linear, with coefficients depending on $\kappa$ and the derivatives of $\psi$ and $a$ at point $(q^1,p^0,\Theta^0)$ up to the $3k$-th order and we have:
			\begin{equation*} 
				N_\infty\bigl(Q_k(P)\bigr)\leq C\|a\|_{C^k} \|\kappa \|^{3k} N_\infty(P).
			\end{equation*} 
			Moreover, if $(q^1,p^0,\Theta^0)\notin \supp a$, then $Q_k=0,$
			\item for every $N\in \N$,
			\begin{equation*} 
				U^{t_0}\Bigl(\hat{T}\left(\kappa_{\tilde{\rho}^0}(\rho^0)\right)\mathcal{M}_h(\kappa)\Lambda_h[P\Psi_0]\Bigr)= \hat{T}\left(\kappa_{\tilde{\rho}^1}(\rho^1)\right) \mathcal{M}_h\left(d_{\boldsymbol{\rho^0}} F\circ \kappa\right) \Lambda_h \left[\sum_{k=0}^{N/2} h^{k/2} Q_k(P) \Psi_0\right]+R_N,
			\end{equation*} 
			with
			\begin{equation*} 
				\|R_N\|_{L^2}\leq Ch^{N/2}  \|a\|_{C^{N+M}} \|\kappa\|^{3N}  N_\infty(P),
			\end{equation*} 
			where $N_\infty(P)$ is the sup norm on the coefficients of $P$, $M$ universal constant, and the constants $C$ depend on the norms of $\psi$ and its derivatives.
		\end{itemize}
	\end{Proposition}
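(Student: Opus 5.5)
The plan is to reduce the statement to a computation at the microscopic scale, following the strategy of \cite[Proposition 2.10]{Lucas}. First we center everything. Using the intertwining relation of Proposition \ref{intertwining}, we conjugate $U^{t_0}$ by Weyl--Heisenberg operators and write
\begin{equation*}
U^{t_0}\hat{T}(\boldsymbol{\rho^0})=\hat{T}(\boldsymbol{\rho^1})\,\widetilde{U},
\end{equation*}
where $\widetilde{U}$ is again a Fourier integral operator of the form (\ref{rep int}). Its phase is $\psi$ Taylor-expanded around $(X^1,\Xi^0,\Theta)=(q^1,p^0,\Theta^0)$, with the affine part removed; here $(q^1,p^0,\Theta^0)$ is the critical point corresponding to $(\boldsymbol{\rho^1},\boldsymbol{\rho^0})\in \text{Gr}'(F)$. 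The removed constant and linear terms produce only a global phase $e^{i\theta/h}$ and the translation. The phase of $\widetilde{U}$ then splits into a quadratic part $\psi_2$ and a remainder $\psi_{\geq 3}$ vanishing to third order, and the amplitude is $a$ recentered at $(q^1,p^0,\Theta^0)$.

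Second, we identify the quadratic part. The oscillatory integral with phase $\psi_2-X^0\Xi^0$ and constant amplitude $a_0(q^1,p^0,\Theta^0)$ is exactly a metaplectic operator quantizing $d_{\boldsymbol{\rho^0}}F$. This is the linearized version of (\ref{hyp psi}), namely (\ref{kappa psi}) when $\mathpzc{k}=0$. When $\mathpzc{k}>0$, we first integrate out the quadratic $\Theta$-variables by exact Gaussian/Fresnel integration, using $d^2_{\overline{X^1},\Theta}\psi=I_{\mathpzc{k}}$ to guarantee nondegeneracy. The normalization (\ref{a0}) fixes the modulus of the prefactor, and the remaining phase is absorbed.

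Third, we rescale and expand the rest. We apply $\Lambda_h$ and change variables $X\mapsto \sqrt h X$, $\Xi\mapsto\sqrt h\Xi$, $\Theta-\Theta^0\mapsto \sqrt h\Theta$. Then $e^{i\psi_{\geq3}/h}a$ becomes $e^{i h^{1/2}\tilde\psi_3(X,\Xi,\Theta)+\dots}a(\sqrt h\,\cdot)$. We Taylor expand this in powers of $h^{1/2}$ up to order $N$: the $k$-th term is $h^{k/2}$ times a polynomial of degree at most $3k$ in $(X^1,\Xi^0,\Theta)$, with coefficients built from derivatives of $\psi$ of order $\le 3k$ and of $a$ of order $\le k$. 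Each monomial in $X^1,\Xi^0,\Theta$ inserted in the integral can be converted into an operator acting on the metaplectic image. Multiplication by $X^1$ acts directly; $\Xi^0$ becomes the derivative $-i\partial_{X^0}$ acting on $\mathcal{M}_1(\kappa)P\Psi_0$ before propagation; and $\Theta$ is rewritten through the stationarity relation in terms of $X^1,\Xi^0$. Pulling everything through $\mathcal{M}_1(d F\circ\kappa)$ gives the form $\mathcal{M}_1(dF\circ\kappa)[Q_k(P)\Psi_0]$. Linearity in $P$, the degree count $\deg P+3k$, and the vanishing of $Q_k$ when the base point is outside $\supp a$ are then immediate.

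The norm bound $N_\infty(Q_k(P))\le C\|a\|_{C^k}\|\kappa\|^{3k}N_\infty(P)$ comes from this conversion. A monomial $x^\alpha\partial^\beta$ of degree $3k$ applied to $\mathcal{M}_1(\kappa)(P\Psi_0)=|\det\Im\Gamma|^{1/4}\tilde P(\Im\Gamma^{1/2}x)e^{ix\Gamma x/2}$ produces factors $\Im\Gamma^{-1/2}$ and $\Gamma\Im\Gamma^{-1/2}$. These are bounded by $\|\kappa\|$, exactly as in the proof of Lemma \ref{control_xy} via (\ref{GammaImGamma}). Re-expressing the result in the basis adapted to $\Gamma_{dF\circ\kappa}$ costs only $\|dF\|$, which is bounded since $t_0$ is fixed.

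The main obstacle is the remainder $R_N$. Its integrand is the Taylor remainder of $e^{ih^{1/2}\tilde\psi_3+\dots}a$, which is not compactly localized in the rescaled variables. The plan is to bound it via the weighted norms: the remainder is $O(h^{N/2})$ times a symbol with polynomial growth of degree about $3N$ and bounded derivatives. By Calderón--Vaillancourt type bounds in $H_1(\jp{\rho}^K)$ (Proposition 2.3 of \cite{Lucas}), its action is controlled by $\|\mathcal{M}_1(\kappa)(P\Psi_0)\|_{H_1(\jp{\rho}^{3N})}\le C\|\kappa\|^{3N}N_\infty(P)$ from Lemma \ref{control_xy}. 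The $M$ extra derivatives of $a$ account for the loss in that boundedness result. The delicate point is to handle the $\Theta$ variables in the remainder term uniformly in $h$. I would first try to eliminate them by exact stationary phase in $\Theta$ before expanding, reducing to the $\mathpzc{k}=0$ case up to an $O(h^\infty)$ error.
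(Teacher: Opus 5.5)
For $\mathpzc{k}=0$ your outline is the paper's argument: recentring, rescaling by $\Lambda_h$, Taylor expansion of the cubic part of the phase and of $a$, exact Egorov on the monomials with $\|\kappa\|$-bounds, and weighted-norm control via Lemma \ref{control_xy}. Your treatment of the auxiliary variables, however, does not work. Since $\mathpzc{k}$ is the corank of the projection of $\text{Gr}'(F)$ onto $(X^1,\Xi^0)$ and $\psi=\Theta\cdot\overline{X^1}+\phi(\widetilde{X^1},\Theta,\Xi^0)$, one has $\partial^2_{\Theta\Theta}\psi=0$ at $(q^1,p^0,\Theta^0)$. The condition $d^2_{\overline{X^1},\Theta}\psi=I_{\mathpzc{k}}$ makes $\psi$ a nondegenerate phase function; it does not make the $\Theta$-Hessian nondegenerate. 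This has three consequences:
\begin{enumerate}
\item The $\Theta$-integral of the quadratic part is a delta function on a linear subspace, not a Gaussian.
\item The linearized relation $d_\Theta\psi=0$ does not contain $\Theta$, so it cannot be solved for $\Theta$ in terms of $(X^1,\Xi^0)$. Indeed $\Theta$ is exactly the direction that $(X^1,\Xi^0)$ does not see.
\item Stationary phase in $\Theta$ is unavailable: eliminating $\Theta$ would make $\text{Gr}'(F)$ projectable, which is false.
\end{enumerate}
The paper instead uses the invertibility of the mixed block. It writes $\Theta\, e_\psi$ as $-i(\partial^2_{\overline{X^1}\Theta}\psi)^{-1}\nabla_{\overline{X^1}}e_\psi$ plus $e_\psi$ times terms linear in $(X^1,\Xi^0)$. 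Thus $\Theta$ acts as the output momentum conjugate to $\overline{X^1}$, and it is absorbed by the same exact Egorov argument as the other monomials. A genuine reduction to $\mathpzc{k}=0$ is also possible, by composing with a metaplectic partial Fourier transform in $\overline{X^1}$, which is harmless for all the norms involved. It cannot be done by stationary phase.

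The second gap is in the remainder. The Taylor remainder of the exponential, $\frac{(i\sqrt h)^N}{(N-1)!}(c_3^\psi)^N\int_0^1 e^{is\sqrt h c_3^\psi}(1-s)^{N-1}\d s$, is not $h^{N/2}$ times a symbol of order $3N$ with bounded derivatives. Indeed $\nabla(\sqrt h\, c_3^\psi)=O\bigl(\sqrt h\jp{(X^1,\Xi^0,\Theta)}^2\bigr)$, which reaches size $h^{-1/2}$ on the support of $a(q^1+\sqrt h X^1,\dots)$. So every derivative of $e^{is\sqrt h c_3^\psi}$ raises the growth order by one. If you put this factor in the amplitude and use the weighted Calder\'on--Vaillancourt bound, you pay one extra power of $\|\kappa\|$ per derivative used. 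You then only get $h^{N/2}\|\kappa\|^{3N+M'}$ for some fixed $M'$, not the stated $\|\kappa\|^{3N}$. This weaker bound would still give a convergent expansion when $\|\kappa\|\le Ch^{-1/6+\epsilon}$, but it is not the proposition.

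The paper splits the remainder into two types.
\begin{enumerate}
\item The type-1 part is a genuine symbol in $S(\jp{\rho}^{3N})$ and is handled as you propose, via Lemma \ref{1st_type}.
\item For the exponential remainder, $s\sqrt h\, c_3^\psi$ is kept in the phase, so the amplitude $(c_3^\psi)^N a$ is a true $S(\jp{\rho}^{3N})$ symbol. The resulting non-metaplectic operator $\tilde R_s$ is controlled by a $T^*T$ argument: $\tilde R_s^*\tilde R_s=\text{Op}_1(B_s)$ with $B_s\in S(\jp{\rho}^{6N})$ uniformly in $s\in[0,1]$ and $h$ (Lemma \ref{2nd_type}). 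This yields $\|\tilde R_s\|_{H_1(\jp{\rho}^{3N})\to L^2}\le C\|a\|_{C^{N+M}}$ without any loss in $\|\kappa\|$.
\end{enumerate}
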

	\begin{Remark}\label{CV kappa petit}
		Under the assumption $\|\kappa\|\leq Ch^{-1/6+\cdot}$, we obtain 
		\begin{equation*} 
			\|R_N\|_{L^2}\leq Ch^{(0+)N}  \|a\|_{C^{N+M}}   N_\infty(P),
		\end{equation*}
	 	so that the remainder can be made arbitrarily small by taking $N$ large enough.
	\end{Remark}
	\subsection{Proof of proposition \ref{iter iso}}
	Due to our specific choice of initial data, we make the Ansatz that after propagation with $U^{t_0}$ as considered in this section, the state can be written as some $\hat{T}\left(\kappa_{\tilde{\rho}^1}(\rho^1)\right)\mathcal{M}(\tilde{\kappa})\Lambda_h [Q\Psi_0]$ for $\rho^1=\Phi^{t_0}(\rho^0)$ given by the classical propagation ($t_0$ defined above in section \ref{preli_dyn_clas}) and $\tilde{\kappa}, Q$ to be determined.
	\medbreak
	Using this Ansatz, we can go to microscopic scaling to do our computations. Set $u_0=\mathcal{M}_1(\kappa)(P\Psi_0)$, with $\|\kappa\|\leq Ch^{-1/6+\cdot}$. We want to understand
	\begin{equation*} 
		u_1=\left(\Lambda_h\right)^* \hat{T}\left(\kappa_{\tilde{\rho}^1}(\rho^1)\right)^* U^{t_0} \hat{T}\left(\kappa_{\tilde{\rho}^0}(\rho^0)\right) \Lambda_h u_0.
	\end{equation*} 
	
	Let us write $\boldsymbol{\rho^1}=\kappa_{\tilde{\rho}^1}\left(\rho^1\right)=\left(q^1,p^1\right)$ and $\boldsymbol{\rho^0}=\kappa_{\tilde{\rho}^0}\left(\rho^0\right)=\left(q^0,p^0\right)$ and define $\Theta^0$ such that:
	\begin{equation*} 
		\partial_\Theta \psi\left(q^1,p^0,\Theta^0\right)=0,\quad \partial_{X^1} \psi\left(q^1,p^0,\Theta^0\right)=p^1,\quad\partial_{\Xi^0} \psi\left(q^1,p^0,\Theta^0\right)=q^0. 
	\end{equation*} 
	Then, we use (\ref{rep int}) and write
	\begin{equation}\label{use rep int}
	\begin{multlined}[.8\textwidth]
			\Bigl(\Lambda_h^{*} T(\boldsymbol{\rho^1})^*U^{t_0} T(\boldsymbol{\rho^0}) \Lambda_hu_0\Bigr)\left(X^1\right)=h^{d/4}e^{-\frac{i X^1p^1}{\sqrt{h}}}\Bigl(U^{t_0} T(\boldsymbol{\rho^0}) \Lambda_h u_0\Bigr)\left(q^1+\sqrt{h}X^1\right)
			\\\shoveleft{=\frac{e^{-\frac{i X^1p^1}{\sqrt{h}}}}{(2\pi h)^{d+\mathpzc{k}/2}} \int_{\R^{2d+\mathpzc{k}}} e^{\frac{i}{h}\bigl(\psi(q^1+\sqrt{h}X^1,\Xi^0,\Theta)-X^0(\Xi^0-p^0)\bigr)}a\left(q^1+\sqrt{h}X^1,\Xi^0,\Theta\right)     u_0\left(\frac{X^0-q^0}{\sqrt{h}}\right) }
			\\\shoveright{\d X^0 \d \Xi^0 \d \Theta}
			\\\shoveleft{=\frac{1}{(2\pi )^{d+\mathpzc{k}/2}} \int_{\mathbb{R}^{2d+\mathpzc{k}}} e^{i\psi_{\text{tot}}\bigl(X^1,\Xi^0,X^0,\Theta\bigr)} a\left(q^1+\sqrt{h}X^1,p^0+\sqrt{h}\Xi^0,\Theta^0+\sqrt{h}\Theta\right) u_0\left(X^0\right)}
			\\ \d X^0 \d \Xi^0\d\Theta
	\end{multlined}
	\end{equation}
	where we have made the change of variables 
	\begin{equation*} 
		X^0\leftarrow h^{-1/2}\left(X^0-q^0\right), \Xi^0\leftarrow -p^0+ h^{-1/2}\Xi^0, \Theta\leftarrow -\Theta^0+h^{-1/2}\Theta
	\end{equation*}
	to locate at scale $\sqrt{h}$ both our variables around the point where the initial coherent state is located, this gives us a total phase of:
	\begin{equation*} 
		\psi_{\text{tot}}\left(X^1,\Xi^0,X^0,\Theta\right)=\frac{1}{h} \psi\left(q^1+\sqrt{h}X^1,p^0+\sqrt{h}\Xi^0,\Theta^0+\sqrt{h}\Theta\right)-X^0\Xi^0-h^{-1/2}\left(X^1p^1+q^0\Xi^0\right).
	\end{equation*} 
	\begin{Remark}
		After this change of variables, every variable has microscopic scaling. We expect them to have at most size 
		\begin{equation*} 
			X^0,X^1,\Xi^0,\Theta\sim \|\kappa\|\leq C h^{-1/6+\cdot}.
		\end{equation*} 
	\end{Remark}
	Now, we will make a Taylor expansion of the phase and the amplitude in the integral.
	\medbreak
	Let us start by the phase, we write the Taylor expansion of 
	\begin{equation*} 
		\psi\left(q^1+\sqrt{h}X^1,p^0+\sqrt{h}\Xi^0,\Theta^0+\sqrt{h}\Theta\right)
	\end{equation*}
 	at order $N+1\in \N$ (in $\sqrt{h}$) as:
	\begin{align*}
		\psi\bigl(q^1+\sqrt{h}X^1&,p^0+\sqrt{h}\Xi^0,\Theta^0+\sqrt{h}\Theta\bigr)=\psi\left(q^1,p^0,\Theta^0\right)+\sqrt{h}\Bigl(X^1.\nabla_{X^1}\psi\left(q^1,p^0,\Theta^0\right)+
		\\&\Xi^0.\nabla_{\Xi^0}\psi\left(q^1,p^0,\Theta^0\right)+\Theta.\nabla_\Theta \psi(q^1,p^0,\Theta^0)\Bigr)
		+h\frac{\text{Hess }\psi\left(q^1,p^0,\Theta^0\right)}{2}.\bigl[X^1,\Xi^0,\Theta\bigr]^{\otimes 2}
		\\&+\sum_{k=3}^{N+1} h^{k/2} \psi_k\left(X^1,\Xi^0,\Theta\right) +h^{(N+2)/2} r_{N+2}^\psi\left(X^1,\Xi^0,\Theta\right),
	\end{align*}
	with $k!\psi_k$ the $k$-th differential of $\psi$ applied to $\left(X^1, \Xi^0,\Theta\right)^{\otimes k}$, so that it is $k$-linear in these variables and 
	\begin{multline*}
		r_{N+2}^\psi\left(X^1,\Xi^0,\Theta\right)=
		\\\frac{1}{N+1}\int_0^1 (1-s)^{N+1}D^{N+2} \psi\left(q^1+s\sqrt{h}X^1,p^0+s\sqrt{h}\Xi^0,\Theta^0+s\sqrt{h}\Theta\right). \Bigl[X^1,\Xi^0,\Theta\Bigr]^{\otimes(N+2)}\d s
	\end{multline*}
	whose derivatives can be controlled by:
	\begin{equation*} 
		\left|\partial^\alpha r^\psi_{N+2}(X^1,\Xi^0,\Theta)\right|\leq C \|\psi\|_{C^{N+2+|\alpha|}}\Bigl\langle\left(X^1,\Xi^0,\Theta\right)\Bigr\rangle^{N+2}.
	\end{equation*} 
	There are a few important things to note about the development of $\psi_{\text{tot}}$: firstly, the term of order $1/h$ will not be too important for us as it is independent of our variable, and so it only adds a global phase. As we work with metaplectic operators, every calculus is made up to a factor in $\mathbb{U}^1$, and so, we can forget this term.
	\medbreak
	Then, we notice that the terms of order $h^{-1/2}$ cancel out, this is expected in our computations since these terms are responsible for the localization of our function in phase space. As we have located our computations around the starting and end points thanks to $T\left(\boldsymbol{\rho^1}\right)$ and $T\left(\boldsymbol{\rho^0}\right)$, the classical transport is already encoded, so it is not surprising that transport is no longer needed. 
	\medbreak
	As we shall see, the terms that are of order $1$ will be responsible for the quadratic deformation of our coherent state, encoding a metaplectic operator.
	\medbreak
	The higher order terms will act as further deformations of this coherent state, making its description more and more precise, fitting even more to the unstable manifold that will be described later on. The important part is that, for now, we will be able to treat these terms as corrective ones.
	\medbreak
	Let us group up these higher order terms in $c_3^\psi$ (corrections of cubic order and more): 
	\begin{equation*} 
		\psi_{\text{tot}}\left(X^1,\Xi^0,X^0,\Theta\right)=\underbrace{\text{Hess } \psi\left(q^1,p^0,\Theta^0\right).\Bigl[X^1,\Xi^0,\Theta\Bigr]^{\otimes 2}-X^0\Xi^0 }_{\psi_2(X^1,X^0,\Xi^0,\Theta)}+\sqrt{h}c^\psi_3\left(X_1,\Xi^0,\Theta\right).
	\end{equation*} 
	We now inject this expansion into the exponential and we get:
	\begin{align} \label{DL exp}
		\begin{split}
			e^{i\sqrt{h}c_3^\psi\left(X^1,\Xi^0,\Theta\right)}=\sum_{k=0}^{N-1} &\frac{(i\sqrt{h})^k}{k!} c_3^\psi\left(X^1,\Xi^0,\Theta\right)^k
			\\&+\frac{(i\sqrt{h})^N}{(N-1)!} c_3^\psi\left(X^1,\Xi^0,\Theta\right)^N \int_0^1 e^{i\sqrt{h}sc_3^\psi\left(X^1,\Xi^0,\Theta\right)}(1-s)^{N-1}\d s
		\end{split}
	\end{align}
	\begin{Remark}
		Expanding of $e^{i\sqrt{h}c_3^\psi}$ is justified by the fact that we expect $\sqrt{h} c^\psi_3\left(X_1,\Xi^0,\Theta\right)$ to be $o(1)$, it is a consequence of $X^1,\Xi^0 \sim \|\kappa\|^2 \leq C h^{-1/6+\cdot}$.
	\end{Remark}
	As $c_3^\psi$ contains not only cubic corrections, but also corrections of higher order, we should regroup terms having the same order by developing the powers in the expansion of the exponential.
	\medbreak
	We obtain:
	\begin{equation*} 
		e^{i\sqrt{h}c_3^\psi\left(X^1,\Xi^0,\Theta\right)}=\sum_{k=0}^{N-1} h^{k/2}P_k\left(X^1,\Xi^0,\Theta\right)+h^{N/2}r_N^{\text{corr}}+r^{\text{exp}}_N,
	\end{equation*} 
	with
	\begin{itemize}
		\item[$\bullet$] $P_k$ polynomial of degree $3k$ in $\left(X^1, \Xi^0,\Theta\right)$ coefficients involving derivatives of $\psi$ taken at the point $\left(q^1,p^0,\Theta^0\right)$ up to the $3+k$-th order.
		\item[$\bullet$] $r_N^{\text{corr}}$ coming from corrections of order $N$ and more involved in the expansion
		\item[$\bullet$] $r^{\text{exp}}_N$ integral rest form the Taylor expansion of the exponential.
	\end{itemize}
	We now write the Taylor expansion of $a$ and multiply it by the other one to get:
	\begin{align}\label{def remainders}
		&e^{i\sqrt{h}c_3^\psi\left(X^1,\Xi^0,\Theta\right)}a\left(\sqrt{h}X^1,\sqrt{h}\Xi^0,\Theta\right)=\sum_{k=0}^{N-1}h^{k/2} \tilde{P}_k\left(X^1,\Xi^0,\Theta\right)+\underbrace{r^{\text{exp}}_Na\left(\sqrt{h}X^1,\sqrt{h}\Xi^0,\sqrt{h}\Theta\right)}_{r^{\text{type }2}_N}
		\\&+\underbrace{h^{N/2} \sum_{k=0}^{N-1} P_k\left(X^1,\Xi^0,\Theta\right) r^a_{N-k}\left(X^1,\Xi^0,\Theta\right)+h^{N/2} R_{N}^{\text{corr}}a\left(\sqrt{h}X^1,\sqrt{h}\Xi^0,\sqrt{h}\Theta\right)}_{h^{N/2} r^{\text{type }1}_N}
	\end{align}
	Where $\tilde{P}_k$ is a polynomial of degree $3k$ in $(X^1,\Xi^0,\Theta)$ given by
	\begin{equation*} 
		\tilde{P_k}=\sum_{k_1+k_2=k} P_{k_1}\left(X^1,\Xi^0,\Theta\right)\times \left( \frac{1}{k_2 !} D^{k_2}a\left(q^1,p^0,\Theta^0\right). \Bigl[X^1,\Xi^0,\Theta\Bigr]^{\otimes k_2}\right)
	\end{equation*} 
	and $r_p^a$ being the integral of the Taylor expansion of a up to order $p-1$:
	\begin{equation*} 
		r_p^a=\frac{1}{p!}\int_0^1 (1-s)^{p-1}D^{p} a\left(q^1+s\sqrt{h}X^1,p^0+s\sqrt{h}\Xi^0,\Theta^0+s\sqrt{h}\Theta\right).\Bigl[X^1,\Xi^0,\Theta\Bigr]^{\otimes p}\d s.
	\end{equation*} 
	\medbreak
	We are now ready to compute the different integrals involved in the expansion, using the special form of $u_0$.
	\medbreak
	\paragraph{Leading term:} The leading term is 
	\begin{equation*} 
		\frac{1}{(2\pi)^{d+\mathpzc{k}/2}} \int_{\mathbb{R}^{2d+\mathpzc{k}}}e^{i\left(\text{Hess }\psi\left(q^1,p^0,\Theta^0\right).\bigl[X^1,\Xi^0,\Theta\bigr]^{\otimes 2}/2-X^0\Xi^0\right)} a\left(q^1,p^0,\Theta^0\right) u_0\left(X^0\right)\d X^0 \d \Xi^0\d \Theta.
	\end{equation*} 
	By differentiating $F\bigl(\partial_{\Xi^0} \psi\left(X^1,\Xi^0,\Theta\right),\Xi^0\bigr)=\bigl(X^1,\partial_{X^1} \psi\left(X^1,\Xi^0,\Theta\right)\bigr)$,
	we can see that the quadratic form 
	\begin{equation*} 
		\psi_2(X^1,X^0,\Xi^0,\Theta)=\frac{\text{Hess }\psi\left(q^1,p^0,\Theta^0\right)}{2}.\Bigl[X^1,\Xi^0,\Theta\Bigr]^{\otimes 2}-X^0\Xi^0
	\end{equation*} 
	is a generating function for the linear map $d_\rho F$, or equivalently $d_\rho F$ is given by:
	\begin{equation*} 
		d_\rho F= \begin{pmatrix}
			(\partial^2_{X,\Xi} \psi)^{-1} & - (\partial^2_{X,\Xi}\psi)^{-1} \partial^2_{\Xi,\Xi} \psi
			\\ \partial^2_{X,X} \psi (\partial^2_{X,\Xi}\psi)^{-1} & \partial^2_{X,\Xi} \psi -\partial^2_{X,X} \psi (\partial^2_{X,\Xi} \psi)^{-1} \partial^2_{\Xi,\Xi} \psi.
		\end{pmatrix}
	\end{equation*} 
	
	So, the main order term of the expansion is given by the action of a metaplectic operator $\mathcal{M}_1\left(d_{\boldsymbol{\rho^0}}F\right)$ on $u_0$:
	\begin{multline*} 
		U^{t_0} T\left(\boldsymbol{\rho^0}\right)\mathcal{M}_h(\kappa)\Lambda_h[P\Psi_0]=T\left(\boldsymbol{\rho^1}\right)\mathcal{M}_h\left(d_{\boldsymbol{\rho^0}}F\circ \kappa\right) \Lambda_h\left[\frac{a\left(q^1,p^0,\Theta^0\right)}{D_\psi\left(q^1,p^0,\Theta^0\right)^{1/2}}P\Psi_0\right]
		\\\shoveright \phantom{} +\text{smaller terms}.
	\end{multline*} 
	Here, we came back to the original scaling by multiplying with $\Lambda_h$ which changed our metaplectic operators into ``semiclassical'' one going from $h=1$ to $h$. Moreover, if we also consider an expansion of $a$ in powers of $h$, we know that the main order term is such that $\left|a^0(q^1,p^0,\Theta^0)\right|=D_\psi\left(q^1,p^0,\Theta^0\right)^{1/2}$, meaning that at order $0$ there is no change of the amplitude.
	\medbreak

	\paragraph{Terms of higher order:}\label{Theta higher order}
	To understand them, we should study 
	\begin{equation*} 
		\frac{1}{(2\pi)^{d+\mathpzc{k}/2}} \int_{\R^{2d+\mathpzc{k}}} e^{i\psi_2(X^1,X^0,\Xi^0,\Theta)} P_k\left(X^1,\Xi^0,\Theta\right) u_0\left(X^0\right)\d X^0 \d \Xi^0.
	\end{equation*} 
	Let us start with the study of those terms when $P_k$ is independent of $\Theta$. To do so, we focus on monomial terms:
	\begin{multline*} 
		I_{\alpha,\beta}(u_0)=\frac{1}{(2\pi)^{d+\mathpzc{k}/2}} \int_{\R^{2d+\mathpzc{k}}} e^{i\bigl(\text{Hess }\psi\left(q^1,p^0,\Theta^0\right).\bigl[X^1,\Xi^0,\Theta\bigr]^{\otimes 2}/2 -X^0\Xi^0\bigr)} \prod_{i=1}^d (X^1_i)^{\alpha_i}(\Xi^0_i)^{\beta_i} u_0\left(X^0\right)
		\\\shoveright \phantom{} \d X^0 \d \Xi^0,
	\end{multline*} 
	with $|\alpha|+|\beta|\leq 3k$. We can take the $\left(X^1_i\right)^{\alpha_i}$ terms out of the integral and use the properties of the Fourier transform $\mathcal{F}$ to get
	\begin{align*}
		I_{\alpha,\beta}(u_0)&=\frac{\prod_{i=1}^d (X^1_i)^{\alpha_i}}{(2\pi)^{(d+\mathpzc{k})/2}}\int_{\R^{d+\mathpzc{k}}}e^{i\text{Hess }\psi\left(q^1,p^0,\Theta^0\right). \bigl[X^1,\Xi^0,\Theta\bigr]^{\otimes 2}/2}\prod_{i=1}^d \left(\Xi^0_i\right)^{\beta_i} \mathcal{F}u_0\left(\Xi^0\right)\d \Xi^0 \d \Theta
		\\&=\frac{\prod_{i=1}^d \left(X^1_i\right)^{\alpha_i}}{(2\pi)^{(d+\mathpzc{k})/2}}\int_{\R^{d+\mathpzc{k}}}e^{i\text{Hess }\psi\left(q^1,p^0,\Theta^0\right) .\bigl[X^1,\Xi^0,\Theta\bigr]^{\otimes 2}/2} \mathcal{F}\bigl((-i)^{|\beta|} \partial_{\Xi^0}^\beta u_0\bigr)(\Xi^0)\d \Xi^0 \d \Theta
		\\&=\prod_{i=1}^d (X^1_i)^{\alpha_i} \mathcal{M}_1\left(d_{\boldsymbol{\rho^0}} F\right) \bigl((-i)^{|\beta|}\partial_{\Xi^0}^\beta u_0\bigr).
	\end{align*}
	We then use the exact Egorov property of metaplectic operators and recall the special form of $u_0$:
	\begin{align*}
		I_{\alpha,\beta}\Bigl(\mathcal{M}_1(\kappa)[P\Psi_0]\Bigr)&=\text{Op}_1(\prod_{i=1}^d \left(X^1_i\right)^{\alpha_i}) \mathcal{M}_1\left(d_{\boldsymbol{\rho^0}}F\right)\text{Op}_1\bigl((\Xi^0)^\beta\bigr) \mathcal{M}_1(\kappa)[P\Psi_0]
		\\&=\mathcal{M}_1\left(d_{\boldsymbol{\rho^0}}F\circ \kappa\right) \text{Op}_1\Bigl(\bigl[d_{\boldsymbol{\rho^0}}F\circ \kappa(x^0,\xi^0)\bigr]_X^\alpha\Bigr) \text{Op}_1\Bigl(\bigl[\kappa(x^0,\xi^0)\bigr]_\Xi^\beta\Bigr)[P\Psi_0].
	\end{align*}
	The action of the first two operators transforms $P\Psi_0$ in another excited state $Q(P)\Psi_0$ with $Q(P)$ of degree $\deg P+|\alpha|+|\beta|$, by developing the powers involved, we can bound the coefficients of $Q$ by $C\left\|d_{\boldsymbol{\rho^0}} F\right\|^{\|\alpha|} \|\kappa\|^{|\alpha|+|\beta|}$.
	\medbreak
	Now, let us focus on the study of the general case by including monomials in $\Theta$:
	\begin{multline*} 
		I_{\alpha,\beta,\gamma}=\frac{1}{(2\pi)^{d+\mathpzc{k}/2}} \int_{\R^{2d+\mathpzc{k}}} e^{i\left(\text{Hess }\psi\left(q^1,p^0,\Theta^0\right).\bigl[X^1,\Xi^0,\Theta\bigr]^{\otimes 2}/2 -X^0\Xi^0\right)} \prod_{i=1}^d \left(X^1_i\right)^{\alpha_i}\left(\Xi^0_i\right)^{\beta_i} \theta_i^{\gamma_i} u_0(X^0)
		\\\shoveright \phantom{}\d X^0 \d \Xi^0\d \Theta.
	\end{multline*} 
	Our goal is to translate the effect of $\Theta$ into a multiplication by a polynomial of $X^1,\Xi^0,X^0$ using an integration by parts.
	\medbreak
	To do so, we compute (denoting $e^{i\text{Hess }\psi\left(q^1,p^0,\Theta^0\right).\bigl[X^1,\Xi^0,\Theta\bigr]^{\otimes 2}/2}$ by $e_\psi$):
	\begin{equation*} 
		\nabla_X e_\psi=ie_\psi\left[\frac{\partial^2 \psi}{\partial X^2}.X^1 +\frac{\partial \psi}{\partial X \partial \Theta}.\Theta +\frac{\partial \psi}{\partial X \partial `\Xi}.\Xi^0\right].
	\end{equation*} 
	But recall that by assumption,
	\begin{equation*} 
		\frac{\partial^2 \psi }{\partial X'\partial \Theta} \text{ is invertible }
	\end{equation*} 
	($\psi$ can even be chosen so that this matrix is $I_{\mathpzc{k}}$, see \ref{hyp psi}).
	Hence,
	\begin{equation*} 
		e_\psi\Theta=-i\left(\frac{\partial^2 \psi }{\partial X'\partial \Theta}\right)^{-1}\left[\nabla_{X'} e_\psi -ie_\psi\frac{\partial\psi}{\partial X \partial X'}X-ie_\psi\frac{\partial \psi}{\partial X' \partial \Xi}. \Xi^0\right]
	\end{equation*} 
	and 
	\begin{equation*} 
		\Theta^{\gamma_i}=\left(-ie_\psi^{-1} \left(\frac{\partial^2 \psi }{\partial X'\partial \Theta}\right)^{-1}\left[\nabla_{X'} e_\psi -ie_\psi\frac{\partial\psi}{\partial X' \partial X}X-ie_\psi\frac{\partial \psi}{\partial X' \partial \Xi}. \Xi^0\right]_\Theta \right)^{\gamma_i},
	\end{equation*} 
	where $\bigl[\ \cdot\ \bigr]_\Theta$ means that we only keep the last $\mathpzc{k}=\text{dim}(\Theta)$ coordinates.
	\medbreak
	Using integration by part, we conclude that:
	\begin{equation*} 
		I_{\alpha,\beta,\gamma}= \frac{1}{2\pi}\int_{\R^{2d+\mathpzc{k}}} e_\psi P_{\alpha,\beta,\gamma}\left(X^1,\Xi^0,X^0\right) u_0\left(X^0\right) \d X^0 \d \Xi^0\d \Theta,
	\end{equation*} 
	with $P_{\alpha,\beta,\gamma}$ a polynomial of total degree $|\alpha|+|\beta|+|\gamma|$ with bounded coefficients.
	\medbreak
	This new polynomial can be estimated in the same way as before, indeed, the presence of $X^0$ is essentially harmless as it has the same behavior as $\Xi^0$ and just add a polynomial that behaves in the same way.
	\medbreak
	As a consequence, by grouping those terms into one polynomial, we can write the term of $k$-th order in the approximation of 
	\begin{equation*} 
		U^{t_0} T\left(\boldsymbol{\rho^0}\right)\mathcal{M}_h(\kappa)\Lambda_h[P\Psi_0]
	\end{equation*} as 
	\begin{equation*} 
		h^{k/2}T\left(\boldsymbol{\rho^1}\right)\mathcal{M}_h\left(d_{\boldsymbol{\rho^0}}F\circ \kappa\right)\Lambda_h[Q_k(P)\Psi_0],
	\end{equation*}
	 with $Q_k(P)$ polynomial of order $\deg P +3k$, linear in $P$. Putting $\|d_{\boldsymbol{\rho^0}} F\|^{\alpha}$ in the constant (we propagate only during a finite time), we get the following estimate using our control on the coefficients of $P_k$:
	\begin{equation*} 
		N_\infty\bigl(Q_k(P)\bigr)\leq C \|a\|_{C^k}N_\infty(P)\|\kappa\|^{3k} .
	\end{equation*} 
	This gives us control over the $L^2$ norm of this $k$-th term:
	\begin{equation} \label{higher order}
		\left\|h^{k/2}T\left(\boldsymbol{\rho^1}\right)\mathcal{M}_h\left(d_{\boldsymbol{\rho^0}}F\circ \kappa\right)\Lambda_h[Q_k(P)\Psi_0]\right\|_{L^2}\leq C \|a\|_{C^k}N_\infty(P)h^{k/2}\|\kappa\|^{3k},
	\end{equation}
	which let us check that each term is indeed smaller than the previous one, as $\|\kappa\|\leq Ch^{-1/6+\cdot}$.
	\medbreak

	\paragraph{Control of the first remainder term:}
	This first type remainder $r^{\text{type }1}_N$ defined in (\ref{def remainders}) contains all terms of order higher than $N$: they either come from the product of an order $k$ of the exponential and a rest of order greater than $N-k$ for $a$ or from remainders that we obtained by injecting our expansion into the exponential $r_N^{\text{corr}}$.
	\medbreak
	All these terms are sums of quantities that can be controlled with 
	\begin{equation}\label{symbol remainder}
		|\partial^\alpha r^{\text{type }1}_N\left(X^1,\Xi^0, \Theta\right)|\leq C \|a\|_{C^{N+|\alpha|}} \bigl\langle X^1,\Xi^0,\Theta\bigr\rangle^{3N},
	\end{equation}
	as they are products of polynomials and rests for which we have the right complementary ``sub-polynomial behavior'' to obtain a power $3N$.
	\medbreak
	This tells us that $r^{\text{type }1}_N$ is in some sense a symbol in the class $S\left(\bigr\langle X^1,\Xi^0,\Theta\bigr\rangle^N\right)$.
	So, it could be interesting to see our integral as the composition of a metaplectic operator and a pseudodifferential operator with this symbol: we will control its norm thanks to the unitarity of the metaplectic operator and translate our control on the symbol into a control of the pseudodifferential operator.
	\medbreak
	\begin{Lemma}\cite[Lemma 2.5]{Lucas}\label{1st_type}
		Let $b\in S\left(\jp{X^1,\Xi^0,\Theta}^N\right)$ then there exists another symbol $\tilde{b}$ with
		$\tilde{b}\in S\left(\bigl\langle X^0,\Xi^0\bigr\rangle^N\right)$ such that for all $0<h\leq 1,$
		\begin{equation*} 
			\frac{1}{(2\pi h)^d}\int_{T^*\R^{d}} e^{\frac{i}{h} \psi_2(X^1,X^0,\Xi^0,\Theta)} b\left(X^1,\Xi^0,\Theta\right) u_0\left(X^0\right)\d X^0\d \Xi^0= \mathcal{M}_h\left(d_{\boldsymbol{\rho^0}} F\right)\Op (\tilde{b}) u_0(x). 
		\end{equation*} 
		Moreover, we can control the symbol $\tilde{b}$ using $b:$ there exists a universal integer $M'\in\N$ such that for all $\alpha\in \N^2,$
		\begin{equation*} 
			\jp{\rho}^{-N} \left|\partial^\alpha \tilde{b}(\rho)\right|\leq C_\alpha \sup_{|\beta|\leq|\alpha|+M'} \sup_{\rho\in T^*\R^d} \left(|\partial^\beta b(\rho)| \jp{\rho}^{-N}\right).
		\end{equation*} 
	\end{Lemma}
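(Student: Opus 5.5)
The plan is to reduce the integral to a known composition. The $\Theta$ integration comes first, and it is removed by a Gaussian (stationary phase) reduction, just as in the treatment of $I_{\alpha,\beta,\gamma}$ above. The quadratic phase $\psi_2$ is a generating function for $d_{\boldsymbol{\rho^0}}F$. I would perform the exact Gaussian integration in $\Theta$, which is legitimate because $\partial^2_{\overline{X^1},\Theta}\psi=I_{\mathpzc{k}}$ by (\ref{hyp psi}). Alternatively, I would reduce to the case $\mathpzc{k}=0$ after composing with a partial Fourier transform in the $\overline{X^1}$ variables. Either route transfers the symbol $b$ into a symbol on $(X^1,\Xi^0)$ with the same order $\jp{\cdot}^N$. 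The Gaussian convolution in $\Theta$ preserves the class $S(\jp{\cdot}^N)$ up to a loss of finitely many derivatives, and this loss is the source of the universal $M'$.

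With $\mathpzc{k}=0$ in hand, I would write the operator as $\mathcal{M}_h(d_{\boldsymbol{\rho^0}}F)\circ B$, where
\[
Bu(x)=(2\pi h)^{-d}\int e^{\frac{i}{h}(x-X^0)\cdot\Xi^0} b\bigl(\ell(x,\Xi^0),\Xi^0\bigr)u(X^0)\,\d X^0\,\d\Xi^0 .
\]
To obtain this, I would use the explicit formula (\ref{kappa psi}) relating the quadratic phase to the matrix. The integral with amplitude depending on $(X^1,\Xi^0)$ then equals $\mathcal{M}_h(d_{\boldsymbol{\rho^0}}F)$ applied to an operator whose kernel is a \emph{left-and-right} (mixed) quantization of $b\circ L$. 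Here $L$ is the linear change of variables $(X^0,\Xi^0)\mapsto(X^1,\Xi^0)$ read off from $d_{\boldsymbol{\rho^0}}F$, namely $X^1=A X^0+B\Xi^0$-type relations coming from (\ref{kappa psi}). Because $d_{\boldsymbol{\rho^0}}F$ is a fixed matrix with $h$-independent bounds (a finite-time flow between charts), $b\circ L$ remains in $S(\jp{X^0,\Xi^0}^N)$ with seminorms controlled by those of $b$.

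Next, I would convert the mixed quantization (amplitude depending on $(X^1,\Xi^0)$, equivalently on output variable and frequency) into a Weyl quantization $\Op(\tilde b)$. The standard formula $\tilde b=e^{-\frac{ih}{2}\langle D_x,D_\xi\rangle}(b\circ L)$ (cf. \cite[Theorem 4.13]{Zwbook}) does this. The resulting $\tilde b$ is given by an oscillatory integral with quadratic phase, and the standard estimate for $e^{ihQ(D)}$ acting on $S(m)$ with $m=\jp{\rho}^N$ gives
\[
\jp{\rho}^{-N}|\partial^\alpha\tilde b|\le C_\alpha\sup_{|\beta|\le|\alpha|+M'}\sup_\rho \jp{\rho}^{-N}|\partial^\beta b|,
\]
with $M'$ depending only on $d$ and $N$. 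This is obtained by integration by parts using the Peetre inequality $\jp{\rho+\rho'}^N\le C\jp{\rho}^N\jp{\rho'}^N$. Uniformity in $0<h\le1$ holds because the symbol class is $h$-independent and $h\le1$ only improves the estimates.

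The main obstacle I expect is the bookkeeping in the first step when $\mathpzc{k}\neq0$. The Gaussian in $\Theta$ is only an oscillatory, non-decaying phase, so the $\Theta$ integral must be treated as an oscillatory integral against a polynomially growing symbol. I would handle this by inserting $\jp{\Theta}^{-2M}(1-\Delta_{X'})^{M}$-type integrations by parts, using the invertibility of $\partial^2_{X',\Theta}\psi$ exactly as in the derivation of $P_{\alpha,\beta,\gamma}$ above. Those integrations by parts account for the finite loss $M'$ of derivatives.
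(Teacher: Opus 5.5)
The paper gives no proof of its own here. It cites \cite[Lemma 2.5]{Lucas}, which covers the one-dimensional case without auxiliary variables, and only remarks that $\Theta$ can be handled ``as in the paragraph about terms of higher order''.

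Your treatment of the case $\mathpzc{k}=0$ is the standard argument for the cited statement and is sound. You factor out $\mathcal{M}_h(d_{\boldsymbol{\rho^0}}F)$, recognize what remains as a non-Weyl quantization of $b\circ L$, and pass to the Weyl symbol with an operator $e^{ih\tilde Q(D)}$. That operator is continuous on $S(\jp{\rho}^N)$, uniformly for $0<h\le 1$, with a loss of finitely many derivatives.

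One correction: $\tilde Q$ is not literally $\pm\frac12\langle D_x,D_\xi\rangle$. The ordering is between $X^1\circ L$ (a combination of $X^0$ and $\Xi^0$) and $\Xi^0$, so undoing the shear generated by $\partial^2_{\Xi\Xi}\psi$ adds a term quadratic in $D_x$. You can check this on $b=(X^1)^2$: exact Egorov gives $\tilde b=(X^1\circ L)^2$ with no correction, whereas your formula would produce one. This does not affect the estimate, since $e^{ih\tilde Q(D)}$ is continuous on $S(\jp{\rho}^N)$ for every real quadratic form.

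For $\mathpzc{k}\neq 0$, drop your first option. The $\Theta$-integral alone is not Gaussian, because $\partial^2_{\Theta\Theta}\psi$ need not be invertible. It vanishes, for instance, for $\psi=\Theta\cdot X^1-\Theta\cdot\Xi^0$, which generates the Fourier transform. The identity $\partial^2_{\overline{X^1},\Theta}\psi=I_{\mathpzc{k}}$ does not rescue it: $\overline{X^1}$ is an output variable, so there is no integral in it to integrate by parts against.

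Your second option is the right one, and it is exact. Use the normal form $\psi=\Theta\cdot\overline{X^1}+\phi(\widetilde{X^1},\Theta,\Xi^0)$ of (\ref{hyp psi}) and apply $\mathcal{F}_h$ in $\overline{X^1}$, with dual variable $\overline{\Xi^1}$.
\begin{itemize}
\item The $(\overline{X^1},\Theta)$-integral then has phase $(\Theta-\overline{\Xi^1})\cdot\overline{X^1}+\phi_2(\widetilde{X^1},\Theta,\Xi^0)$, where $\phi_2$ is the quadratic part of $\phi$.
\item This phase is a nondegenerate quadratic form in $(\overline{X^1},\Theta-\overline{\Xi^1})$: its Hessian is $\begin{pmatrix}0&I\\ I&\partial^2_{\Theta\Theta}\phi_2\end{pmatrix}$, with determinant $\pm1$.
\item Exact quadratic stationary phase therefore gives $e^{ih\mathcal{Q}(D)}b$ restricted to a linear critical set. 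This is a symbol of order $\jp{\cdot}^N$ in $(\widetilde{X^1},\overline{\Xi^1},\Xi^0)$; these, and not $(X^1,\Xi^0)$ as you wrote, are the new variables.
\item What remains is a $\mathpzc{k}=0$ operator, and $\mathcal{F}_h^{-1}$ recombines with its metaplectic factor into $\mathcal{M}_h(d_{\boldsymbol{\rho^0}}F)$.
\end{itemize}

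The paper's hint trades powers of $\Theta$ for $X^1$-derivatives and polynomial factors, so it is designed for amplitudes polynomial in $\Theta$. Your partial-Fourier route instead covers arbitrary $b\in S(\jp{X^1,\Xi^0,\Theta}^N)$. That generality is what is actually needed, since the lemma is applied to the non-polynomial remainder $r^{\text{type }1}_N$.
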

	The lemma proved by Vacossin is actually in the one-dimensional setting without auxiliary variable, but the proof remains the same higher dimension, and auxiliary variables can be dealt with in a similar fashion as in the previous paragraph about terms of higher order.
	\medbreak
	Using this lemma for $h=1$, we can find a $\tilde{r}^{\text{type }1}_N$ such that 
	
	\begin{align*}
		R_N^{\text{type }1}:&=\frac{1}{(2\pi h)^{d+\mathpzc{k}/2}}\int_{\R^{2d+\mathpzc{k}}} e^{\frac{i}{h}\psi_2(X^1,X^0,\Xi^0,\Theta)} r^{\text{type }1}_N\left(X^1,\Xi^0,\Theta\right) u_0\left(X^0\right)\d X^0\d \Xi^0\d \Theta \\&=\mathcal{M}_1\left(\d_{\boldsymbol{\rho^0}}F\right) \text{Op}_1\left(\tilde{r}^{\text{type }1}_N\right)u_0.
	\end{align*}
	This gives:
	\begin{align}\label{rest 1st}
		\begin{split}
			\left\|R_N^{\text{type }1}\right\|_{L^2}&\leq \left\|\mathcal{M}_1\left(\d_{\boldsymbol{\rho^0}}F\right) \text{Op}_1\left(\tilde{r}^{\text{type }1}_N\right)u_0\right\|_{L^2}
			\\&\leq \left\|\text{Op}_1\left(\tilde{r}^{\text{type }1}_N\right)\right\|_{H_1(\jp{\rho}^{3N})\to L^2}\left\|\mathcal{M}_1(\kappa)[P\Psi_0]\right\|_{H_1(\jp{\rho}^{3N})}
			\\&\leq C\| a\|_{C^{N+M+M'}} \|\kappa\|^{3N} K_{N,\deg P} N_\infty(P).
		\end{split}
	\end{align}
	Using our control on $r^{\text{type }1}_N$ and its seminorms in $S\left(\jp{\rho}^{3N}\right)$ plus lemma \ref{control_xy} controlling
	\newline $\|u_0\|_{H_1(\jp{\rho}^{3N})}=\|\mathcal{M}_1(\kappa)[P\Psi_0]\|_{H_1(\jp{\rho}^{3N})}$.
	\medbreak

	\paragraph{Control of the second remainder term:}
	We are left with understanding
	\begin{equation*} 
		\frac{1}{(2\pi)^{d+\mathpzc{k}/2}} \int_{\R^{2d}}\int_{\R^\mathpzc{k}} e^{i\psi_2(X^1,X^0,\Xi^0,\Theta)} r_N^{\text{type }2}\left(X^1,\Xi^0,\Theta\right) u_0\left(X^0\right)\d X^0\d \Xi^0\d \Theta.
	\end{equation*} 
	where $r_N^{\text{type }2}$ defined in (\ref{def remainders}) is a integral itself: we want to interchange the order of integration of these integrals. We will look for an uniform estimate in $s\in[0,1]$ of the $L^2$ norm of:
	\begin{multline}\label{type 2}
		\tilde{R}_s u_0(x)=\frac{1}{(2\pi)^{d+\mathpzc{k}/2}} \int_{\R^{2d}}\int_{\R^\mathpzc{k}}e^{i \psi_2(X^1,X^0,\Xi^0,\Theta)+is\sqrt{h} c_3^\psi\left(X^1,\Xi^0,\Theta\right)} b_N\left(X^1,\Xi^0,\Theta\right)u_0\left(X^0\right)
		\\ \shoveright \phantom{} \d X^0\d \Xi^0\d \Theta,
	\end{multline}
	with $b_N\left(X^1,\Xi^0,\Theta\right)=c_3^\psi\left(X^1,\Xi^0,\Theta\right)^N a\left(q_1+\sqrt{h}X^1,p_0+\sqrt{h}\Xi^0,\Theta\right)$ which is in the 
	 $S\left(\jp{\rho}^{3N}\right)$ symbol class thanks to estimates such as (\ref{symbol remainder}).
	\medbreak
	Since the phase changed, this no longer looks like a metaplectic operator applied to a pseudo-operator. However, as we are looking for the $L_2$ norm, it is interesting to look at $\tilde{R}_s^*\tilde{R}_s$ which by the rules of composition of Fourier integral operators should be a pseudo-differential operator.
	
	\begin{Lemma}\cite[Lemma 2.6]{Lucas}\label{2nd_type}
		For all $s\in [0,1],$ there exists $B_s(.)\in S\left(\jp{\rho}^{6N}\right)$ such that:
		\begin{itemize}
			\item[$\bullet$] $\tilde{R}_s^*\tilde{R}_s=\text{Op}_1\left(B_s\right)$.
			\item[$\bullet$] There exists an universal constant $M\in \N$ such that for all $\alpha \in \N^d, s\in[0,1]$,
			\begin{equation*} 
				\sup_{\rho} \left|\jp{\rho}^{-6N}\partial^\alpha B_s(\rho)\right|\leq C \left(\sup_{|\beta|\leq |\alpha|+M}\sup_{\rho} \partial^\beta b_N(\rho)\jp{\rho}^{-3N}\right)^2 . 
			\end{equation*} 
		\end{itemize}
	\end{Lemma}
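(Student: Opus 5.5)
The plan is to compute $\tilde R_s^*\tilde R_s$ explicitly as an oscillatory integral and to read off a symbol from it. Write the kernel of $\tilde R_s$ from (\ref{type 2}):
\begin{equation*}
K_s(X^1,X^0)=(2\pi)^{-(d+\mathpzc{k}/2)}\int e^{i(\Phi_s(X^1,\Xi^0,\Theta)-X^0\Xi^0)} b_N(X^1,\Xi^0,\Theta)\,\d\Xi^0\,\d\Theta,
\end{equation*}
with $\Phi_s=\tfrac12\text{Hess }\psi(q^1,p^0,\Theta^0)[X^1,\Xi^0,\Theta]^{\otimes 2}+s\sqrt h\,c_3^\psi$. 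Note that $\Phi_s$ equals $\psi(q^1+\sqrt hX^1,\dots)/h$ minus constant and linear terms, up to an interpolation in $s$ of the cubic remainder. The composition $\tilde R_s^*\tilde R_s$ then has kernel
\begin{equation*}
\int e^{-i\Phi_s(X^1,\Xi,\Theta)+i\Phi_s(X^1,\Xi',\Theta')+iX\Xi-iX'\Xi'}\overline{b_N}(X^1,\Xi,\Theta)\, b_N(X^1,\Xi',\Theta')\,\d X^1\,\d\Xi\,\d\Xi'\,\d\Theta\,\d\Theta'.
\end{equation*}

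\textbf{Kuranishi step.} The phase difference $\Phi_s(X^1,\Xi',\Theta')-\Phi_s(X^1,\Xi,\Theta)$ vanishes on the diagonal. By Taylor's formula it can be written as
\begin{equation*}
\langle \Xi'-\Xi,G_s\rangle+\langle\Theta'-\Theta,H_s\rangle,
\end{equation*}
where $G_s=\int_0^1\partial_{\Xi}\Phi_s(X^1,\Xi+\tau(\Xi'-\Xi),\dots)\,\d\tau$, and similarly for $H_s$. The first $X^1$-derivatives of the vector $(G_s,H_s)$ are $\partial^2_{X^1,(\Xi,\Theta)}\psi$ evaluated along the segment, which is a full-rank matrix by (\ref{hyp psi}) up to $O(\sqrt h\,\jp{\cdot})$ corrections. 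The idea is to change variables from $X^1$ (together with the $\Theta$-directions that (\ref{hyp psi}) normalizes) to $\zeta=(G_s,\cdot)$. This turns the kernel into $\int e^{i(X-X')\cdot\zeta}(\dots)$, i.e. the kernel of a pseudodifferential operator whose symbol $B_s$ is an amplitude in $(X,X',\zeta)$. That amplitude is a product of two factors $b_N$ (each of growth $\jp{\cdot}^{3N}$) and a Jacobian, and can be reduced to a left or Weyl symbol by the usual stationary-phase reduction formula. The $\Theta$ variables are handled as in the ``terms of higher order'' paragraph: since $\partial^2_{\overline{X^1},\Theta}\psi=I_{\mathpzc{k}}$, integrations by parts trade them for polynomial factors in $X^1,\Xi$, which only affects the growth exponent and the number $M$ of derivatives.

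\textbf{Main obstacle.} The change of variables has to be uniform in $s\in[0,1]$ and $h$, even though $c_3^\psi$ is unbounded in the rescaled variables. I would first use the compact support of $a$: $b_N$ vanishes unless $\sqrt h\,|(X^1,\Xi^0,\Theta)|\le C$. On that set, every derivative of $\sqrt h\,c_3^\psi$ of order $\ge2$ is $O(\sqrt h\,\jp{\cdot})=O(1)$ times a small constant. More precisely, $\partial^2(\sqrt h\,c_3^\psi)(Y)$ equals $\text{Hess }\psi$ evaluated at a point within distance $\sqrt h|Y|$ of $(q^1,p^0,\Theta^0)$, minus $\text{Hess }\psi(q^1,p^0,\Theta^0)$. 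So $\text{Hess }\Phi_s$ is $\text{Hess }\psi$ at a point of $\supp a$, and the nondegeneracy in (\ref{hyp psi}) holds uniformly there, after shrinking the microlocal support of $U^{t_0}$ if necessary. All higher derivatives of $\Phi_s$ are bounded by $\|\psi\|_{C^k}h^{(k-2)/2}$, which is $\le C$. Hence the map $X^1\mapsto\zeta$ is a global diffeomorphism with uniformly controlled derivatives; I would glue it to the identity outside $\supp a$ if needed. With this in hand, the symbol bound
\begin{equation*}
\jp{\rho}^{-6N}|\partial^\alpha B_s|\le C\Bigl(\sup_{|\beta|\le|\alpha|+M}\sup_\rho|\partial^\beta b_N(\rho)|\jp{\rho}^{-3N}\Bigr)^2
\end{equation*}
follows from the Leibniz rule and from standard non-stationary-phase estimates for the reduction to a symbol, each of which costs a fixed number $M$ of derivatives. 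This is the argument of \cite[Lemma 2.6]{Lucas}, carried over to dimension $d$ and to the auxiliary variables.
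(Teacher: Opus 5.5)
The paper gives no proof of its own here: it defers to Vacossin's one-dimensional lemma and asserts that the extension to dimension $d$ and to auxiliary variables is routine. Your sketch is the standard argument behind that lemma and is sound: a $T^*T$ kernel computation, Kuranishi's trick with nondegeneracy made uniform in $s$ and $h$ by the compact support of $a$ in the rescaled variables, then reduction of the amplitude to a symbol.

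One correction to the Kuranishi step, and one small precision. After substituting $\zeta=G_s$ the phase becomes $(X-X')\cdot\Xi+(\zeta-X')\cdot(\Xi'-\Xi)$, so the frequency of the resulting kernel is $\Xi$, and $\zeta$ is paired with $\Xi'-\Xi$ rather than being the frequency; integrating out $(\zeta,\Xi'-\Xi)$ is exactly the amplitude-to-symbol reduction you invoke, so the conclusion is unaffected. Also, the Hessian to control is the interpolation $(1-s)\,\text{Hess }\psi(q^1,p^0,\Theta^0)+s\,\text{Hess }\psi(\cdot)$, not $\text{Hess }\psi$ at a single point of $\supp a$; your shrinking of $\supp a$ keeps it close to $\text{Hess }\psi(q^1,p^0,\Theta^0)$, which is what you need.
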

	Once again, Vacossin proves this lemma in a more restrictive setting; the modifications needed are the same as for lemma \ref{1st_type}.
	\medbreak
	This allows us to make the following computation:
	\begin{align*}
		\left\|\tilde{R}_s\right\|^2_{H_1(\jp{\rho}^{3N})\to L^2}&\leq \left\|\tilde{R}_s^*\tilde{R}_s\right\|_{H_1(\jp{\rho}^{3N})\to H_1(\jp{\rho}^{-3N})} 
		\\&\leq \left\|\text{Op}_1\left(B_s\right)\right\|_{H_1(\jp{\rho}^{3N})\to H_1(\jp{\rho}^{-3N})} 
		\\&\leq C\sup_{|\alpha|\leq M}\sup_\rho \left|\partial^\alpha B_s(\rho)\jp{\rho}^{-6N}\right|
		\\&\leq C \left(\sup_{|\beta|\leq 2M}\sup_\rho \partial^\beta b_N(\rho)\jp{\rho}^{-3N}\right)^2
		\\&\leq (C\|a\|_{C^{N+M}})^2.
	\end{align*}
	Hence, $\|\tilde{R}_n^\text{exp}\|\leq \int_0^1 \|\tilde{R}_s\|\d s\leq C$ and we conclude in the same way as the first remainder term, see \ref{rest 1st}.

	\section{Propagating for longer times, from squeezed states to partially Lagrangian states.}\label{section2nd}
	\textbf{Limitations of the previous method and new description:}
	The problem of the method used in the previous section is that it cannot be used to propagate our coherent states up to Ehrenfest times. Once the wavepacket spreads too much, more precisely, once $\|\kappa\|$ reaches $h^{-1/6}$, our method fails since cubic and higher terms no longer are correction terms, see (\ref{higher order}).
	\medbreak
	In order to propagate further anyway, we wish to represent our state in a more adequate way. Using the properties of our coordinates, we see that if we only look at the flow restricted to the central manifold, its ``new'' Ehrenfest time will be big enough so that, on the time scale we are considering, our method will still work in the central directions, meaning the $x$ and $\xi$ coordinates.
	\medbreak
	However, this will not be true in the transversal directions $y$ and $\eta$. For these, we use a representation as ``WKB states'', see section \ref{WKB description}.
	\medbreak
	Let us define the specific class of functions that we will be using to describe our state:
	\begin{definition}\label{def S_delta,nu}
		Consider $\delta,\nu\geq 0$ and define $S_{\delta,\nu}\left(\R^{d_\perp},\text{Poly}\times\text{Gauss}\right)$ as the set of functions $u$ from $\R^{d_{\scalerel*{\parallel}{\perp}}}\times \R^{d_\perp}$ to $\C$ which can be written as:
		\begin{align*}
			u(x,y)&=\sum_{\substack{\gamma\in \N^{d_{\scalerel*{\parallel}{\perp}}} \\|\gamma|\leq N}} u_\gamma(y) \mathcal{M}_{\text{c},h}\left(\kappa^\text{c}( y)\right)\Bigl[\left(\ \boldsymbol{\cdot}\ /\sqrt{h}\right)^\gamma \varphi_0(\ \boldsymbol{\cdot}\ )\Bigr](x), \text{ for } x\in \R^{d_{\scalerel*{\parallel}{\perp}}}, y\in \R^{d_\perp},
			\\&=\mathcal{M}_\text{c}\left(\kappa^\text{c}(y)\right)\Lambda_{h,x}\Bigl[P_y\Psi_0\Bigr](x),
		\end{align*}
		where for all $\gamma\N^{d_{\scalerel*{\parallel}{\perp}}}$,$u_\gamma\in S_\delta\left(\jp{y}^{-\infty}\right)$, $\kappa^\text{c}\in h^{-\nu}S_{0+\cdot}\left(\R^{d_\perp},\text{Symp}_{d_{\scalerel*{\parallel}{\perp}}}(\C)\right)$ and $\Lambda_{h,x} f(x,y)=h^{-d_{\scalerel*{\parallel}{\perp}}/4}\times$ \newline $ f\left(\frac{x}{\sqrt{h}},y\right)$.
		\medbreak
		We make an additional assumption on the central part of $u$ regarding $\kappa^\text{c}$ or more precisely on the associated covariance matrix $\Gamma_{\scalerel*{\parallel}{\perp}}$ and its derivatives $\frac{\partial^{\alpha}\Gamma_{\scalerel*{\parallel}{\perp}}}{\partial y^{\alpha}}$ (where $\alpha\in \N^{d_\perp}$):
		\begin{equation}\label{GammadGamma}
			\left\|\Im\Gamma_{\scalerel*{\parallel}{\perp}}^{-1/2}\frac{\partial^{\alpha}\Gamma_{\scalerel*{\parallel}{\perp}}}{\partial y^{\alpha}}  \Im\Gamma_{\scalerel*{\parallel}{\perp}}^{-1/2}\right\|\leq C h^{-2\nu}|\log h|^{|\alpha|},
		\end{equation}
		this will be useful to understand the derivatives of $u$ in $L^2$ norms.
	\end{definition}
	\begin{Remark}
		\begin{itemize}
			\item We will work with $\delta<1/2$ so that we can use the stationary phase lemma, see section \ref{WKB description}. Unfortunately,  (non-squeezed) coherent states are not in the set $S_{\delta,\nu}\left(\R^{d_\perp},\text{Poly}\times\text{Gauss}\right)$; when writing them as a product of a Gaussian in $y$ times a Gaussian in $x$, we want to set $u_0(y)$ to be the Gaussian in $y$ but it is in no $S_\delta$ class with $\delta<1/2$. However, as explained in section \ref{WKB description}, the presence of hyperbolicity in these directions tells us that propagating the state will regularize in the direction $y$. This is why we will first propagate with the method of section \ref{section1st}, after propagating for a $\epsilon_s |\log h|$ time, we expect to obtain a state in $S_{\delta,\nu}\left(\R^{d_\perp},\text{Poly}\times\text{Gauss}\right)$. Actually, for reasons that will appear later on, we will even assume that $\delta+\nu<1/2$.
			\item For $\nu,$ we will restrict ourselves to $\nu<1/6$. As we have seen for propagation for small $\log(1/h)$ times, this is the maximal size of $\kappa$ for which section \ref{section1st}'s method works. Thanks to the hypothesis of 3-normal hyperbolicity (\ref{H1'}), we can see that $\nu=1/6-$ will not be reached until times close to the Ehrenfest one. Before that, if we propagate up to a time $t=\frac{\alpha \log(1/h)}{2 \lambda_\text{max}}$, we can take $\nu=\alpha/6$.
		\end{itemize}
	\end{Remark}
	\medbreak
	\textbf{Objective: }Let $u\in S_{\delta,\nu}\left(\R^{d_\perp},\text{Poly}\times\text{Gauss}\right)$ for $\delta<1/2$ and $\nu\leq 1/6$. In this section, we want to show that, after propagating for a time $t$ independent of $h$ (see the setting in the previous section \ref{context}), we can still use this space to describe our state.
	\medbreak
	\textbf{Context of the computation:} We will use the same framework as in the previous section, see section \ref{context}, except for the use of the operator $\hat{T}$: it was previously used to describe the fact that we thought of our state as a localized around a certain point. However, in this new setting, we rather think of our state as localized around the isotropic manifold (the unstable manifold to $\rho_0$ (resp $\rho_1$) when it makes sense). 
	\newline 
	As a consequence, consider the initial manifold $\mathcal{I}_0$ to be a manifold of dimension $d_\perp$  projectable in $y^0$ passing through $\rho_0$ and parameterize it using the method described in the Introduction:
	\begin{equation*} 
		\mathcal{I}_0=\left\{\biggl(\overline{x^0}(y^0), y^0,\overline{\xi^0}(y^0), \underbrace{\nabla \phi^0(y^0)+\frac{(\nabla \overline{\xi^0}(y^0)).\overline{x^0}(y^0)-\overline{\xi^0}(y^0).(\nabla \overline{x^0}(y^0))}{2}}_{\coloneq \overline{\eta^0}(y^0)}\biggr), y^0\in D_{\epsilon_1}(q^0_y)\right\},
	\end{equation*} 
	(with $\epsilon>0$ the maximal diameter of our charts) and assume that its image under the classical dynamics $F$ (defined in (\ref{def F})) is still projectable on $y^0$:
	\begin{align*} 
		\mathcal{I}_1&=F( \mathcal{I}_0)
		\\&=\left\{\biggl(\overline{x^1}(y^1), y^1,\overline{\xi^1}(y),\underbrace{\nabla_y \phi^1(y^1)+\frac{(\nabla \overline{\xi^1}(y^1)).\overline{x^1}(y^1)-\overline{\xi^1}(y^1).(\nabla \overline{x^1}(y^1))}{2}}_{\coloneq \overline{\eta^1}(y^1) }\biggr), y^1\in D_{\epsilon_1}(q^1_y)\right\}.
	\end{align*} 
	We also assume that these manifolds are close to $\mathcal{I}_m$, the functions defining them verify for a $C>0$ independent of $h$:
	\medbreak
	\textbf{Dynamical assumption:}
	\begin{equation}\label{I proche}
		\left\|\overline{x^0}\right\|_{C^1},\left\|\overline{\xi^0}\right\|_{C^1},\left\|\overline{\eta^0}\right\|_{C^1}\leq C
	\end{equation}
	which implies that the same is true for functions that define $\mathcal{I}_1$.
	\medbreak
	Then we define an operator associated with a manifold $\mathcal{I}_j$, (see remark \ref{rem th 2}) $j=0$ or $1$: for $u\in L^2\left(\R^d\right)$
	\begin{equation}\label{T_L}
		\hat{T}_{\mathcal{I}_j}u(x,y)= \exp\left( \frac{i}{h} \Bigl[\phi^j(y)+(\overline{\xi^j}(y)).(x-\overline{x^j}(y)/2)\Bigr]\right) u\left(x-\overline{x^j}(y),y\right),
	\end{equation}
	this operator is unitary, and its inverse is given by, for $v\in L^2\left(\R^d\right)$,
	\begin{equation*} 
		\left(\hat{T}_{\mathcal{I}_j}\right)^*v(x,y)=\exp\left(- \frac{i}{h}\Bigl[\phi^j(y)+\overline{\xi^j}(y).(x+\overline{x^j}(y)/2)\Bigr]\right) u\left(x+\overline{x^j}(y), y\right).
	\end{equation*} 
	\begin{Remark}
		This modification is only needed when considering points that are not exactly on $K^\delta$, as for points in $K^\delta$, this isotropic manifold is exactly described by the variable $y$, i.e. $\mathcal{I}_j=\bigl\{(q^j_x, q^j_y+y^j,p^j_x, p^j_y), y^j\in D_{\epsilon_1}\bigr\}$ for all $j$, no drift can appear in the central direction, and we can still use the standard Weyl-Heisenberg operator $\hat{T}\left(\rho_j\right)$ rather than $\hat{T}_{\mathcal{I}_j}$.
	\end{Remark}
	For $u\in S_{\delta,\nu}$, we want to compute
	\begin{equation*} 
		\left(\hat{T}_{\mathcal{I}_1}\right)^* U^{t_0} \hat{T}_{\mathcal{I}_0}  u,
	\end{equation*} 
	just as in the previous section, we actually want to have a microscopic scaling for the $x$ variable while keeping the $y$ variable macroscopic,
	as a consequence we will instead compute:
	\begin{equation*} 
		u_1\coloneq\Bigl(\Lambda_{h,x}\Bigr)^* \Bigl(\hat{T}_{\mathcal{I}_1}\Bigr)^* U^{t_0}\hat{T}_{\mathcal{I}_0} \Lambda_{h,x} u_0,
	\end{equation*} 
	where $\Lambda_{h,x} f(x,y)=h^{-d_{\scalerel*{\parallel}{\perp}}/4} f\left(h^{-1/2}x,y\right)$ and $u_0=\left(\Lambda_{h,x}\right)^* u$.
	\medbreak
	For the sake of simplicity, we will explain the method in the case where no auxiliary variable $\Theta$ is needed. To deal with the general case, one should first remark that auxiliary variables are only needed for the central directions. Then occurrences of $\Theta$ should be treated those of $x^0$ and $\xi^0$, see the method explained in the corresponding paragraph \ref{Theta higher order}.
	\subsection{\texorpdfstring{Integration in $y^0,\eta^0$}{Integration in y0,eta0}}
	\textbf{Notation:}
	For the sake of readability, we will introduce the notations 
	\begin{equation*} 
		X^1=\left(x^1,y^1\right), \quad X^0=\left(x^0, y^0\right), \quad \Xi^0=\left(\xi^0,\eta^0\right).
	\end{equation*} 
	\medbreak
	We start by using an integral representation of the Fourier Integral Operator $\left(\hat{T}_{\mathcal{I}_1}\right)^* U^{t_0} \hat{T}_{\mathcal{I}_0}$ as
	\begin{equation*} 
		\left(\hat{T}_{\mathcal{I}_1}\right)^* U^{t_0} \hat{T}_{\mathcal{I}_0}u(X^1)=(2\pi h)^{-d/2}  \int_{\R^{2d}} e^{\frac{i}{h} \bigl(\psi(X^1,\Xi^0)-X^0.\Xi^0\bigr)} a(X^1,\Xi^0) u(X^0) \d X^0 \d \Xi^0.
	\end{equation*} 
	\begin{Remark}
		As $a$ has a compact support, so does the final state $\left(\hat{T}_{\mathcal{I}_1}\right)^* U^{t_0} \hat{T}_{\mathcal{I}_0}u$. As a consequence, estimates on its $L^2$ norm can be obtained only by looking at its $L^\infty$ norm.
	\end{Remark}
	This integral representation comes with a generating function $\psi$ that represents the symplectomorphism 
	\begin{equation}\label{mathfrak F}
		\mathfrak{F}\coloneq\left(\kappa_{\mathcal{I}_1}\right)^{-1} \circ \kappa_{\tilde{\rho_1}}\circ \Phi^{t_0}\circ \left(\kappa_{\tilde{\rho_0}}\right)^{-1} \circ\kappa_{\mathcal{I}_0}=\left(\kappa_{\mathcal{I}_1}\right)^{-1}\circ F\circ\kappa_{\mathcal{I}_0}
	\end{equation}
	where 
	\begin{multline*} 
		\kappa_{\mathcal{I}_i}(x,y,\xi,\eta)=
		\\\left(x+\overline{x^i}(y),y,\xi+\overline{\xi^i}(y),\eta+\nabla \phi^i(y)+\frac{(\nabla \overline{\xi^i}(y)).\overline{x^i}(y)-\overline{\xi^i}(y).(\nabla \overline{x^i}(y))}{2}-\nabla \overline{x^i}(y). \xi\right).
	\end{multline*} 
	In particular, this symplectomorphism $\mathfrak{F}$ sends the model isotropic manifold $\mathcal{I}_m=\{(0,y,0,0), $
	\newline$y\in D_{\epsilon_1}\}$ onto itself.
	\medbreak
	Making a change of variable in $x^0,\xi^0$ to rescale the variables in this integral representation yields:
	\begin{align}\label{integrale totale}
		\begin{split}
			u_1\left(X^1\right)&=\bigl(\bigl(\Lambda_{h,x}\bigr)^* \bigl(\hat{T}_{\mathcal{I}_1}\bigr)^* U^{t_0} \hat{T}_{\mathcal{I}_0} \Lambda_{h,x} u_0\bigr)(X^1)
			\\&=\frac{h^{-d_\perp/2}}{(2\pi)^d}\int_{\R^{2d}} e^{\frac{i}{h}\psi_\text{tot}\left(X^1,\Xi^0,X^0\right)} a\left(\sqrt{h}x^1,y^1,\sqrt{h}\xi^0,\eta^0\right)  
			\\&\qquad\qquad\qquad\times\sum_{\substack{\gamma\in \N^{d_{\scalerel*{\parallel}{\perp}}}, \\|\gamma|\leq N} }u_\gamma\left(y^0\right) \mathcal{M}_{\text{c},1}\left(\kappa^\text{c}( y^0)\right)\bigl[X^\gamma \Psi_0\bigr](x^0) \d X^0 \d \Xi^0,
		\end{split}
	\end{align}
	where 
	\begin{equation*}
		\psi_{\text{tot}}\left(X^1,\Xi^0,X^0\right)=\psi\left(\sqrt{h}x^1, y^1,\sqrt{h}\xi^0,\eta^0\right)-h x^0\xi^0
		-y^0\eta^0.
	\end{equation*}
	Let us select a term $\gamma$ in the sum inside the integrand of (\ref{integrale totale}) and compute the associated integral.
	\medbreak
	\subsubsection{Non stationary phase in $y^0,\eta^0$}
	Let us start with a preliminary lemma to estimate derivatives of a $S_\delta^\nu$ function.
	
	\begin{Lemma}\label{derive squeezed}
		Let $\alpha\in \N^{d_\perp}$, then 
		\begin{equation*} 
			\partial_{y^0}^{\alpha} \mathcal{M}_{c,1}\left(\kappa^c(y^0)\right) \bigl[X^\gamma \Psi_0\bigr](x^0)=\left|\det (\Im(\Gamma_{\scalerel*{\parallel}{\perp}}))\right|^{1/4} Q_{\alpha} \left(\Im \Gamma_{\scalerel*{\parallel}{\perp}}^{1/2} x\right)e^{i \frac{x\cdot\Gamma_{\scalerel*{\parallel}{\perp}} x}{2}},
		\end{equation*} 
		with $Q_\alpha$ a polynomial of  degree lower than $|\gamma|+2|\alpha|$ verifying $N_\infty(Q_\alpha)\leq C h^{-2\nu|\alpha|}$.
		\medbreak
		In particular, we have
		\begin{equation*} 
			\left\|\partial_{y^0}^{\alpha} \mathcal{M}_{c,1}\left(\kappa^c(y^0)\right) \bigl[X^\gamma \Psi_0\bigr](x^0)\right\|_{L^2(\d x^0)L^\infty(\d y^0)}\leq C_{\alpha,\gamma} h^{-2|\alpha|\nu} |\log h|^{|\alpha|}.
		\end{equation*} 
	\end{Lemma}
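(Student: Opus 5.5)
Write $\Gamma=\Gamma_{\scalerel*{\parallel}{\perp}}(y^0)$. By Proposition \ref{propag excited} (at $h=1$),
\begin{equation*}
\mathcal{M}_{c,1}(\kappa^c(y^0))[X^\gamma\Psi_0](x)=\left|\det\Im\Gamma\right|^{1/4}Q_0\left(\Im\Gamma^{1/2}x\right)e^{ix\cdot\Gamma x/2},
\end{equation*}
with $N_\infty(Q_0)\leq C$. The idea is to differentiate this closed form in $y^0$ by induction on $|\alpha|$, showing that each $y$-derivative keeps the same structure. Each derivative raises the degree of the polynomial by at most $2$ and costs a factor $h^{-2\nu}|\log h|$, which comes from assumption (\ref{GammadGamma}) of Definition \ref{def S_delta,nu}.

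A single derivative $\partial_{y_j}$ can fall on three kinds of factors.

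\textbf{The Gaussian.} It produces $\tfrac{i}{2}x\cdot\partial_j\Gamma\, x$. Setting $z=\Im\Gamma^{1/2}x$, this equals
\begin{equation*}
\tfrac{i}{2}\, z\cdot\left(\Im\Gamma^{-1/2}\,\partial_j\Gamma\,\Im\Gamma^{-1/2}\right)z,
\end{equation*}
a quadratic polynomial in $z$ whose coefficients are $O(h^{-2\nu}|\log h|)$ by (\ref{GammadGamma}).

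\textbf{The determinant.} It produces $\tfrac14\operatorname{Tr}\left(\Im\Gamma^{-1}\partial_j\Im\Gamma\right)$, which is also the trace of a conjugated matrix bounded by (\ref{GammadGamma}). This gives a degree-zero term of the same size.

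\textbf{The polynomial argument.} Here $\partial_j$ hits $Q(\Im\Gamma^{1/2}x)$ and gives $\nabla Q(z)\cdot(\partial_j\Im\Gamma^{1/2})\Im\Gamma^{-1/2}z$. I expect this to be the main obstacle, since one must bound the derivative of a matrix square root. I would write $\partial_j(\Im\Gamma^{1/2})$ through the Sylvester equation $S\,\dot S+\dot S\,S=\partial_j\Im\Gamma$ with $S=\Im\Gamma^{1/2}$. In an eigenbasis of $S$ with eigenvalues $s_k$, this gives
\begin{equation*}
\left(\dot S\,S^{-1}\right)_{kl}=\frac{\left(\partial_j\Im\Gamma\right)_{kl}}{(s_k+s_l)\,s_l}.
\end{equation*}
The bound $(s_k+s_l)s_l\geq s_ks_l$ compares this to $\left(S^{-1}\,\partial_j\Im\Gamma\,S^{-1}\right)_{kl}$, so the entries are again controlled by (\ref{GammadGamma}), up to a dimensional constant.

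If this direct comparison is awkward, I would instead avoid square roots altogether. Since $\mathcal{M}_1$ is only defined up to the choice of normalization, one can write the excited state as $\tilde Q(x)e^{ix\cdot\Gamma x/2}$ and track homogeneous weights. The alternative is to use that $\Im\Gamma$ and its derivatives appear only through conjugations, which keeps the same degree count.

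Iterating, and absorbing higher derivatives with the Leibniz rule, gives $Q_\alpha$ of degree at most $|\gamma|+2|\alpha|$ with coefficients $O(h^{-2\nu|\alpha|}|\log h|^{|\alpha|})$. The $\log$ factor is carried along, or hidden in the $h^{-2\nu|\alpha|}$ up to $\epsilon$-losses. For the $L^2(\d x^0)$ bound, change variables $z=\Im\Gamma^{1/2}x$. The Jacobian factor cancels $\left|\det\Im\Gamma\right|^{1/2}$, leaving
\begin{equation*}
\left\|Q_\alpha(z)e^{-|z|^2/2}\right\|_{L^2}\leq C_{\deg}N_\infty(Q_\alpha),
\end{equation*}
which holds uniformly in $y^0$. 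This gives the stated $L^2(\d x^0)L^\infty(\d y^0)$ estimate.
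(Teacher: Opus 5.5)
Your route is the paper's: the closed form from Proposition~\ref{propag excited}, then $\partial_{y_j}$ landing on the determinant, the Gaussian or the argument $\Im\Gamma^{1/2}x$, each controlled by (\ref{GammadGamma}). Your Sylvester-equation bound on $\partial_j(\Im\Gamma^{1/2})\,\Im\Gamma^{-1/2}$ is correct, and it justifies a step the paper only asserts.

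The gap, which the paper's proof shares, is that your list of places where the derivative can land is incomplete. The polynomial $Q_0=Q(X^\gamma)$ of Proposition~\ref{propag excited} depends on the whole matrix $\kappa^c(y)$, not only on $\Gamma(y)$. Two symplectic matrices with the same $\Gamma$ differ by a right factor in $\mathrm{U}(d_{\scalerel*{\parallel}{\perp}})\subset Sp$, and its metaplectic lift mixes Hermite functions of equal degree. So the coefficients of $Q_0$ are themselves functions of $y$. Concretely, take $d_{\scalerel*{\parallel}{\perp}}=1$ and $\kappa^c(y)$ the rotation by an angle $\theta(y)$. Then $\Gamma\equiv i$, so (\ref{GammadGamma}) holds trivially and your three cases all contribute zero. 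Yet $\mathcal{M}_1(\kappa^c(y))[X\Psi_0]=e^{-ic\theta(y)}X\Psi_0$ for a constant $c\neq0$ fixed by the phase convention. Its $y$-derivative $-ic\,\theta'(y)e^{-ic\theta(y)}X\Psi_0$ is not controlled by (\ref{GammadGamma}) at all. So your claim that every derivative costs a factor coming from (\ref{GammadGamma}) is false as it stands.

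The repair needs the other hypothesis of Definition~\ref{def S_delta,nu}, namely $\kappa^c\in h^{-\nu}S_{0+\cdot}$. Write $\kappa^c=\begin{pmatrix}A&B\\C&D\end{pmatrix}$, $M=A+iB$, $N=C+iD$. Then $\kappa^c=\kappa_\Gamma u$, where $\kappa_\Gamma$ is the lower-triangular symplectic map with the same $\Gamma$, and $u\in\mathrm{U}(d_{\scalerel*{\parallel}{\perp}})$ corresponds to the unitary matrix $V=\Im\Gamma^{1/2}M$. The coefficients of $Q_0$ are polynomials in the entries of $V$ and $\overline{V}$, up to the phase $(\det V)^{-1/2}$ if you keep it. Now
\begin{equation*}
\partial_jV=\bigl(\partial_j(\Im\Gamma^{1/2})\,\Im\Gamma^{-1/2}\bigr)V+\Im\Gamma^{1/2}\,\partial_jM .
\end{equation*}
Your Sylvester bound handles the first term. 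For the second, $\|\Im\Gamma^{1/2}\|=\|M^{-1}\|\leq\|N\|\leq C\|\kappa^c\|$, which follows from $M^*N-N^*M=2iI$. Hence $\|\partial_jV\|\leq C_\epsilon h^{-2\nu-\epsilon}$ by the symbol estimate on $\partial_j\kappa^c$, and higher derivatives are treated the same way. With this fourth case added, your induction proves the lemma, with the same $|\log h|$ (or $h^{-\epsilon}$) losses you already point out.
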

	
	\begin{proof}
		First, let us use \ref{propag excited} to write 
		\begin{equation*} 
			\mathcal{M}_{c,1}\left(\kappa^c(y^0)\right) \bigl[X^\gamma \Psi_0\bigr](x^0)=\left|\det (\Im(\Gamma_{\scalerel*{\parallel}{\perp}}))\right|^{1/4}P\left(\Im \Gamma_{\scalerel*{\parallel}{\perp}}^{1/2} x\right)e^{i \frac{x\cdot\Gamma_{\scalerel*{\parallel}{\perp}} x}{2}},
		\end{equation*} 
		where $P$ is a polynomial of degree at most $|\gamma|$ and $N_\infty(P)\leq C$.
		\medbreak 
		Using this expression, we compute derivatives for each of these terms:
		\begin{itemize}
			\item $\partial_y |\det (\Im(\Gamma_{\scalerel*{\parallel}{\perp}}))|^{1/4}=\partial_y\det (\Im(\Gamma_{\scalerel*{\parallel}{\perp}}))^{1/4}$ since the matrix $\Im(\Gamma_{\scalerel*{\parallel}{\perp}})$ is symmetric definite positive. Then
			\begin{align*}
				|\partial_y \det (\Im(\Gamma_{\scalerel*{\parallel}{\perp}}))^{1/4}|&=\frac{1}{4} (\det \Im \Gamma)^{1/4} |\text{Tr} (\Im \Gamma^{-1} \partial_y \Im \Gamma)|
				\\&\leq C (\det \Im \Gamma)^{1/4} \|\Im \Gamma^{-1/2} \partial_y \Im \Gamma \Im \Gamma^{-1/2}\|.
			\end{align*}
			For higher derivatives, we obtain similar expressions, leading to the following estimate:
			\begin{align*}
				|\partial_y^{\beta} \det (\Im(\Gamma_{\scalerel*{\parallel}{\perp}}))^{1/4}|&\leq C (\det \Im \Gamma)^{1/4} \sup_{\sum k_i\beta_i=\beta} \|\Im \Gamma^{-1/2} \partial_y^{\beta_i} \Im \Gamma \Im \Gamma^{-1/2}\|^{k_i}
				\\&\leq C (\det \Im \Gamma)^{1/4} h^{-2\nu |\beta|} |\log h|^{|\beta|},
			\end{align*}
			with the assumption \ref{GammadGamma}.
			\item For $\partial^{\beta}_ye^{i \frac{x\cdot\Gamma_{\scalerel*{\parallel}{\perp}} x}{2}}$, we obtain a sum of terms
			\begin{equation*} 
				\left( x \partial_{y}^{\beta_1} \Gamma_{\scalerel*{\parallel}{\perp}} x\right)^{k_1} \dots \left( x \partial_{y}^{\beta_l} \Gamma_{\scalerel*{\parallel}{\perp}} x\right)^{k_l}e^{i \frac{x\cdot\Gamma_{\scalerel*{\parallel}{\perp}} x}{2}},
			\end{equation*} 
			with $\sum k_i\beta_i=\beta$. Which we rewrite as some 
			\begin{equation*} 
				Q(\Im \Gamma^{1/2}x)e^{i \frac{x\cdot\Gamma_{\scalerel*{\parallel}{\perp}} x}{2}},
			\end{equation*} 
			with $Q$ polynomial of degree $2|\beta|$ and
			\begin{align*}
				N_\infty(Q)&\leq C \sup_{\sum k_i\beta_i=\beta}\left\| \Im\Gamma_{\scalerel*{\parallel}{\perp}}^{-1/2}\partial^\beta_i \Gamma_{\scalerel*{\parallel}{\perp}} \Im \Gamma_{\scalerel*{\parallel}{\perp}}^{-1/2}\right\|^{k_i}
				\\&\leq C h^{-2\nu |\beta|} |\log h|^{|\beta|},
			\end{align*}
			with the assumption \ref{GammadGamma}.
			\item For $\partial^{\beta}_y P\left(\Im (\Gamma_{\scalerel*{\parallel}{\perp}})^{1/2} x\right)$, we compute that 
			\begin{align*}
				\partial_y P\left(\Im \Gamma_{\scalerel*{\parallel}{\perp}}^{1/2} x\right)&=\langle \partial_y \Im \Gamma_{\scalerel*{\parallel}{\perp}}^{1/2}, \nabla P\left(\Im \Gamma_{\scalerel*{\parallel}{\perp}}^{1/2} x\right)
				\\&=\tilde{P}(\Im \Gamma_{\scalerel*{\parallel}{\perp}} ^{1/2}),
			\end{align*}
			with $\tilde{P}$ a polynomial whose coefficients are bounded by $C h^{-2\nu}N_\infty(P)$.
			\medbreak
			In a similar fashion, we can write higher order derivatives as 
			\begin{equation*} 
				\partial^{\beta}_y P\left(\Im (\Gamma_{\scalerel*{\parallel}{\perp}})^{1/2} x\right)=P_{\beta}\left(\Im (\Gamma_{\scalerel*{\parallel}{\perp}})^{1/2} x\right),
			\end{equation*} 
			with $N_\infty(P_\beta)\leq C h^{-2\nu |\beta|} N_\infty(P)$.
		\end{itemize}
		As a consequence, we have the following:
		\begin{equation*} 
			\partial_{y^0}^{\alpha} \mathcal{M}_{c,1}\left(\kappa^c(y^0)\right) \bigl[X^\gamma \Psi_0\bigr](x^0)=\left|\det (\Im(\Gamma_{\scalerel*{\parallel}{\perp}}))\right|^{1/4} Q_{\alpha} \left(\Im (\Gamma_{\scalerel*{\parallel}{\perp}})^{1/2} x\right)e^{i \frac{x\cdot\Gamma_{\scalerel*{\parallel}{\perp}} x}{2}},
		\end{equation*} 
		with $Q_\alpha$ a polynomial of degree lower than $\deg (P)+2|\alpha|$ verifying $N_\infty(Q_\alpha)\leq C h^{-2\nu|\beta|} N_\infty(P)$.
		\medbreak
		Finally, the computation of its $L^2$ norm yields:
		\begin{equation*} 
			\left\|\partial_{y^0}^{\alpha} \mathcal{M}_{c,1}\left(\kappa^c(y^0)\right) \bigl[X^\gamma \Psi_0\bigr](x^0)\right\|_{L^2(\d x^0)}\leq C h^{-2|\alpha_4|\nu} |\log h|^{|\alpha_4|}.
		\end{equation*} 
	\end{proof}

	We want to start by performing the integration in $y^0,\eta^0$. In this section, we will consider $x^0,\xi^0$ as external parameters.
	\medbreak
	First, we should obtain a non stationary phase lemma:
	\begin{Lemma}\label{non statio}
		If for all $X^1\in D_{\epsilon_1}$, the phase $\partial_{y^0,\eta^0}\psi$ is uniformly bounded from below by some $C>0$ on $\text{supp }a$ then 
		\begin{equation*} 
			\left\|I\right\|_{L^2(\d x^1 \d y^1)} =O(h^\infty).
		\end{equation*} 
	\end{Lemma}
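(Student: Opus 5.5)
The plan is a standard non-stationary phase argument in the variables $(y^0,\eta^0)$ only, with $x^0,\xi^0$ (and $X^1$) as parameters, combined with the derivative bounds of Lemma \ref{derive squeezed}. Here $I$ is the $\gamma$-term of the integral (\ref{integrale totale}). The phase in those variables is $\frac{1}{h}\bigl(\psi(\sqrt{h}x^1,y^1,\sqrt{h}\xi^0,\eta^0)-y^0\eta^0\bigr)$. The hypothesis gives $|\nabla_{y^0,\eta^0}(\psi-y^0\eta^0)|\geq C$ on $\supp a$: either $|\partial_{\eta^0}\psi-y^0|$ or $|\eta^0|$ is bounded below, and I read the hypothesis in exactly this sense.

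I would introduce the transpose of the operator
\[
L=\frac{h}{i}\,\frac{\overline{\nabla_{y^0,\eta^0}\Phi}\cdot\nabla_{y^0,\eta^0}}{|\nabla_{y^0,\eta^0}\Phi|^2},\qquad \Phi=\psi-y^0\eta^0,
\]
which satisfies $L e^{i\Phi/h}=e^{i\Phi/h}$. I would then integrate by parts $K$ times in $(y^0,\eta^0)$.

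Each application of ${}^tL$ produces a factor $h$ times one derivative, falling on one of three things:
\begin{itemize}
\item the amplitude $a(\sqrt{h}x^1,y^1,\sqrt{h}\xi^0,\eta^0)$, which costs $O(1)$;
\item the coefficients of $L$, which are smooth and bounded because $|\nabla\Phi|\geq C$, costing $O(1)$;
\item the factor $u_\gamma(y^0)\,\mathcal{M}_{c,1}(\kappa^c(y^0))[X^\gamma\Psi_0](x^0)$.
\end{itemize}
For the last factor, $u_\gamma\in S_\delta$ gives a loss of $h^{-\delta}$ per derivative. Lemma \ref{derive squeezed} gives a loss of $h^{-2\nu}|\log h|$ per derivative, with a polynomial $Q_\alpha$ of degree $|\gamma|+2|\alpha|$ whose coefficients are controlled, measured in $L^2(\d x^0)$. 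Each integration by parts therefore gains $h^{1-\delta-2\nu}|\log h|$. Since $\delta+\nu<1/2$ and $\nu<1/6$, we have $\delta+2\nu<1$, so this gain is at least $h^{\epsilon'}$ with $\epsilon'>0$. After $K$ steps the integrand carries $h^{K\epsilon'}$, which is $O(h^\infty)$.

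It remains to control the rest of the integral uniformly. The $\eta^0$ integration runs over the compact support of $a$. The $y^0$ integration is controlled by the decay of $u_\gamma\in S_\delta(\jp{y}^{-\infty})$. The $\xi^0$-dependence lives only in $a$ and in the compactly supported rescaled variable. The remaining integration in $(x^0,\xi^0)$ against $e^{-ix^0\xi^0}$ is handled as in Section \ref{section1st}: it acts like a metaplectic or Fourier step with $h=1$. I would bound it in $L^2(\d x^1)$ by the $L^2(\d x^0)$ norms from Lemma \ref{derive squeezed}, using a Plancherel or Lemma \ref{1st_type}-type argument. The $x^1$-dependence of $a$ through $\sqrt{h}x^1$ contributes only bounded symbol derivatives. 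Finally, since everything is compactly supported in $y^1$, the $L^2(\d y^1)$ norm follows from the $L^\infty$ bound, and the prefactor $h^{-d_\perp/2}$ is absorbed by the $O(h^\infty)$ gain.

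The main obstacle is this last mixing. Derivatives in $y^0$ of the squeezed part produce polynomial growth in $x^0$ of degree up to $|\gamma|+2K$. I need to make sure that $\xi^0$ enters the $(y^0,\eta^0)$ phase only through the bounded quantity $\sqrt{h}\xi^0$, so no unbounded factor in $\xi^0$ appears. I then need to check that the resulting integral in $(x^0,\xi^0)$ still defines an $L^2$-bounded map of weighted norms, using the $H_1(\jp{\rho}^K)$ control of Lemma \ref{control_xy} with $\|\kappa^c\|\leq h^{-\nu}$. The polynomial weights cost at most $h^{-\nu(|\gamma|+2K)}$. That loss is already included in the per-step count above, since Lemma \ref{derive squeezed}'s $h^{-2\nu}$ per derivative accounts for it, so the net gain per step stays positive.
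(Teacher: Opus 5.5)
Your proposal is correct and follows the paper's proof. Both use the same operator $L$ in $(y^0,\eta^0)$, integrate by parts $K$ times, and bound the losses by $h^{-\delta}$ per derivative from $u_\gamma\in S_\delta$ and by $h^{-2\nu}|\log h|$ per derivative from Lemma \ref{derive squeezed}. Your per-step gain $h^{1-\delta-2\nu}$ is more pessimistic than the paper's $h^{1-\max(\delta,2\nu)}$, but it is still a positive power of $h$.

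The only difference is the final $L^2$ estimate. The paper rewrites each integral as the unitary FIO composed with $\Op(\tilde b)$ acting on $\partial_y^\alpha[u_\gamma v]$. You instead integrate absolutely in $(y^0,\eta^0)$, use $L^2$-boundedness of the central oscillatory integral, and absorb the negative power of $h$ into the $O(h^\infty)$ gain. The weighted $H_1(\jp{\rho}^K)$ norms you raise as the main obstacle are unnecessary, because Lemma \ref{derive squeezed} already bounds the $L^2(\d x^0)$ norms with the polynomial factors included.
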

	\begin{proof}
		We follow the usual proof of the non-stationary phase lemma, except that we are looking for $L^2(\d X^1)$ estimates. 
		\medbreak
		We define the usual operator 
		\begin{equation*} 
			L=\frac{h}{i}\frac{1}{|\partial_{y^0,\eta^0} \psi_\text{tot}|^2}\left\langle \partial_{y^0,\eta^0} \psi_\text{tot},\partial_{y^0,\eta^0}\right\rangle,
		\end{equation*} 
		thanks to the non-vanishing hypothesis of $d\psi_{\text{tot}}$, this operator makes sense on $\supp a$.
		\newline It is designed to verify: 
		\begin{equation*} 
			L(e^{i\psi_\text{tot}/h})=e^{i\psi_\text{tot}/h}.
		\end{equation*} 
		Injecting this identity into $I$ yields: $\forall X^1\in D_{\epsilon_1}$, $\forall K\in \N$,
		\begin{equation*} 
			I(X^1)=(2\pi h)^{-d_\perp/2} \int_{\R^{2d}} e^{\frac{i}{h}\psi_\text{tot}} (L^*)^K \bigl[a\ u_\gamma\ v\bigr]\d X^0 \d \Xi^0,
		\end{equation*} 
		with 
		\begin{equation}\label{def v}
			v\left(x^0,y^0\right)= \mathcal{M}_{c,1} \left(\kappa^c(y^0)\right) \bigl[X^\gamma\Psi_0\bigr]\left(x^0\right).
		\end{equation}
		\medbreak
		Computing $(L^*)^K$ gives a differential operator of order $K$ with coefficients involving derivatives of $\psi_\text{tot}$ at the point $\left(X^1,\Xi^0,X^0\right)$ and a global factor $h^K$.
		\medbreak
		Once we remove this global factor $h^K$, we are left with the estimation of a sum of integrals of type
		\begin{equation} \label{FIO pseudo}
			I_b=(2\pi h)^{-d_\perp/2} \int_{\R^{2d}} e^{\frac{i}{h}\psi_\text{tot}} b\ \partial^{\alpha}_y \bigl[u_\gamma\ v\bigr]\d X^0 \d \Xi^0,
		\end{equation}
		with $b\in C^\infty_c$ composed of derivatives of $\psi$ and $a$.
		\medbreak
		Using composition of Fourier Integral Operators (or similar arguments as in lemma \ref{1st_type}), we can write this integral as the composition of the Fourier Integral Operator $\Bigl(\hat{T}_{\mathcal{I}_1}\Bigr)^* U^{t_0}\hat{T}_{\mathcal{I}_0}$ and a pseudodifferrential operator $\Op (\tilde{b})$ as follows:
		\begin{equation*} 
			I_b=\Bigl(\Lambda_{h,x}\Bigr)^* \Bigl(\hat{T}_{\mathcal{I}_1}\Bigr)^* U^{t_0}\hat{T}_{\mathcal{I}_0} \Op (\tilde{b}) \partial^{\alpha}_y \bigl[u_\gamma\ v\bigr],
		\end{equation*} 
		with a $\tilde{b}\in \mathcal{S}$.
		\medbreak
		Then, using the unitarity of the operators $\Lambda_{h,x}$, $\Bigl(\hat{T}_{\mathcal{I}_1}\Bigr)^* U^{t_0}\hat{T}_{\mathcal{I}_0}$ and $L^2$-continuity of $\Op (\tilde{b})$ we obtain:
		\begin{equation*} 
			\|I_b\|_{L^2}\leq C \|\partial^{\alpha}_y \bigl[u_\gamma\ v\bigr]\|_{L^2}.
		\end{equation*} 
		As $u_\gamma\in S^{\delta}(\jp{y}^{-\infty})$, we have
		\begin{align}\label{fin est L2} 
			\begin{split}
				\|\partial^{\alpha}_y \bigl[u_\gamma\ v\bigr]\|_{L^2(d X^1)}&\leq \sup_{\alpha_1+\alpha_2=\alpha} \|\partial^{\alpha}_y u_\gamma\|_{L^2(\d y)} \|\partial^{\alpha_2} v\|_{L^2(\d x)L^\infty(\d y)}
				\\&\leq C\sup_{\alpha_1+\alpha_2=\alpha} \|\partial^{\alpha_1}_y u_\gamma\|_{L^\infty(\d y)} \|\partial^{\alpha_2} v\|_{L^2(\d x)L^\infty(\d y)}
				\\&\leq C\sup_{\alpha_1+\alpha_2=\alpha} h^{-\delta |\alpha_1|} h^{-2\nu |\alpha_2|}.
			\end{split}
		\end{align}
		Hence, 
		\begin{equation*}
			\|I(X^1)\|_{L^2(\d X^1)} \leq C h^{(1-\max(\delta,2\nu))K},
		\end{equation*} 
		which concludes the proof with our assumptions on $\delta$ and $\nu$.
	\end{proof}
	
	\subsubsection{Stationary points}
	The next step is the identification of stationary points. We consider $x^1,y^1,x^0,\xi^0$ to be external parameters, so that we obtain two equations:
	\begin{align}\label{pt_crit}
		\begin{split}
			y^0&=\partial_{\eta} \psi\Bigl(\sqrt{h}x^1, y^1,\sqrt{h}\xi^0,\eta^0\Bigr)
			\\\eta^0&=0.
		\end{split} 
	\end{align}
	Note that this system is equivalent to finding $\tilde{x^0},\tilde{y}^0,\tilde{\xi^1},\tilde{\eta^1}$ such that
	\begin{equation*} 
		\mathfrak{F}\left(\tilde{x}^0,\tilde{y}^0,\sqrt{h}\xi^0, 0 \right)=\left(\sqrt{h}x^1,y^1,\tilde{\xi}^1,\tilde{\eta}^1\right).
	\end{equation*} 
	But when $x^1=0$ and $\xi^0=0$, we already had an explicit solution to this equation: we can choose on the right hand-side a point of the isotropic manifold $\mathcal{I}_m$ and as $\mathfrak{F}(\mathcal{I}_m)=\mathcal{I}_m$, we know that there exists $y^0$ such that 
	\begin{equation}\label{I_pt_crit}
		\mathfrak{F}\left(0,y^0,0, 0 \right)=\left(0,y^1,0,0\right).
	\end{equation}
	\subsubsection{Morse lemma with parameter}
	The next step of the proof is to obtain a change of variable close to the critical set that will allow us to reduce our study to the quadratic case. This part can be done in the same way as classical proofs, which can be found in \cite{FIO}. 
	\medbreak
	
	\begin{Lemma}\cite[Lemma 1.2.2]{FIO}\label{Morse}
		There exists a $C^\infty$ mapping $\kappa:\R^{3d} \to\R^{2d_\perp}$ such that
		\begin{equation*} 
			\kappa\left(x^1,y^1,\xi^0,x^0, y^0,\eta^0\right)=(z,\zeta),
		\end{equation*} 
		with 
		\begin{equation*} 
			z=y^0-\tilde{y}^0+O\left(|y^0-\tilde{y}^0|^2\right),\quad \zeta=\eta^0+O\left(|\eta^0|^2\right),
		\end{equation*} 
		where $\tilde{y}^0\left(x^1,y^1,\xi^0\right)$ and $\tilde{\eta}^0\left(x^1,y^1,\xi^0\right)=0$ are the critical points
		and
		\begin{equation*} 
			\psi_\text{tot}\left(X^1,\Xi^0,X^0\right) = \psi_\text{tot}\left(X^1,\xi^0, \tilde{\eta}^0,x^0,\tilde{y}^0\right) + \frac{1}{2}\left\langle Q(X^1,\xi^0) (z,\zeta),(z,\zeta)\right\rangle,
		\end{equation*}  
		\begin{equation}\label{Q_phase_statio}
			Q(X^1,\xi^0) = \begin{pmatrix}
				0 & - I
				\\ -I & \partial^2_{\eta,\eta} \psi(\sqrt{h}x^1,y^1,\sqrt{h}\xi^0,0).
			\end{pmatrix}
		\end{equation}
		\medbreak
		Moreover, at fixed $X^1$, $\xi^0$, $x^0$; $\tilde{\kappa}:y^0,\eta^0\to (z,\zeta)$ gives a diffeomorphism between $y^0,\eta^0\in \R^{2d_\perp}$ and $z,\zeta\in \R^{2d_\perp}$.
	\end{Lemma}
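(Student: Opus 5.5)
The plan is to follow the classical Morse lemma with parameters (as in \cite[Lemma 1.2.2]{FIO}), treating $(x^1,y^1,\xi^0,x^0)$ as parameters and $(y^0,\eta^0)$ as the integration variables.

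The first step is to check that the critical set described in (\ref{pt_crit}) is a smooth graph over the parameters. The equations are $\partial_{\eta^0}\psi_{\text{tot}}=\partial_\eta\psi-y^0=0$ and $\partial_{y^0}\psi_{\text{tot}}=-\eta^0=0$. They give $\tilde{\eta}^0=0$ and $\tilde{y}^0=\partial_\eta\psi(\sqrt{h}x^1,y^1,\sqrt{h}\xi^0,0)$, which depends smoothly on the parameters. The dependence on $x^0$ disappears because $\psi_{\text{tot}}$ is affine in $x^0$ in the $h x^0\xi^0$ term, and that term does not involve $(y^0,\eta^0)$.

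The second step is to compute the Hessian in $(y^0,\eta^0)$. We have $\partial^2_{y^0y^0}\psi_{\text{tot}}=0$, $\partial^2_{y^0\eta^0}\psi_{\text{tot}}=-I$, and $\partial^2_{\eta^0\eta^0}\psi_{\text{tot}}=\partial^2_{\eta\eta}\psi$. This is exactly the matrix $Q$ in (\ref{Q_phase_statio}), with $\partial^2_{\eta\eta}\psi$ evaluated at $\eta^0=0$. Its determinant is $\pm1$, so it is nondegenerate uniformly in $h$ and in the parameters.

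The third step is the Taylor formula with integral remainder around the critical point. Writing $w=(y^0-\tilde{y}^0,\eta^0)$, we get
\[
\psi_{\text{tot}}=\psi_{\text{tot}}|_{\text{crit}}+\tfrac12\langle B(w)w,w\rangle,\qquad B(w)=2\int_0^1(1-s)\,\mathrm{Hess}_{(y^0,\eta^0)}\psi_{\text{tot}}(\text{crit}+sw)\,\d s .
\]
Here $B(0)=Q$, and $B$ is smooth in $w$ and in all parameters. We then look for a matrix-valued $R(w)$ with $R(0)=I$ and $R^TQR=B$. The map $R\mapsto R^TQR$ is a submersion at $R=I$ onto symmetric matrices, because its differential $H\mapsto H^TQ+QH$ is onto when $Q$ is invertible. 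The implicit function theorem therefore provides $R(w)=\mathcal{R}(B(w))$ with $\mathcal{R}$ smooth near $Q$. Setting $(z,\zeta)=R(w)w$ gives the claimed form, and $(z,\zeta)=w+O(|w|^2)$.

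The main point to verify is uniformity in the parameters, including $h$. The parameters enter only through $\sqrt{h}x^1$ and $\sqrt{h}\xi^0$ inside a compactly supported region where $a\neq0$, so all derivatives of $B$ are bounded independently of $h$. The factor $\mathcal{R}$ is defined on a fixed neighbourhood of the compact family $\{Q(X^1,\xi^0)\}$. These two facts make the construction uniform.

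Finally, $\tilde\kappa:w\mapsto R(w)w$ has differential $I$ at $w=0$, so it is a local diffeomorphism near the critical point with uniform size. To get a global diffeomorphism of $\R^{2d_\perp}$, I would use the fact that, on the support of $a$, the phase is close to its quadratic part. More precisely, I would cut off with a function that equals $1$ near the critical set and glue $R(w)w$ to $w$ outside. Away from the critical set, the nonstationary phase Lemma \ref{non statio} already shows the contribution is $O(h^\infty)$, so only the local diffeomorphism matters there.
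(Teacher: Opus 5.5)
Your local construction (Taylor formula with integral remainder, then solving $R^TQR=B(w)$ near $R=I$) is the classical parametric Morse lemma. That is all the paper offers here: it cites \cite[Lemma 1.2.2]{FIO}, and that lemma is local, so the citation does not justify the global clause either.

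The gap is your last step, the ``Moreover'' clause. Gluing $R(w)w$ to $w$ with a cutoff does give a diffeomorphism of $\R^{2d_\perp}$. But wherever the cutoff is not identically $1$, the identity $\psi_{\text{tot}}=\psi_{\text{tot}}|_{\text{crit}}+\tfrac12\langle Q(z,\zeta),(z,\zeta)\rangle$ is lost, so the glued map is not the $\tilde\kappa$ of the lemma. Closeness of the phase to its quadratic part on the support of $a$ cannot repair this. The lemma asserts an exact identity, and any $h$-independent discrepancy in the phase is divided by $h$ in the exponent. Your fallback (localize near the critical set and dispose of the rest with Lemma \ref{non statio}) is a sound route to the stationary phase proposition. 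However, it proves a weaker statement than the lemma.

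What is missing is to use the structure of this particular phase instead of a generic implicit-function choice of $R$. In the variables $(y^0,\eta^0)$, $\psi_{\text{tot}}=\psi(\sqrt{h}x^1,y^1,\sqrt{h}\xi^0,\eta^0)-y^0\cdot\eta^0-h\,x^0\cdot\xi^0$ is affine in $y^0$. So your matrix is
\[
B(w)=\begin{pmatrix}0&-I\\-I&H(\eta^0)\end{pmatrix},\qquad H(\eta)=2\int_0^1(1-s)\,\partial^2_{\eta\eta}\psi(\sqrt{h}x^1,y^1,\sqrt{h}\xi^0,s\eta)\,\d s ,
\]
with $B(0)=Q$, and $B(w)-Q$ lives only in the lower-right block $D(\eta^0)=H(\eta^0)-H(0)$. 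Consequently the following choice works for every $w$, with no smallness condition:
\[
R(w)=\begin{pmatrix}I&-\tfrac12 D(\eta^0)\\0&I\end{pmatrix},\qquad (z,\zeta)=R(w)w=\bigl(y^0-\tilde y^0-\tfrac12 D(\eta^0)\eta^0,\ \eta^0\bigr).
\]
This $R(w)$ satisfies $R(w)^TQR(w)=B(w)$ exactly. The map $w\mapsto R(w)w$ is a shear, hence a global diffeomorphism with explicit inverse $(z,\zeta)\mapsto(z+\tilde y^0+\tfrac12 D(\zeta)\zeta,\zeta)$. It is smooth in all parameters and uniform in $h$, provided you first extend $\psi$ smoothly off the support of $a$, which does not change the integral. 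This proves the whole lemma at once.

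It also shows what the remainders actually are: $\zeta=\eta^0$ exactly and $z=y^0-\tilde y^0+O(|\eta^0|^2)$. In general you can only claim $(z,\zeta)=w+O(|w|^2)$ jointly, as you did. The separated bounds written in the statement fail: for $d_\perp=1$ and the phase $\eta^3-y\eta$, they would force $z=0$ at $y=\tilde y=0$, hence $\eta^3=0$.
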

	\begin{Remark}\label{det Q}
		The matrix $Q\left(X^1,\xi^0\right)$ is the Hessian matrix in $y,\eta$ of $\psi_{\text{tot}}$ at the critical point, it is non-degenerate. In particular, we have $|\det Q(X^1,\xi^0)|=1$ and $\text{sgn}\ Q(X^1,\xi^0)=0$.
	\end{Remark}
	\medbreak
	\subsubsection{Stationary phase lemma}
	Here the proof is similar to the usual one (see, for instance, \cite[Theorem 3.11]{Zwbook}), the only modification comes from the estimation of rests which is done just as in the non stationary phase setting, lemma \ref{non statio}.
	\medbreak
	Let us start by stating the result for a fixed $x^1, y^1, \xi^0$:
	
	\begin{Proposition}
		Consider $\tilde{y}^0\left(\sqrt{h}x^1,\sqrt{h}\xi^0\right),\tilde{\eta}^0\left(\sqrt{h}x^1,\sqrt{h}\xi^0\right)=0$ associated critical points defined in equation (\ref{pt_crit}), then:
		\newline There exists, for each $l \in\N$, a differential operator $A_{2l}(y,\eta,D_y,D_{\eta})$ in the variables $y^0$ and $\eta^0$, with coefficients that depend smoothly on $X^1,x^0,\xi^0$ and of
		order less than or equal to $2l$, such that for all $K\in \N,$ we have 
		\begin{multline*}
			(2\pi h)^{-d_\perp/2} \int_{\R^{2d_\perp}} e^{\frac{i}{h}\psi_\text{tot}\left(X^1,\Xi^0,X^0\right)} a\left(\sqrt{h}x^1,y^1,\sqrt{h}\xi^0,\eta^0\right) u_\gamma\left(y^0\right) \mathcal{M}_{\text{c},1}\left(\kappa^\text{c}( y^0)\right)\bigl[X^\gamma \Psi_0\bigr](x^0) 
			\\\shoveright {\phantom{}\d y^0 \d \eta^0}
			\\=\sum_{l=0}^K h^l \Bigl[A_{2l}(y,\eta,D_y,D_{\eta}) b\Bigr]\left(X^1, \xi^0, \tilde{\eta}^0,x^0,\tilde{y}^0\right)+R(X^1,x^0,\xi^0),
		\end{multline*}
		where
		\begin{equation*} 
			b\left(X^1,\Xi^0,X^0\right)=a\left(\sqrt{h}x^1, y^1,\sqrt{h}\xi^0,\eta^0\right) \sum_{\substack{\gamma\in \N^{d-d_\text{hyp}} \\|\gamma|\leq N} }u_\gamma(y^0) \mathcal{M}_{\text{c},1}(\kappa^\text{c}( y^0))\bigl[X^\gamma \Psi_0\bigr](x^0),
		\end{equation*} 
		and 
		\begin{equation*} 
			\int_{\R^d} \left( \int_{\R^{2d_{\scalerel*{\parallel}{\perp}}}} R(X^1,x^0,\xi^0)\d x^0 \d \xi^0 \right)^2 \d X^1 \leq C h^{(1-2\max(\delta, 2 \nu))2K}.
		\end{equation*} 
		Moreover, we have $A_0 =\left|\det Q(X^1,\xi^0)\right|^{-1/2}=1$.
	\end{Proposition}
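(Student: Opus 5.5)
The plan is to run the proof of the standard stationary phase theorem \cite[Theorem 3.11]{Zwbook} with $X^1,x^0,\xi^0$ frozen as parameters, and to bound the remainder the way it is done in lemma \ref{non statio}. The first step is to localize. With a cutoff $\chi(y^0-\tilde y^0,\eta^0)$ equal to $1$ near the critical set (\ref{pt_crit}), we split the integrand. On the complement of this neighbourhood, $\partial_{y^0,\eta^0}\psi_\text{tot}$ is bounded from below: indeed $\partial_{y^0}\psi_\text{tot}=-\eta^0$ and $\partial_{\eta^0}\psi_\text{tot}=\partial_\eta\psi-y^0$. Lemma \ref{non statio} then shows that this piece is $O(h^\infty)$ in the required mixed norm, and we absorb it into $R$.

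Near the critical set, we apply the Morse lemma with parameters (lemma \ref{Morse}) and change variables $(y^0,\eta^0)\mapsto(z,\zeta)$. This turns the phase into $\psi_\text{tot}(X^1,\xi^0,0,x^0,\tilde y^0)+\frac12\langle Q(z,\zeta),(z,\zeta)\rangle$, with $Q$ given by (\ref{Q_phase_statio}). The Jacobian of the change of variables is absorbed into the amplitude $\tilde b$. The quadratic case is then handled by Fourier transform, via Plancherel in $(z,\zeta)$:
\begin{equation*}
(2\pi h)^{-d_\perp}\int e^{\frac{i}{2h}\langle Q w,w\rangle}\tilde b(w)\,\d w=|\det Q|^{-1/2}e^{\frac{i\pi}{4}\text{sgn}Q}\bigl(e^{\frac{ih}{2}\langle Q^{-1}D,D\rangle}\tilde b\bigr)(0).
\end{equation*}
Remark \ref{det Q} gives $|\det Q|=1$ and $\text{sgn}\,Q=0$, so $A_0=1$. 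Expanding $e^{\frac{ih}{2}\langle Q^{-1}D,D\rangle}$ to order $K$ and returning to the variables $(y^0,\eta^0)$ produces the operators $A_{2l}$, of order $2l$, whose coefficients depend smoothly on $(X^1,x^0,\xi^0)$ through $\tilde y^0$ and the Morse change of variables.

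The main obstacle is the remainder estimate. Because $b$ is not an $h$-independent symbol, the Taylor remainder of $e^{\frac{ih}{2}\langle Q^{-1}D,D\rangle}$ is controlled by $h^{K+1}$ times norms of $\partial^{\alpha}_{y^0}\partial^{\beta}_{\eta^0}b$ with $|\alpha|+|\beta|\le 2K+2+M$, where $M$ is a fixed Sobolev loss. Derivatives in $\eta^0$ only hit $a$ and are harmless. For derivatives in $y^0$ we use the Leibniz rule as in (\ref{fin est L2}): $\partial^{\alpha_1}u_\gamma$ costs $h^{-\delta|\alpha_1|}$ in $L^\infty$, and lemma \ref{derive squeezed} bounds $\partial^{\alpha_2}_{y^0}\mathcal{M}_{c,1}(\kappa^c(y^0))[X^\gamma\Psi_0]$ by $h^{-2\nu|\alpha_2|}|\log h|^{|\alpha_2|}$ in $L^2(\d x^0)L^\infty(\d y^0)$. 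Each pair of derivatives therefore costs at most $h^{-2\max(\delta,2\nu)}$ up to logarithms, which gives a gain of $h^{(1-2\max(\delta,2\nu))}$ per order.

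To obtain the $L^2(\d X^1)$ bound after integrating in $(x^0,\xi^0)$, we do not estimate pointwise. Instead, as in lemma \ref{non statio} and lemma \ref{1st_type}, we reinterpret the remainder integral as the Fourier integral operator $(\Lambda_{h,x})^*(\hat T_{\mathcal{I}_1})^*U^{t_0}\hat T_{\mathcal{I}_0}$ composed with a pseudodifferential operator acting on the derivatives $\partial^\alpha_y[u_\gamma v]$. Unitarity and $L^2$-boundedness then reduce the estimate to $\|\partial^\alpha_y[u_\gamma v]\|_{L^2}$, which yields the stated bound $Ch^{(1-2\max(\delta,2\nu))2K}$ for the squared norm. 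The point to check carefully is that the Morse change of variables and the $(x^0,\xi^0)$-dependence of $\tilde y^0$ only introduce $h$-independent smooth coefficients. This holds because they enter through $\sqrt h x^1$ and $\sqrt h\xi^0$, so their derivatives are bounded or even improve.
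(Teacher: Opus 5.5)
Your proposal is correct and follows essentially the same route as the paper. The Morse lemma with parameters (lemma \ref{Morse}) reduces to the quadratic phase $\frac12\langle Q(z,\zeta),(z,\zeta)\rangle$ with $|\det Q|=1$ and $\text{sgn}\, Q=0$. The quadratic case is then handled on the Fourier side; the paper Taylor-expands $J_{\tilde b}(h)$ in $h$, which amounts to your expansion of $e^{\frac{ih}{2}\langle Q^{-1}D,D\rangle}$. The remainder is bounded, as in lemma \ref{non statio}, by rewriting it as the Fourier integral operator composed with a pseudodifferential operator acting on $\partial^\alpha_y[u_\gamma v]$ and then using (\ref{fin est L2}).

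Your preliminary cutoff near the critical set is harmless but unnecessary, since the Morse change of variables is a global diffeomorphism in $(y^0,\eta^0)$ for fixed $(X^1,x^0,\xi^0)$.
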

	\begin{proof}
		\begin{itemize}
			Let 
			\begin{align*}
				I_\gamma(X^1,\xi^0,x^0)\coloneq(2\pi h)^{-d_\perp/2}& \int_{\R^{2d_\perp}} e^{\frac{i}{h}\psi_\text{tot}\left(X^1,\Xi^0,X^0\right)} a\left(\sqrt{h}x^1,y^1,\sqrt{h}\xi^0,\eta^0\right) u_\gamma\left(y^0\right) 
				\\&\times\mathcal{M}_{\text{c},1}\left(\kappa^\text{c}( y^0)\right)\bigl[X^\gamma \Psi_0\bigr](x^0) \d y^0 \d \eta^0.
			\end{align*}
			First, we want to reduce the study to the quadratic phase case: using the lemma \ref{Morse}, we can make the change of variables $y^0 \leftarrow z, \eta^0 \leftarrow \zeta$ and obtain the following
			\begin{equation*} 
				I_\gamma(X^1,\xi^0,x^0)\coloneq(2\pi h)^{-d_\perp/2} \int_{\R^{2d_\perp}}e^{\frac{i}{2h} Q(X^1,\xi^0)(z,\zeta)^{\otimes 2}}b(X^1,\sqrt{h}\xi^0,x^0,(\tilde{\kappa})^{-1}(z,\zeta))\d z \d \zeta,
			\end{equation*} 
			hence we are now working with an amplitude
			\begin{equation*} 
				\tilde{b}(X^1,\sqrt{h}\xi^0,x^0,z,\zeta)=a\left(\sqrt{h}x^1, y^1,\sqrt{h}\xi^0,\eta^0(z,\zeta)\right) u_\gamma(y^0(z,\zeta)) \mathcal{M}_{\text{c},1}(\kappa^\text{c}( y^0(z,\zeta)))\bigl[X^\gamma \Psi_0\bigr](x^0).
			\end{equation*} 
			Now, to deal with the quadratic case, we use the usual method: 
			\medbreak
			We begin by using the Fourier transform to get that (using remark \ref{det Q})
			\begin{equation*} 
				I_\gamma(X^1,\xi^0,x^0)= \int_{\R^{2d}}e^{-\frac{ih}{2}\langle Q^{-1}(X^1,\xi^0)(\check{z},\check{\zeta}),(\check{z},\check{\zeta})\rangle} \check{\tilde{b}} (X^1,\sqrt{h}\xi^0,x^0,\check{z},\check{\zeta}) \d \check{z}\d\check{\zeta}.
			\end{equation*} 
			Set 
			\begin{equation*} 
				J_{\tilde{b}}(h,X^1,\xi^0,x^0)=\int_{\R^{2d}}e^{-\frac{ih}{2}\langle Q^{-1}(X^1,\xi^0)(\check{z},\check{\zeta}),(\check{z},\check{\zeta})\rangle} \check{\tilde{b}} (X^1,\sqrt{h}\xi^0,x^0,\check{z},\check{\zeta}) \d \check{z}\d\check{\zeta},
			\end{equation*} 
			then 
			\begin{align*}
				\partial_hJ_{\tilde{b}}(h,X^1,\xi^0,x^0)&=\int_{\R^{2d_\perp}}e^{-\frac{ih}{2}\langle Q^{-1}(\check{z},\check{\zeta}),(\check{z},\check{\zeta})\rangle}\left(-\frac{i}{2}\langle Q^{-1}(\check{z},\check{\zeta}),(\check{z},\check{\zeta})\rangle \check{\tilde{b}} (X^1,\sqrt{h}\xi^0,x^0,\check{z},\check{\zeta}) \right) \d \check{z}\d\check{\zeta}
				\\&=J_{P\tilde{b}}(h,X^1,\xi^0,x^0),
			\end{align*}
			for $P=-\frac{i}{2}\langle Q^{-1}(X^1,\xi^0) D_{y,\eta},D_{y,\eta}\rangle. $
			Therefore, the Taylor expansion gives:
			\begin{equation*} 
				J_{\tilde{b}}(h,X^1,\xi^0,x^0)=\sum_{l=0}^{K-1}\frac{h^l}{l!}J_{P^l\tilde{b}}(0,X^1,\xi^0,x^0)+ \frac{h^K}{(K-1)!} R_K(h,\tilde{b}),
			\end{equation*} 
			for 
			\begin{equation*} 
				R_K(h,\tilde{b})=\int_0^1 (1-t)^{K-1}J_{P^K\tilde{b}}(th,X^1,\xi^0,x^0) \d t.
			\end{equation*} 
			We then compute that 
			\begin{equation*} 
				J_{P^l\tilde{b}}(0,X^1,\xi^0,x^0)=(2\pi)^{d_\perp} P^l\tilde{b}(0),
			\end{equation*} 
			hence, the Taylor expansion written above gives the expansion of the proposition.
			\medbreak
			Finally, we are left with the estimation of the remainder term $R_K(h,\tilde{b})$, by Jensen inequality, we get the following:
			\begin{align*}
				\int_{\R^{2d}} \biggl( \int_{\R^{2d_{\scalerel*{\parallel}{\perp}}}} &R_K(X^1,x^0,\xi^0)\d x^0 \d \xi^0 \biggr)^2 \d X^1
				\\&\leq \int_{\R^{2d}} \int_0^1 (1-t)^{2(k-1)} \left( \int_{\R^{2d_{\scalerel*{\parallel}{\perp}}}} J_{P^K\tilde{b}}(th,X^1,\xi^0,x^0)\d x^0 \d \xi^0\right)^2 \d t \d X^1.
			\end{align*}
			But we notice that 
			\begin{equation*} 
				\int_{\R^{2d_{\scalerel*{\parallel}{\perp}}}} J_{P^K\tilde{b}}(th,X^1,\xi^0,x^0)\d x^0 \d \xi^0= (2\pi h)^{-d/2} \int_{\R^{2d}} e^{ \frac{i}{th}\psi_{\text{tot}}} P^K b \d X^0 \d \Xi^0,
			\end{equation*} 
			where $b$ is defined in the proposition.
			\medbreak
			Notice that we are in the same situation as in the proof of the non-stationary phase lemma, see (\ref{FIO pseudo}). We follow the same strategy and write this integral as the composition of a Fourier Integral Operator and a pseudodifferential one and conclude by estimating the $L^2$ norm just as in (\ref{fin est L2}) (recalling that $P^K$ is of order $2K$), we obtain 
			\begin{equation*} 
				\int \left(\int h^K|R_K(h,\tilde{b})|\d x^0 \d \xi^0\right)^2 \d X^1\leq Ch^{\bigl(1-2\max(\delta,2\nu)-\bigr)2K},
			\end{equation*} 
			which gives us the announced control on the remainder.
		\end{itemize}
	\end{proof}
	
	\begin{Corollary}
		Consider $\tilde{y}^0\left(\sqrt{h}x^1,\sqrt{h}\xi^0\right),\tilde{\eta}^0\left(\sqrt{h}x^1,\sqrt{h}\xi^0\right)=0$ the associated critical points defined in equation (\ref{pt_crit}),
		then
		\begin{enumerate}
			\item There exist for each $l \in\N$, differential operators $A_{2l}(x,D)$ in the variables $y^0$ and $\eta^0$, of
			order less than or equal to $2l$, such that for all $K\in \N,$
			\begin{equation*} 
				u_1=u_1^K+O_{L^2(\d X^1)}(h^{(1-2\max(\delta, 2 \nu))K}),
			\end{equation*} 
			where 
			\begin{equation}\label{après phase statio}
				u_1^K=h^{-d_\perp}\int\int e^{\frac{i}{h}\psi_{\text{tot}}(X^1, \xi^0,\tilde{\eta}^0,x^0,\tilde{y}^0)} \sum_{l=0}^K h^l \Bigl[A_{2l}(x,D) b\Bigr]\left(X^1, \xi^0, \tilde{\eta}^0,x^0,\tilde{y}^0\right)\d x^0 \d \xi^0,
			\end{equation}
			with
			\begin{equation*} 
				b\left(X^1,\Xi^0,X^0\right)=a\left(\sqrt{h}x^1, y^1,\sqrt{h}\xi^0,\eta^0\right) \sum_{\substack{\gamma\in \N^{d-d_\text{hyp}} \\|\gamma|\leq N} }u_\gamma(y^0) \mathcal{M}_{\text{c},1}(\kappa^\text{c}( y^0))\bigl[X^\gamma \Psi_0\bigr](x^0).
			\end{equation*} 
			\item  In particular,
			\begin{equation*} 
				A_0=1.
			\end{equation*} 
		\end{enumerate} 
	\end{Corollary}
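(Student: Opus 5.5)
The corollary follows by integrating the preceding proposition in the central variables $(x^0,\xi^0)$ and controlling the remainder in $L^2(\d X^1)$. Start from the representation (\ref{integrale totale}) of $u_1$. By Fubini, the integral over $\R^{2d}$ splits into an inner integral in $(y^0,\eta^0)$ and an outer integral in $(x^0,\xi^0)$. The prefactor $h^{-d_\perp/2}(2\pi)^{-d}$ is rearranged so that the inner integral carries exactly the normalisation $(2\pi h)^{-d_\perp/2}$ of the proposition, and the leftover constants are absorbed into the outer measure; this produces the $h^{-d_\perp}$ normalisation written in (\ref{après phase statio}). For each fixed $(X^1,x^0,\xi^0)$, and for each $\gamma$ with $|\gamma|\le N$, the inner integral is $I_\gamma(X^1,\xi^0,x^0)$.

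Next, apply the preceding proposition to each $I_\gamma$ and sum the finitely many $\gamma$ terms by linearity. The leading expansion terms $\sum_{l\le K} h^l [A_{2l}b](X^1,\xi^0,\tilde\eta^0,x^0,\tilde y^0)$ appear multiplied by the phase $e^{\frac{i}{h}\psi_{\text{tot}}}$ evaluated at the critical point. This phase comes from the Morse lemma (Lemma \ref{Morse}), which writes $\psi_{\text{tot}}$ as its critical value plus a quadratic form in $(z,\zeta)$; the critical value factors out of the inner integral. Integrating these terms in $(x^0,\xi^0)$ gives $u_1^K$. Point (2), $A_0=1$, is read off directly from the proposition, using Remark \ref{det Q}: $|\det Q|=1$ and $\operatorname{sgn} Q=0$, so no Maslov factor appears.

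The main point is the remainder. Its total contribution is $\int R(X^1,x^0,\xi^0)\,\d x^0\,\d\xi^0$, summed over $\gamma$. The proposition gives exactly
\[
\int\Bigl(\int R\,\d x^0\,\d\xi^0\Bigr)^2\d X^1\le C h^{(1-2\max(\delta,2\nu))2K},
\]
so this contribution is $O_{L^2(\d X^1)}(h^{(1-2\max(\delta,2\nu))K})$. The estimate holds because the $(x^0,\xi^0)$ integral was kept inside before taking the square, so the bound is on the full operator written as FIO$\circ$pseudodifferential operator, as in (\ref{FIO pseudo}) and (\ref{fin est L2}), rather than pointwise in $(x^0,\xi^0)$. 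A pointwise bound would not be integrable in those variables, since $x^0$ is not localised at an $h$-independent scale.

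I expect the main obstacle to be checking uniformity: the constants must not depend on $(X^1,x^0,\xi^0)$. The Morse change of variables and the critical point $\tilde y^0$ depend smoothly on $(\sqrt h x^1,y^1,\sqrt h\xi^0)$ only on the support of $a$, which is compact in the rescaled variables. Since $a$ has compact support, the problem reduces to this compact set, so all derivatives of $\psi$ entering the $A_{2l}$ coefficients are uniformly bounded. The $h^{-2\nu}|\log h|$ losses from $y^0$-derivatives of the squeezed part (Lemma \ref{derive squeezed}) are already built into the exponent $(1-2\max(\delta,2\nu))$. Because $\delta+\nu<1/2$ and $\nu<1/6$, this exponent is positive, so the remainder gains a power of $h$ with each increase of $K$, which is exactly the statement.
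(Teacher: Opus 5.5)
Your proposal is correct and follows the paper's route: the corollary is the preceding stationary-phase proposition integrated in $(x^0,\xi^0)$. The phase at the critical point factors out via the Morse lemma, $A_0=1$ comes from Remark~\ref{det Q}, and the proposition already states the remainder bound in the integrated $L^2(\d X^1)$ form. The one slip is cosmetic: $h^{-d_\perp/2}(2\pi)^{-d}=(2\pi h)^{-d_\perp/2}(2\pi)^{d_\perp/2-d}$ leaves no power of $h$ outside, so your rearrangement does not literally produce the prefactor $h^{-d_\perp}$ of $u_1^K$; the paper's normalising constants are inconsistent at this point anyway, and nothing in the argument depends on them.
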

	\subsection{\texorpdfstring{Integration in $x^0,\xi^0$}{Integration in x0,xi0}}\label{DL dir centrale}
	To conclude, we still have to integrate in the $x^0$ and $\xi^0$ variables, for this section $y^1$ is considered as an external parameter, when $y^1$ is not mentioned in an estimate, it means that said estimate is uniform in $y^1$. Let us consider the $l$-th term of the expansion, select a term in the sums that make up the integrand of \ref{après phase statio}, if we write
	\begin{equation*} 
		A_{2l}=\sum_{|\alpha|\leq 2l} A_{\alpha}(X^1,\xi^0) \partial^\alpha_{y,\eta},
	\end{equation*} 
	then a term of the sum is given by some
	\begin{equation*} 
		A_{\alpha} \partial^\alpha_{y,\eta}[au_\gamma \mathcal{M}_{\text{c},1}(\kappa^\text{c})\bigl[X^\gamma \Psi_0\bigr]]=A_{\alpha} \sum_{|\alpha_1|+|\alpha_2|+|\alpha_3|=|\alpha|}c_{\alpha_1,\alpha_2,\alpha_3}\ \partial^{\alpha_1}_{y,\eta} a\ \partial^{\alpha_2}_{y} u_\gamma\ \partial^{\alpha_3}_{y} \mathcal{M}_{\text{c},1}(\kappa^\text{c})\bigl[X^\gamma \Psi_0\bigr].
	\end{equation*} 
	Consequently, let us study
	\begin{multline*} 
		h^{2l}I_{\alpha,\gamma}(X^1)=h^{2l}\int\int e^{\frac{i}{h} \psi_{\text{tot}}(X^1,\xi^0,x^0)} b_{\alpha,\gamma}\left(\tilde{y}^0(X^1,\xi^0)\right) \partial^{\alpha_3}_{y} \mathcal{M}_{c,1}\left(\kappa^c(\tilde{y}^0(X^1,\xi^0))\right)\bigl[X^{\gamma'} \Psi_0\bigr](x^0) 
		\\ \shoveright \phantom{}\d x^0 \d\xi^0,
	\end{multline*} 
	with $b_{\alpha,\gamma}$ composed of $A_\alpha\ c_{\alpha_1,\alpha_2,\alpha_3}\ \partial^{\alpha_1}_{y,\eta} a\ \partial^{\alpha_2}_{y} u_\gamma$ and $|\alpha_1|+|\alpha_2|+|\alpha_3|\leq 2l$.
	\subsubsection{Expansion in $x^1,\xi^0$}
	We have to expand every term depending on $x^1$ or $\xi^0$, including terms depending on the stationary point $\tilde{y}^0$.
	\medbreak
	Let us start by doing the Taylor expansion of $\tilde{y}^0$:
	\begin{equation*}
		\tilde{y}^0\left(\sqrt{h}x^1,y^1,\sqrt{h}\xi^0\right)=\tilde{y}^0\left(0,y^1,0\right)+\sqrt{h}\Bigl[\partial_{x^1}\tilde{y}^0\left(0,y^1,0\right).x^1+\partial_{\xi^0}\tilde{y}^0\left(0,y^1,0\right).\xi^0\Bigr]+\text{l.o.t.},
	\end{equation*}
	one can show that $\partial_{x^1}\tilde{y}^0\left(0,y^1,0\right),\partial_{\xi^0}\tilde{y}^0\left(0,y^1,0\right)$ have $L^\infty(\d y^1)$ norms uniformly bounded by a constant $C$. This is done by differentiating equation (\ref{pt_crit}) with respect to $x^1$ (resp. $\xi^0$).
	The same can be done with higher derivatives of $\tilde{y}^0$.
	\medbreak 
	Due to the special role played by the values of $\tilde{y}^0$ at $x^1=\xi^0=0$, we will denote them by $\overline{y^0}(y^1)$, in adequation with the equation \ref{I_pt_crit}.
	\medbreak
	\textbf{Expansion of the phase:} Let us now compute the expansion of the phase $\psi_{\text{tot}}$ in $\sqrt{h}x^1,\sqrt{h}\xi^0$ (not writing dependencies in $y^1$)
	\begin{itemize}
		\item the $0$-order term in $\sqrt{h}x^1,\sqrt{h}\xi^0$ is given by 
		\begin{equation*} 
			\psi_0= \psi\left(0, y^1,0,0\right),
		\end{equation*} 
		which actually is independent of $y^1$ as we have 
		\begin{equation*}
			\partial_{y^1} \psi_0\left(y^1\right)=\partial_y \psi (0,y^1,0,0)=0.
		\end{equation*}
		Hence, the associated phase $e^{\frac{i}{h}\psi_0\left(y^1\right)}$ is in fact a constant in $x^1,y^1$: $e^{i\theta /h}$ which we can ignore.
		\item For the $1$-st order term in $\sqrt{h}x^1,\sqrt{h}\xi^0$ we can see that all the terms cancel out:
		\begin{align*}
			\psi_1\left(x^1,\xi^0\right)&=\partial_x \psi \left(0, y^1,0,0\right).\big[\sqrt{h}x^1]  +\partial_\xi \psi\left(0, y^1,0,0\right).\big[\sqrt{h}\xi^0]-\overline{\eta^0}\cdot \partial y^0 .[\sqrt{h}x^1,\sqrt{h}\xi^0]
			&=0.
		\end{align*}
		\item For the $2$-nd order term in $\sqrt{h}x^1,\sqrt{h}\xi^0$, there are some cancelations that leave us with 
		\begin{equation*}
			\psi_2\left(x^1,x^0,\xi^0\right)=h\partial^2_{x} \psi .\bigl(x^1,x^1\bigr)+2h\partial^2_{x,\xi}\psi.\bigl(x^1,\xi^0\bigr)+h\partial^2_{\xi} \psi.(\xi^0,\xi^0)-hx^0\cdot\xi^0,
		\end{equation*}
		where the differentials of $\psi$ are computed at the point $(0,y^1,0,0)$.
		\item Just as in section \ref{section1st}, higher order terms will be expanded with the exponential (see equation (\ref{DL exp})).
	\end{itemize}
	\textbf{Expansion of the amplitudes:} Then, we expand the cubic and higher terms of the phase along with the amplitudes, this computation is the same as in (\ref{def remainders}):
	\begin{align}\label{DL_exp_a}
		\begin{split}
			e&^{\frac{i}{h}\psi_\text{tot}(X^1,\Xi^0,X^0)} \Bigl[A_\alpha\partial^{\alpha_1}a\Bigr]\left(\sqrt{h}x^1, y^1,\sqrt{h}\xi^0,\right)= \exp\left(i\psi_2(y^1)\right)
			\Biggl[  \sum_{k=0}^{M-1}h^{k/2} \tilde{P}_k\left(x^1,\xi^0\right)
			\\&
			+\underbrace{h^{M/2} \sum_{k=0}^{M-1} P_k(x^1,\xi^0) r^a_{M-k}\left(x^1,\xi^0\right)
				+h^{M/2} R_{M}^{\text{corr}}a\left(\sqrt{h}x^1,\sqrt{h}\xi^0\right)}_{r^{\text{type}1}_M}+\underbrace{r^{\text{exp}}_Ma\left(\sqrt{h}x^1,\sqrt{h}\xi^0\right) }_{r^{\text{type}2}_M}    \Biggr],
		\end{split}
	\end{align}
	where \begin{itemize}
		\item $\psi_2$ is computed above.
		\item  $P_k\left(x^1,\xi^0\right)$ is a polynomial of order $3k$ in $\left(x^1,\xi^0\right)$ with coefficients involving derivatives of $\psi$ taken at the point $\left(0,y^1,0,0\right)$ up to the $3+k$th order.
		\item$\tilde{P}_k$ is a polynomial of order $3k$ in $(x^1,\xi^0)$ given by
		\begin{equation*} 
			\tilde{P_k}=\sum_{k_1+k_2=k} P_{k_1}\left(x^1,\xi^0\right)\times \left( \frac{1}{k_2!} D^{k_2}_{x,\xi}\tilde{a}\left(0,y^1,0\right) .\bigl[x^1,\xi^0\bigr]^{\otimes k_2}\right),
		\end{equation*} 
		with 
		\begin{equation*} 
			\tilde{a}\left(x^1,y^1,\xi^0\right)=\Bigl[A_{\alpha}\partial^{\alpha_1}a\Bigr]\left(x^1, y^1,\xi^0,\tilde{y}^0(x^1,y^1,\xi^0)\right).
		\end{equation*} 
		\item $r_p^a$ being the integral of the Taylor expansion of $\tilde{a}$ up to order $p-1$:
		\begin{equation*} 
			r_p^a=\frac{1}{p!}\int_0^1 (1-s)^{p-1}D^{p} \tilde{a}(s\sqrt{h}x^1,y^1,s\sqrt{h}\xi^0).\bigl[x^1,\xi^0\bigr]^{\otimes p}\d s.
		\end{equation*} 
		\item $R_n^{\text{corr}}$ is the rest coming from corrections of order $N$ and more for the phase verifying 
		\begin{equation*} 
			\left|\partial^\alpha R^\text{corr}_M\left(x^1,\xi^0\right)\right|\leq C \|a\|_{C^{M+|\alpha|}} \jp{ (x^1,\xi^0)}^{3M}.
		\end{equation*} 
		Note that this control can be performed independently of $y^1$ and $\eta^0$ as $a$ and $\psi$ are uniformly bounded.
		\item $r_M^\text{exp}$ is the error that results from the expansion of the exponential 
		\begin{equation*} 
			\frac{i^M h^{M/2}}{(M-1)!} c_3^\psi\left(X^1,\Xi^0\right)^M \int_0^1 e^{i\sqrt{h}sc_3^\psi\left(X^1,\Xi^0\right)}(1-s)^{M-1}\d s,
		\end{equation*} 
		where $\sqrt{h}c_3^\psi=\psi_{\text{tot}}/h-\psi_0/h-\psi_1/\sqrt{h}-\psi_2$.
	\end{itemize}
	\textbf{Expansion of the $S_{\delta,\nu}$ function:} Finally, we are left with the expansion of 
	\begin{equation*} 
		v_\gamma\left(x^0,y^0\right)=\partial^{\alpha_2}_y u_\gamma\left(\tilde{y}^0\right) \partial^{\alpha_3}_y\mathcal{M}_{c,1}\left(\kappa^c(\tilde{y}^0)\right)\bigl[X^{\gamma'} \Psi_0\bigr](x^0),
	\end{equation*} 
	(which depends on $x^1,\xi^0$ through $\tilde{y}^0(x^1,\xi^0)$).
	\newline 
	Let us first recall that, by lemma \ref{derive squeezed}, we have
	\begin{equation*} 
		\partial^{\alpha_3}_y\mathcal{M}_{c,1}\left(\kappa^c(\tilde{y}^0)\right)\bigl[X^{\gamma'} \Psi_0\bigr](x^0)=\left|\det (\Im(\Gamma_{\scalerel*{\parallel}{\perp}}))\right|^{1/4} Q_{\alpha} \left(\Im (\Gamma_{\scalerel*{\parallel}{\perp}})^{1/2} x^0\right)e^{i \frac{x^0\cdot\Gamma_{\scalerel*{\parallel}{\perp}} x^0}{2}},
	\end{equation*} 
	with $Q_\alpha$ a polynomial of degree lower than $|\gamma|+2|\alpha|$ verifying $N_\infty(Q_\alpha)\leq C h^{-2\nu|\beta|}$.
	\medbreak
	Expanding $v_\gamma$ yields:
	\begin{multline} \label{v_gamma}
		v_\gamma\left(x^0,\tilde{y}^0(\sqrt{h}x^1,y^1,\sqrt{h}\xi^0)\right)=v_\gamma\left(x^0,\overline{y^0}(y^1)\right)+\sqrt{h} \partial_y v_\gamma\left(x^0,\overline{y^0}(y^1)\right).\partial_{x^1,\xi^0} \tilde{y}^0.\bigl(x^1,\xi^0\bigr)
		\\\shoveright \phantom{}+\text{l.o.t.}+r_M^v,
	\end{multline}
	where the remainder of this expansion up to order $M$ in $\sqrt{h} x^1,\xi^0$ is denoted by $r_M^v$ and verifies
	\begin{equation*} 
		r^v_M=\frac{h^{M/2}}{M!} \int_0^1 (1-s)^{M-1} \partial^M_{x^1,\xi^0}v_\gamma(x^0,\tilde{y}^0(s\sqrt{h}x^1,y^1,s\sqrt{h}\xi^0).[x^1,\xi^0]^{\otimes M}\d s.
	\end{equation*} 
	\subsubsection{The central metaplectic operator}
	A term given by the combination of these expansions is some 
	\begin{equation*} 
		\int e^{i \psi_2(x^1,x^0,\xi^0)} Q(x^1,\xi^0)\mathcal{M}_{c,1}\left(\kappa^c(\tilde{y}^0)\right)\bigl[X^{\gamma'} \Psi_0\bigr](x^0) \d \xi^1 \d x^0,
	\end{equation*} 
	with $Q$ a polynomial in $x^1,\xi^0$ (depending on $y^1$). The phase inside this integral being quadratic, we can interpret this integral as the action of some metaplectic $\mathcal{M}_{c,1}(d^c\mathfrak{F}(y^1))$ (quantizing a linear symplectic map $d^c\mathfrak{F}(y^1)$) on a squeezed state.
	\medbreak
	Using the formula that relates the quadratic phase of a metaplectic operator and the linear dynamics recalled in (\ref{kappa psi}) we can identify the matrix $d^c\mathfrak{F}(y^1)$ of this operator:
	\begin{Lemma}\label{compare dcF}
		The linear symplectic application generated by the quadratic function
		\begin{equation*} 
			\psi_2\left(y^1,x^1,x^0,\xi^0\right)=\partial^{2}_{x,x}\psi.\bigl(x^1,x^1\bigr)+2\partial^2_{x,\xi} \psi .\bigl(x^1,\xi^0\bigr)+\partial_{\xi^,\xi}^2 \psi.\bigl(\xi^0,\xi^0\bigr)-x^0\cdot \xi^0,
		\end{equation*} 
		(where the derivatives of $\psi$ are computed at the point $(0,y^1,0,0)$) corresponds to the linearization of $\mathfrak{F}$ (defined in (\ref{mathfrak F})) restricted and corestricted to the space $\text{Span }\left(\frac{\partial }{\partial x},\frac{\partial}{\partial \xi}\right)$ i.e. 
		\begin{equation*} 
			d^c\mathfrak{F}(y^1)= \begin{pmatrix}
				d_x\mathfrak{F}_x & d_\xi \mathfrak{F}_x
				\\ d_x \mathfrak{F}_\xi & d_\xi \mathfrak{F}_\xi
			\end{pmatrix}.
		\end{equation*} 
		Moreover, we can compare this matrix with the central part linearization of $F$ at $0$: in the notation of \ref{dF0},
		\begin{equation*} 
			\left\| d^c\mathfrak{F}(y^1)-\begin{pmatrix} A & E
				\\ F & C
			\end{pmatrix}\right\|\leq C\left\|(\overline{x^0}(y^0),y^0,\overline{\xi^0}(y^0),\overline{\eta^0}(y^0))\right\|.
		\end{equation*} 
	\end{Lemma}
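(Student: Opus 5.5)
The plan has two parts. First we identify $d^c\mathfrak{F}(y^1)$ through the generating-function formula (\ref{kappa psi}). Then we compare it with the central block of $d_0F$ using the structure of $\mathfrak{F}=(\kappa_{\mathcal{I}_1})^{-1}\circ F\circ\kappa_{\mathcal{I}_0}$.

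\textbf{Identification.} The generating function $\psi$ of $\mathfrak{F}$ satisfies $\mathfrak{F}(\partial_{\Xi}\psi(X^1,\Xi^0),\Xi^0)=(X^1,\partial_{X}\psi(X^1,\Xi^0))$. We differentiate this identity at the point $(X^1,\Xi^0)=(0,y^1,0,0)$, which is the image under the generating-function parametrization of the point $(0,\overline{y^0}(y^1),0,0)\in\mathcal{I}_m$; see (\ref{I_pt_crit}). We restrict to variations in $(x^1,\xi^0)$ only, with $y^1$ fixed and $\eta^0=0$. By (\ref{pt_crit}) the transverse input is not frozen: $\tilde y^0$ moves with $(x^1,\xi^0)$. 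Since $\mathfrak{F}$ preserves $\mathcal{I}_m$ and, at points of $\mathcal{I}_m$, its differential maps the transverse span $\text{Span}(\partial_y,\partial_\eta)$ to itself up to the central terms, those $y$-variations contribute nothing to the $(x,\xi)$ components. What remains is the linear relation
\[
(x^0,\xi^0)\mapsto (x^1,\xi^1),\qquad x^0=\partial^2_{\xi,x}\psi\, x^1+\partial^2_{\xi,\xi}\psi\,\xi^0,\quad \xi^1=\partial^2_{x,x}\psi\, x^1+\partial^2_{x,\xi}\psi\,\xi^0 ,
\]
which is exactly the relation generated by the quadratic form $\psi_2$. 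Inverting it gives formula (\ref{kappa psi}) with the $(x,\xi)$ Hessian blocks, and this is the block $\begin{pmatrix} d_x\mathfrak{F}_x & d_\xi\mathfrak{F}_x\\ d_x\mathfrak{F}_\xi & d_\xi\mathfrak{F}_\xi\end{pmatrix}$ evaluated at $(0,\overline{y^0}(y^1),0,0)$. The step that needs care is the invertibility of $\partial^2_{x,\xi}\psi$. The no-auxiliary-variable assumption on $\psi$ guarantees that $\partial^2_{X,\Xi}\psi$ is invertible. Near $\mathcal{I}_m$ the cross blocks $\partial^2_{x,\eta}\psi$ and $\partial^2_{y,\xi}\psi$ vanish, again because $d\mathfrak{F}$ preserves the splitting along $\mathcal{I}_m$. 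The full matrix is therefore block-triangular, and invertibility passes to its central block.

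\textbf{Comparison.} We write $d\mathfrak{F}=d\kappa_{\mathcal{I}_1}^{-1}\circ dF\circ d\kappa_{\mathcal{I}_0}$ at $(0,\overline{y^0},0,0)$. The map $\kappa_{\mathcal{I}_0}$ sends this point to $\rho_*:=(\overline{x^0}(\overline{y^0}),\overline{y^0},\overline{\xi^0}(\overline{y^0}),\overline{\eta^0}(\overline{y^0}))$. Its differential has identity central-to-central block. Its off-diagonal pieces are $\nabla\overline{x^0}$, $\nabla\overline{\xi^0}$ in the $y$-column and $-\nabla\overline{x^0}\,\xi$ in the $\eta$-row, and these all feed central components only through the $y$-direction or through $\xi$, which vanishes at our point. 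The same holds for $\kappa_{\mathcal{I}_1}^{-1}$. Hence the central-to-central block of $d\mathfrak{F}$ equals the central-to-central block of $dF(\rho_*)$, up to terms in which a transverse-to-central or central-to-transverse block of $dF(\rho_*)$ appears. By (\ref{dF0}), these off-diagonal blocks vanish at $0$, so a Taylor expansion gives that they are $O(\|\rho_*\|)$, using $F\in C^2$ with bounds uniform on the chart. Likewise $dF(\rho_*)_{c\to c}=\begin{pmatrix}A&E\\F&C\end{pmatrix}+O(\|\rho_*\|)$. Combining these with the uniform $C^1$ bounds (\ref{I proche}) gives the stated estimate.

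The main obstacle is the first part: justifying that moving $\tilde y^0$ does not pollute the central block. I would settle it by differentiating (\ref{pt_crit}) and checking directly that the mixed second derivatives of $\psi$ between central and transverse variables vanish at points of $\mathcal{I}_m$. This vanishing holds because $\mathfrak{F}(\mathcal{I}_m)=\mathcal{I}_m$ and the splitting is preserved there; if it holds only approximately, the resulting error is again $O(\|\rho_*\|)$ and is absorbed into the comparison estimate.
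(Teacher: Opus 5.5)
Your overall strategy is the paper's. The invariance $\mathfrak{F}(\mathcal{I}_m)=\mathcal{I}_m$ gives the block structure that identifies the central block with the $(x,\xi)$-part of the Hessian of $\psi$. The comparison then comes from the triangular form of $d\kappa_{\mathcal{I}_j}$ and a Taylor expansion of $dF$ around (\ref{dF0}).

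Your first part differentiates the generating-function identity along $(x^1,\xi^0)$ and discards the motion of $\tilde y^0$. This is a valid variant of the paper's reading of blocks off the lower-block-triangular $(\partial^2_{X,\Xi}\psi)^{-1}$, with three imprecisions:
\begin{itemize}
\item The fact you need is that $d\mathfrak{F}$ maps $T\mathcal{I}_m=\mathrm{Span}(\partial_y)$ into itself. It does not preserve $\mathrm{Span}(\partial_y,\partial_\eta)$: the $\eta$-column of $d\mathfrak{F}$ generally has nonzero central entries. This is harmless because $d\eta^0=0$.
\item On $\mathcal{I}_m$ only the mixed derivatives involving $y^1$ vanish ($\partial^2_{y,\xi}\psi$, $\partial^2_{y,x}\psi$). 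By contrast, $\partial^2_{x,\eta}\psi=\partial_{x^1}y^0$ is driven by $d_x\mathfrak{F}_y$, which is generically nonzero. One vanishing block is enough for triangularity.
\item The vanishing is exact, so your ``approximately'' fallback is unnecessary. It would not rescue an exact identification anyway.
\end{itemize}

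The step that is wrong as written is the comparison. The $\eta$-component of $\kappa_{\mathcal{I}_0}$ contains $-\nabla\overline{x^0}(y)\cdot\xi$. Its $\xi$-derivative is $-(\nabla\overline{x^0})^{T}$, which does not depend on $\xi$; only its $y$-derivative vanishes at $\xi=0$. So $d\kappa_{\mathcal{I}_0}(dx,0,d\xi,0)=(dx,0,d\xi,-(\nabla\overline{x^0})^{T}d\xi)$ has an $O(1)$ transverse component. Writing $dF(\rho_*)=(A_{i,j})$, the $\xi\to x$ block of $d\mathfrak{F}$ is
\[
A_{1,3}-A_{1,4}(\nabla\overline{x^0})^{T}-\nabla\overline{x^1}\,A_{2,3}+\nabla\overline{x^1}\,A_{2,4}(\nabla\overline{x^0})^{T}.
\]
The last term contains only the $\eta\to y$ block $A_{2,4}$, which is transverse-to-transverse. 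So your claim that every correction involves a transverse-to-central or central-to-transverse block of $dF(\rho_*)$ is false. The factors $\nabla\overline{x^j}$ are only $O(1)$ by (\ref{I proche}), so they do not make this term small.

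The fix is to use the full form of (\ref{dF0}). At the base point the unstable and stable bundles are separately invariant, so the $y$-row of $d_0F$ is $(0,B,0,0)$. Hence $A_{2,4}(0)=0$ and $A_{2,4}(\rho_*)=O(\|\rho_*\|)$. This is why the paper's final mean-value bound includes $\|A_{2,4}\|$ alongside $\|A_{1,4}\|$, $\|A_{3,4}\|$, $\|A_{2,1}\|$, $\|A_{2,3}\|$. With this term handled, your estimate goes through.
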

	We postpone the proof of this lemma until section \ref{metap_simple}.
	\medbreak
	For now, we can see that each term of this expansion is given by some
	\begin{equation*} 
		\mathcal{M}_{c,1}\left(d^c\mathfrak{F}(y^1)\circ \kappa^c(\tilde{y}^0)\right) \bigl[Q \Psi_0\bigr](x^1),
	\end{equation*} 
	where $Q$ is a polynomial in $x,\xi$ whose coefficients depend on $y^1$. So, we can relate its $L^2(\d X^1)$ norm with the sup norm of $Q$'s coefficients.
	\subsection{Study of the expansion}\label{sec:study-of-the-expansion}
	Now, we can state the following:
	\begin{Proposition}[Rewriting the expansion]\label{Qjkl}
		Let $u\in S_{\delta,\nu}\left(\R^{d_\perp},\text{Poly }\times \text{ Gauss}\right)$ with $\nu<1/6$ and $\delta+\nu<1/2$ and write it as 
		\begin{align*}
			u(x,y)&=\sum_{\substack{\gamma\in \N^{d_{\scalerel*{\parallel}{\perp}}} \\|\gamma|\leq N}} u_\gamma(y) \mathcal{M}_{\text{c},h}\left(\kappa^\text{c}( y)\right)\Bigl[\left(\ \boldsymbol{\cdot}\ /\sqrt{h}\right)^\gamma \varphi_0(\ \boldsymbol{\cdot}\ )\Bigr](x), \text{ for } x\in \R^{d_{\scalerel*{\parallel}{\perp}}}, y\in \R^{d_\perp},
			\\&=\mathcal{M}_\text{c}\left(\kappa^\text{c}(y)\right)\Lambda_{h,x}\Bigl[P_y\Psi_0\Bigr](x).
		\end{align*}
		Then $\forall N\in\N^*,$ there exists a family of polynomials in $x$ named $Q^{k,l}(P)$ such that:
		\begin{itemize}
			\item For every $N\in \N$,
			\begin{align*} 
				\bigl(\hat{T}_{\mathcal{I}_1}\bigr)^* U^{t_0} \hat{T}_{\mathcal{I}_0}u(x,y)&=\mathcal{M}_{\text{c}}\left(d^\text{c} \mathfrak{F}(y)\circ \kappa^\text{c}(y^0(y))\right)\Lambda_{h,x}\left[\sum_{2l+k<2N} h^{k/2+l} Q^{(k,l)}(P)_{y} \Psi_0\right](x)
				\\&\qquad+R_N(x,y),
			\end{align*}
			with 
			\begin{align*} 
				\left\|R_N\right\|_{L^2}&\leq C  \sup_{2l+k=2N}h^N\sup_{\substack{|\alpha_1|+|\alpha_2|\leq 2l , \\ |\beta_1|+|\beta_2|=k}} \left\|N_\infty\left(\partial^{\alpha_1+\beta_1}_y P_y\right)\right\|_{L^2(\d y)} \left\|\kappa^\text{c}\right\|^{|\beta_1|+2|\alpha_2|+3|\beta_2|+d_{\scalerel*{\parallel}{\perp}}}
				\\&\leq C h^{-\nu d_{\scalerel*{\parallel}{\perp}}} \max ( h^{2N(1/2-\delta-\nu)}, h^{N(1/2-3\nu)}).
			\end{align*}
			\item $y^0(y)$ defined by the following between the isotropic manifolds $\mathcal{I}_1$ and $\mathcal{I}_0$:
			\begin{equation*}
				F\left(\overline{x^0}(y^0(y)),y^0(y),\overline{\xi^0}(y^0(y)),\overline{\eta^0}(y^0(y))\right)=\left(\overline{x^1}(y),y,\overline{\xi^1}(y),\overline{\eta^1}(y)\right).
			\end{equation*}
			\item $Q^{(0,0)}(P)=\left|\det \partial_{y^1} y^0(y^1)\right|^{1/2} P_{y^0(y)}$.
			\item $Q^{(k,l)}$ is a polynomial of degree $\deg P+3k+4l,$ the map $P\mapsto Q^{(k,l)}(P)$ is linear, with coefficients controlled by:
			\begin{equation*} 
				\left\|N_\infty\left(Q^{(k,l)}(P)\right)(y)\right\|_{L^2}\leq C\sup_{\substack{|\alpha_1|+|\alpha_2|\leq 2l , \\ |\beta_1|+|\beta_2|=k}} \left\|N_\infty\left(\partial^{\alpha_1+\beta_1}_y P_y\right)\right\|_{L^2(\d y)} \left\|\kappa^\text{c}\right\|^{|\beta_1|+2|\alpha_2|+3|\beta_2|}.
			\end{equation*} 
			\item Moreover, the coefficients of $Q^{(k,l)}(P)$ are sums of products between functions involving $a, \psi$ or their derivatives taken at the point $(0,y,0,0)$ and coefficients of $P$ or their derivatives taken at the point $y^0(y)$.
			\item Each term of the sum is in the $ S_{\delta,\nu}\left(\R^{d_\perp},\text{Poly }\times \text{ Gauss}\right)$ space.
		\end{itemize}
		
	\end{Proposition}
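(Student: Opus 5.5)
The plan is to assemble the expansions already produced in the preceding subsections: the stationary phase in $(y^0,\eta^0)$ (Corollary after the stationary phase proposition), and the Taylor expansions of phase, amplitude and $S_{\delta,\nu}$ factor in $(x^1,\xi^0)$ around $x^1=\xi^0=0$ given in (\ref{DL_exp_a}) and (\ref{v_gamma}). These are then regrouped according to the powers $h^{l}$ (coming from $A_{2l}$) and $h^{k/2}$ (coming from the central Taylor expansion). For fixed $y^1=y$ the stationary point at $x^1=\xi^0=0$ is $\overline{y^0}(y)$. By (\ref{I_pt_crit}) and the definition of $\mathfrak{F}$ this is exactly the point $y^0(y)$ of the statement mapping $\mathcal{I}_0$ to $\mathcal{I}_1$. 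The quadratic part $\psi_2$ of the remaining $(x^0,\xi^0)$-integral is, by Lemma \ref{compare dcF}, the generating function of $d^c\mathfrak{F}(y)$. Each term is therefore of the form $\mathcal{M}_{c,1}(d^c\mathfrak{F}(y))\,\text{Op}_1(\text{poly in }x^1,\xi^0)\,\mathcal{M}_{c,1}(\kappa^c(y^0(y)))[Q\Psi_0]$.

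Exactly as in the "terms of higher order" paragraph of Proposition \ref{iter iso}, I would use exact Egorov to push the monomials $(x^1)^{\beta}(\xi^0)^{\beta'}$ through the metaplectic operators. This gives $\mathcal{M}_{c}(d^c\mathfrak{F}(y)\circ\kappa^c(y^0(y)))\Lambda_{h,x}[Q^{(k,l)}(P)_y\Psi_0]$, and it defines the $Q^{(k,l)}$. The bookkeeping of the bounds runs as follows.
\begin{itemize}
\item A central Taylor term of order $k$ splits into $|\beta_2|$ powers from cubic-and-higher phase corrections, each costing $\|\kappa^c\|^3$, and $|\beta_1|$ derivatives falling on $\tilde y^0$, each producing $\partial_y P$ with cost $\|\kappa^c\|$.
\item The operator $A_{2l}$ contributes $|\alpha_1|$ derivatives on $u_\gamma$ and $|\alpha_2|$ derivatives on the squeezed factor. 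By Lemma \ref{derive squeezed}, the latter raise the degree by $2$ each and cost $\|\kappa^c\|^2$ per derivative through (\ref{GammadGamma}).
\end{itemize}
This yields the degree bound $\deg P+3k+4l$ and the stated $N_\infty$ estimate.

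For the leading term $Q^{(0,0)}$, I would note that $A_0=1$ and that the principal amplitude satisfies (\ref{a0}). The Jacobian of the change of variables $y\mapsto y^0(y)$ then produces $|\det\partial_y y^0|^{1/2}$, using the unitarity of the transverse part of the FIO as in WKB transport. Membership of each term in $S_{\delta,\nu}$ follows because the coefficients are products of smooth functions of $(0,y,0,0)$ with derivatives of $P$ at $y^0(y)$. Since $y^0(\cdot)$ is uniformly smooth, the $S_\delta$ estimates are preserved. The new covariance $\Gamma$ coming from $d^c\mathfrak{F}(y)\circ\kappa^c(y^0(y))$ still satisfies (\ref{GammadGamma}), using Lemma \ref{compare dcF} and the fact that $d^c\mathfrak{F}$ has bounded $y$-derivatives.

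The remainder is where I expect the main difficulty. It has three sources:
\begin{itemize}
\item the stationary phase error, of size $h^{(1-2\max(\delta,2\nu))K}$;
\item the type 1 and type 2 remainders in the central directions;
\item the Taylor remainder $r^v_M$ of $v_\gamma$ in (\ref{v_gamma}).
\end{itemize}
For the first two sources I would reuse Lemmas \ref{1st_type} and \ref{2nd_type} at fixed $y^1$, together with Lemma \ref{control_xy}. This bounds the $L^2(\d x^1)$ norm by the $H_1(\jp{\rho}^{3M})$ norm of the squeezed state, which is at most $\|\kappa^c\|^{3M}$; integrating in $y^1$ then gives the $L^2(\d y)$ norms of $N_\infty(\partial^\alpha P)$. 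The delicate point is the remainder $r^v_M$, whose Taylor integral is taken at intermediate points $\tilde y^0(s\sqrt h x^1,y^1,s\sqrt h\xi^0)$ depending on $x^1$ and $\xi^0$. Here I would first try to treat it as a symbol in $(x^1,\xi^0)$ of growth $\jp{(x^1,\xi^0)}^M$, with constants $h^{-\delta|\alpha_1|-2\nu|\alpha_2|}$. The extra $\|\kappa^c\|^{d_{\scalerel*{\parallel}{\perp}}}\le h^{-\nu d_{\scalerel*{\parallel}{\perp}}}$ loss would be accepted when passing from pointwise to $L^2$ bounds, since the integrand is no longer exactly a squeezed state. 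Finally, grouping by $2l+k=2N$, each $h^{l}$ carries at worst $h^{-2l(\delta+\nu)}$ and each $h^{k/2}$ carries at worst $h^{-3\nu k}$, which gives $\max(h^{2N(1/2-\delta-\nu)},h^{N(1/2-3\nu)})$.
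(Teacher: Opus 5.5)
The expansion and its bookkeeping follow the paper. You regroup the $h^l$ stationary-phase terms with the $h^{k/2}$ central Taylor terms, identify $\overline{y^0}(y)$ with $y^0(y)$, read $\psi_2$ as the generating function of $d^c\mathfrak{F}(y)$, and push monomials through by exact Egorov. The degree and $N_\infty$ counting is the paper's too. Two small omissions there:
\begin{itemize}
\item Expanding $\tilde y^0$ inside $\Gamma_{\scalerel*{\parallel}{\perp}}(\tilde y^0)$ is a third source of terms. It gives a factor quadratic in $x^0$ times one linear in $(x^1,\xi^0)$, so it costs $\|\kappa^{\text{c}}\|^3$ per order and belongs to $\beta_2$.
\item For $Q^{(0,0)}$ the leading factor is $a(0,y,0,0)/|\det\partial^2_{x,\xi}\psi(0,y,0,0)|^{1/2}$. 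It becomes $|\det\partial^2_{y,\eta}\psi(0,y,0,0)|^{1/2}$ through (\ref{a0}) and the block-triangular structure of $\partial^2_{X,\Xi}\psi(0,y,0,0)$, which is forced by $\mathfrak{F}(\mathcal{I}_m)=\mathcal{I}_m$. Then $\partial^2_{y,\eta}\psi(0,y,0,0)=d\overline{y^0}(y)$ follows by differentiating (\ref{pt_crit}). No appeal to ``unitarity of the transverse part'' is needed or available.
\end{itemize}

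\textbf{The genuine gap is the remainder estimate.} Lemmas \ref{1st_type} and \ref{2nd_type} concern $\int e^{i\psi_2}b(X^1,\Xi^0)\,u_0(X^0)\,\d X^0\d\Xi^0$ with a \emph{fixed} input $u_0$. Their proof rewrites this as $\mathcal{M}_1(d_{\boldsymbol{\rho^0}}F)\,\text{Op}_1(\tilde b)\,u_0$ and then pays $\|u_0\|_{H_1(\jp{\rho}^{3M})}$. In the central integration, however, the remainder symbol multiplies $v_\gamma(x^0,\tilde y^0(\sqrt h x^1,y^1,\sqrt h\xi^0))$. This depends on $x^1$ and $\xi^0$ through the critical point, even at fixed $y^1$.
\begin{itemize}
\item This already affects the type 1 and type 2 remainders of (\ref{DL_exp_a}), since they multiply the unexpanded $v_\gamma$. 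If you Taylor-expand $v_\gamma$ first, it affects every product containing $r^v_M$. So you cannot simply reuse those lemmas.
\item Your fallback of treating $r^v_M$ as a symbol in $(x^1,\xi^0)$ also fails. $r^v_M$ is $\partial^M v_\gamma$ at an intermediate point, so it depends jointly on $x^0$ and $(x^1,\xi^0)$. It is not a symbol times a fixed squeezed state, so there is no $H_1(\jp{\rho}^K)$ norm to invoke.
\item ``Accepting a $\|\kappa^{\text{c}}\|^{d_{\scalerel*{\parallel}{\perp}}}$ loss when passing to $L^2$'' is not a mechanism. The remainder symbols grow like $\jp{x^1}^{3M}$, and $a(\sqrt h x^1,\dots)$ only localises to $|x^1|\lesssim h^{-1/2}$. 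Without genuine decay in $x^1$, the $L^2(\d x^1)$ norm loses negative powers of $h$ that destroy the gain.
\end{itemize}

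\textbf{What is missing is a direct $L^2(\d x^1)L^\infty(\d y^1)$ estimate}, which is what the paper does; the $y^1$-support is compact thanks to $a$.
\begin{enumerate}
\item Write $\psi_2=\langle A(x^1,\xi^0),(x^1,\xi^0)\rangle-x^0\cdot\xi^0$ and do the $x^0$-integral exactly. This replaces $v_\gamma$ by its Fourier transform $\widehat{v_\gamma}(\xi^0,\tilde y^0)$, again an explicit polynomial times a Gaussian governed by $\Gamma_{\scalerel*{\parallel}{\perp}}^{-1}$.
\item Integrate by parts in $\xi^0$ with an operator built from $\langle A_{1,2}x^1,\partial_{\xi^0}\rangle/\|A_{1,2}x^1\|^2$, which reproduces the cross-term oscillation $e^{2iA_{1,2}x^1\cdot\xi^0}$. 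This is available because $\det A_{1,2}\neq 0$ when there are no auxiliary variables.
\item Each integration by parts gains $|x^1|^{-1}$ and puts one $\xi^0$-derivative on $r_M\widehat{v_\gamma}$. Derivatives landing on $\tilde y^0(\sqrt h x^1,\sqrt h\xi^0)$ carry a $\sqrt h$ and are harmless by Lemma \ref{derive squeezed}; the others cost $\|\kappa^{\text{c}}\|$ each.
\item Performing $3M+d_{\scalerel*{\parallel}{\perp}}$ of them beats the $\jp{x^1}^{3M}$ growth and leaves $\jp{x^1}^{-d_{\scalerel*{\parallel}{\perp}}}\in L^2(\d x^1)$. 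This is exactly where the factor $\|\kappa^{\text{c}}\|^{d_{\scalerel*{\parallel}{\perp}}}$ in the statement comes from.
\item H\"older and a bound on $\|\jp{\xi^0}^{|\beta_1|}\partial^{\beta_2}_{\xi^0}\widehat{v_\gamma}\|_{L^2(\d\xi^0)}$, in the spirit of Lemma \ref{control_xy}, then conclude.
\end{enumerate}
The same scheme handles the type 2 remainder and the terms involving $r^v_M$.
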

	
	\begin{proof}
		This expansion can be obtained by reordering the terms of the expansion with respect to the order of the stationary phase and order of expansion for the integrand we are considering.
		\medbreak
		Hence, $Q^{(k,l)}(P)$ corresponds to the polynomial obtained when considering the term of order $h^{k/2}$ for the expansion of the integrand and the one of order $h^l$ for the stationary phase lemma.
		\medbreak
		\textbf{Let us focus on the study of $Q^{(k,l)}(P)$.}
		\begin{itemize}
			\item Let us start with the study of $Q^{(0,0)}$. Looking at the leading term of the expansion of the integrand, we can see that we obtain a factor given by 
			\begin{equation*} 
				\frac{a\left(0,y^1,0,0\right)}{\left|\det \partial^2_{x,\xi} \psi_2\left(0,y^1,0,0\right)\right|^{1/2}}.
			\end{equation*} 
			Recall that the main order of $a$ is given by (see \ref{a0} and the remarks above it)
			\begin{equation*} 
				\left|a_0\left(0,y^1,0,0\right)\right|=\left|\det \partial_{X,\Xi}^2 \psi\left(0,y^1,0,0\right)\right|^{1/2}.
			\end{equation*} 
			Similarly to the proof of lemma \ref{compare dcF}, we use the lower triangular block-structure of 
			\newline$\partial_{X,\Xi}^2 \psi\left(0,y^1,0,0\right)$ to deduce that the factor is nothing else than
			\begin{equation*} 
				\left|\det \partial_{y,\eta}^2\psi (0,y^1,0,0)\right|^{1/2}.
			\end{equation*} 
			
			Moreover, from equation (\ref{pt_crit}), we can see that
			\begin{equation*} 
				\partial^2_{y,\eta} \psi(0,y^1,0,0)=d \overline{y^0}(y^1).
			\end{equation*} 
			
			\item The computation of the degree of $Q^{(k,l)}(P)$ is also an easy derivation of the previous computations. 
			For the stationary phase, the degree can change when a derivative in $y^0$ hits $\mathcal{M}_{c,1}(\kappa^c(y^0))$, it can increase by at most $2$ per derivative (see the proof of lemma \ref{non statio}). As the order $h^l$ of the stationary phase lemma involves $2l$ derivatives, we obtained the $4l$ term as expected.
			\medbreak
			Regarding the expansion in $x^1,\xi^0$, the worse case scenario for the polynomial's degree next to the $h^{k/2}$ in the integrand expansion is the same as in section \ref{section1st}, giving an increase of $3k$.
			\item The linearity in $P$ comes from the linearity of the integral.
			\item Estimation of the $L^2$ norm of the coefficients of $Q^{(k,l)}(P)$:
			\medbreak
			Let us focus our study on the terms that might be problematic, we need to see what are the worse terms for the derivatives of the $A_{2l}$ operator to hit and what are the worse terms to expand in $x^1,\xi^0$.
			\begin{itemize}\label{worse cases}
				\item For derivatives in $y^0$, we look at the proof of lemma \ref{non statio}, the worse places for derivatives to fall are either on $u^\gamma$ (hence coefficients of our polynomial $P$ in our setting) which are estimated with $N_\infty(\partial^{\alpha_1}P)$ or on the matrices $\Gamma_{\scalerel*{\parallel}{\perp}}(y^0)$ of the squeezed coherent state which are estimated with (\ref{derive squeezed}), giving a $\left\|\kappa^{\text{c}}\right\|^{2|\alpha_2|}$ factor.
				\item For the expansion in $x^1,\xi^0$, there are three scenarios to take into account: 
				\medbreak
				Expanding the cubic and higher order portion of the phase, which corresponds to the polynomial $\tilde{P}_k$, estimated just as in (\ref{higher order}) by $\left\|\kappa^\text{c}\right\|^{3|\beta_2|}$.
				\medbreak
				Expanding the critical point $\tilde{y}^0$ inside the $S_\delta$ part (that is coefficients of $P_{\tilde{y}^0}$): this expansion involves $|\beta_1|$ derivatives of the coefficients of $P_y$ along with multiplication by a (otherwise bounded) polynomial of degree $|\beta_1|$, see (\ref{v_gamma}). 
				\newline This leads to a control with $\left\|N_\infty\left(\partial^{\beta_1}P\right)\right\|_{L^2} \left\|\kappa^\text{c}\right\|^{|\beta_1|}$.
				\medbreak
				Expanding the critical point $\tilde{y}^0$ inside $\Gamma_{\scalerel*{\parallel}{\perp}}(y^0)$, similarly this is controlled by (\ref{derive squeezed}) and (\ref{higher order}) and also gives an estimate by $\left\|\kappa^\text{c}\right\|^{3|\beta_2|}$.
			\end{itemize}
			Now that we have described the effect of derivatives on each terms, it is easy to see that terms where we mix the derivatives can be dealt the same way which gives the result.
		\end{itemize}
		\textbf{Finally, we explain how to control remainders of this expansion.} 
		
		The goal of the estimate on remainders is to see whether they can still be made arbitrarily small as $M$ goes to infinity. Using similar arguments as in section \ref{section1st},
		we can see that it is the case.
		Just as in section \ref{DL dir centrale}, we consider the $l$-th term of the expansion, select a term in the sums composing the integrand and write it as:
		\begin{multline*} 
			h^{2l}I_{\alpha,\gamma}(X^1)=h^{2l}\int\int e^{\frac{i}{h} \psi_{\text{tot}}(X^1,\xi^0,x^0)} b_{\alpha,\gamma}\left(\tilde{y}^0(X^1,\xi^0)\right) \partial^{\alpha_3} \mathcal{M}_{c,1}\left(\kappa^c(\tilde{y}^0(X^1,\xi^0))\right)\bigl[X^{\gamma'} \Psi_0\bigr](x^0) 
			\\\shoveright\phantom{}\d x^0 \d\xi^0,
		\end{multline*} 
		(with a $b_{\alpha,\gamma}$ composed of $A_\alpha(X^1,\xi^0)\partial^{\alpha_1} a \partial^{\alpha_2} u_\gamma$ and $|\alpha_1|+|\alpha_2|+|\alpha_3|\leq 2l$).
		Then, we expand it as in section \ref{DL dir centrale} and study the remainders that appear (after rescaling them with the $(\Lambda_{h,x})^*$ operator).
		\begin{itemize}
			\item The remainders of type 1 defined in (\ref{DL_exp_a}) verify similar estimates as their counterpart from section \ref{section1st}, see (\ref{symbol remainder}):
			\begin{equation*} 
				|\partial^\beta_{x^1,\xi^0} r_M^{\text{type}1}|\leq C_\beta \jp{x^1,\xi^0}^{3M}, \quad \forall \beta\in \N^{2d_{\scalerel*{\parallel}{\perp}}`} 
			\end{equation*} 
			uniformly in $y^1$, however, the integral associated is slightly different and is given by 
			\begin{equation}\label{rest 1st type}
				R_M^{\text{type} 1}(X^1)=\frac{1}{(2\pi)^d}\int_{T^*\R^{d}} e^{i \psi_2(x^1,x^0,\xi^0)} r_M^{\text{type}1}\left(x^1,\xi^0\right) v_\gamma(x^0,\tilde{y^0}(\sqrt{h}x^1,\sqrt{h} \xi^0))\d x^0\d \xi^0,
			\end{equation}
			with 
			\begin{align*}
				v_\gamma(x^0,\tilde{y}^0(\sqrt{h}x^1,\sqrt{h} \xi^0))&=\partial^{\alpha_2}_y u_\gamma\left(\tilde{y}^0\right) \partial^{\alpha_3}_y\mathcal{M}_{c,1}\left(\kappa^c(\tilde{y}^0)\right)\bigl[X^{\gamma'} \Psi_0\bigr](x^0).
				\\&=\partial^{\alpha_2}_y u_\gamma\left(\tilde{y}^0\right) \left|\det (\Im(\Gamma_{\scalerel*{\parallel}{\perp}}(\tilde{y}^0)))\right|^{1/4} Q_{\alpha_3} \left(\Im \Gamma_{\scalerel*{\parallel}{\perp}}(\tilde{y}^0)^{1/2} x\right)e^{i \frac{x\cdot\Gamma_{\scalerel*{\parallel}{\perp}}(\tilde{y}^0) x}{2}},
			\end{align*}
			where $N_\infty(Q_{\alpha_3})\leq C h^{-2\nu|\alpha_3|}$
			\medbreak
			The obstacle that prevents us from using lemma \ref{1st_type} is that $v_\gamma$ depends on $x^1,\xi^0$. To avoid this issue, we will obtain direct estimates on the $L^2(\d X^1)$ norm of this integral by integration by parts. The first thing to notice is that this integral has a compact support in $y^1$ thanks to the amplitude $a$, but this is not the case in $x^1$ as $a$ is evaluated at point $\sqrt{h}x^1$. As a consequence, it is enough to obtain $L^2(\d x^1) L^\infty(\d y^1)$ estimates to control the $L^2(\d X^1)$ norm. In what follows, all estimates will be uniform with respect to $y^1$.
			\medbreak
			Recall that the phase $\psi_2$ is quadratic and can be written as 
			\begin{equation*} 
				\psi_2(x^1,x^0,\xi^0)=\left\langle \underbrace{\begin{pmatrix}
						A_{1,1} & A_{1,2} \\ A_{1,2}^T & A_{2,2}
				\end{pmatrix}}_{\coloneq A} \begin{pmatrix} x^1 \\ \xi^0\end{pmatrix},\begin{pmatrix} x^1 \\ \xi^0\end{pmatrix} \right\rangle-x^0\cdot \xi^0.
			\end{equation*} 
			Performing first the integral in $x^0$ gives a Fourier transform:
			\begin{align*}\label{rest 1st type}
				R_M^{\text{type } 1}(X^1)=\frac{1}{(2\pi)^d}\int_{\R^{d}}& e^{i A(x^1,\xi^0)^{\otimes 2}} r_M^{\text{type}1}\left(x^1,\xi^0\right) \widehat{v_\gamma}(\xi^0,\tilde{y}^0(\sqrt{h}x^1,\sqrt{h} \xi^0))\d \xi^0,
			\end{align*}
			where 
			\begin{equation}\label{wide v}
				\widehat{v_\gamma}(\xi^0,\tilde{y}^0(\sqrt{h}x^1,\sqrt{h} \xi^0))=\partial^{\alpha_2}_y u_\gamma\left(\tilde{y}^0\right) \left|\det (\Im(\Gamma_{\scalerel*{\parallel}{\perp}}(\tilde{y}^0)))\right|^{-1/4} Q_{\alpha_3} \left(\Im \Gamma_{\scalerel*{\parallel}{\perp}}(\tilde{y}^0)^{-1/2} \xi^0\right)e^{i \frac{\xi^0\cdot\Gamma_{\scalerel*{\parallel}{\perp}}(\tilde{y}^0)^{-1} \xi^0}{2}}.
			\end{equation}
			To obtain $L^2$ estimates in $x^1$, for $x^1\neq 0$, we introduce the operator 
			\begin{equation*} 
				L\coloneq\frac{\langle A_{1,2} x^1, \partial_{\xi^0}\rangle}{2\|A_{1,2} x^1\|^2},
			\end{equation*} 
			(which is well-defined as $\det A_{1,2}\neq 0$ because we treat the case without auxiliary variables) so that $L e^{2i  A_{1,2} x^1\cdot\xi^0}=e^{2i  A_{1,2} x^1\cdot\xi^0}$.
			Hence, for any $k\in \mathbb{N}$,
			\begin{align*}
				\left|R_M^{\text{type }1}(X^1)\right|\leq C \int_{\R^d} \Big|(L^*)^k \Big[r_M^{\text{type}1} \widehat{v_\gamma}\Big]\Big| \d \xi^0.
			\end{align*}
			
			Then we check using lemma \ref{derive squeezed} that 
			\begin{equation*} 
				\Big|(L^*)^k \Big[r_N^{\text{type}1} \hat{v_\gamma}\Big]\Big|\leq C \sum_{M_1+M_2=3M} \jp{x^1}^{M_1-k} \jp{\xi^0}^{M_2} \sum_{|\beta|\leq k} \partial^\beta_{\xi^0} [\widehat{v_\gamma} ].
			\end{equation*} 
			Hence, using Hölder,
			\begin{align*}
				\left\|R_M^{\text{type }1}\right\|_{L^2(\d x^1)L^\infty(\d y^1)}\leq C \left\|\jp{x^1}^{-d_{\scalerel*{\parallel}{\perp}}}\right\|_{L^2(\d x^1)} \sup_{|\beta_1|+|\beta_2|\leq 3M+d_{\scalerel*{\parallel}{\perp}}} \left\| \jp{\xi^0}^{|\beta_1|} \partial^{\beta_2}_{\xi^0} \widehat{v_\gamma}\right\|_{L^2(\d \xi^0) L^\infty(\d x^1)}. 
			\end{align*}
			Hence, we have to estimate $\| \jp{\xi^0}^{|\beta_1|} \partial^{\beta_2}_{\xi^0} \widehat{v_\gamma}\|_{L^2(\d \xi^0)} $, for this we notice that derivatives $\partial^{\beta_2}_{\xi^0}$ can either fall on the stationary point  $\tilde{y^0}(\sqrt{h}x^1,\sqrt{h}\xi^0)$ or on ``regular'' appearances of $\xi^0$ inside the polynomial and Gaussian of $\widehat{v_\gamma}$, see its definition in (\ref{wide v}). 
			Due to the $\sqrt{h}\xi^0$ inside the stationary point and lemma \ref{derive squeezed}, we check that when derivatives hit the stationary point, the norm is not increased, hence we only have to use a bound similar to lemma \ref{control_xy}.
			\medbreak
			Finally, when we add the global factor $h^{2l+3M}$, we get
			\begin{equation*} 
				h^{2l+3M}\left\|R_M^{\text{type }1}\right\|_{L^2(\d x^1)L^\infty(\d y^1)}\leq C h^{2l+3M} \left\|N_\infty\left(\partial^{\alpha_2}_y P_y\right)\right\|_{L^2(\d y)} \left\|\kappa^\text{c}\right\|^{2|\alpha_3|+3M+d_{\scalerel*{\parallel}{\perp}}}.
			\end{equation*} 
			\item For remainders of type 2 defined in (\ref{DL_exp_a}), we adapt the estimate of section \ref{section1st} in a similar way.
			\item Finally, in order to control terms involving $r_M^v$, we also want to use the same method as above, the main difference being that we have a different estimate on the remainder $r_M^v$:
			\begin{equation}
				|r_N^v|\leq C \jp{ x^1,\xi^0}^N \sup_{|\beta_1|+|\beta_2|=N}\left\|N_\infty\left(\partial^{\alpha_2+\beta_1}_y P_y\right)\right\|_{L^2(\d y)} \left\|\kappa^\text{c}\right\|^{2|\alpha_3|+2|\beta_2|}. 
			\end{equation}
			\medbreak
			We need a bound for 
			\begin{equation*} 
				\left|\int e^{\frac{i}{h} \psi_2\left(x^1,x^0,\xi^0\right)} \tilde{P}_k(x^1,y^1,\xi^0)r_{M-k}^v(x^1,y^1,\xi^0,x^0) \d x^0 \d \xi^0\right|,
			\end{equation*} 
			by changing the order of integration, we will instead look for a bound to
			\begin{equation*} 
				\left|\int e^{\frac{i}{h} \psi_2\left(x^1,x^0,\xi^0\right)} \partial^M_{x^1,\xi^0}v_\gamma(x^0,\tilde{y}^0(s\sqrt{h}x^1,y^1,s\sqrt{h}\xi^0).[x^1,\xi^0]^{\otimes M} \d x^0 \d \xi^0\right|.
			\end{equation*} 
			This integral is very similar to the one of (\ref{rest 1st type}), and the same change of variables is needed to control it.
			\medbreak
			Then, doing the same argument based on lemma \ref{1st_type} and using estimates of lemma \ref{control_xy} and lemma \ref{derive squeezed} we obtain the announced bound.
		\end{itemize}
		
	\end{proof}	
	
	\begin{Remark}
		Assumptions $\nu<1/6$ and $\delta+\nu<1/2$ ensure that the expansion is convergent.
	\end{Remark}

	\subsubsection{The parallel metaplectic operator}\label{metap_simple}
	In this section, we explain the proof of lemma \ref{compare dcF}:
	
	\begin{proof}
		To obtain the first part of the lemma, we use the fact that $\mathfrak{F}(\mathcal{I}_m)=\mathcal{I}_m$ so that its linearization of points $(0,y^1,0,0)\in \mathcal{I}_m$ has the following structure:
		\begin{equation}\label{struct}
			d\mathfrak{F}(0,y^1,0,0)=\begin{pmatrix}
				\ast & 0 & \ast & \ast 
				\\ \ast & \ast & \ast  & \ast
				\\ \ast & 0 & \ast & \ast
				\\ 0 & 0 &0 & \ast
			\end{pmatrix}.
		\end{equation}
		From the link between symplectomorphism and associated generating function, we know that
		\begin{equation*} 
			d\mathfrak{F}=\begin{pmatrix}
				(\partial^2_{X,\Xi}\psi)^{-1} & -(\partial^2_{X,\Xi} \psi)^{-1} \partial^2_{\Xi,\Xi}\psi
				\\ \partial^2_{X,X}\psi(\partial^2_{X,\Xi}\psi)^{-1} & (\partial^2_{X,\Xi} \psi)^T-\partial^2_{X,X} \psi (\partial^2_{X,\Xi} \psi)^{-1} \partial^2_{\Xi,\Xi} \psi
			\end{pmatrix},
		\end{equation*} 
		and that $\psi_2(y^1)$ generates the linear symplectic application
		\begin{equation*} 
			\begin{pmatrix}
				(\partial^2_{x,\xi}\psi)^{-1} & -(\partial^2_{x,\xi} \psi)^{-1} \partial^2_{\xi,\xi}\psi
				\\ \partial^2_{x,x}\psi(\partial^2_{x,\xi}\psi)^{-1} & (\partial^2_{x,\xi} \psi)^T-\partial^2_{x,x} \psi (\partial^2_{x,\xi} \psi)^{-1} \partial^2_{\xi,\xi} \psi
			\end{pmatrix}.
		\end{equation*} 
		From (\ref{struct}), we can see that $(\partial^2_{X,\Xi}\psi)^{-1}$ is lower block triangular and deduce that 
		\begin{equation*} 
			(\partial^2_{x,\xi}\psi)^{-1}=d_x\mathfrak{F}_x.
		\end{equation*} 
		Then looking at the lower-left block in (\ref{struct}) yields 
		\begin{equation*} 
			\partial^2_{x,x}\psi(\partial^2_{x,\xi}\psi)^{-1}=d_x\mathfrak{F}_{\xi}.
		\end{equation*} 
		Finally, we obtain the last two remaining blocks by studying the lower-right block in (\ref{struct}).
		\medbreak
		For the second part of this lemma, we have to study the differences between the linearization of $\mathfrak{F}$ and the one of $F$.
		\medbreak
		We notice that for any $\rho\in \R^{2d},$ $d\kappa_{\mathcal{I}_0}(\rho)$ and $(d\kappa_{\mathcal{I}_1}(\rho))^{-1}$ have the following structure:
		\begin{equation*} 
			\begin{pmatrix}
				I & \ast & 0 & 0
				\\ 0 & I & 0 & 0
				\\0 & \ast & I & 0
				\\ 0 & \ast & \ast & I
			\end{pmatrix},
		\end{equation*} 
		with all matrices $\ast$ bounded by as constant $C>0$ by (\ref{I proche}).
		Hence, by writing, for any $\rho\in \R^{2d},$ 
		\begin{equation*} 
			d F(\rho)=\begin{pmatrix}
				A_{1,1} & A_{1,2} & A_{1,3} & A_{1,4}
				\\ A_{2,1} & A_{2,2} & A_{2,3} & A_{2,4}
				\\ A_{3,1} & A_{3,2} & A_{3,3} & A_{3,4}
				\\ A_{4,1} & A_{4,2} & A_{4,3} & A_{4,4}
			\end{pmatrix},
		\end{equation*} 
		we can see that 
		\begin{equation*} 
			d \mathfrak{F}(0,y^0,0,0)=\begin{pmatrix}
				A_{1,1}+\ast A_{2,1} & \ast & A_{1,3}+A_{1,4}\ast + \ast(A_{2,3}+A_{2,4}\ast) & \ast
				\\ \ast & \ast & \ast &\ast
				\\ A_{3,1}+\ast A_{2,1} & \ast & A_{3,3}+A_{3,4}\ast +\ast(A_{2,3}+A_{2,4}\ast) & \ast
				\\ \ast & \ast & \ast &\ast
			\end{pmatrix},
		\end{equation*} 
		(where the corresponding $\rho$ at which matrices $A_{i,j}$ are computed is $(\overline{x^0}(y^0),y^0,\overline{\xi^0}(y^0),\overline{\eta^0}(y^0))$) and then
		\begin{equation*} 
			d^c\mathfrak{F}(y^1)=\begin{pmatrix}
				A_{1,1}+\ast A_{2,1}& A_{1,3}+A_{1,4}\ast + \ast(A_{2,3}+A_{2,4}\ast)
				\\ A_{3,1}+\ast A_{2,1} & A_{3,3}+A_{3,4}\ast +\ast(A_{2,3}+A_{2,4}\ast) 
			\end{pmatrix}.
		\end{equation*} 
		\medbreak
		Finally, we know that 
		\begin{equation*} 
			d F(0,0,0,0)=\begin{pmatrix}
				A & 0 & E & 0
				\\	0 & B & 0 & 0
				\\	F  & 0 & C & 0
				\\	0 & 0 & 0 & D
			\end{pmatrix}.
		\end{equation*} 
		Hence, we can see using the mean value theorem that 
		\begin{align*}
			\left\| d^c\mathfrak{F}(y^1)-\begin{pmatrix} A & E
				\\ F & C
			\end{pmatrix}\right\|&\leq C(\|A_{1,1}-A\|+\|A_{1,3}-E\|+\|A_{3,1}-F\|+\|A_{3,3}-C\|
			\\&\quad+\|A_{2,1}\|+\|A_{2,3}\|+\|A_{2,4}\|+\|A_{1,4}\|+\|A_{3,4}\|)
			\\&\leq C \left\|(\overline{x^0}(y^0),y^0,\overline{\xi^0}(y^0),\overline{\eta^0}(y^0))\right\|.
		\end{align*}
	\end{proof}

	\section{Reduction of the study} \label{section reduction}
	In this part and the next one, we want to prove the result of theorem \ref{th_trapped_set}. We are looking for a description of the propagator $\chi^w e^{-it\mathpzc{p}/h}$ acting on coherent states located at distance at most $h^{\tau}$ of $K^\delta$ (with $\chi$ cutoff in a $h^{\epsilon_1}$ neighborhood in the transverse direction to the trapped set) for times $t\leq \tau\frac{|\log h|}{6 \lambda^{\text{c}}}$ (or $C|\log h|$ with any $C>0$ if $\lambda^{\text{c}}=0$).
	\medbreak
	Let us start by making some reductions.
	\subsection{Localization at all times}
	
	The first step is to add a cutoff all along the trajectory forcing the localization close to $K^\delta$. We will use the time $t_0$ defined in section \ref{preli_dyn_clas} (recall that $t=nt_0$).
	\begin{Proposition}\label{cutoff_each_step}
		Let $\chi\in C^{\infty}_c(\R^{2d})$ such that $\chi=1$ on $K^\delta(\epsilon_1/2)\cap p^{-1}(-\delta,\delta)$ and 
		\newline 
		$\supp \chi \subset K^{3\delta/2}(\epsilon_1)\cap p^{-1}(-3\delta/2,3\delta/2)$, $t_0$ defined in section \ref{preli_dyn_clas} and $n\leq C \log(1/h)$ then 
		\begin{equation*} 
			\left\|\chi^w e^{-int_0\mathpzc{p}/h}\chi^w-\chi^w e^{-it_0\mathpzc{p}^w/h} \chi^w e^{-it_0 P/h}\chi^w \dots \chi^w e^{-it_0\mathpzc{p}^w/h}\chi^w\right\|_{L^2}= O(h^\infty).
		\end{equation*} 
	\end{Proposition}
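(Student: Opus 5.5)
The plan is to telescope. Set $U=e^{-it_0\mathpzc{p}^w/h}$. For $1\le k\le n$ let $A_k$ denote the operator obtained from $\chi^w U^n\chi^w$ by inserting $\chi^w$ after each of the first $k-1$ factors $U$, counted from the right. Then $A_1=\chi^w U^n \chi^w$, and $A_n$ is the fully cut-off product. Writing
\[
A_{k+1}-A_k=-\chi^w U^{n-k}(1-\chi^w)U\,B_k ,
\]
where $B_k$ is a product of $k-1$ factors $\chi^w U$ followed by $\chi^w$, we see that each $B_k$ has norm $\le 1+O(h^\infty)$ by unitarity of $U$ and $\|\chi^w\|\le 1+O(h)$ (we may take $0\le\chi\le 1$). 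There are $n\le C|\log h|$ terms, so it suffices to prove the bound
\[
\bigl\|\chi^w U^{m}(1-\chi^w)U\chi^w\bigr\|=O(h^\infty)
\]
uniformly in $m\le C|\log h|$. Heuristically this says that mass leaving the support region $\{\chi=1\}$ cannot return to $\supp\chi$. The total $O(h^\infty)$ bound then follows from a sum of $C|\log h|$ terms, each $O(h^\infty)$.

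To prove this uniform bound I would use the escape function $G_2$ of lemma \ref{G2} and build a quantum monotonicity (Lyapunov) argument rather than Egorov up to $|\log h|$ times. The first ingredient is the classical content, given by corollary \ref{def exit macro} and lemma \ref{def_exit}. Points in $\supp(1-\chi)\cap\Phi^{t_0}(\supp\chi)$ lie outside $K^\delta(\epsilon_1/2)$ in the energy window. After possibly enlarging $t_0$, any such point that reaches $\supp\chi$ again at a later time must stay in $K^\delta(\epsilon_1/2)$ on intermediate times, which is a contradiction. So one can choose $G_2$ (with $V$ a neighborhood of $K^{2\delta}$ inside $\{\chi=1\}$) such that $G_2$ is constant, say $0$, on $\supp\chi$ and $G_2\ge c>0$ on the relevant part of $\supp(1-\chi)\cap \Phi^{t_0}(\supp \chi)$, while $H_pG_2\ge0$.

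Next take $f\in C^\infty$ nondecreasing with $f=0$ on $(-\infty,c/3]$ and $f=1$ on $[2c/3,\infty)$, and set $g=f\circ G_2$, cut off to the energy window using the localization of $\chi$. Then $H_p g\ge 0$. The semiclassical sharp Gårding inequality gives
\[
\frac{d}{ds}\langle g^w U(s)v,U(s)v\rangle\ge -Ch\|v\|^2,
\]
and iterating with a hierarchy of nested functions $g_j$ (as in the standard propagation-of-singularities bootstrap) improves this to an $O(h^\infty)$ loss over times $m t_0\le C|\log h|$. Applying this to $v=(1-\chi^w)U\chi^w w$ yields the following: $v$ is microlocalized, modulo $O(h^\infty)$, where $g=1$, and $g=0$ on $\supp\chi$. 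Hence $\chi^w U^m v=\chi^w(1-g^w)U^m v+O(h^\infty)$, and the monotonicity keeps $U^m v$ in $\{g\ge 1-\text{small}\}$. By composition of $\chi^w$ with $(1-g^w)$, whose supports are disjoint, this quantity is $O(h^\infty)$.

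The main obstacle is this last monotonicity step. It needs a uniform $O(h^\infty)$ error over logarithmically many time steps with only smooth, $h$-independent symbols. I would handle it with the standard nested cutoff bootstrap ($g_0\prec g_1\prec\dots\prec g_N$, each step gaining a power $h$ with constants independent of $m$), so that accumulating $C|\log h|$ errors costs only a logarithm. I would also have to check carefully that the classical non-return statement of lemma \ref{def_exit} really produces a single $G_2$ separating $\supp\chi$ from the exit region, possibly after shrinking $\epsilon_1$.
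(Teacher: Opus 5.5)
Your telescoping is a genuinely different and leaner bookkeeping than the paper's. The paper writes $\chi^w+\chi_-^w+\chi_+^w=I+R_\infty$ and expands $\chi^wU^n\chi^w$ into $4^{n-1}$ words $B_{\boldsymbol a_{n-1}}\cdots B_{\boldsymbol a_1}B_A$. It kills every word other than $(A,\dots,A)$ either by a forbidden short transition ((\ref{image chi_+}), (\ref{image chi})) or by the energy remainder $R_\infty$, at the polynomial cost $4^{n-1}=h^{-O(1)}$. You only get the $n-1$ terms $\chi^wU^m(1-\chi^w)U\chi^w$ with $m\ge 1$. That reduction is correct (incidentally $\|B_k\|\le(1+Ch)^k$, not $1+O(h^\infty)$, which is harmless).

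The gap is in your proof of the key bound. The separation you ask of $G_2$ cannot hold. The set $\supp(1-\chi)\cap\Phi^{t_0}(\supp\chi)$ contains points of the layer $\{0<\chi<1\}\subset\supp\chi$, for instance points on their way out of $K^\delta(\epsilon_1)$. So $G_2$ cannot both vanish on $\supp\chi$ and be $\ge c$ there, and $v=(1-\chi^w)U\chi^w w$ is \emph{not} microlocalized in $\{g=1\}$: part of it sits where $\chi\neq0$. Your contradiction argument does not exclude this, because Lemma \ref{def_exit} concerns returns to $K^\delta(\epsilon_1/2)$, not to the larger set $\supp\chi$. Shrinking $\epsilon_1$ does not help either.

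What is true is the two-step statement behind (\ref{strict croissance})--(\ref{image chi}). If $\rho\in\supp\chi$ and $\Phi^{t_0}(\rho)\notin K^\delta(\epsilon_1/2)$, the orbit stays off $K^\delta(\epsilon_1/2)$ afterwards, where $H_pG_2\ge C_0$. Hence $G_2(\Phi^{2t_0}(\rho))\ge C_0t_0-\alpha\ge 2\alpha$ by (\ref{C_0t_0}). Since $m\ge1$, you can spend one factor $U$ on this and obtain $(1-\chi_+^w)U(1-\chi^w)U\chi^w=O(h^\infty)$ by bounded-time Egorov, with $\chi_+$ as in the paper.

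After that, the long-time part needs neither sharp G\aa rding nor a nested bootstrap. As you wrote it, the G\aa rding step only gives an $O(h|\log h|)$ loss, i.e.\ $O((h|\log h|)^{1/2})$ in norm, and the upgrade to $O(h^\infty)$ is asserted rather than shown. Instead, note that $G_2\ge\alpha$ keeps the orbit off $K^\delta(\epsilon_1/2)$, so $\Phi^{t_0}(\supp\chi_+)\subset\{\chi_+=1\}$, which is (\ref{image chi_+}). Hence $(1-\chi_+^w)U\chi_+^w=O(h^\infty)$. An induction then gives $U^{m-1}\chi_+^w=\chi_+^w(U\chi_+^w)^{m-1}+O(mh^\infty)$ uniformly for $m\le C|\log h|$, and $\chi^w\chi_+^w=O(h^\infty)$ finishes. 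So your classical input is the same as the paper's; what your route buys is $n$ telescoping terms in place of the word combinatorics.

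Finally, make the energy localization explicit. You cannot take $V\supset K^{2\delta}$ inside $\{\chi=1\}$, because $\supp\chi\subset p^{-1}(-3\delta/2,3\delta/2)$. More seriously, $p$ is conserved and $K_E$ is trapped for $|E|<2\delta$. So there are trapped points with energy in $(\delta,3\delta/2)$ where $0<\chi<1$, and along the whole orbit if, e.g., $\chi$ depends only on $p$ near $K$. On coherent states centred there, your key bound fails, and so does the case $n=2$ of the displayed estimate: $\chi^wU(1-\chi^w)U\chi^w$ is then of order one. The paper's argument also tacitly lives in $p^{-1}(-\delta,\delta)$, where Lemma \ref{G2} gives $H_pG_2\ge0$. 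You should insert an energy cutoff $\psi(\mathpzc{p}^w)$ with $\supp\psi\subset(-\delta,\delta)$, which commutes with $U$, or restrict to data microlocalized there.
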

	We will prove this proposition in the two following sections.
	\subsubsection{Properties of the classical dynamics}
	Let us start by recalling the classical side of the picture. In section \ref{fuite_def_esc}, we have used escape functions to show that the trajectory we consider must stay inside a compact set. More precisely, we can use the setting given by lemma \ref{G2} about escape functions: we can construct a smooth function $G_2$ which verifies
	\begin{itemize}
		\item On $p^{-1}(-2\delta,2\delta)\setminus K^\delta(\epsilon_1/2),$
		\begin{equation*} 
			H_pG_2 \geq C_0,
		\end{equation*} 
		with $C_0$ a constant.
		\item On $p^{-1}(-2\delta,2\delta)$, $G_2=0$ on a neighborhood of $K^{\delta}$.
		\item  On $p^{-1}(-2\delta,2\delta)$, we have
		\begin{equation*} 
			H_pG_2\geq 0,
		\end{equation*} 
		which implies that $G_2$ is non-decreasing along the flow trajectories. 
	\end{itemize}
	Our goal here is to obtain a control over $\Phi^{-t_0} (\supp \chi)$ and $\Phi^{t_0}(\supp \chi)$ (i.e. points entering or leaving $\supp \chi$ under the dynamics).
	\medbreak
	Let us first notice that, as $K^{2\delta}(\epsilon_1)$ is compact, there exists a $\alpha>0$ such that 
	\begin{equation*} 
		K^{2\delta}(\epsilon_1)\subset (-\alpha \leq G_2 \leq \alpha).
	\end{equation*}
	We may now assume that $t_0$ is taken large enough so that 
	\begin{equation}\label{C_0t_0} 
		C_0t_0\geq 3 \alpha.
	\end{equation} 
	Then we recall another property of $t_0$ that was established in lemma \ref{def_exit}:
	\medbreak
	If $\rho$ is a point in $K^\delta(\epsilon_1/2)$ and $\Phi^t(\rho)$ is also in $K^\delta(\epsilon_1/2)$ for a $t>2t_0$, then 
	\begin{equation}\label{non retour}
		\forall s\in [t_0,t-t_0],\ \Phi^{s}(\rho)\in K^\delta(\epsilon_1/2).
	\end{equation}
	
	Recall that $\chi\in C^\infty(p^{-1}(-2\delta,2\delta),[0,1])$ verifies $\chi=1$ on $K^{\delta}(\epsilon_1/2)$ and $\supp \chi \subset K^{2\delta}(\epsilon_1)$.
	\medbreak
	Let us now consider $\chi_+ \in C^\infty(p^{-1}(-2\delta,2\delta),[0,1])$ such that $\chi_+=1$ on $\{G_2\geq 2 \alpha\}$ and 
	\newline\indent $\text{supp } \chi_+ \subset \{G_2 \geq \alpha\}$.
	\medbreak
	We then define $\chi_- \in C^\infty(p^{-1}(-2\delta,2\delta),[0,1])$ such that $\chi+\chi_-+\chi_+=1$ on $p^{-1}(-7\delta/4,7\delta/4)$.
	\medbreak
	From this construction, we get a few properties: 
	\begin{itemize}
		\item Consider a point $\rho \in \supp \chi_+$, then $G_2(\Phi^t(\rho))\geq \alpha$ for all $t\geq 0$, hence $\Phi^t(\rho)\notin K^\delta(\epsilon_1/2)$, which, in turn, implies that $H_pG_2(\Phi^t(\rho))\geq C_0$ and:
		\begin{equation*} 
			G_2(\Phi^{t_0}(\rho))\geq C_0t_0+\alpha \geq2\alpha.
		\end{equation*} 
		As a consequence, $\chi_+(\Phi^{t_0}(\rho))=1$ which means that 
		\begin{equation}\label{image chi_+}
			\Phi^{t_0}(\supp \chi_+)\cap (\supp \chi \cup \supp \chi_-)=\emptyset.
		\end{equation}
		\item Similarly, let $\rho\in \supp \chi$ and assume that $\Phi^{t_0}(\rho)\notin K^{\delta}(\epsilon_1/2)$ (or equivalently $\phi^{t_0}(\rho)\in \supp \chi_+\cup \supp \chi_-$). Then we can show that 
		\begin{equation}\label{strict croissance}
			\forall s\geq t_0,\ \Phi^s(\rho)\notin K^{\delta}(\epsilon_1/2).
		\end{equation}
		\medbreak 
		Indeed, if $\rho\in K^{\delta}(\epsilon_1/2)$, it is a consequence of (\ref{non retour}), otherwise $\rho\in K^{2\delta}(\epsilon_1)\setminus K^{\delta}(\epsilon_1/2)$ but since $\Phi^{t_0}(\rho)\notin K^{\delta}(\epsilon_1/2),$ we can use lemma \ref{fuite_def_esc} (with $V_1=K^{\delta}(\epsilon_1/2)$ and $\mathbf{U_0}=K^{2\delta}(\epsilon_1)$) to deduce that $\Phi^{t_0}\notin K^{2\delta}(\epsilon_1)$. Finally, we can apply lemma \ref{def_exit} to $K^{2\delta}(\epsilon_1)$. (To be precise, doing this might require to consider a bigger $t_0$ as we apply lemmas \ref{fuite_def_esc} and \ref{def_exit} to different sets than in section \ref{fuite_def_esc}.)
		\medbreak
		Equation (\ref{strict croissance}) gives us as in the previous case:
		\begin{equation*} 
			G_2(\Phi^{2t_0}(\rho))\geq C_0t_0+G_2(\Phi^{t_0}(\rho))\geq C_0t_0+G_2(\rho)\geq C_0t_0-\alpha \geq 2\alpha.
		\end{equation*}
		As a consequence, $\chi_+(\Phi^{2t_0}(\rho))=1$ which means that
		\begin{equation}\label{image chi}
			\Phi^{2t_0}\left(\supp \chi\cup \Phi^{-t_0}(\supp \chi_{\pm})\right)\cap \left(\supp \chi \cup \supp \chi_-\right)=\emptyset.
		\end{equation}
	\end{itemize}
	\subsubsection{Proof of proposition \ref{cutoff_each_step}}
	Now that we have information on the classical behavior of $\supp\chi$, we may move to the quantum side of the picture. Let us quantize $\chi,\chi_-,\chi_+$ (with Weyl Quantization for instance) so that $R_\infty\coloneq\chi_A^w+\chi_-^w+\chi_+^w-I$ is also a pseudodifferential operator and verifies
	\begin{equation*} 
		\text{WF}_h(R_\infty)\cap p^{-1}(-7\delta/4,7\delta/4)=\emptyset.
	\end{equation*}
	Moreover, using the semiclassical wavefront sets of these operators and properties of Fourier Integral Operators, we can translate equation (\ref{image chi_+}) into
	\begin{equation*} 
		\left.\begin{array}{lr}
			\|\chi_-^w e^{-it_0\mathpzc{p}/h} \chi_+^w\|_{L^2 \to L^2}\\
			\|\chi^w e^{-it_0\mathpzc{p}/h} \chi_+^w\|_{L^2 \to L^2}
		\end{array}\right\} = O(h^\infty),
	\end{equation*} 
	and (\ref{image chi}) into 
	\begin{equation*}
		\left.\begin{array}{lr}
			\|\chi_-^w e^{-it_0\mathpzc{p}/h} \chi_{\pm}^we^{-it_0\mathpzc{p}/h} \chi^w \|_{L^2 \to L^2}\\
			\|\chi^w e^{-it_0\mathpzc{p}/h} \chi_{\pm}^we^{-it_0\mathpzc{p}/h} \chi^w \|_{L^2 \to L^2}
		\end{array}\right\} = O(h^\infty).
	\end{equation*}
	We can now conclude the proof of proposition \ref{cutoff_each_step}:
	\medbreak
	Let us set $\mathcal{A}=\{A,+,-,\infty\}$ and define for $a\in \mathcal{A},$
	\begin{equation*} 
		B_a=\left\{
		\begin{aligned}
			&e^{-it_0\mathpzc{p}/h}\chi^w\quad\ \text{if } a=A,
			\\&e^{-it_0\mathpzc{p}/h}\chi_a^w\quad\ \text{if } a\in\{+,-\},
			\\& e^{-it_0\mathpzc{p}/h} R_\infty \quad\text{if } a= \infty.
		\end{aligned}\right.
	\end{equation*} 
	Then as $\sum_{a\in \mathcal{A}}B_a=e^{-it_0\mathpzc{p}/h}$, we have
	\begin{equation}\label{decompo Ba}
		\chi_A^w e^{-int_0\mathpzc{p}^w/h} \chi_A^w=\chi_A^w \sum_{\boldsymbol{a} \in \mathcal{A}^{n-1}} B_{\boldsymbol{a}_{n-1}}\dots B_{\boldsymbol{a}_1} B_A.
	\end{equation} 
	Let us assume consider some $\boldsymbol{a}\in \mathcal{A}^{n-1}$ that is not $(A,\dots,A)$.
	Our goal is to show that 
	\begin{equation}\label{traj impossible}
		\|\chi_A^w B_{\boldsymbol{a}_{n-1}}\dots B_{\boldsymbol{a}_1} e^{-it_0\mathpzc{p}^w/h} \chi_A^w\|=O(h^\infty).
	\end{equation}
	We have several different cases:
	\begin{enumerate}
		\item There exists a $j_\infty\in\llbracket 1, n-1 \rrbracket$ such that $\boldsymbol{a}_{j_\infty}=\infty$, then we should follow the proof of \cite[Lemma 6.5]{NZ_pression_topo}. We know that $\|R_\infty\psi\|_{L^2}$ is small when $\text{WF}_h(\psi)\subset p^{-1}(-7\delta/4,7\delta/4)$, we want to use the fact that $\supp \chi \subset p^{-1}(-3\delta/2,3\delta/2)$ to say that 
		\begin{equation*} 
			R_\infty B_{\boldsymbol{a}_{j_\infty-1}}\dots B_{\boldsymbol{a}_1}e^{-it_0\mathpzc{p}^w/h} \chi^w=O(h^\infty),
		\end{equation*} 
		where the implicit constant is independent of $(\boldsymbol{a}_{j_\infty-1},\dots,\boldsymbol{a}_{1})$.
		This result is obtained by inserting nested $j$ cutoffs in energy ranging from $3\delta/2$ to $7\delta/4$, we refer to \cite[Lemma 6.5]{NZ_pression_topo} for more details.
		\medbreak
		In conclusion, we use $\|e^{-int_0\mathpzc{p}^w/h}\|_{L^2\to L^2}\leq 1$ and $\|\chi^w\|_{L^2\to L^2},\|\chi_a^w\|_{L^2\to L^2}\leq 2$ for $a\in \mathcal{A}$ to obtain equation (\ref{traj impossible}).
		\item Let us now assume that for all $j\in\llbracket 1, n-1 \rrbracket$, $\boldsymbol{a}_j\neq\infty$. 
		\begin{enumerate}
			\item If there exists a $j\in\llbracket 1, n-1 \rrbracket$ such that $\boldsymbol{a}_{j}=+,$ we look at the largest of those $j$ and denote it by $j_+$. We have two cases:
			\begin{enumerate}
				\item Either $j_+\neq n-1$, in which case $a_{j_++1}\in \{A,-\}$ but we have  
				\begin{equation*} 
					\left.\begin{array}{lr}
						\|\chi_-^w e^{-it_0\mathpzc{p}^w/h} \chi_+^w\|_{L^2 \to L^2}\\
						\|\chi^w e^{-it_0\mathpzc{p}^w/h} \chi_+^w\|_{L^2 \to L^2}
					\end{array}\right\} = O(h^\infty),
				\end{equation*}
				so, we again obtain equation (\ref{traj impossible}).
				\item Otherwise, $a_{n-1}=+$ but again we use $\|\chi_A^w e^{-it_0\mathpzc{p}^w/h} \chi_+^w\|_{L^2 \to L^2}\leq \epsilon $,
				to conclude that we have equation (\ref{traj impossible}).
			\end{enumerate}
			\item If there exists no such $j_+$ then there must exist a $j\in\llbracket 1, n-1 \rrbracket$ such that $\boldsymbol{a}_{j}=-,$ we consider $j_-$ the smallest of those. We then know that $\boldsymbol{a}_{j_--1}=A$ (setting $\boldsymbol{a}_0=A$ due to (\ref{decompo Ba})).
			\begin{enumerate}
				\item Either $j_-<n-1$ and $\boldsymbol{a}_{j_-+1}=-$ in which case we can use 
				\begin{equation*}
					\|\chi_-^w e^{-it_0\mathpzc{p}^w/h} \chi_-^w e^{-it_0\mathpzc{p}^w/h} \chi^w\|=O(h^\infty),
				\end{equation*}
				to obtain equation (\ref{traj impossible}),
				\item or we have $j_-=n+1$ or $\boldsymbol{a}_{j_-+1}=A$ and we should use 
				\begin{equation*}
					\|\chi^w e^{-it_0\mathpzc{p}^w/h} \chi_-^w e^{-it_0\mathpzc{p}^w/h} \chi^w\|=O(h^\infty),
				\end{equation*}
				to get equation (\ref{traj impossible}).
			\end{enumerate}
		\end{enumerate}
	\end{enumerate}
	
	Finally, equation (\ref{traj impossible}) implies that 
	\begin{equation*} 
		\|\chi^w e^{-int_0\mathpzc{p}^w/h} \chi^w-\chi^we^{-it_0\mathpzc{p}^w/h}\chi^w \dots e^{-it_0\mathpzc{p}^w/h} \chi^w\|_{L^2\to L^2} \leq (|\mathcal{A}^{n-1}|-1) O(h^\infty),
	\end{equation*} 
	which, using $|\mathcal{A}^{n-1}|=4^{n-1}$ and $n\leq C|\log h|$, finally gives that
	\begin{equation*} 
		\|\chi^w e^{-int_0\mathpzc{p}^w/h} \chi^w-\chi^we^{-it_0\mathpzc{p}^w/h}\chi^w \dots e^{-it_0\mathpzc{p}^w/h} \chi^w\|_{L^2\to L^2}=O(h^\infty).
	\end{equation*}

	\subsection{Introducing coordinates charts along the trajectory}\label{charts}
	In this section, we explain how to replace the propagator $e^{-int_0\mathpzc{p}^w/h}$ with the product of $n$ propagators for a time $t_0$ that are localized. 
	\medbreak
	Recall that in definition \ref{nth chart}, we have explained the coordinates we wanted to use to describe the evolution of the coherent state centered at $\rho$ which is at a distance $h^{\tau}$ from $K^{\delta}$. It involved looking at the iterates of the point that reaches the distance between $\rho $ and $K^{\delta}$ that we denoted by $\tilde{\rho}$. Those iterates were the ``base points'' and are denoted by
	\begin{equation*}
		\tilde{\rho}^k=\Phi^{kt_0}(\tilde{\rho}), \quad \forall k\in \llbracket 1,n\rrbracket.
	\end{equation*}
	We also introduce the iterates of $\rho$ as $\rho^k=\Phi^{kt_0}(\rho)$.
	\medbreak
	Recall that $\tilde{\rho}^k$'s charts were introduced as $(U_k,\kappa^{(k)})$, see definition \ref{charts}.
	\begin{Remark}
		In the case where $\rho\in K$, we can obviously take $\tilde{\rho}=\rho$ which simplifies the computations. If we were just interested in propagation in this case, some problems that we will encounter would not have occurred. We will try to keep mentioning such shortcuts throughout the proof.
	\end{Remark}
	We now introduce an associated cutoff $\pi_k$ such that $\pi_k$ is equal to unity near some $\tilde{U}_k \Subset U_k$ and vanishes outside of $U_k$. Then we quantify these cutoffs into $\Pi^{k}=\Op \left(\pi_{k}\right)$.
	\medbreak
	This gives us local propagators 
	\begin{equation*} 
		\left(\Pi^{k}\right)^* e^{-it_0\mathpzc{p}^w/h} \Pi^{k-1},
	\end{equation*}
	which we will see in $\R^d$ by quantifying our coordinates $\kappa^{(k)}$ via a Fourier integral operator $\mathcal{U}^{k}:L^2(\R^{d}) \to L^2\left(\R^d\right)$ which leads to the study of:
	\begin{equation}\label{def Tk}
		T_{k}=\mathcal{U}^{k} \left(\Pi^{k}\right)^* e^{-it_0\mathpzc{p}^w/h} \Pi^{k-1} \left(\mathcal{U}^{k-1}\right)^*,\quad \text{ for }k\in \llbracket 2,n\rrbracket,
	\end{equation}
	and 
	\begin{equation}\label{def T1}
		T_1=\mathcal{U}^{1} \left(\Pi^{1}\right)^* e^{-it_0\mathpzc{p}^w/h} \Pi_{\alpha_0} \left(\mathcal{U}_{\alpha_0}\right)^*.
	\end{equation}
	We can see that our state stays microlocalized in a microscopic region: for the transverse direction it is a consequence of lemma \ref{cutoff_each_step} and for the central one it is due to the fact that the times considered are smaller than the central Ehrenfest time. As a consequence, the charts introduced above are enough to describe the entire state, contributions in other charts would be $O(h^\infty)$. This implies the following:
	\begin{Lemma}\label{introduce charts}
		In the notation of theorem \ref{th_trapped_set},
		\begin{equation*}
			\mathcal{U}_{\beta}\ \chi^w e^{-int_0\mathpzc{p}^w/h}  (\mathcal{U}_\alpha)^* \varphi_{\kappa_{\alpha}^{-1}(\rho)}=\mathcal{U}_{\beta}(\mathcal{U}^{n})^*T_n \dots T_1 \varphi_{\kappa_{\alpha}^{-1}(\rho)}+O(h^\infty),
		\end{equation*}
		where the $T_j$'s are defined in equations (\ref{def Tk}) and (\ref{def T1}).
	\end{Lemma}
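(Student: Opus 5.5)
We combine Proposition \ref{cutoff_each_step} with a microlocal partition along the trajectory. Write $t=nt_0$. First, insert the cutoffs at every step:
\begin{equation*}
\chi^w e^{-int_0\mathpzc{p}^w/h}(\mathcal{U}_\alpha)^*\varphi_{\kappa_\alpha^{-1}(\rho)}=\chi^w e^{-it_0\mathpzc{p}^w/h}\chi^w\cdots\chi^w e^{-it_0\mathpzc{p}^w/h}(\mathcal{U}_\alpha)^*\varphi_{\kappa_\alpha^{-1}(\rho)}+O(h^\infty).
\end{equation*}
To apply Proposition \ref{cutoff_each_step} in this form we need a cutoff $\chi^w$ on the right as well. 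Since the initial coherent state is microlocalized at distance $h^{\tau}$ of $K^\delta$, where $\chi=1$, we have $\chi^w(\mathcal{U}_\alpha)^*\varphi=(\mathcal{U}_\alpha)^*\varphi+O(h^\infty)$. This follows from Proposition \ref{cas_iso}, because all derivatives of $\chi-1$ vanish near the center.

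The main step is to replace each $\chi^w$ at step $k$ by the localized chart operators $\Pi^k$ and $\mathcal{U}^k$. We would argue inductively that, after $k$ steps, the state $\chi^w e^{-it_0\mathpzc{p}^w/h}\cdots$ has semiclassical wavefront set (modulo $O(h^\infty)$, uniformly in $k\le n\le C|\log h|$) contained in a fixed small neighborhood of $\tilde{\rho}^k$, namely $\tilde U_k$.

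\textbf{Transverse directions.} The cutoff $\chi$ confines the state to $K^\delta(\epsilon_1)$. At this stage it is \emph{not} small transversally: unstable spreading is macroscopic. So the charts $U_k$ must be taken of fixed size $\epsilon_1$ in $y$, which is allowed by Definition \ref{coord}.

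\textbf{Central directions.} Here we use the classical estimates of Lemma \ref{proof_dyn_class}, applied, as in Remark \ref{dyn class pas rho}, to all points of the $h^{1/2-}$-ball around $\rho$ whose orbit remains in $K^\delta(\epsilon_1/2)$. They give $\|(q_x^k,p_x^k,p_y^k)\|\le Ce^{(\lambda^c+\epsilon_2)kt_0}h^{\tau-}$, which stays $\ll \epsilon_1$ for $t\le(\tau/\lambda_c-\epsilon)|\log h|$. Hence these points lie in $\tilde U_k$.

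\textbf{Quantum step.} Using Egorov-type propagation of wavefront sets over the time $t_0$ (finite, $h$-independent) together with the fact that $\pi_k=1$ on $\tilde U_k$, we get $(\Pi^k)^*\chi^w e^{-it_0\mathpzc{p}^w/h}\Pi^{k-1}v=\chi^w e^{-it_0\mathpzc{p}^w/h}v+O(h^\infty)\|v\|$ on the relevant states. We then insert $(\mathcal{U}^k)^*\mathcal{U}^k=I$ microlocally on $U_k$, which holds because $\mathcal{U}^k$ is microlocally unitary there. Between consecutive steps, the intermediate factors $\chi^w$ can be dropped: $\chi=1$ on $\tilde U_k\cap p^{-1}(-\delta,\delta)$, and energy is localized by the initial state.

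\textbf{Main obstacle.} The hardest point is obtaining uniform $O(h^\infty)$ errors over $n\sim|\log h|$ steps while the state is only known to be $h^{1/2-}$-localized in the central directions and spread transversally. We would handle this as follows. We microlocalize on $h$-dependent sets of size $h^{1/2-\epsilon'}e^{(\lambda^c+\epsilon_2)kt_0}$ in the central variables, using exotic symbol classes $S_{1/2-}$, which are admissible because this size stays below $h^{\epsilon}$ for the times considered. The resulting $O(h^\infty)$ losses then sum over $C|\log h|$ steps without harm, since each operator has norm at most $2$.

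Finally, the last chart change $\mathcal{U}_\beta(\mathcal{U}^n)^*$ appears because we apply $\chi^w$ at the final time and then pass to the chart $\kappa_\beta$ containing $\Phi^t(\tilde\rho)$. This gives the stated identity.
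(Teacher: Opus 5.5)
Your strategy is the paper's own, which justifies the lemma only in the two sentences preceding it. You use Proposition~\ref{cutoff_each_step} for transverse confinement and the central estimates of Lemma~\ref{proof_dyn_class} (staying below the central Ehrenfest time) for central confinement, then insert the chart operators. Your remark that uniformity over $C|\log h|$ steps needs $h$-dependent central cutoffs in an exotic class is a sensible refinement.

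\textbf{The gap.} One step fails as written. You claim that the $\chi$-cut state after $k$ steps is microlocalized in $\tilde U_k$. Since the state is spread along the unstable direction over the whole $\epsilon_1$-neighbourhood (as you note), $\tilde U_k$ must then reach transverse distance $\epsilon_1$. You also claim $\chi=1$ on $\tilde U_k\cap p^{-1}(-\delta,\delta)$, so that the factors $\chi^w$ can be dropped. These two claims cannot both hold.
\begin{itemize}
\item $\chi$ equals $1$ only on $K^\delta(\epsilon_1/2)$ and has a transition zone $\{0<\chi<1\}$ out to transverse distance $\epsilon_1$.
\item Once the unstable spreading is macroscopic, the state carries a non-$O(h^\infty)$ amount of mass in that zone, where $(\Pi^k)^*$ and $\chi^w$ act differently. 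This is exactly the regime of Theorem~\ref{th_trapped_set}; below it, the paper notes $\chi^w$ can simply be removed.
\item At the last step no estimate can repair this, because the lemma compares $\chi^w$ on the left with $(\mathcal U^n)^*\mathcal U^n(\Pi^n)^*$ on the right.
\item The mismatch is also present at intermediate steps, since $T_{k+1}T_k$ produces $\Pi^k(\mathcal U^k)^*\mathcal U^k(\Pi^k)^*\approx\Op(|\pi_k|^2)$, not $\chi^w$.
\end{itemize}

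\textbf{What is needed.}
\begin{enumerate}
\item Choose the final chart cutoff compatibly with $\chi$. For example, take $\pi_n=\chi\,\theta_n$ with $\theta_n\in C_c^\infty(U_n)$ equal to $1$ on the part of $\supp\chi$ whose central chart coordinates are at most a fixed $r$; $U_n$ must contain that set. Your central estimates then give $(\Pi^n)^*w=\chi^w w+O(h^\infty)$ on the final state $w$.
\item For intermediate steps, rerun the proof of Proposition~\ref{cutoff_each_step} with the intermediate $\chi$ replaced by $|\pi_k|^2$, where $\pi_k=\chi\,\theta_k$ is built the same way. Wherever the two cutoffs differ on the state, the state lies in $\{0<\chi<1\}\subset\supp\chi_-\cup\supp\chi_+$ and came from $\supp\chi$ one step earlier. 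By (\ref{image chi}), the next propagation step moves it out of $\supp\chi\supset\supp\pi_{k+1}$, so it is killed on both sides.
\end{enumerate}

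\textbf{A smaller point.} When $\tau<1/2$, the central coordinates of the relevant points are of size $h^{\tau}e^{\lambda^{\text{c}}kt_0}\gg h^{1/2}e^{\lambda^{\text{c}}kt_0}$. Your exotic central cutoffs must therefore have size $h^{\min(\tau,1/2)-}e^{(\lambda^{\text{c}}+\epsilon_2)kt_0}$, or be centred on the drifting manifold $\mathcal I^{(k)}$. It is the smallness of this quantity that uses the restriction $t\le(\tau/\lambda^{\text{c}}-\epsilon)|\log h|$.
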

	\section{Description of propagated coherent states.}\label{iter_method}
	\subsection{Successive applications of the method of expansion of the integrand} \label{app_1st}
	\subsubsection{Recalling the previous result}
	Just as in \cite{Lucas}, we want to apply the method of section \ref{section1st} for the propagation during $n_s=\epsilon_s |\log(h)|$ iterations (with $0<\epsilon_s<\tau/(6t_0\lambda_{\text{max}})$) before propagating even further with the second method.
	\begin{Remark}
		This choice of time $n_s$ for which we switch method is arbitrary, any multiple of $\log(1/h)$ smaller than $\frac{\tau|\log(h)|}{6t_0\lambda_{\text{max}}}$ would work, if needed, we might take $\epsilon_s$ arbitrarily small (independent of $h$).
	\end{Remark} 
	\begin{Proposition}\cite[Proposition 5.1]{Lucas}\label{succ_1st}
		For every $0\leq p \leq n_s$ and $K+l< 2N$, there exist polynomials $P_p^{(k,l)}$ and a remainder $R^{(N)}_{n_s}$ such that if we introduce
		\begin{equation*} 
			u_p^{(k,l)}= T\left(\rho^p\right) \mathcal{M}\left(d_\rho [\kappa^{p,0} \Phi^{pt_0} (\kappa_{\alpha_0})^{-1}]\right) \Lambda_h\bigl[P_p^{(j,k,l)} \Psi_0\bigr],
		\end{equation*}
		we have
		\begin{equation*} 
			T_{p}\dots T_2T_{1} \chi_{\alpha_0}^w \varphi_\rho=\sum_{k+l<2N} h^{k/2+l/2} u_{p}^{(k,l)}+R_{p}^{(N)},
		\end{equation*}
		with the polynomials verifying 
		\begin{equation*} 
			P_p^{(0,0)}=1, \quad \deg P_p^{(k,l)}\leq 3k+2l , \quad N_\infty\left(P_{p}^{(k,l)}\right)\leq C p^{k} \left\|d\Phi^{pt_0}(\rho)\right\|^{3k},
		\end{equation*}
		and the remainder:
		\begin{equation*} 
			\left\|R_{n_s}^{(N)}\right\|_{L^2} \leq C h^{N}\left(\sigma_\text{tot}^{(nt_0)}\right)^{6N},
		\end{equation*}
		with $\sigma_\text{tot}^{(t)}=\sup_{\rho\in K^\delta} \left\|d\Phi^t(\rho)\right\|$.
	\end{Proposition}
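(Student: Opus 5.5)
The plan is an induction on $p$. Each step $T_p$ is handled by Proposition \ref{iter iso}, and the remainders are summed using unitarity of the local propagators. First I will put the initial datum in the right form. Write $\chi_{\alpha_0}^w\varphi_\rho$, expressed through $\mathcal{U}_{\alpha_0}$, as a Weyl--Heisenberg translate of $\mathcal{M}(\mathrm{Id})\Lambda_h[\Psi_0]$, up to $O(h^\infty)$. Using Proposition \ref{cas_iso} we may need to add a finite expansion in excited states, which gives $P_0^{(0,0)}=1$ and, for $l>0$, polynomials $P_0^{(0,l)}$ of degree $\le 2l$ coming from the lower-order terms of the cutoff and charts. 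The index $l$ records powers of $h$ produced by the subprincipal parts of the amplitudes (the $h$-expansion of $a$); the index $k$ records the half-powers produced by Taylor expansion of the phase.

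The inductive step goes as follows. Assume the expansion holds at step $p$, with state
\[
u_p^{(k,l)}=T(\rho^p)\mathcal{M}(\kappa_p)\Lambda_h[P_p^{(k,l)}\Psi_0],\qquad \kappa_p=d_\rho[\kappa^{p,0}\Phi^{pt_0}(\kappa_{\alpha_0})^{-1}].
\]
Since $T_{p+1}$ is a compactly microlocalized FIO quantizing $F_p=\kappa^{(p+1)}\Phi^{t_0}(\kappa^{(p)})^{-1}$, we can apply Proposition \ref{iter iso} to each term. By the chain rule $d_{\boldsymbol{\rho^p}}F_p\circ\kappa_p=\kappa_{p+1}$. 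Define
\[
P_{p+1}^{(k,l)}=\sum_{k'+j=k,\ l'+m=l} Q_{j,m}\bigl(P_p^{(k',l')}\bigr),
\]
where $Q_{j,m}$ is the $h^{j/2+m}$-coefficient, with $m$ coming from the expansion of the amplitude $a$. Then the degree bound $\deg\le 3k+2l$ follows from $\deg Q_j(P)=\deg P+3j$. The key point is the growth of $N_\infty$. A naive use of $N_\infty(Q_j(P))\le C\|\kappa_p\|^{3j}N_\infty(P)$ at every step compounds the constants $C$ into $C^p$, which is not acceptable. The bound $Cp^k\|d\Phi^{pt_0}\|^{3k}$ instead means we must treat $Q_0$ exactly: since $a_0$ has modulus $D_\psi^{1/2}$, the $j=0$, $m=0$ term is the identity up to a phase. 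Nontrivial contributions occur at most $k$ times. So $P_p^{(k,\cdot)}$ is a sum over the at most $\binom{p}{k}\lesssim p^k$ choices of the steps $s_1<\dots<s_k$ at which a cubic correction is inserted. Each insertion at step $s$ costs $\|\kappa_s\|^{3}$, transported to time $p$ by the exact metaplectic evolution. I would verify, as in \cite{Lucas}, that these transported costs are bounded by $\|d\Phi^{pt_0}\|^3$ up to a uniform constant. This uses Lemma \ref{control_xy}-type bounds together with the uniform hyperbolicity estimates (\ref{lambda max et t0}), and Lemma \ref{compare_linear} to compare $\kappa_p$ with $d\Phi^{pt_0}(\tilde\rho)$ for $n\le n_s$, where $h^\tau$ times the exponential factors stays bounded.

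For the remainders, I would write
\[
R_{p+1}^{(N)}=T_{p+1}R_p^{(N)}+\sum_{k+l<2N}h^{k/2+l/2}\,\mathrm{rem}_{N}\bigl(u_p^{(k,l)}\bigr)+(\text{terms moved to orders}\ge 2N).
\]
Using $\|T_{p+1}\|\le 1+O(h^\infty)$, each step contributes at most $Ch^{N}\|\kappa\|^{6N}$ (the remainder estimate of Proposition \ref{iter iso}, taken with index $2N$). Summing over $p\le n_s=O(|\log h|)$ gives $Ch^N|\log h|(\sigma_{\rm tot})^{6N}$. The logarithm is absorbed by slightly shrinking the exponent, or by taking $N+1$ and using that $h^{1/2}\sigma^3\le h^{0+}$ since $n_s t_0\lambda_{\max}<1/6$. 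The step I expect to be the main obstacle is the combinatorial bookkeeping that avoids the $C^p$ growth. To handle it I will first try to follow the argument of \cite[Proposition 5.1]{Lucas}. Its dimension-free structure (exact metaplectic leading term, polynomial corrections inserted at finitely many steps) should carry over once auxiliary variables $\Theta$ are handled as in Proposition \ref{iter iso}.
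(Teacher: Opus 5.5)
Your outline is correct and is essentially the argument the paper relies on. The statement is quoted from \cite{Lucas}, and the paper runs the same scheme for the second method in Lemma \ref{norm_2ndmethod} and the remainder recursion (\ref{def restes}): iterate Proposition \ref{iter iso}, use that the $(0,0)$ contribution only multiplies by a unimodular scalar so that a term of order $k$ involves boundedly many nontrivial insertions, count insertion patterns by a power of $p$, and telescope the per-step remainders using uniform boundedness of the $T_j$.

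One small correction concerns your log-absorption step. Rerunning at order $N+1$ does not remove the logarithm, because the dropped terms with $k=2N$ still carry the factor $p^{2N}\sim|\log h|^{2N}$. As written, you therefore get the remainder bound only up to a $|\log h|^{O(N)}$ factor, i.e.\ an arbitrarily small $h^{-\epsilon}$ loss. This is harmless for the later use in Lemma \ref{1st_methodK}. It is also removable by sharper counting, e.g.\ using that in the adapted charts $\|\kappa_s\|$ is, up to a constant, bounded by $e^{-(\nu_{\text{min}}^\perp-\epsilon)(p-s)t_0}\|\kappa_p\|$ (transverse expansion dominates central growth), which makes the sums over cubic insertion times geometric.
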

	\begin{Remark}
		The dependence in $p$ should not be neglected as $p$ can go up to $n$ which is a multiple of $\log(1/h)$. However, compared to $\left\|d\Phi^{pt_0}(\rho)\right\|^{3k}$, the term $p^{2j+k}$ will play a minor role. In fact, $\left\|d\Phi^{pt_0}(\rho)\right\|$ grows as we propagate further and further, deteriorating the precision of our asymptotics and can be a limitation to the validity of the expansion.
		For our expansion to be convergent, it is necessary to have:
		\begin{equation*} 
			\left\|d\Phi^{pt_0}(\rho)\right\|^3 < ch^{-1/2+\epsilon},
		\end{equation*}
		for some $\epsilon>0$ so that the blow up of the coefficients of our polynomials $P_p^{(j,k)}$ does not compensate for the powers of $h$ that we have gained.
		\medbreak
		This shows the necessity to change the method used when this no longer works and explains why we will switch to expansion of integrand+stationary phase.
	\end{Remark}
	\subsubsection{Link with the $S_{\delta,\nu}$ space}\label{link Sdeltanu}
	The only thing left to do is to see that this propagated state at time $n_st_0$ fits in the $S_{\delta,\nu}$ setting (see the definition \ref{def S_delta,nu}) so that we can use the second method to propagate even further.
	\begin{Lemma}\label{1st_methodK}
		Set $\delta_{(\text{ini})}=1/2-t_0(\nu_{\text{min}}^\perp-\epsilon_2/3) \epsilon_s$ and let $\rho$ be in a $h^{\tau}$ neighborhood of $K^\delta$ and $n_s=\epsilon_s |\log(h)|$ (with $0<\epsilon_s<\tau/(6t_0\lambda_{\text{max}})$).
		Then, for all $2j+k+l<2N$, there exists a family $\left(u_\gamma\right)_{|\gamma|\leq 3k+2l}\in S^{-\infty}_{\delta_{(\text{ini})}}\left(\R^{d_\perp}\right)$ and $\kappa^\text{c}\in Sp(2d_{\scalerel*{\parallel}{\perp}})$ such that 
		\begin{align*} 
			\mathcal{M}\left(d_\rho [\kappa^{n_s,0} \Phi^{n_st_0} (\kappa_{\alpha_0})^{-1}]\right) \Lambda_h\bigl[P_{n_s}^{(k,l)}\Psi_0\bigr](x,y)=\sum_{\substack{\gamma\in \N^{d_{\scalerel*{\parallel}{\perp}}}, \\|\gamma|\leq N}}  u_\gamma\left(y\right)  &\mathcal{M}_{\text{c}}\left(\kappa^{\text{c},(\text{ini})}\right)\Lambda_h\bigl[(\ \boldsymbol{\cdot}\ )^\gamma \Psi_0^{\text{c}}(\ \boldsymbol{\cdot}\ )\bigr](x)
			\\&+O_{L^2}(h^{N(\tau-6t_0\lambda_{\text{max}}\epsilon_s)}),
		\end{align*}
		with $\left\|\kappa^{\text{c},(\text{ini})} \right\|\leq Ch^{-\nu}$ for $\nu=\epsilon_s/3$.
	\end{Lemma}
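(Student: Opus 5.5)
The strategy is to make the symplectic matrix $\kappa\coloneq d_\rho[\kappa^{n_s,0}\Phi^{n_st_0}(\kappa_{\alpha_0})^{-1}]$ nearly block diagonal with respect to the splitting central $(x,\xi)$ / transverse $(y,\eta)$. Then the metaplectic operator $\mathcal{M}(\kappa)$ approximately factors as $\mathcal{M}_{\text{c}}(\kappa^{\text{c}})\otimes\mathcal{M}_{\text{hyp}}(\kappa^{\text{hyp}})$, and the excited Gaussian $P^{(k,l)}_{n_s}\Psi_0$ splits as a finite sum of tensor products of Hermite-type functions. Each resulting term is then a product $u_\gamma(y)\,\mathcal{M}_{\text{c}}(\kappa^{\text{c}})\Lambda_h[X^\gamma\Psi_0^{\text{c}}](x)$, which is exactly the claimed structure.

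\textbf{Step 1: comparison of linearizations.} I first compare $\kappa$ with the linearization along the base point trajectory, $\tilde\kappa\coloneq d_{\tilde\rho}[\kappa^{n_s,0}\Phi^{n_st_0}(\kappa_{\alpha_0})^{-1}]$. In the adapted charts of Definition \ref{coord}, $\tilde\kappa$ is a product of matrices of the form (\ref{dF0}), together with an initial chart change that is $\epsilon$-close to block diagonal by Definition \ref{K adapted}. Hence $\tilde\kappa=\mathrm{diag}(\kappa^{\text{c}},\kappa^{\text{hyp}})$ up to a bounded initial correction, where:
\begin{itemize}
\item $\|\kappa^{\text{c}}\|\le Ce^{n_st_0(\lambda^{\text{c}}+\epsilon_2)}$;
\item $\kappa^{\text{hyp}}=\mathrm{diag}(B_{n_s},D_{n_s})$, with $\|B_{n_s}^{-1}\|,\|D_{n_s}\|\le e^{-n_st_0(\nu^\perp_{\min}-\epsilon_2/3)}$.
\end{itemize}
By $3$-normal hyperbolicity, and since $\lambda^{\text{c}}\le\lambda_{\max}$, the bound on $\kappa^{\text{c}}$ gives $\|\kappa^{\text{c}}\|\le Ch^{-\nu}$ with $\nu=\epsilon_s/3$ (after absorbing $t_0$ into the normalisation of $\epsilon_s$). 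Lemma \ref{compare_linear} gives $\|\kappa-\tilde\kappa\|\le Ch^{\tau}e^{2n_st_0\lambda_{\max}}$. This quantity is a positive power of $h$ because $\epsilon_s<\tau/(6t_0\lambda_{\max})$.

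\textbf{Step 2: replacing $\kappa$ by $\tilde\kappa$ on the state.} I write $\kappa=\tilde\kappa(I+E)$ with $E=\tilde\kappa^{-1}(\kappa-\tilde\kappa)$, so that $\|E\|\le Ch^{\tau}e^{3n_st_0\lambda_{\max}}$. Since $\mathcal{M}(\tilde\kappa)$ is unitary, it suffices to bound $\|(\mathcal{M}(I+E)-I)\Lambda_h[P\Psi_0]\|_{L^2}$. Writing $\mathcal{M}(I+E)$ as the metaplectic propagator of a small quadratic Hamiltonian, Duhamel's formula combined with Lemma \ref{control_xy} (at scale $h=1$) bounds this by $C\|E\|\,N_\infty(P)$. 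Here $N_\infty(P)\le C\|d\Phi\|^{3k}$ by Proposition \ref{succ_1st}. Repeating the argument at higher order in the Taylor expansion in $E$ produces the error $O(h^{N(\tau-6t_0\lambda_{\max}\epsilon_s)})$; I will track exponents loosely and absorb losses into the $\epsilon_s$ margin.

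\textbf{Step 3: the tensor decomposition.} For block-diagonal $\tilde\kappa$, Proposition \ref{Metap_phi_0} gives a block-diagonal $\Gamma$. Expanding $P(X_x,X_y)=\sum_\gamma X_x^\gamma P_\gamma(X_y)$ and applying Proposition \ref{propag excited} in each block separately yields the factor $\mathcal{M}_{\text{c}}(\kappa^{\text{c}})\Lambda_h[X^\gamma\Psi_0^{\text{c}}]$ times
\[
u_\gamma(y)=\mathcal{M}_{\text{hyp}}(\kappa^{\text{hyp}})\Lambda_h[P_\gamma\Psi_0^{\text{hyp}}](y).
\]
Because $\kappa^{\text{hyp}}$ is block diagonal in $(y,\eta)$ with expansion $B_{n_s}$, we get $\Gamma_{\text{hyp}}=iB_{n_s}^{-T}B_{n_s}^{-1}$. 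Thus $u_\gamma$ is a polynomial times a Gaussian of width $\sqrt h\,\|B_{n_s}^{-1}\|^{-1}\ge h^{1/2-t_0(\nu^\perp_{\min}-\epsilon_2/3)\epsilon_s}=h^{\delta_{(\text{ini})}}$. Each $y$-derivative therefore costs at most $h^{-\delta_{(\text{ini})}}$, and the Gaussian decay gives the $\jp{y}^{-\infty}$ behaviour, so $u_\gamma\in S^{-\infty}_{\delta_{(\text{ini})}}$. The initial chart correction, which is only $\epsilon$-close to block diagonal, is handled by absorbing it into $\kappa_{\alpha_0}$, or equivalently into the first, $h$-independent factor. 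This is possible because that factor is bounded and fixed.

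\textbf{Main obstacle.} I expect the hardest part to be Step 2: controlling the off-diagonal error uniformly while the norms involved grow like negative powers of $h$. The issue is to ensure that the error remains small after multiplication by the polynomial coefficients of size $\|d\Phi\|^{3k}$.
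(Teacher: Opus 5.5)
Your proof is correct. On $K^\delta$ it coincides with the paper's argument: block structure of the linearized flow in the adapted charts, tensor factorization, and $S_{\delta_{(\text{ini})}}$ bounds read off from $\Gamma_{\text{hyp}}$. For $\rho\notin K^\delta$, however, you take a genuinely different and cleaner route.

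\textbf{How the two routes differ.} The paper perturbs at the level of the Siegel matrix. It writes $\Gamma_\rho=\mathrm{diag}(\Gamma_{\text{c}}(\tilde\rho),\Gamma_{\text{hyp}}(\tilde\rho))+E$ and Taylor-expands the factor $e^{\frac{i}{h}\langle (x,y),E(x,y)\rangle}$. It then re-expands the resulting polynomial in the central Hermite-type basis. Finally it bounds the remainder by an explicit Gaussian integral, after rescaling by the blocks of $\Im\Gamma^{1/2}$ and an $LDU$ factorization of $M=I+F$. Along the way it must also handle the non-block-diagonal $\Im(\Gamma_\rho)^{1/2}$ inside the Hermite polynomials and the determinant prefactor.

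You factor instead in the symplectic group, $\kappa=\tilde\kappa(I+E)$, so the perturbation acts \emph{first}, on the unsqueezed state $P\Psi_0$ at unit scale. There $\mathcal{M}_1(I+E)=e^{-iq^w}$ with $q$ quadratic of size $O(\|E\|)$. The Taylor terms $(q^w)^jP\Psi_0$ are again polynomials of degree $\deg P+2j$ times $\Psi_0$, so they feed directly into your Step 3. By unitarity the remainder is $O(\|E\|^N N_\infty(P))$, with no Gaussian bookkeeping at all. Your route even gives a better rate: $\|E\|\le Ch^{\tau-3t_0\lambda_{\max}\epsilon_s}$ up to $\epsilon_2$-losses, against the paper's $h^{\tau-6t_0\lambda_{\max}\epsilon_s}$.

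Putting $E$ on the right is what makes this work; on the left it would act on a state already squeezed by $\tilde\kappa$. It also disposes of the obstacle you anticipate: $N_\infty(P)$ only multiplies $\|E\|^N$, and is harmless once the prefactor $h^{k/2}$ of Proposition \ref{succ_1st} is taken into account (the paper does not track it either). What the paper's route offers in exchange is an explicit description of the true covariance $\Gamma_\rho$ as a small perturbation of a block-diagonal one. That is the approximate tensor structure referred to in Remark \ref{rem th 2}.

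\textbf{A step to repair.} You call the first factor only ``$\epsilon$-close to block diagonal'' and propose absorbing it into $\kappa_{\alpha_0}$. That is not allowed, since the initial state is fixed by the chart. Moreover, if this factor genuinely coupled central and transverse variables, the tensor factorization of Step 3 would break.

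What you need, and what is true, is that $T_0:=d_{\tilde\rho}\Phi^{(1)}$ is \emph{exactly} block diagonal for the central/transverse splitting. Both charts straighten $K^\delta$ exactly (item (1) of Definition \ref{K adapted}, item (2) of Definition \ref{coord}). Hence $T_0$ preserves $\{(x,0,\xi,0)\}$, and, being symplectic, also its symplectic orthogonal $\{(0,y,0,\eta)\}$.

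The $\epsilon$-closeness only concerns the transverse block $\begin{pmatrix} B & G\\ H & D\end{pmatrix}$, which is $h$-independent and simply produces a fixed $\Gamma_0$. The later factors $\mathrm{diag}(\mathcal{B},\mathcal{B}^{-T})$, with $\mathcal{B}$ the product of the unstable blocks, then act by dilation. So $\Gamma_{\text{hyp}}=\mathcal{B}^{-T}\Gamma_0\mathcal{B}^{-1}$, which is the paper's $D_{n_s-1}(H+iD)(B+iG)^{-1}B_{n_s-1}^{-1}$, rather than $iB_{n_s}^{-T}B_{n_s}^{-1}$. Your $S_{\delta_{(\text{ini})}}$ estimates go through unchanged.
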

	\begin{Remark}
		Here we write the result at a macroscopic scale as it is the one used to define the admissible states for our next method.
	\end{Remark}
	\begin{proof} 
		\begin{itemize}
			\item \textbf{Case where $\rho\in K^\delta$:}
			\medbreak
			We will make use of the special form of $d_\rho \bigl[\kappa^{i,0} \Phi^{it_0} \left(\kappa_{\alpha_0}\right)^{-1}\bigr]$, let us introduce the notation  
			\begin{equation*} 
				\boldsymbol{\Phi^{(i)}}\coloneq\kappa^{i,0} \Phi^{it_0} \left(\kappa_{\alpha_0}\right)^{-1}.
			\end{equation*}
			\medbreak
			The first step is to decompose $P_{n_s}^{(j,k,l)}$ into monomials: we need to understand for $\gamma\in \N^d$,
			\begin{equation*} 
				v_\gamma\coloneq\mathcal{M}\left(d_\rho \bigl[\kappa^{i,0} \Phi^{it_0} (\kappa_{\alpha_0})^{-1}\bigr]\right) \Lambda_h\bigl[c_{\gamma}X^\gamma \Psi_0(X)\bigr].
			\end{equation*}
			From lemma \ref{propag excited}, we know that it can be written as
			\begin{equation*} 
				v_\gamma(x,y)=h^{-d/4} \left|\det \Im(\Gamma) \right|^{1/4} h_\gamma\left(h^{-1/2}\Im(\Gamma)^{1/2}(x,y)\right) e^{\frac{i}{h}\langle (x,y), \Gamma (x,y)\rangle }.
			\end{equation*}
			We will now use properties of our charts to obtain information on $\Gamma$:
			\medbreak
			With our choice of coordinates in the first chart $\alpha,$ we see that we can write 
			\begin{equation} \label{block struct}
				d_\rho \Phi^{(1)} =\begin{pmatrix}
					A & 0 & E & 0 
					\\ 0 & B & 0 & G
					\\ F & 0 & C & 0
					\\ 0 & H & 0 & D
				\end{pmatrix}.
			\end{equation}
			Indeed, from the properties of the coordinates, we know that the space $E_{\scalerel*{\parallel}{\perp}}\coloneq\{(x,0,\xi,0),$
			$(x,\xi)\in \R^{2d_{\scalerel*{\parallel}{\perp}}}\}\subset T_\rho \R^{2d}$ is preserved by $d_\rho \Phi^{(1)}$ ($K^\delta$ is sent onto itself). But we also know that $d_\rho \Phi^{(1)}$ is symplectic, hence verifies:
			\begin{equation*} 
				d_\rho \Phi^{(1)}(E_{\scalerel*{\parallel}{\perp}}^{\perp_\omega})=(d_\rho \Phi^{(1)}(E_{\scalerel*{\parallel}{\perp}}))^{\perp_\omega},
			\end{equation*}
			but as $E_{\scalerel*{\parallel}{\perp}}^{\perp_\omega}=\{ (0,y,0,\eta),(y,\eta)\in \R^{2d_\perp}\}\subset T_\rho \R^{2d}$ this implies the block-diagonal structure of (\ref{block struct}).
			\medbreak
			Then, as the other coordinates are adapted to the point $\rho$, we compute (using (\ref{dF0})):
			\begin{equation*}
				d_\rho \Phi^{(n_s)}=\begin{pmatrix}
					A_{n_s} & 0 & E_{n_s} & 0
					\\ 0 & B_{n_s} & 0 & B_{n_s-1}G 
					\\ F_{n_s} & 0 & C_{n_s} & 0
					\\ 0 & D_{n_s-1}H & 0 & D_{n_s}
				\end{pmatrix}.
			\end{equation*}
			Hence, if we compute the associated $\Gamma,$ we obtain:
			\begin{align*}
				\Gamma&=\begin{pmatrix}
					\left(F_{n_s}+iC_{n_s}\right)\left(A_{n_s}+iE_{n_s}\right)^{-1} & 0
					\\ 0 & D_{n_s-1}(H+iD)(B+iG)^{-1}B_{n_s-1}^{-1}
				\end{pmatrix}
				\\&=\begin{pmatrix}
					\Gamma_\text{c} & 0
					\\ 0 & \Gamma_\text{hyp}
				\end{pmatrix},
			\end{align*}
			which yields:
			\begin{equation*} 
				v_\gamma(x,y)=h^{-d/4} h_\gamma(x,y) e^{\frac{i}{h}\langle y, \Gamma_\text{hyp} y\rangle } e^{\frac{i}{h}\langle x, \Gamma_\text{c} x\rangle }.
			\end{equation*}
			From the properties of the flow, we have some specific information on the quantities involved there:
			\begin{itemize}
				\item $h_\gamma$ has separated variables (i.e. is some $h_\gamma^x(x)h_\gamma^y(y)$) because $X^\gamma$ does and $d_\rho F^{(n_s)}$ also have the tensorial product structure.
				\item $\left|\Gamma_\text{hyp}\right|\leq C e^{-2n_st_0 \nu_{\text{min}}^\perp}=Ch^{\nu_{\text{min}}^\perp t_0 \epsilon_s}$ thanks to the definition of $t_0$ and $n_s$ ($G$ and $H$ does not have any influence on the estimate since they are only one term in a infinite product).
				\item we recognize in $\Gamma_\text{c}$ the $\Gamma$ coming from the parallel metaplectic operator $\mathcal{M}_c(\left(\kappa^\text{c}\right)$, which is independent of $y$.
			\end{itemize}
			Combining those facts, we see that we can write 
			\begin{align*}
				v&_\gamma(x,y)= 
				\\& \underbrace{h^{-d_\perp/4}\left|\det \Im(\Gamma_\text{hyp}) \right|^{1/4} h_{\gamma}^{\text{hyp}}\left(\Im(\Gamma_\text{hyp})^{1/2}h^{-1/2} y\right) e^{\frac{i}{h}\langle y, \Gamma_\text{hyp} y \rangle }}_{u_{\gamma_x}(y)} h^{-d_{\scalerel*{\parallel}{\perp}}/4}\mathcal{M}_{\text{c}} (\kappa)\Lambda_h\bigl[x^{\gamma_x} \Psi_0\bigr](x),
			\end{align*}
			where $h_{\gamma}^{\text{hyp}}$ is a polynomial of degree $\gamma_y$ and coefficients uniformly bounded with respect to $h$.
			\medbreak
			To conclude the proof, we have to show that $u_{\gamma_x}\in S^{-\infty}_{\delta_{(\text{ini})}}$. Notice first that by changing the variable, we have
			\begin{equation*} 
				\|u_\gamma\|_{\infty}\leq C \text{ independent of } h.
			\end{equation*}
			Then, notice that differentiating in a $y$ variable (say the $i$-th) has two effects: when the derivative hits $h_{\gamma}^{\text{hyp}}$, we obtain 
			\begin{equation*}
				h^{-d_{\scalerel*{\parallel}{\perp}}/4}\left|\det \Im(\Gamma_\text{hyp}) \right|^{1/4} \langle \nabla h_{\gamma}^{\text{hyp}}\left(\Im(\Gamma_\text{hyp})^{1/2}h^{-1/2} y\right), h^{-1/2}\Im(\Gamma_\text{hyp})^{1/2} e_i\rangle e^{\frac{i}{h}\langle y, \Gamma_\text{hyp} y \rangle},
			\end{equation*}
			hence a bound with $Ch^{-1/2}|\Im(\Gamma_\text{hyp})^{1/2}|$; otherwise the derivative can hit the Gaussian which adds a $2\langle y, \Gamma_\text{hyp} e_i\rangle/h$ factor, after change of variable it can be bounded by $Ch^{-1/2}|\Gamma_\text{hyp}\Im(\Gamma_\text{hyp})^{-1/2}|$.
			\medbreak
			As a consequence, using (\ref{GammaImGamma}) and bounds on the matrix coefficients, we obtain that
			\begin{equation*} 
				\|\partial^\alpha u_\gamma\|_{\infty}\leq C h^{-|\alpha|(1/2-(\nu_{\text{min}}^\perp-\epsilon_2/3) t_0 \epsilon_s)}=C h^{-|\alpha|\delta_{(\text{ini})}}.
			\end{equation*}

			\item \textbf{General case, $\rho$ in a $h^{\tau}$ neighborhood of $K^\delta$:}
			\medbreak
			In the general case, $\rho$ is at distance at most $h^{\tau}$ from a point $\tilde{\rho}$ in $K^\delta$ for which the charts are adapted. Just as in the previous case, we use lemma \ref{propag excited} to write
			\begin{equation*} 
				v_\gamma(x,y)=h^{-d/4} \left|\det \Im(\Gamma) \right|^{1/4} h_\gamma\left(h^{-1/2}\Im(\Gamma)^{1/2}(x,y)\right) e^{\frac{i}{h}\langle (x,y), \Gamma (x,y)\rangle },
			\end{equation*}
			and study what can be said about $\Gamma$.
			\medbreak
			At the iterates of $\tilde{\rho}$, for which the charts are tailored, we get as the previous case:
			\begin{equation*} 
				d \Phi^{(n_s)}_{\tilde{\rho}}=\begin{pmatrix}
					A_{n_s} & 0 & E_{n_s} & 0
					\\ 0 & B_{n_s} & 0 & B_{n_s-1}G 
					\\ F_{n_s} & 0 & C_{n_s} & 0
					\\ 0 & D_{n_s-1}H & 0 & D_{n_s},
				\end{pmatrix},
			\end{equation*}
			then using the comparison of the linear dynamics established in the classical dynamics lemma \ref{compare_linear}
			\begin{equation*} 
				\left\|d_{\tilde{\rho}}\Phi^{(n_s)}-d_\rho \Phi^{(n_s)}\right\|= O\left(h^{\tau-2\epsilon_st_0\lambda_{\text{max}}}\right).
			\end{equation*}
			\medbreak
			Hence, if we compute the $\Gamma$ associated with $d_\rho \Phi^{(n)}$ we obtain:
			\begin{equation*} 
				\Gamma_\rho= \begin{pmatrix}
					\Gamma_\text{c}(\tilde{\rho})+ E_1 & E_2
					\\ E_2^T & \Gamma_\text{hyp}(\tilde{\rho})+E_3
				\end{pmatrix}\coloneq\begin{pmatrix} \Gamma_\text{c}(\tilde{\rho}) &0 \\ 0 & \Gamma_\text{hyp}(\tilde{\rho}) \end{pmatrix} +E,
			\end{equation*}
			with $\|E_p\|\leq  C h^{\tau-4t_0\lambda_{\text{max}} \epsilon_s}$.
			\medbreak
			As a consequence, we can write that
			\begin{equation*}
				v_\gamma(x,y)=h^{-d/4} \left|\det \Im(\Gamma) \right|^{1/4} h_\gamma\left(h^{-1/2}\Im(\Gamma)^{1/2}(x,y)\right) e^{\frac{i}{h}\langle x, \Gamma_\text{c}(\tilde{\rho}) x\rangle} e^{\frac{i}{h}\langle y, \Gamma_\text{hyp}(\tilde{\rho}) y\rangle} e^{\frac{i}{h}\langle (x,y), E (x,y)\rangle }.
			\end{equation*}
			
			\medbreak
			As $E$ is small, we should do a Taylor expansion of the $e^{\frac{i}{h}\langle (x,y), E (x,y)\rangle }$ factor to have a description that fits in the framework of lemma \ref{1st_methodK}. Let us write
			\begin{equation} \label{Dl exp E}
				e^{\frac{i}{h}\langle (x,y), E (x,y)\rangle}=\sum_{k=0}^N \frac{(i\langle (x,y), E (x,y)\rangle)^k}{h^{k}k!}+R_N(x,y),
			\end{equation}
			with 
			\begin{equation*} 
				|R_N|\leq \frac{|\langle (x,y), E (x,y)\rangle |^N}{h^NN!} \exp\left(-\frac{\langle (x,y), \Im E (x,y)\rangle}{h}\right).
			\end{equation*}
			\textbf{Reordering the terms to obtain an expansion as in lemma \ref{1st_methodK}:}
			\medbreak
			Let us explain how to reorder the factors to obtain the expansion of lemma \ref{1st_methodK}. For now, we have the following:
			\begin{equation}\label{Q+lot}
				v_\gamma(x,y)=h^{-d/4} \left|\det \Im(\Gamma) \right|^{1/4} Q(x,y)e^{\frac{i}{h}\langle y, \Gamma_\text{hyp}(\tilde{\rho}) y\rangle}e^{\frac{i}{h}\langle x, \Gamma_\text{c}(\tilde{\rho}) x\rangle} + \text{l.o.t.},
			\end{equation}
			with $Q$ a polynomial in $(x,y)$.
			\medbreak 
			From lemma \ref{propag excited}, we know that each $\tilde{\gamma}_x\in \N^{d_{\scalerel*{\parallel}{\perp}}},$
			\begin{equation*}
				\mathcal{M}_{\text{c}}\left(\kappa^{\text{c},(\text{ini})}(\tilde{\rho})\right)\Lambda_h\bigl[(\ \boldsymbol{\cdot}\ )^\gamma \Psi_0^{\text{c}}(\ \boldsymbol{\cdot}\ )\bigr](x)=h^{-d_\perp/4}\left|\det \Im(\Gamma_\text{c}) \right|^{1/4} h_{\tilde{\gamma}_x}\left(h^{-1/2}\Im(\Gamma_{\text{c}})^{1/2}x\right) e^{\frac{i}{h}\langle x, \Gamma_{\text{c}} x\rangle },
			\end{equation*}
			where $h_{\tilde{\gamma}_x}$ is of degree $\tilde{\gamma}_x$.
			\medbreak
			We notice that the $(h_{\tilde{\gamma}_x})_{\tilde{\gamma}_x}$ form a basis of $\mathbb{C}[X_1,\dots,X_{d_{\scalerel*{\parallel}{\perp}}}]$. From this observation, we see that we can decompose 
			\begin{equation*} 
				Q(x,y)\in C[X_1,\dots,X_{d_{\scalerel*{\parallel}{\perp}}},Y_1,\dots,Y_{d_\perp}]\simeq C[Y_1,\dots,Y_{d_\perp}][X_1,\dots,X_{d_{\scalerel*{\parallel}{\perp}}}]
			\end{equation*} 
			with this basis:
			\begin{equation*}
				Q(x,y)=\sum_{|\tilde{\gamma}_x|\leq \deg_xQ} Q_{\tilde{\gamma}_x}(y)h_{\tilde{\gamma}_x}(x).
			\end{equation*}
			Plugging this in (\ref{Q+lot}), gives a description as in lemma \ref{1st_methodK} provided we can show that for every $\tilde{\gamma}_x$, $u_{\tilde{\gamma}_x}\coloneq Q_{\tilde{\gamma}_x}e^{\frac{i}{h}\langle y, \Gamma_\text{hyp}(\tilde{\rho}) y\rangle}$ is in $S^{-\infty}_{\delta_{(\text{ini})}}$ and that the remainder is indeed small.
			\medbreak
			\textbf{The $S_\delta$ estimate:}
			\medbreak
			Let us start by the $S^{-\infty}_{\delta_{(\text{ini})}}$ estimate. We start by doing a change of variables 
			\begin{align}\label{chg var}
				\begin{split}
					x^{\text{new}}&=\Im \Gamma_{\text{c}}(\tilde{\rho})^{1/2}x
					\\ y^{\text{new}}&=\Im \Gamma_{\text{hyp}}(\tilde{\rho})^{1/2}y.
				\end{split}
			\end{align} 
			We can see that the processes of doing this change of variable and doing the decomposition in polynomial as above are commutative. But by doing this change of variables, we are led to consider
			\begin{equation*}
				v_\gamma(\Im \Gamma_{\text{c}}(\tilde{\rho})^{-1/2}x^{\text{new}}, \Im \Gamma_{\text{hyp}}(\tilde{\rho})^{-1/2}y^{\text{new}}).
			\end{equation*}
			Notice that this expression involves the coefficients of two new matrices:
			\begin{itemize} 
				\item A new matrix $M$ (or more precisely its square root) that appears in the polynomials $h_\gamma$, it verifies
				\begin{equation*}
					\begin{pmatrix}
						\Im \Gamma_\text{c}+\Im E_1 & \Im E_2
						\\ \Im E_2^T & \Im \Gamma_\text{hyp} +\Im E_3
					\end{pmatrix} = \begin{pmatrix} \Im \Gamma_\text{c}^{1/2} & 0
						\\ 0 & \Im \Gamma_\text{hyp}^{1/2}\end{pmatrix}M
					\begin{pmatrix} \Im \Gamma_\text{c}^{1/2} & 0
						\\ 0 & \Im \Gamma_\text{hyp}^{1/2}\end{pmatrix}.
				\end{equation*}
				We can compute that 
				\begin{equation}\label{Fi}
					M\coloneq\begin{pmatrix}
						I_{d_{\scalerel*{\parallel}{\perp}}}+F_1 & F_2
						\\ F_2^T & I_{d\perp}+F_3
					\end{pmatrix},
				\end{equation}
				with $F_i=\Im \Gamma_\text{hyp}^{-1/2} \Im E_i \Im \Gamma_\text{c}^{-1/2}$ and that $\|F_i\|\leq C h^{\tau-6t_0\lambda_{\text{max}} \epsilon_s}$. Hence this matrix is bounded independently of $h$.
				\item And a very similar matrix that comes from the expansion (\ref{Dl exp E}):
				\begin{equation*}
					E_\Gamma\coloneq\begin{pmatrix} \Im \Gamma_\text{c}^{-1/2} & 0
						\\ 0 & \Im \Gamma_\text{hyp}^{-1/2}\end{pmatrix}E
					\begin{pmatrix} \Im \Gamma_\text{c}^{-1/2} & 0
						\\ 0 & \Im \Gamma_\text{hyp}^{-1/2}\end{pmatrix},
				\end{equation*}
				which is also bounded independently of $h$.
			\end{itemize}
			Consequently, we can conclude by undoing the change of variable that 
			\begin{equation*} 
				\|\partial^{\alpha} u_{\tilde{\gamma}_x}\|_{\infty}\leq C(h^{-1/2}|\Gamma(\tilde{\rho})|)^{|\alpha|}\leq C h^{-|\alpha|\delta_{(\text{ini})}}.
			\end{equation*}
			\textbf{Control of the remainder:}
			\medbreak
			We are now left with showing that $R_N$ can indeed be considered as a remainder by estimating
			\begin{equation}\label{mathcal N}
				\mathcal{N}\coloneq\|h^{-d/4}|\det \Gamma_\rho|^{1/4} R_N e^{\frac{i}{h}\langle x ,\Gamma_\text{c}(\tilde{\rho}) x \rangle } e^{-\frac{1}{h}\langle y ,\Gamma_\text{c}(\tilde{\rho}) y \rangle }\|_{L^2(\d x\d y)},
			\end{equation}
			the general estimate where we add polynomials in $x,y$ is similar.
			\medbreak
			We start doing the same change of variable (\ref{chg var}) in the integral defining the norm $\mathcal{N}$ in (\ref{mathcal N}) which gives:
			\begin{equation*} 
				\mathcal{N}^2\leq C\int |\det M|^{1/2} |\langle (x,y), (M-I) (x,y)\rangle |^{2N}e^{- (x,y)\cdot M(x,y)} \d x \d y.
			\end{equation*}
			We then reduce the quadratic form associated with $M$ by noticing that 
			\begin{multline*}
				M =
				\\+ \begin{pmatrix} I_{d_{\scalerel*{\parallel}{\perp}}} & 0 \\ F_2^T(I_{d_{\scalerel*{\parallel}{\perp}}}+F_1)^{-1} & I_{\perp} \end{pmatrix} \begin{pmatrix} I_{d_{\scalerel*{\parallel}{\perp}}}+F_1 & 0 \\ 0 & I_{d_\perp}+F_4-F_2^T(I_{d_{\scalerel*{\parallel}{\perp}}}+F_1)^{-1}F_2 \end{pmatrix} \begin{pmatrix} I_{d_{\scalerel*{\parallel}{\perp}}} & (I_{d_{\scalerel*{\parallel}{\perp}}}+F_1)^{-1}F_2 \\ 0 & I_{d_\perp} \end{pmatrix},
			\end{multline*}
			and as $I_{d_\perp}+F_1$, $I_{d_\perp}+F_4-F_2^T(I_{d_{\scalerel*{\parallel}{\perp}}}+F_1)^{-1}F_2$ are invertible, 
			\begin{equation*} 
				\|(I_{d_\perp}+F_1)^{-1}\|,\|I_{d_\perp}+F_4-F_2^T(I_{d_{\scalerel*{\parallel}{\perp}}}+F_1)^{-1}F_2\|\leq 2,
			\end{equation*}
			for $h$ small enough since $\|F_i\|\leq C h^{\tau-6t_0\lambda_{\text{max}} \epsilon_s}$ and $\upsilon\coloneq\tau-6t_0\lambda_{\text{max}} \epsilon_s>0$ by assumption on $\epsilon_s$.
			\medbreak
			After a new change of variables given by
			\begin{align*}
				\begin{pmatrix} x^{\text{new}} \\ y^{\text{new}} \end{pmatrix} &=\begin{pmatrix} \left[I_{d_{\scalerel*{\parallel}{\perp}}}+F_1\right]^{1/2} & 0 \\ 0 & \left[I_{d_\perp}+F_4-F_2^T(I_{d_{\scalerel*{\parallel}{\perp}}}+F_1)^{-1}F_2\right]^{1/2} \end{pmatrix} \begin{pmatrix} I_{d_{\scalerel*{\parallel}{\perp}}} & (I_{d_{\scalerel*{\parallel}{\perp}}}+F_1)^{-1}F_2 \\ 0 & I_{d_\perp} \end{pmatrix} \begin{pmatrix} x^{\text{old}} \\ y^{\text{old}}\end{pmatrix}
				\\&\coloneq P\begin{pmatrix} x^{\text{old}} \\ y^{\text{old}}\end{pmatrix},
			\end{align*}
			we obtain (using that the $\|P^{-1}\|\leq C$):
			\begin{align*} 
				\mathcal{N}^2&\leq C\int |\langle (x,y), P^{-1}E_\Gamma P^{-1}(x,y)\rangle |^{2N} e^{-\|x\|^2-\|y\|^2}\d x \d y
				\\&\leq C \|E_\Gamma\|^{2N}
				\\&\leq C h^{2N\upsilon}.
			\end{align*}
		\end{itemize}
	\end{proof}
	\subsection{Successive application of expansion of the integrand+stationary phase}\label{app_2nd}
	We will follow the same steps to described the iterated state as in the previous section.
	\medbreak
	From the previous part, we see that each term of the expansion can be written as 
	\begin{equation*} 
		\hat{T}_{\mathcal{I}_{n_s}} \Lambda_h u^{(\text{ini})},
	\end{equation*}
	with $u^{(\text{ini})}\in S_{\delta,\nu}$, $\delta=\delta_{(\text{ini})}=1/2-t_0\nu_{\text{min}}^\perp \epsilon_s$, $\nu=\epsilon_s/3$ and 
	\begin{equation*} 
		\mathcal{I}^{n_s}=\bigl\{\left(q^{n_s}_x,y^{n_s},p^{n_s}_x,p^{n_s}_y\right), y^{n_s}\in D_{\epsilon_1} \bigr\}.
	\end{equation*}
	We have used the fact that for this specific $\mathcal{I}^{n_s}$, which is a translation of $\mathcal{I}_m$, we have 
	\begin{equation*} 
		\hat{T}_{\mathcal{I}_{n_s}}=\hat{T}(q^{n_s}_x,0,p^{n_s}_x,p^{n_s}_y),
	\end{equation*}
	and the $S_\delta$ class is invariant by the translation by $q^{n_s}_y$ in $y$.
	\medbreak
	Let us write this $u^{(\text{ini})}$ as 
	\begin{equation}\label{u_ini}
		u^{(\text{ini})}=\sum_{k+l<2N}h^{k/2+l/2}\mathcal{M}_{\text{c},1}(\kappa^{\text{c}(\text{ini})})[Q_{n_s}^{(k,l)}(y,\boldsymbol{\cdot}) \Psi_0](x),
	\end{equation}
	with $Q_{n_s}^{(j,k,l)}(y,x)$ a polynomial in the $x$ variable of degree $3k+l$ whose coefficients are $S_\delta\left(\jp{y}^{-\infty}\right)$ in $y$ so that each term of this expansion is in $S_{\delta,\nu}$ (we have simply reconstituted the expansion by summing the result given by lemma \ref{1st_methodK} over $k$ and $l$).
	\medbreak
	We define the sequence of manifolds $\mathcal{I}_p\coloneq\kappa^{(p)}\Phi^{(p-n_s)t_0}(\kappa^{(n_s)})^{-1}(\mathcal{I}^{n_s})$ (i.e. the iterates of $\mathcal{I}_{n_s}$ read in the charts of $\tilde{\rho}_{p}$). The hypothesis on these manifolds required for the method \ref{section2nd} are verified in the appendix \ref{L_j}, those estimates are similar to the Inclination Lemma which is a result of hyperbolic dynamics stating that manifolds which are transverse enough to the stable manifold remains in the same unstable cone.
	\medbreak
	Let us introduce the coordinates of $\mathcal{I}_p$ in the chart:
	\begin{equation}\label{coord Ip}
		\mathcal{I}_p=\{(\overline{x^p}(y),y,\overline{\xi^p}(y),\overline{\eta^p}(y), y\in D_{\epsilon_1})\}.
	\end{equation}
	\medbreak
	We note that each $T_p$ defined in (\ref{def Tk}) is a Fourier integral operator as described in the beginning of section \ref{context}, see (\ref{propagator}).
	\medbreak
	For each $p\in \llbracket n_s+1,n \rrbracket$, we apply the main result of section \ref{section2nd}, proposition \ref{Qjkl}, to $T_p$ for an initial state given by an element of $S_{\delta,\nu}$ defined as 
	\begin{equation*} 
		u= \mathcal{M}_{\text{c},1}\left(\kappa^{\text{c},(p-1)}(y)\right)\Bigl[P(y,x)\Psi_0\Bigr]\in S_{\delta,\nu},
	\end{equation*}
	with 
	\begin{equation}\label{kappa c p}
		\kappa^{\text{c},(p-1)}(y^{p-1})\coloneq d^c \mathcal{F}_{p-1}\circ\dots\circ d^c\mathcal{F}_{n_s+1} \text{ where }\mathcal{F}_k\coloneq\left(\kappa_{\mathcal{I}_k}\right)^{-1} \circ \kappa_{\tilde{\rho_k}}\circ \Phi^{t_0}\circ \left(\kappa_{\tilde{\rho_{k-1}}}\right)^{-1} \circ\kappa_{\mathcal{I}_{k-1}},
	\end{equation}
	in the notations of section \ref{section2nd} and $\kappa^{\text{c},(\text{ini})}$ defined in lemma \ref{1st_methodK}.
	\medbreak
	This proposition \ref{Qjkl} gives us a new family of polynomials in $x$, $(Q_p^{(k,l)}(P))$ verifying:
	\begin{itemize}
		\item For every $N\in \N$,
		\begin{equation*} 
			\hat{T}_{\mathcal{I}_{p}}^* T_p \hat{T}_{\mathcal{I}_{p-1}} u\left(x^{p},y^{p}\right)=\mathcal{M}\left(d^c\mathcal{F}_p\right)\Bigl[\sum_{k,l} Q_p^{(k,l)}(P) \Psi_0\Bigr]+R_N,
		\end{equation*}
		with 
		\begin{equation}\label{norm R_N}
			\|R_N\|_{L^2}\leq C h^N \sup_{2l+k=2N}\sup_{\substack{|\alpha_1|+|\alpha_2|\leq 2l , \\ |\beta_1|+|\beta_2|=k}} \left\|N_\infty\left(\partial^{\alpha_1+\beta_1}_y P_y\right)\right\|_{L^2(\d y)} \|\kappa^\text{c}\|^{|\beta_1|+2|\alpha_2|+3|\beta_2|+d_{\scalerel*{\parallel}{\perp}}}.
		\end{equation}
		\item $y^{p-1}(y^p)$ defined by the following between the isotropic manifolds $\mathcal{I}_p$ and $\mathcal{I}_{p-1}$
		\begin{equation*}
			F(\overline{x^{p-1}}(y^{p-1}(y^p)),y^{p-1}(y^p),\overline{\xi^{p-1}}(y^{p-1}(y^p)),\overline{\eta^{p-1}}(y^{p-1}(y^p)))=(\overline{x^p}(y^p),y^p,\overline{\xi^p}(y^p),\overline{\eta^p}(y^p)).
		\end{equation*}
		\item $Q_p^{(0,0,0)}(P)= =\left|\det \partial_{y^p} y^{p-1}(y^p)\right|^{1/2} P_{y^{p-1}(y^p)}= \alpha_p(y^p) P_{y^{p-1}(y^p)}$ multiplies by a constant in $x^i$.
		\item $Q_p^{(k,l)}$ is a polynomial of degree $\deg P+3k+4l$ in $x$, the map $P\mapsto Q^{(k,l)}$ is linear, with coefficients controlled by:
		\begin{equation*} 
			\left\|N_\infty\left(Q^{(k,l)}(P)\right)(y)\right\|_{L^2} \leq C\sup_{\substack{|\alpha_1|+|\alpha_2|\leq 2l , \\ |\beta_1|+|\beta_2|=k}} \left\|N_\infty\left(\partial^{\alpha_1+\beta_1}_y P_y\right)\right\|_{L^2(\d y)} \left\|\kappa^\text{c}\right\|^{|\beta_1|+2|\alpha_2|+3|\beta_2|},
		\end{equation*}
		where the $N_\infty$ means the largest coefficient of this polynomial in $x$.
	\end{itemize}
	Then we define recursively: $P_0^{(0,0,0)}=P_\text{ini}$ a polynomial appearing in the expansion (\ref{u_ini}) of $u_{\text{ini}}$ and $P_0^{(k,l)}=0$ otherwise,
	\begin{equation}\label{def poly iteres}
		P_p^{(k,l)}=\sum_{k_1+k_2=k} \sum_{l_1+l_2=l} Q_p^{(k_1,l_1)}(P_{p-1}^{(k_2,_2l)}),
	\end{equation}
	and the remainder term 
	\begin{equation}\label{def restes}
		r_N^{(p)}=T_p\left(r_N^{(p-1)}\right)+\sum_{k+l<2N} h^{k/2+l} R_p^{(2N-k-l)}\left(P_{p-1}^{(k,l)}\right).
	\end{equation}
	
	\begin{Lemma}
		With these notations, we have:
		\begin{equation*} 
			u_p=T_{n_s+p}\dots T_{n_s+1}u_0=\sum_{k+l<2N} h^{k/2+l} e^{\frac{i}{h} \phi_p(\sqrt{h} y)}u_p^{(k,l)}+r^{(p)}_N,
		\end{equation*}
		where 
		\begin{equation*} 
			u_p^{(k,l)}=\hat{T}\left(\rho^p\right)\mathcal{M}_\text{c} \left(\kappa^{\text{c},(p)}(y)\circ \kappa^{\text{c},(\text{ini})}\right)\Lambda_{h,x}\left[P_p^{(k,l)}(y,x)\Psi_0\right].
		\end{equation*}
	\end{Lemma}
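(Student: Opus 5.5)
We argue by induction on $p$, starting from $p=0$, where the statement is exactly the expansion (\ref{u_ini}) of $u^{(\text{ini})}$ given by lemma \ref{1st_methodK}. At that step we take $P_0^{(0,0)}=P_{\text{ini}}$, the other $P_0^{(k,l)}$ equal to $0$, $\phi_0=0$, $r_N^{(0)}=0$, and we read $\hat{T}(\rho^{n_s})$ as $\hat{T}_{\mathcal{I}_{n_s}}$. By linearity of $T_p$ and of the recursions (\ref{def poly iteres}) and (\ref{def restes}), it suffices to treat a single term of the sum.

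For the inductive step, assume the formula holds at step $p-1$ and apply $T_{n_s+p}$. We write $T_{n_s+p}=\hat{T}_{\mathcal{I}_p}\bigl(\hat{T}_{\mathcal{I}_p}^*T_{n_s+p}\hat{T}_{\mathcal{I}_{p-1}}\bigr)\hat{T}_{\mathcal{I}_{p-1}}^*$, which is legitimate because each $\hat{T}_{\mathcal{I}_j}$ is unitary, and then absorb $\hat{T}_{\mathcal{I}_{p-1}}^*$ into the state of step $p-1$. Each term $h^{k/2+l}\,\mathcal{M}_{\text{c}}\bigl(\kappa^{\text{c},(p-1)}\circ\kappa^{\text{c},(\text{ini})}\bigr)\Lambda_{h,x}\bigl[P_{p-1}^{(k,l)}\Psi_0\bigr]$ belongs to $S_{\delta,\nu}$, by the last point of proposition \ref{Qjkl} applied at the previous step. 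The hypotheses of that proposition are available: the $C^1$ control (\ref{I proche}) on $\mathcal{I}_{p-1}$ and $\mathcal{I}_p$ comes from appendix \ref{L_j}, and $\nu<1/6$, $\delta+\nu<1/2$ follow from the choice of $\epsilon_s$.

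Proposition \ref{Qjkl} then gives
\begin{equation*}
\hat{T}_{\mathcal{I}_p}^*T_{n_s+p}\hat{T}_{\mathcal{I}_{p-1}}[\,\cdot\,]=\mathcal{M}_{\text{c}}\bigl(d^c\mathcal{F}_p\circ\kappa^{\text{c},(p-1)}\circ\kappa^{\text{c},(\text{ini})}\bigr)\Lambda_{h,x}\Bigl[\sum_{k',l'}h^{k'/2+l'}Q_p^{(k',l')}\bigl(P_{p-1}^{(k,l)}\bigr)\Psi_0\Bigr]+R_p^{(2N-k-l)}.
\end{equation*}
By definition (\ref{kappa c p}), $d^c\mathcal{F}_p\circ\kappa^{\text{c},(p-1)}=\kappa^{\text{c},(p)}$. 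There is one subtlety: $\kappa^{\text{c},(p-1)}$ is evaluated at $y^{p-1}(y^p)$, and this is precisely the composition that the definition builds in. Collecting powers of $h$ and reindexing produces (\ref{def poly iteres}). The global oscillating factor $e^{i\psi_0/h}$ appears at each step. This factor was shown to be independent of $y^1$, so it is a constant phase; we accumulate these constants into $\phi_p$, or drop them, since states are considered modulo $e^{i\theta/h}$. The remainder satisfies the recursion (\ref{def restes}), because the old remainder $r_N^{(p-1)}$ is simply propagated by $T_{n_s+p}$. Finally, $\hat{T}_{\mathcal{I}_p}$ is put back in front, and this plays the role of $\hat{T}(\rho^p)$ in the statement.

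The main obstacle is closing the induction. The output of proposition \ref{Qjkl} has to lie again in $S_{\delta,\nu}$ with \emph{uniform} constants. The stated estimates of that proposition involve $\|N_\infty(\partial_y^\alpha P_y)\|$ and derivative bounds on $\kappa^{\text{c}}$, and these constants would a priori grow geometrically with $p\sim|\log h|$. To control this, we use the structure $Q^{(0,0)}(P)=\alpha_p\,P_{y^{p-1}(y^p)}$. Composing the maps $y^{p-1}(y^p)$ contracts, by hyperbolicity along $\mathcal{I}_p$, with rate $e^{-\nu_{\min}^\perp t_0}$. Consequently the $y$-derivatives of the transported coefficients improve, and this is what yields the $\delta_t$ of point \ref{pt 3}. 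Similarly, estimate (\ref{GammadGamma}) for $\kappa^{\text{c},(p)}$ follows from lemma \ref{compare dcF} together with the bounds $\|A\|,\|C\|,\ldots\leq\mu$. For the present lemma, though, only the algebraic identity is needed. We therefore verify the algebra carefully and postpone the uniform bounds to the subsequent estimates on $P_p^{(k,l)}$ and $r_N^{(p)}$.
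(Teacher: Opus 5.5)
Your induction is exactly how the paper obtains this lemma, which it states without further argument as a consequence of the recursive definitions (\ref{def poly iteres}) and (\ref{def restes}): you conjugate $T_{n_s+p}$ by the unitary operators $\hat{T}_{\mathcal{I}_p}$ and $\hat{T}_{\mathcal{I}_{p-1}}$, apply proposition \ref{Qjkl} termwise, and match the output with those recursions, reading $\hat{T}(\rho^p)$ as $\hat{T}_{\mathcal{I}_p}$ and letting $\phi_p$ absorb the constant phases $e^{i\psi_0/h}$. The one inaccuracy is a side remark that only affects the postponed estimates, not the identity: beyond the first step, the conditions $\nu<1/6$ and $\delta+\nu<1/2$ must hold for the drifting indices $\nu_p,\delta_p$ of lemma \ref{Ninf_poly}, and they follow from the time restriction and the spectral gap in (\ref{H1'}), not from the choice of $\epsilon_s$.
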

	We will now estimate the different quantities involved in this lemma.
	\medbreak
	\subsubsection{The analysis of $\kappa^{\text{c},(p)}$}\label{estim_metap}
	Let us start with $\kappa^{\text{c},(p)}$ and the following lemma
	\begin{Lemma}
		There exists a $C>0$ independent of $h$ such that for any $p\in \llbracket n_s, n\rrbracket$,
		\begin{equation*}
			\|\kappa^{\text{c},(p)}\|_{L^\infty(\d y)}\leq C e^{p(\lambda_c+\epsilon_2/3)t_0}.
		\end{equation*}
	\end{Lemma}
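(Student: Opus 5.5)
The plan is to bound the product $\kappa^{\text{c},(p)}(y)=d^c\mathcal{F}_p\circ\dots\circ d^c\mathcal{F}_{n_s+1}$ (as defined in (\ref{kappa c p})) factor by factor, using Lemma \ref{compare dcF}. That lemma compares each $d^c\mathcal{F}_k(y)$ with the central block $\begin{pmatrix} A_k & E_k\\ F_k & C_k\end{pmatrix}$ of $d_0F_k$ from (\ref{dF0}). The error is $O\bigl(\|(\overline{x^{k-1}}(y^{k-1}),y^{k-1},\overline{\xi^{k-1}}(y^{k-1}),\overline{\eta^{k-1}}(y^{k-1}))\|\bigr)$, evaluated along the backward orbit $y^{k-1}=y^{k-1}(y^{k})$ determined by the isotropic manifolds $\mathcal{I}_k$.

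We write $d^c\mathcal{F}_k=L_k+\Delta_k$, where $L_k$ is the exact central block at the base point $\tilde\rho^{k-1}$. Since the charts are adapted to the points $\tilde\rho^k\in K^\delta$, the product $L_p\cdots L_{n_s+1}$ is exactly $d\Phi^{(p-n_s)t_0}(\tilde\rho^{n_s})|_{TK^\delta}$ read in charts. By (\ref{lambda_c}) and the choice of $t_0$ in (\ref{lambda max et t0}), every sub-product $L_j\cdots L_i$ then satisfies
\begin{equation*}
\|L_j\cdots L_i\|\leq C e^{(j-i+1)(\lambda_c+\epsilon_2/3)t_0},
\end{equation*}
with $C$ uniform. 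Expanding the product of the $(L_k+\Delta_k)$ gives the standard perturbation bound
\begin{equation*}
\|\kappa^{\text{c},(p)}\|\leq C e^{(p-n_s)(\lambda_c+\epsilon_2/3)t_0}\prod_{k}\bigl(1+C\|\Delta_k\|\bigr).
\end{equation*}
It therefore suffices to show that $\sum_k\|\Delta_k\|$ is bounded uniformly in $h$ and $y$.

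This summability is the main obstacle, because the points $(\overline{x^{k}},y^{k},\overline{\xi^{k}},\overline{\eta^{k}})$ are not small a priori: $y^k$ ranges over the whole disc $D_{\epsilon_1}$. To handle this I would refine Lemma \ref{compare dcF} using the structure (\ref{struct}). On $\mathcal{I}_m$ the flow $\mathfrak F_k$ preserves the unstable manifold, and at the base point the $y$-dependence of the central block comes only from points of the unstable manifold $W^u(\tilde\rho^{k-1})$. Following $y^{k-1}(y^k)$ backwards, the hyperbolic component contracts: $\|y^{k-j}\|\lesssim e^{-j(\nu^\perp_{\min}-\epsilon_2/3)t_0}$. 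The central offsets $\overline{x},\overline{\xi},\overline{\eta}$ I would control by $Ce^{k(\lambda_c+2\epsilon_2/3)t_0}h^\tau$, using Lemma \ref{proof_dyn_class} together with Remark \ref{dyn class pas rho} and the appendix estimates on $\mathcal{I}_p$.

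Two difficulties remain in making this work. First, the unstable manifold $W^u(\tilde\rho^{k-1})$ is not exactly $\{(0,y,0,0)\}$ away from $\tilde\rho^{k-1}$, so the block estimate must be taken relative to the point on $K^\delta$ that the orbit approaches. Second, terms with $\|y^{k-1}\|$ of order one for the last few $k$ must simply be absorbed into $C$, since there are boundedly many such $k$ before exponential decay sets in. With these points handled, the sum consists of a geometric series in $e^{-j\nu^\perp_{\min}t_0}$ plus
\begin{equation*}
\sum_k e^{k(\lambda_c+2\epsilon_2/3)t_0}h^\tau\lesssim h^{\tau}e^{p(\lambda_c+2\epsilon_2/3)t_0},
\end{equation*}
which stays bounded for $p\leq n\leq \tau|\log h|/(t_0\lambda_c)$ after adjusting $\epsilon_2$. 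Finally, the factor $e^{-n_s(\cdots)}$ is at most $1$, giving the claimed bound $Ce^{p(\lambda_c+\epsilon_2/3)t_0}$.
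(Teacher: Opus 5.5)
Your argument is essentially the paper's. Both bound each $d^c\mathcal{F}_k$ by the base-point central block of $dF_k$ via Lemma \ref{compare dcF} and obtain $\|\kappa^{\text{c},(p)}\|\leq Ce^{p(\lambda_c+\epsilon_2/3)t_0}\prod_k(1+C\|\Delta_k\|)$; the paper multiplies one-step norms where you expand around sub-products, which is a slightly more careful variant. Both then get $\sum_k\|\Delta_k\|=O(1)$ by applying Lemma \ref{proof_dyn_class} to the backward orbit of a point of $\mathcal{I}_p$, started on $\mathcal{I}_{n_s}$, whose $x,\xi,\eta$ coordinates are those of $\rho^{n_s}$.

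No refinement of Lemma \ref{compare dcF} is needed, and your first ``difficulty'' does not arise. By Definition \ref{coord} the chart of $\tilde\rho^{k-1}$ maps $W^u(\tilde\rho^{k-1})$ exactly onto $\{(0,y,0,0)\}$, and the lemma's error term already includes $\|y^{k-1}\|$, which is summable by backward contraction. Also, boundedness of $h^{\tau}e^{p(\lambda_c+2\epsilon_2/3)t_0}$ comes from the margin $-\epsilon$ in the time range of Theorem \ref{th_trapped_set}, not from shrinking $\epsilon_2$ alone.
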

	\begin{proof}
		By definition,
		\begin{equation*}
			\kappa^{\text{c},(p)}(y^{p})\coloneq d^c \mathcal{F}_{p}(y^p)\circ\dots\circ d^c\mathcal{F}_{n_s+1}(y^{n_s+1}),
		\end{equation*}
		with the $\mathcal{F}_{k}$'s defined in (\ref{kappa c p}) and $y^k$ defined by the equation between $\mathcal{I}_p$ and $\mathcal{I}_k$:
		\begin{equation*} 
			(\overline{x^{k-1}}(y^{k-1}),y^{k-1},\overline{\xi^{k-1}}(y^{k-1}),\overline{\eta^{k-1}}(y^{k-1})))=(F_p\circ\dots F_k)^{-1}(\overline{x^p}(y^p),y^p, \overline{\xi^p}(y^p),\overline{\eta^p}(y^p)).
		\end{equation*}
		\medbreak
		Then, using lemma (\ref{compare dcF}) we have 
		\begin{equation} \label{dcFk}
			\|d^c\mathcal{F}_k(y^k)-dF_k\vert_\text{c}(0)\|\leq C \|(\overline{x^{k-1}}(y^{k-1}),y^{k-1},\overline{\xi^{k-1}}(y^{k-1}),\overline{\eta^{k-1}}(y^{k-1}))\|,
		\end{equation}
		with $F_k=\kappa_{\tilde{\rho_1}} \Phi^{t_0} \kappa_{\tilde{\rho_0}}$, $dF_k\vert_\text{c}(0)$ the central block of its differential, see (\ref{dF0}).This then yields 
		\begin{align}\label{dcF acc}
			\begin{split}
				\|\kappa^{\text{c},(p)}(y^{p})\| &\leq \prod_{k=n_s+1}^p \|d F_k\vert_\text{c}(0)\|\left(1+C\|(\overline{x^{k-1}}(y^{k-1}),y^{k-1},\overline{\xi^{k-1}}(y^{k-1}),\overline{\eta^{k-1}}(y^{k-1}))\|\right)
				\\ &\leq C e^{p(\lambda_c+\epsilon_2/3)t_0} \prod_{k=n_s+1}^p\left(1+C\|(\overline{x^{k-1}}(y^{k-1}),y^{k-1},\overline{\xi^{k-1}}(y^{k-1}),\overline{\eta^{k-1}}(y^{k-1}))\|\right),
			\end{split}
		\end{align}
		where we used our choice of $t_0$ to compare $\|d F_k\vert_\text{c}(0)\|$ with $\lambda_c$, see (\ref{lambda max et t0}).
		\medbreak
		Finally, as the points $(\overline{x^{k}}(y^{k}),y^{k},\overline{\xi^{k}}(y^{k}),\overline{\eta^{k}}(y^{k}))$ corresponds to the backwards iterates of 
		\begin{equation*} 
			(\overline{x^p}(y^p),y^p, \overline{\xi^p}(y^p),\overline{\eta^p}(y^p))
		\end{equation*}
		under the flow (read in charts) $\Phi^{t_0}$, we can apply bound their norms using the section on classical dynamics, or more precisely lemma \ref{proof_dyn_class} (with remark \ref{dyn class pas rho}):
		\medbreak
		We want to define $\underline{\rho}$ by 
		\begin{equation*}
			\Phi^{(p-n_s)t_0}(\underline{\rho})\coloneq(\kappa^{(p)})^{-1}(\overline{x^p}(y^p),y^p, \overline{\xi^p}(y^p),\overline{\eta^p}(y^p))\in K^{\delta}(\epsilon_1).
		\end{equation*}
		Then, we may assume that all the iterates $\Phi^{kt_0}(\underline{\rho})\in K^\delta(\epsilon_1)$ for $0\leq k\leq p-n_s$, otherwise we would get a point outside of charts which would give us a $O(h^\infty)$ result. 
		\medbreak
		We finally need information on the initial coordinates of $\underline{\rho}$ in the chart of $\tilde{\rho}^{n_s}$ (i.e. replacing the $h^\tau$ of lemma \ref{proof_dyn_class} by some other estimate). To do this, we simply notice that $\kappa^{(n_s)}(\underline{\rho})\in \mathcal{I}_{n_s}$ and recall that 
		\begin{equation*} 
			\mathcal{I}^{n_s}=\bigl\{\left(q^{n_s}_x,y^{n_s},p^{n_s}_x,p^{n_s}_y\right), y^{n_s}\in D_{\epsilon_1} \bigr\},
		\end{equation*}
		where $q^{n_s}_x,p^{n_s}_x,p^{n_s}_y$ are coordinates of $\rho^{n_s}$.
		\medbreak
		Hence, the $x,\xi,\eta$ coordinates of $\underline{\rho}$ in the charts of $\tilde{\rho}^{n_s}$ are bounded using the coordinates of $\rho^{n_s}$ in the chart of $\tilde{\rho}^{n_s}$. But this time, the coordinates can be controlled using lemma \ref{proof_dyn_class} as the time $n_s$ is not big enough for $\rho^{n_s}$ to be outside of the $\epsilon_1$ neighborhood of $K^\delta$ (we have a crude estimation of this distance by $Ch^{\tau} e^{(\lambda_{\text{max}}+\epsilon_2/3)n_s}$).
		\medbreak
		As a consequence, the assumptions to apply lemma \ref{proof_dyn_class} are verified and we have the estimate 
		\begin{equation*} 
			\|\overline{x^k}(y^k)\|,\|\overline{\xi^k}(y^k)\|,\|\overline{\eta^k}(y^k)\|\leq (1+\epsilon_1) e^{(\lambda^{\text{c}}+2\epsilon_2/3)kt_0} h^{\tau},
		\end{equation*}
		\begin{equation*}
			\|y^k\|\leq (1+\epsilon_1)\max\bigl( e^{-(p-n_s-k)t_0(\nu_{\text{min}}^\perp-2\epsilon_2/3)},e^{(\lambda^{\text{c}}+2\epsilon_2/3)kt_0} h^{\tau}\bigr),
		\end{equation*}
		which let us show that 
		\begin{equation*} 
			\sum_{k=n_s+1}^{p} \|(\overline{x^{k-1}}(y^{k-1}),y^{k-1},\overline{\xi^{k-1}}(y^{k-1}),\overline{\eta^{k-1}}(y^{k-1}))\|=O(1),
		\end{equation*}
		and then we obtain using (\ref{dcF acc}) that 
		\begin{equation*}
			\|\kappa^{\text{c},(p)}(y^{p})\|\leq C e^{p(\lambda_c+\epsilon_2/3)t_0}.
		\end{equation*}
	\end{proof}
	
	\subsubsection{The analysis of the polynomial $P_p^{(k,l)}$}
	
	\begin{Lemma}
		For all $p\in \llbracket n_s, n\rrbracket$ and $(k,l)$ such that $k+l<2N$, $P_p^{(k,l)}$ is of degree less than $\deg P+3k+2l$.
	\end{Lemma}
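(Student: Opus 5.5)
The natural route is induction on $p$, using the recursion (\ref{def poly iteres}):
\begin{equation*}
P_p^{(k,l)}=\sum_{k_1+k_2=k}\sum_{l_1+l_2=l} Q_p^{(k_1,l_1)}\bigl(P_{p-1}^{(k_2,l_2)}\bigr),
\end{equation*}
together with the degree information on the maps $Q_p^{(k,l)}$ from Proposition \ref{Qjkl}. For the base case $p=n_s$ (the index $0$ of the recursion), $P_{n_s}^{(0,0)}=P_{\text{ini}}$ has degree $\deg P$, and $P_{n_s}^{(k,l)}=0$ otherwise. So the bound holds trivially, with the convention $\deg 0=-\infty$.

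For the inductive step, I assume $\deg P_{p-1}^{(k_2,l_2)}\leq \deg P+3k_2+2l_2$ and bound each summand separately, since the degree of a sum is at most the maximum of the degrees. The key input is the degree of $Q_p^{(k_1,l_1)}(R)$ as a function of $\deg R$. The $3k_1$ part is clean: expanding the cubic and higher terms of the phase and the critical point $\tilde y^0$ in $(x^1,\xi^0)$ gives, at order $h^{k_1/2}$, polynomials of degree $3k_1$, exactly as in Section \ref{section1st}. The difficulty is the stationary phase part. Proposition \ref{Qjkl} records $\deg R+3k_1+4l_1$, because each $y$-derivative hitting $\mathcal{M}_{c,1}(\kappa^c(y))$ may raise the degree by $2$ (Lemma \ref{derive squeezed}). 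Inserting this naively would only yield $3k+4l$, not $3k+2l$.

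This gap is the main obstacle, and the fix has to come from the indexing. The factor $h^{l}$ from stationary phase carries $2l$ derivatives, each contributing at most degree $2$ but only half a power of $h$ per unit of degree-cost. I would regroup the bookkeeping so that stationary phase order $l_1$ is recorded with index $2l_1$ in the $l$ slot, matching the exponent $h^{k/2+l}$ against the half-power indexing $h^{k/2+l/2}$ used in Proposition \ref{succ_1st}. Concretely, I would check that in the remainder estimate (\ref{norm R_N}) the cost of $\alpha_2$ derivatives on the Gaussian is $\|\kappa^c\|^{2|\alpha_2|}$, and that each such derivative accompanies $h^{1/2}$ in the relevant sense, so the degree growth per unit of the $l$-index is $2$. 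Failing that, I would derive the degree directly from Lemma \ref{derive squeezed}: the operator $A_{2l_1}$ has order $\le 2l_1$, but its coefficient structure $-\frac{i}{2}\langle Q^{-1}D,D\rangle^{l_1}$ with $Q$ as in (\ref{Q_phase_statio}) involves only mixed $\partial_y\partial_\eta$ and $\partial_\eta^2$ terms. Only the $\partial_y$ factors, at most $l_1$ of them, hit the squeezed state, since the amplitude depends on $\eta$ only through $a$. Each such factor raises the $x$-degree by at most $2$, giving $+2l_1$.

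Granting that step, $\deg Q_p^{(k_1,l_1)}(R)\le \deg R+3k_1+2l_1$, and the induction closes:
\begin{equation*}
\deg\bigl(Q_p^{(k_1,l_1)}(P_{p-1}^{(k_2,l_2)})\bigr)\le \deg P+3k_2+2l_2+3k_1+2l_1=\deg P+3k+2l.
\end{equation*}
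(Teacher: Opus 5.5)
Your induction skeleton (recursion (\ref{def poly iteres}), base case $P_{n_s}^{(0,0)}=P_{\text{ini}}$, adding degree increments) is exactly the paper's one-line argument, and your final display is correct arithmetic. \textbf{The gap is the input you grant}, namely $\deg Q_p^{(k_1,l_1)}(R)\le \deg R+3k_1+2l_1$. Proposition \ref{Qjkl} only provides $+4l_1$, and neither of your fixes yields $+2l_1$.

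Reindexing $l\mapsto 2l$ does not prove the statement. $P_p^{(k,l)}$ is defined by (\ref{def poly iteres}) with $l$ the stationary-phase order (weight $h^{k/2+l}$), so a bound $3k+2l'$ with $l'=2l$ is just $3k+4l$ relabelled.

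The structural argument reads off the block structure of $Q$ instead of $Q^{-1}$. Write $S=\partial^2_{\eta,\eta}\psi(\sqrt{h}x^1,y^1,\sqrt{h}\xi^0,0)$ and work in the Morse variables $(z,\zeta)$. Then
\[
Q^{-1}=\begin{pmatrix} -S & -I\\ -I & 0\end{pmatrix},\qquad \langle Q^{-1}D,D\rangle=-\langle S D_z,D_z\rangle-2\,D_z\cdot D_\zeta .
\]
So it is the pure $\zeta$-block that vanishes: every term carries at least one $D_z$, and a pure $D_z^2$ term is present whenever $S\neq 0$. $S$ vanishes for the linear model (\ref{dF0}), whose $y$-row has no $\eta$-entry. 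It need not vanish along $\mathcal{I}_m$, though, since the $y$-row of $d\mathfrak{F}(0,y^1,0,0)$ in (\ref{struct}) is full. Hence $\langle Q^{-1}D,D\rangle^{l_1}$ can put $2l_1$ derivatives on $\kappa^c(y^0)$ and, by Lemma \ref{derive squeezed}, raise the degree by $4l_1$. This is exactly the count in the proof of Proposition \ref{Qjkl}.

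Moreover, after the change of variables of Lemma \ref{Morse}, $y^0$ is in general a function of $(z,\zeta)$ jointly, so a $D_\zeta$ can also reach the Gaussian factor. Your claim that the amplitude depends on $\eta$ only through $a$ holds only in the original variables, where the phase is not quadratic and $\langle Q^{-1}D,D\rangle$ is not the relevant operator.

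So, with the ingredients available, your induction closes only with $\deg P_p^{(k,l)}\le \deg P+3k+4l$. That is also all the paper's induction on (\ref{def poly iteres}) yields when combined with Proposition \ref{Qjkl}; the $2l$ in the statement looks like a leftover from Proposition \ref{succ_1st}. The downstream estimates are consistent with $4l$: Lemma \ref{norm_2ndmethod} carries $(\sigma_c^{(p)})^{2|l_2|}$ with $|l_2|\le 2l$, and the remainder estimate at the end of Section \ref{app_2nd} uses $h^{-4l\nu_p}$.

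To genuinely obtain $+2l_1$ you would need a new cancellation showing that at most $l_1$ of the $2l_1$ derivatives in $A_{2l_1}$ can reach $\kappa^c(y^0)$. An example would be $S\equiv 0$ along $\mathcal{I}_m$ together with Morse coordinates in which $y^0$ depends on $z$ alone, and nothing in the setup supplies that. As written, the step you grant is the entire content of the claim. You should either state and prove the $3k+4l$ bound, which your induction gives immediately, or isolate such a structural input explicitly.
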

	This is obtained by induction on the formula (\ref{def poly iteres}).
	\begin{Lemma}\label{norm_2ndmethod}
		For all $p\in \llbracket n_s,n\rrbracket,$ and $(k,l)$ such that $k+l<2N,$ $\forall \alpha\in \N^{d_\perp},$
		\begin{equation*} 
			\left\|N_\infty\left(\partial^\alpha_y P_p^{(k,l)}\right)\right\|_{L^2}\leq C  (p^{2N}+1) \sup_{\substack{|l_1|+|l_2|\leq 2l , \\ |k_1|+|k_2|=k}} \left(h^{-1/2} e^{-pt_0(\nu_{\text{min}}+\epsilon)}\right)^{|l_1|+|k_1|+|\alpha|} \left(\sigma_{\text{c}}^{(p)}\right)^{|k_1|+2|l_2|+3|k_2|}.
		\end{equation*}
	\end{Lemma}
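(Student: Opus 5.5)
We argue by induction on $p\in\llbracket n_s,n\rrbracket$, starting from the recursion (\ref{def poly iteres}) and the bound on $Q_p^{(k,l)}$ provided by Proposition \ref{Qjkl}. The base case $p=n_s$ comes from Lemma \ref{1st_methodK}. There $P_{n_s}^{(0,0)}$ has coefficients in $S_{\delta_{(\text{ini})}}$, so each $y$-derivative costs $h^{-\delta_{(\text{ini})}}=h^{-1/2}e^{-n_st_0(\nu_{\text{min}}^\perp-\epsilon_2/3)}$, which is exactly the factor $h^{-1/2}e^{-pt_0(\nu_{\text{min}}+\epsilon)}$ at $p=n_s$ (up to renaming $\epsilon$). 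The other $P_{n_s}^{(k,l)}$ come from Proposition \ref{succ_1st}, and there the central spreading is controlled by $\|\kappa^{\text{c},(\text{ini})}\|\le \sigma_c^{(n_s)}$.

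For the inductive step we need two ingredients beyond Proposition \ref{Qjkl}, both describing how a single step $Q_p^{(k_1,l_1)}$ acts on derivatives.

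\textbf{First ingredient: derivatives and the transverse contraction.} Applying $\partial_y^\alpha$ to $Q_p^{(k,l)}(P)$ hits either the factors built from $a,\psi$ at $(0,y,0,0)$, which are harmless, or the coefficients of $P$ evaluated at $y^{p-1}(y)$. By the chain rule the latter produce $\partial^{\alpha'}P$ multiplied by derivatives of $y\mapsto y^{p-1}(y)$. This map is the backward transverse dynamics restricted to the unstable-like manifolds $\mathcal{I}_p$, so $\|d y^{p-1}\|\le e^{-t_0(\nu_{\text{min}}-\epsilon_2/3)}$, and its higher derivatives are uniformly bounded by the appendix estimates on $\mathcal{I}_p$ (the Inclination Lemma).

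\textbf{Second ingredient: one step of the recursion.} This is Proposition \ref{Qjkl} with $\kappa^{\text{c}}=\kappa^{\text{c},(p-1)}$, using the bound of Section \ref{estim_metap}, $\|\kappa^{\text{c},(p-1)}\|\le C\sigma_c^{(p)}$.

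Combining the two, the induction hypothesis is propagated as follows.
\begin{itemize}
\item The indices $l_1,k_1$ count the $y$-derivatives created in the stationary phase and in the expansion of $\tilde y^0$. Each such derivative either lands on the oldest amplitude, where it gains the accumulated contraction $e^{-pt_0\nu}$ together with one factor $h^{-1/2}$, or lands on $\Gamma$, where it is moved into the $\sigma_c$ count.
\item Each $\kappa^{\text{c}}$-power from the new step combines multiplicatively. Since $\kappa^{\text{c},(p)}$ is the full composition, these powers are dominated by $(\sigma_c^{(p)})^{\cdot}$ rather than by a product of per-step factors.
\item The polynomial prefactor $p^{2N}+1$ absorbs the number of ways the total orders $(k,l)$ can be distributed among the $p-n_s$ steps. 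Since $k+l<2N$, at most $2N$ steps contribute non-trivially, which gives a count of order $\binom{p}{2N}\lesssim p^{2N}$.
\end{itemize}

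The main obstacle is preventing constants from compounding over the roughly $|\log h|$ steps. The leading operator $Q_p^{(0,0)}$ acts on every step, and it multiplies by $\alpha_p(y)=|\det\partial_y y^{p-1}|^{1/2}$, which is not bounded by $1$. To handle this, we plan to pull out the telescoping product $\prod_p\alpha_p$ exactly: it is a Jacobian factor, namely $J_u^{-1/2}$, which preserves $L^2(\mathrm dy)$ norms after the change of variables $y\mapsto y^{p-1}(y)$. We will therefore measure the coefficients in $L^2(\mathrm dy)$, so that composition with $y^{p-1}$ times $\alpha_p$ is an isometry. With this normalisation, only the at most $2N$ non-leading steps contribute constants $C$, which gives $C^{2N}$. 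In addition, derivatives of $\alpha_p$ and of $y^{p-1}$ compose along the chain. Here we use that the chain-rule sums $\sum_j e^{-(p-j)t_0\nu}$ converge geometrically, exactly as in the proof of the bound on $\kappa^{\text{c},(p)}$, so each derivative still costs only $h^{-1/2}e^{-pt_0\nu}$ uniformly in $p$.
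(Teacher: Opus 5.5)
Your architecture matches the paper's. You unfold the recursion (\ref{def poly iteres}) into chains with at most $k+l$ non-trivial steps, treat each $Q^{(0,0)}$ as a Jacobian-weighted composition that is an isometry of $L^2(\d y)$, bound the non-trivial steps by Proposition \ref{Qjkl}, and count chains polynomially in $p$. Your geometric-series treatment of the derivatives of the Jacobian product is even a little sharper than the paper's $\log(1/h)^{|\alpha|}$.

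The gap is the first bullet of your combination step: the claim that every derivative created inside a chain ``gains the accumulated contraction $e^{-pt_0\nu}$''. A non-trivial step $Q_s^{(k_s,l_s)}$ creates, through Proposition \ref{Qjkl}, derivatives $\partial^{\alpha_1+\beta_1}$ of the coefficients of $P_{s-1}$, coming from $A_{2l}$ and from expanding $\tilde y^0$. These are derivatives in the \emph{input} variable $y^{s-1}$. Since $P_{s-1}$ is, up to its Jacobian factor, $P_{\text{ini}}\circ y^{n_s}(y^{s-1})$, each of them costs $h^{-\delta_{(\text{ini})}}\|\partial y^{n_s}/\partial y^{s-1}\|\approx h^{-1/2}e^{-(s-1)t_0\nu_{\text{min}}}$. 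After that, the chain only applies $Q_{s+1}^{(0,0)},\dots,Q_p^{(0,0)}$. That is exactly the $L^2$-isometry you invoke in your last paragraph, so it cannot shrink the term. The further contraction $\|\partial y^{s-1}/\partial y^{p}\|$ is collected only by the outer derivatives $\partial_{y^p}^{\alpha}$, which is why the $|\alpha|$ part of the exponent is fine.

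For $s=n_s+1$ the created derivatives therefore cost $h^{-\delta_{(\text{ini})}}$ each. This exceeds the factor $h^{-1/2}e^{-pt_0\nu_{\text{min}}}$ you need by $e^{(p-n_s)t_0\nu_{\text{min}}}$, which is a negative power of $h$ once $p-n_s\sim|\log h|$. The prefactor $p^{2N}$ cannot absorb it. Neither can the $\sigma_{\text{c}}$-alternatives in the supremum, since $\sigma_{\text{c}}^{(p)}$ may grow arbitrarily slowly (case $\lambda^{\text{c}}=0$). Nothing later in the chain damps these early corrections. This matches the WKB picture: the order-$h$ terms are sourced at every time by derivatives of the amplitude at that time, the earliest times dominate, and the terms are then merely transported.

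The paper's own proof makes the same identification when it bounds the worst case by $\bigl(\|\partial y^{s_m}/\partial y^p\|\,|\Gamma^{\text{ini}}|\,h^{-1/2}\bigr)^{l_1+k_1+|\alpha|}$, so following it will not close the gap. What your induction actually proves is a bound that records the step $s$ at which each derivative is created. Such a derivative is charged $h^{-1/2}e^{-(s-1)t_0\nu_{\text{min}}}$, together with the $\kappa^{\text{c},(s-1)}$-powers of that same step. Only the $|\alpha|$ outer derivatives get $h^{-1/2}e^{-pt_0\nu_{\text{min}}}$. Replacing $s$ by $p$ in the first factor, as your argument does, is not justified.
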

	
	\begin{proof}
		The proof is similar to the one done in \cite[Lemma 5.6]{Lucas}. We will prove the result by strong induction on $p$.
		\medbreak
		We start with $p=n_s$, by definition of $P_{\text{ini}}$ with (\ref{u_ini}) and lemma \ref{1st_methodK}, we have that 
		\begin{align*}
			\left\|N_\infty\left(\partial^\alpha_y P_{n_s}^{(0,0)}\right)\right\|_{L^2}&\leq C (h^{-1/2} |\Gamma_{\text{hyp}}|^{1/2})^{|\alpha|}
			\\&\leq C  \left(h^{-1/2}e^{-pt_0(\nu_{\text{min}}+\epsilon)}\right)^{|\alpha|}.
		\end{align*}
		Now, let us fix $(k,l)$ such that $k+2l<2N$. By iterating the result defining $P_p^{(k,l)}$, we obtain:
		\begin{equation*} 
			\partial^\alpha_y P_p^{(k,l)}= \sum_{\substack{k_1+\dots+k_p=k\\ l_1+\dots+l_p=l}} \partial^\alpha_y Q_p^{(k_p,l_p)} \circ \dots \circ Q_1^{(k_1,l_1)}\left(P_\text{ini}\right).
		\end{equation*}
		Just as before, the key argument is that for a term of this sum, there are only a finite number of couples $\left(k_s,l_s\right)$ that are not $(0,0)$: $p$ can be large (ie depending on $h$) but $k$ and $l$ are finite. 
		\medbreak
		We want to bound all the terms of this sum by the worse case scenario and then control the number of terms.
		\medbreak 
		Let us denote by $I_{k,l}$ the set of sequences $\left(k_s,l_s\right)$ such that $k_1+\dots+k_p=k$, $l_1+\dots+l_p=l$ and pick such a sequence in $I_{k,l}$. Out of this sequence, most couples are $(0,0)$, we only need to take into account the other ones as $Q_p^{(0,0)}(P)$ is well understood. Let us define $I$ the set of the indexes $s$ such that at least one of the parameters $k,l$ is not $0$. We rename the parameters of $I$: $(k_{s_1},l_{s_1})$ ,\dots,$(,k_{s_m},l_{s_m})$.
		\medbreak 
		We start by writing (using our knowledge of $Q_p^{(0,0)},$ see proposition \ref{Qjkl}):
		\begin{align*}
			Q_p^{(k_p,l_p)}\circ \dots \circ Q_1^{(k_1,l_1)}(P_\text{ini})_{y^p}&=Q_p^{(0,0)}\circ \dots Q_{s_m}^{k_{s_m},l_{s_m}}\circ\dots Q_1^{k_1,l_1}\left(P_\text{ini}\right)_{y^p}
			\\&= \left|\det\left(\frac{\partial y^{s_m}}{\partial y^p}(y^p)\right)\right|^{1/2} Q_{s_m}^{k_{s_m},l_{s_m}}\circ\dots Q_1^{k_1,l_1}\left(P_\text{ini}\right)_{y^{s_{m}}(y^p)}.
		\end{align*}
		Looking at the derivatives in $y$, we can write its coefficients as a product of derivatives of:
		\begin{itemize}
			\item The product of the determinant factors coming from every time we chose the main order term in the expansion (the $Q_s^{(0,0)}$): $\left|\det\left(\frac{\partial y^{s_m}}{\partial y^p}(y^p)\right)\right|^{1/2}$.
			\item Coefficients coming from $Q_{s_m}^{k_{s_m},l_{s_m}}$: derivatives of the amplitude and phase of the FIO taken at point $y$ multiplied by some power of $\left\|\frac{\partial y^{s_m}}{\partial y^p}(y^p)\right\|$ coming from $y^{s_m}(y^p)$.
			\item Coefficients of $P_{s_m-1}\coloneq Q_{s_m-1}^{(k_{s_m-1},l_{s_m-1})}\circ \dots \circ Q_1^{(k_1,l_1)}\left(P_\text{ini}\right)$ evaluated at point $y^{s_m-1}(y^{s_m})$ multiplied by the some power of $\left\|\frac{\partial y^{s_m}}{\partial y^p}(y^p)\right\|$, just like in the second item.
		\end{itemize}
		Now, let us see how we should control each type of terms.
		\medbreak
		For elements of first type, we use the fact that derivatives of a contracting flow have similar size to the flow itself, see lemma \ref{der_contract}. We get
		\begin{equation*} 
			\frac{\left|d^\alpha(\det(\frac{\partial y^p}{\partial y^n}))\right|}{\left|\det(\frac{\partial y^p}{\partial y^n})\right|}=O(p^{|\alpha|})=O(\log(1/h)^{|\alpha|}),
		\end{equation*}
		which implies that 
		\begin{equation}\label{deriv det} 
			\left|\det (\frac{\partial y^p}{\partial y^n})\right|^{-1/2} \left|d^\alpha(\det(\frac{\partial y^p}{\partial y^n}))\right|=O(\log(1/h)^{|\alpha|}).
		\end{equation}
		\medbreak
		Elements of second type of the product will not matter as they are $S_0$: they come from a propagation in time $t_0$, independent of $h$. 
		\medbreak
		For elements of last type, we will use the induction.
		\medbreak
		As we look at the $L^2$ norm of the coefficients, a change of variables $y^p \mapsto y^{s_m}(y^p)$ is needed due to the estimates (\ref{deriv det}) for the first type, we know how to had the corresponding determinant into the estimate. Hence we get the estimate
		\begin{align}\label{d alpha Q}
			\begin{split}
				\mathcal{N}:&=\left\|N_\infty\left(\partial^\alpha_{y^p} Q_p^{(k_p,l_p)} \circ \dots \circ Q_1^{(k_1,l_1)}(P_\text{ini})_{y^p}\right)\right\|_{L^2(\d y^p)} 
				\\&\leq C \sup_{\alpha_1+\alpha_2=\alpha}  \log(1/h)^{|\alpha_1|} \left\|\frac{\partial y^{s_m}}{\partial y^p}(y^p)\right\|^{|\alpha_2|}  \times\left\|N_\infty\left(Q_{s_m}^{(k_{s_m},l_{s_m})}(\partial^{\alpha_2}_y P_{s_m-1})_{y^{s_m}}\right)\right\|_{L^2( \d y^{s_m})}.
			\end{split}
		\end{align}
		Then, we use the estimate on coefficients of $Q^{(k,l)}$ obtained in the proof of proposition \ref{Qjkl} and comparison between the iterated metaplectic operator and the linearized flow in appendix \ref{estim_metap} to obtain:
		\begin{align*}
			\Bigl\|N_\infty\Bigl(Q_{s_m}^{(k_{s_m},l_{s_m})}&(P_{s_m-1})\Bigr)\Bigr\|_{L^2}
			\\&\leq  C\sup_{\substack{|\alpha_1|+|\alpha_2|\leq 2l_{s_m}  \\ |\beta_1|+|\beta_2|=k_{s_m}}} \left\|N_\infty\left(\partial^{\alpha_1+\beta_1}_y (P_{s_m-1})_y)\right)\right\|_{L^2} \left\|\kappa^{\text{c},(s_m)}\right\|^{|\beta_1|+2|\alpha_2|+3|\beta_2|}
			\\&\leq  C\sup_{\substack{|\alpha_1|+|\alpha_2|\leq 2l_{s_m}  \\ |\beta_1|+|\beta_2|=k_{s_m}}} \left\|N_\infty\left(\partial^{\alpha_1+\beta_1}_y (P_{s_m-1})_y)\right)\right\|_{L^2} \left(\sigma_c^{(s_m)}\right)^{|\beta_1|+2|\alpha_2|+3|\beta_2|}. 
		\end{align*}
		We then need to study $\partial_y^\alpha(P_{s_m-1})_y$ and the norm of its coefficients by this is done by induction.
		\medbreak
		Overall, we see that the worse case scenario  in (\ref{d alpha Q}) is for the derivatives to hit onto coefficients of $P_\text{ini}$ which leads to the following estimate for $\mathcal{N}$ defined in \eqref{d alpha Q}:
		\begin{equation*} 
			\mathcal{N} \leq C \sup_{\substack{|l_1|+|l_2|\leq 2l , \\ |k_1|+|k_2|=k}} \left( \left\|\frac{\partial y^{s_m}}{\partial y^p}(y^p)\right\||\Gamma^{\text{ini}}|h^{-1/2}\right)^{l_1+k_1+|\alpha|} 
			\left(\sigma_{\text{c}}^{(p)}\right)^{|k_1|+2|l_2|+3|k_2|}.
		\end{equation*}
		Then we use, the estimate of lemma \ref{der_contract} to control $\left\|\frac{\partial y^{s_m}}{\partial y^p}(y^p)\right\|$ and lemma \ref{1st_methodK} for $\left|\Gamma^{\text{ini}}\right|$.
		\medbreak
		Finally, we have to estimate of the number of terms in the sum.
		It is not difficult to see that there are at most $i^{k+l}$ terms if $i>0$. This can be seen using to the injection:
		\begin{equation*} 
			\begin{array}{rcl}
				I_{k,l} &\to&  \llbracket 1,i \rrbracket^k\times \llbracket 1,i \rrbracket^l\\
				(k_q),(l_q) &\mapsto & (\alpha_1,\dots,\alpha_k,\beta_1,\dots,\beta_l),
			\end{array}
		\end{equation*}
		where 
		\begin{equation*}
			\alpha_m=\inf \left\{ L, \sum_{q=1}^L k_q \geq m\right\},\quad \beta_m=\inf \left\{ L, \sum_{q=1}^L l_q \geq m\right\}.
		\end{equation*}
	\end{proof}
	\begin{Remark}
		This result implies, in particular, that all the $u_p^{(k,l)}$ are in $L^2\left(\R^d\right)$.
	\end{Remark}

	This result, along with the preemptive result, let us see that our state is still in the $S_{\delta,\nu}$ even after propagating for long times: 
	\begin{Lemma}\label{Ninf_poly}
		For all $p\in \llbracket n_s,n\rrbracket,$ and $(k,l)$ such that $k+2l<2N,$ $\forall \alpha\in \N^{d_\perp},$
		\begin{equation*} 
			h^{k/2+l} u_p^{(k,l)}\in S_{\delta_p,\nu},
		\end{equation*}
		for $\nu_p=pt_0\left(\lambda^{\text{c}}+\epsilon_3\right)/|\log h|,\quad \delta_p=\frac{1}{2}\left(1-p\frac{t_0}{|\log h|}\nu^{\text{min}}_{\perp}\right)+$.
		\medbreak
		Moreover, we have the following bound
		\begin{align*}
			\left\|N_\infty\left(\partial^\alpha_y P_p^{(k,l)}\right)\right\|_\infty &\leq C p^{2N}  \sup_{\substack{|l_1|+|l_2|\leq 2l , \\ |k_1|+|k_2|=k}} \left( h^{-1/2}e^{-pt_0(\nu_{\text{min}}+\epsilon)}\right)^{l_1+k_1+|\alpha|} \left(\sigma_{\text{c}}^{(p)}\right)^{|k_1|+2|l_2|+3|k_2|} 
			\\&\times \left(J_u^{(pt_0)}(\tilde{\rho})\right)^{-1/2} \left\|N_\infty\left(P_\text{ini}\right)\right\|_{L^2},
		\end{align*}
		and the essential support of $P_p^{(k,l)}$ is included in a ball centered at $\rho^k$ of size $CJ_u^{(t)}(\tilde{\rho})\sqrt{h}$.
	\end{Lemma}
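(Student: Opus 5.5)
The plan is to derive Lemma \ref{Ninf_poly} from Lemma \ref{norm_2ndmethod}, together with three further ingredients: the bound on $\kappa^{\text{c},(p)}$ from section \ref{estim_metap}, the contraction estimates of Lemma \ref{der_contract}, and the initial data of Lemma \ref{1st_methodK}. There are three separate claims: membership in $S_{\delta_p,\nu_p}$, the weighted $L^\infty$ bound, and the localization of the essential support. The ordering is to first pass from the $L^2(\d y)$ estimate of Lemma \ref{norm_2ndmethod} to an $L^\infty(\d y)$ estimate, then read off the symbol exponents, and finally track supports through the recursion (\ref{def poly iteres}).

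\textbf{$L^\infty$ bound.} I would redo the induction of Lemma \ref{norm_2ndmethod}, but keep the sup norm in $y^p$ instead of changing variables $y^p\mapsto y^{s_m}(y^p)$. In that proof, the only reason the determinant factors $|\det \partial_{y^p}y^{s_m}|^{1/2}$ were harmless is that they were absorbed by the Jacobian of the change of variables in the $L^2$ norm. In $L^\infty$ they remain. Chaining the main-order factors $Q_s^{(0,0)}$ from $n_s$ up to $p$ produces $|\det \partial_{y^p} y^{n_s}(y^p)|^{1/2}$. By the inclination-type estimates of Appendix \ref{L_j}, this is comparable, up to $e^{o(p)}$ factors absorbed into the $+$ in the exponents, to $(J_u^{(pt_0)}(\tilde\rho))^{-1/2}$. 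The initial coefficients are bounded in sup norm by $N_\infty(P_\text{ini})$. Since $P_\text{ini}$ is essentially a Gaussian of width $\sqrt h$ times $h^{-d_\perp/4}$, the sup and $L^2$ norms agree up to the normalisation kept in the statement. Derivatives falling on determinant factors cost only $\log(1/h)^{|\alpha|}$ by (\ref{deriv det}), and these are absorbed into $p^{2N}$. The rest of the counting, including the $p^{k+l}$ bound on the number of terms, is identical to Lemma \ref{norm_2ndmethod}.

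\textbf{Symbol class.} The main factor $h^{-1/2}e^{-pt_0(\nu_{\min}-\epsilon)}$ equals $h^{-\delta_p}$ with $\delta_p=\frac12-\frac{pt_0}{|\log h|}\nu^{\min}_\perp+$. This gives the $S_{\delta_p}$ estimate for the coefficients $u_\gamma$, that is, for the coefficients of $P_p^{(k,l)}$ in the Hermite-type basis. The powers of $\sigma_c^{(p)}\le Ce^{pt_0(\lambda_c+\epsilon_3)}=h^{-\nu_p}$ multiply $h^{k/2+l}$. Since $|k_1|+2|l_2|+3|k_2|\le 3k+4l$ and $\nu_p<1/6$, the prefactor $h^{k/2+l}$ dominates, so the term stays bounded. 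For the central part, $\kappa^{\text{c},(p)}\circ\kappa^{\text{c},(\text{ini})}$ has norm at most $h^{-\nu_p}$ by section \ref{estim_metap}. The condition (\ref{GammadGamma}) on $y$-derivatives of $\Gamma_{\scalerel*{\parallel}{\perp}}$ has to be checked separately. I would differentiate the product $d^c\mathcal F_p(y^p)\cdots d^c\mathcal F_{n_s+1}(y^{n_s+1}(y^p))$ with the Leibniz rule. Each derivative hits one factor and gains $\|\partial_{y^p}y^k\|\le 1$ (contraction backward along $\mathcal I$), and there are $O(p)=O(|\log h|)$ choices of factor. Combined with (\ref{GammaImGamma}), this gives $h^{-2\nu_p}|\log h|^{|\alpha|}$.

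\textbf{Essential support.} Where $P_\text{ini}$ is non-negligible, it is concentrated in a ball of radius $h^{1/2-}|\Im\Gamma_\text{hyp}|^{-1/2}$ around $q_y^{n_s}$. Each $Q_s^{(k,l)}$ evaluates coefficients at $y^{s-1}(y^s)$, so the support at step $p$ is the image of this ball under the inverse of $y^p\mapsto y^{n_s}$, i.e.\ under the unstable expansion. Its volume is multiplied by $J_u$ and its diameter by at most $e^{pt_0\lambda_{\max}}$. The stated radius $CJ_u^{(t)}\sqrt h$ should be read in that sense, and I would state it via volume and diameter as in point \ref{pt 2}.

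\textbf{Main obstacle.} The hard step is the Jacobian comparison $|\det\partial_{y^p}y^{n_s}|\approx J_u^{(pt_0)}(\tilde\rho)^{-1}$ uniformly in $y^p$, even though $\mathcal I_p$ is only close to, and not equal to, $W^u(\tilde\rho^p)$. I would prove it by a telescoping product as in (\ref{dcF acc}). The per-step errors are bounded by the distances from Lemma \ref{proof_dyn_class}, and their sum is $O(1)$, which yields the comparison up to a multiplicative constant.
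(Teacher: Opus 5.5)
Your proposal is correct and follows the paper's own route. You rerun the induction of Lemma~\ref{norm_2ndmethod} in sup norm, so the determinant factors of the $Q_s^{(0,0)}$ steps are no longer absorbed by a change of variables and produce the $J_u^{-1/2}$ factor via Lemma~\ref{det y}; you take $\nu_p$ from the bound on $\kappa^{\text{c},(p)}$ in section~\ref{estim_metap}; and you read the $\jp{y}^{-\infty}$ decay and the essential support off the Gaussian factor $e^{-y^{n_s}(y^p)\cdot\Gamma^{(\text{ini})}y^{n_s}(y^p)/h}$ carried by every term. Your Leibniz check of (\ref{GammadGamma}) is what the paper does separately in Appendix~\ref{proof_GammadGamma}.

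One bookkeeping correction. By Lemma~\ref{det y}, the chain of $Q_s^{(0,0)}$ factors from $n_s$ to $p$ only yields $J_u^{((p-n_s)t_0)}(\tilde\rho)^{-1/2}$, not $J_u^{(pt_0)}(\tilde\rho)^{-1/2}$. The remaining $J_u^{(n_st_0)}(\tilde\rho)^{-1/2}$ must come from the sharp bound $\|N_\infty(P_\text{ini})\|_\infty\approx h^{-d_\perp/4}|\det\Im\Gamma_\text{hyp}|^{1/4}\|N_\infty(P_\text{ini})\|_{L^2}$. This is because the initial Gaussian has width $\sqrt h\,|\Im\Gamma_\text{hyp}|^{-1/2}$, not $\sqrt h$. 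It is also where the $h^{-d_\perp/4}$ normalisation you allude to enters.
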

	
	\begin{proof}
		From the study of the iterated metaplectic operator, we have seen that quantities for the general case can be estimated using the linearized dynamic along $K^\delta$ associated with points $\rho\in K$ (see section \ref{estim_metap}). But we have a control on this dynamic with $\lambda^{\text{c}}$:
		\begin{equation*} 
			\left\| d^\text{c}_\rho \Phi^{(p)}\right\|\leq Ch^{-p\left(\lambda^{\text{c}}+\epsilon_3\right)/|\log h|}.
		\end{equation*}
		This shows that we can take the $\nu$ announced above.
		\medbreak
		The $S_\delta^{-\infty}$ behavior can be shown using the same kind of study as above but on 
		\newline $\left\|N_\infty \left(\partial^\alpha_y P_p^{(k,l)}\right)\right\|_\infty$ rather than its $L^2$ norm. For the sake of simplicity, we will deal with the case $\alpha=0$, the behavior of derivatives can be deduced from the same computations as in the previous proof.
		\medbreak
		Firstly, the $\jp{y}^{-\infty}$ behavior comes from the fact that we always have an exponential in every term of the expansion, namely $e^{-y^{n_s}(y^p) \Gamma^{(\text{ini})}y^{n_s}(y^p)/h}$ . This term is also responsible for the property on the essential support.
		\medbreak 
		We then follow the previous proof and notice that the main difference comes from the fact that the determinant factors obtained when choosing the main order term in the expansion (i.e. for any $k_q=0$, $l_q=0$) do not disappear (we do not have to do a change of variable as we look at $L^\infty$ norms rather than $L^2$ ones). We will keep a factor 
		\begin{equation*} 
			\underset{s\notin I_{k,l}}{\prod} \left|\det\left(\frac{\partial y^{s}}{\partial y^{s+1}}(y^{s+1})\right)\right|  \sim  \left|\det\left(\frac{\partial y^{0}}{\partial y^{p}}(y^{p})\right)\right|=O \left(J_u^{(pt_0)}(\rho)^{-1/2}\right),
		\end{equation*}
		as shown in lemma \ref{det y}.
		\medbreak
		If we take a look at other factors composing the coefficients of $Q_p^{(k_p,l_p)}\circ \dots \circ Q_1^{(k_1,l_1)}(P_\text{ini})_y$,
		we see that they can be written as some 
		\begin{equation*} 
			\left|\det\left(\frac{\partial y^{0}}{\partial y^{p}}(y^{p})\right)\right| a^{(s_1)}(y^{s_1}(y^p))\dots a^{(s_m)}(y^{s_m}(y^p)) a^{(0)}(y^0(y^p)) e^{-y^0(y^p) \Gamma^{(\text{ini})} y^0(y^p)}{h},
		\end{equation*}
		where $a^{(s_j)}$ involves the amplitude, the phase and the coefficients of the stationary phase of the $s_j$-th Fourier integral operator and $a^{(0)}$ involves coefficients of $P_\text{ini}$ and their derivatives.
	\end{proof}
	\begin{Remark}
		As we can see from the value of $\delta_p$ and $\nu_p$ obtained here, from the point of view of $u_p^{(k,l)}$, there is a smoothing effect on the amplitude as $\delta_p$ gets smaller with the propagation but on the opposite, $\nu_p$ tends to grow. Note that this smoothing only happens for the amplitude: when considering derivatives in $y$ of the function $v_\gamma$ of theorem \ref{th_K} and theorem \ref{th_trapped_set}, they can fall not only on the polynomials $P_p^{(k,l)}$ but also on the $\Gamma$ factor (or functions defining $\mathcal{I}$), which is estimated differently.
	\end{Remark}
	\medbreak
	\subsubsection{Control of the remainder and consequences} 
	Using lemma \ref{norm_2ndmethod} and the iterative formula for the rest $r^{(p)}_N$, we can deduce a control of the remainder:
	\begin{Lemma}
		\begin{equation*} 
			\left\|r^{(p)}_N\right\|\leq C h^{2N\epsilon}N_\infty\left(P_\text{ini}\right).
		\end{equation*}
	\end{Lemma}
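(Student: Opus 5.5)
We unfold the recursion (\ref{def restes}) and bound each contribution, treating separately the terms created at a given step and the way later steps propagate them. Iterating (\ref{def restes}) from $p=n_s$ gives
\begin{equation*}
r_N^{(p)}=T_{p}\cdots T_{n_s+1}\, r_N^{(n_s)}+\sum_{q=n_s+1}^{p} T_p\cdots T_{q+1}\sum_{k+l<2N} h^{k/2+l} R_q^{(2N-k-l)}\bigl(P_{q-1}^{(k,l)}\bigr).
\end{equation*}
Each $T_j$ is, up to $O(h^\infty)$, a microlocally unitary Fourier integral operator composed with bounded cutoffs. Hence $\|T_p\cdots T_{q+1}\|_{L^2\to L^2}\leq 1+O(h^\infty)$ uniformly for $p-q\leq C|\log h|$, and propagation adds no loss. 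The initial remainder $r_N^{(n_s)}$ combines Proposition \ref{succ_1st} with the truncation error of Lemma \ref{1st_methodK}. It is bounded by $C h^{N}(\sigma^{(n_st_0)}_{\rm tot})^{6N}+Ch^{N(\tau-6t_0\lambda_{\max}\epsilon_s)}$. Since $\epsilon_s<\tau/(6t_0\lambda_{\max})$, this is $O(h^{2N\epsilon})$ for a suitable small $\epsilon>0$, times $N_\infty(P_{\rm ini})$.

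For the terms created at step $q$ we use (\ref{norm R_N}), with $P=P_{q-1}^{(k,l)}$, $\kappa^{\rm c}=\kappa^{{\rm c},(q-1)}$ and order $2N-k-l$. This gives
\begin{equation*}
\bigl\|R_q^{(2N-k-l)}(P_{q-1}^{(k,l)})\bigr\|\leq C h^{N'}\sup \bigl\|N_\infty(\partial^{\alpha_1+\beta_1}_y P_{q-1}^{(k,l)})\bigr\|_{L^2}\,(\sigma_c^{(q)})^{|\beta_1|+2|\alpha_2|+3|\beta_2|+d_{\scalerel*{\parallel}{\perp}}},
\end{equation*}
where $N'$ is the order of the remainder. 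We then insert Lemma \ref{norm_2ndmethod} for the derivatives of $P_{q-1}^{(k,l)}$, and the bound $\|\kappa^{{\rm c},(q)}\|\leq Ce^{q(\lambda_c+\epsilon_2/3)t_0}$ from \S\ref{estim_metap}.

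Every factor $h^{1/2}$ or $h$ in the product is then paired either with a factor $h^{-1/2}e^{-qt_0(\nu_{\min}-\epsilon)}=h^{-\delta_q}$ or with powers of $\sigma_c^{(q)}\leq Ch^{-\nu_q}$. The admissible time range ($t\leq (1/(6\lambda_c)-\epsilon)|\log h|$ together with $t\geq \epsilon|\log h|$) guarantees $\nu_q<1/6-\epsilon'$ and $\delta_q+\nu_q<1/2-\epsilon'$. Exactly as in the final line of Proposition \ref{Qjkl}, each remainder at step $q$ is therefore bounded by $C(q^{2N}+1)h^{-\nu d_{\scalerel*{\parallel}{\perp}}}h^{4N\epsilon'}N_\infty(P_{\rm ini})$. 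The bookkeeping is the same as in the proof of Lemma \ref{norm_2ndmethod}: the $h^{k/2+l}$ prefactor pairs with the $k_1,l_1$ (respectively $k_2,l_2$) losses. Two steps remain:
\begin{enumerate}
\item Sum over the finitely many pairs $(k,l)$.
\item Sum over $q\leq n=O(|\log h|)$. This costs only a factor $|\log h|^{2N+1}$, which is absorbed by shrinking the exponent to $2N\epsilon$ (after possibly replacing $N$ by a multiple of $N$ to compensate the fixed loss $h^{-\nu d_{\scalerel*{\parallel}{\perp}}}$).
\end{enumerate}

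The main obstacle is the exponent count. We must check that the worst case among the terms, with all derivatives landing either on the WKB amplitude (the $\delta$-loss) or on $\Gamma_{\scalerel*{\parallel}{\perp}}$ (the $2\nu$-loss), still leaves a net positive power of $h$ uniformly in $q$ up to the final time. The delicate case is near $t\approx(1/(2\lambda_{\max})-\epsilon)|\log h|$, where $\delta_q$ is smallest but $\nu_q$ is largest. There one must use the $r$-normal hyperbolicity $\nu^\perp_{\min}>3\lambda^c$ to ensure $\max(h^{2N(1/2-\delta_q-\nu_q)},h^{N(1/2-3\nu_q)})\leq h^{4N\epsilon'}$ at every intermediate step, not just at the end.
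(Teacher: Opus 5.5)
Your proof follows the paper's argument. The paper likewise combines the recursion (\ref{def restes}) with the boundedness of $T_p$, the remainder estimate (\ref{norm R_N}) and Lemma \ref{norm_2ndmethod}, and bounds each new error in terms of $\delta_p,\nu_p$ at the cost of a $|\log h|$ factor. Your unrolled version makes explicit what the paper leaves implicit: the initial remainder and the summation over the $O(|\log h|)$ steps.

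Two minor corrections. First, the cutoffs only give $\|T_j\|\leq 1+O(h)$, not $1+O(h^\infty)$, though the product over $O(|\log h|)$ steps is still uniformly bounded. Second, the margin in $\delta_q+\nu_q<1/2$ is tightest right after the switch time $n_s$, where it is roughly $\epsilon_s t_0(\nu_{\text{min}}^\perp-\lambda^{\text{c}})$ and only improves afterwards. So near the final time the only constraint at stake is $\nu_q<1/6$, contrary to your closing paragraph.
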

	\begin{proof}
		The equation \ref{def restes} combined with equation \ref{norm R_N} and lemma \ref{norm_2ndmethod} yields:
		\begin{align*}
			\left\|r^{(p)}_N\right\|&\leq \left\|r^{(p-1)}_N\right\|+\sum_{k+l<2N} Ch^{k/2+l+N} \times \\&\qquad\sup_{2l'+k'=2N}\sup_{\substack{|\alpha_1|+|\alpha_2|\leq 2l' , \\ |\beta_1|+|\beta_2|=k'}}\|\kappa^\text{c}\|^{|\beta_1|+2|\alpha_2|+3|\beta_2|+d_{\scalerel*{\parallel}{\perp}}}
			\left\|N_\infty\left(\partial^{\alpha_1+\beta_1}_y P_{p-1}^{(k,l)}\right)\right\|_{L^2} 
			\\&\leq  \left\|r^{(p-1)}_N\right\|+\sum_{k+l<2N} Ch^{k/2+l+N}((p-1)^{2N}+1)  \sup_{2l'+k'=2N}\sup_{\substack{|\alpha_1|+|\alpha_2|\leq 2l' , \\ |\beta_1|+|\beta_2|=k'}}\|\kappa^\text{c}\|^{|\beta_1|+2|\alpha_2|+3|\beta_2|+d_{\scalerel*{\parallel}{\perp}}} 
			\\&\qquad \times\sup_{\substack{|l_1|+|l_2|\leq 2l , \\ |k_1|+|k_2|=k}} \left(h^{-1/2} e^{-pt_0(\nu_{\text{min}}+\epsilon)}\right)^{|l_1|+|k_1|+|\alpha_1+\beta_1|} \left(\sigma_{\text{c}}^{(p)}\right)^{|k_1|+2|l_2|+3|k_2|},
		\end{align*}
		which gives
		\begin{align*}
			\left\|r^{(p)}_N\right\|\leq &\left\|r^{(p-1)}_N\right\|+C|\log h|\sum_{k+l<2N} h^{k/2+l}  \max(h^{-3\nu_p k}, h^{-(1/2-\delta_p)k}) \max( h^{-4l\nu_p},h^{-2l(1/2-\delta_p)})
			\\&\times h^N \sup_{2l'+k'=2N} \max(h^{-2(1/2-\delta_p)l'}, h^{-2l'\nu_p} ) \max(h^{-(1/2-\delta_p-\nu_p)k'},n^{-3k'\nu_p}).
		\end{align*}
	\end{proof}
	These results imply that 
	$T_n \dots T_1 \varphi_{\kappa_{\alpha}^{-1}(\rho)}$ has the form mentionned in thereom \ref{th_trapped_set}. This is enough to conclude for the case of  theorem \ref{th_K} using the few simplifications pointed out through out the paper. However, according to lemma \ref{introduce charts}, to get the result of theorem \ref{th_trapped_set}, we still have to apply the operator $\mathcal{U}_\beta (\mathcal{U}^n)^*$ to $T_n \dots T_1 \varphi_{\kappa_{\alpha}^{-1}(\rho)}$. 
	\medbreak
	It turns out that adding this last operator is essentially harmless: the operator $\mathcal{U}_\beta (\mathcal{U}^n)^*$ is also a local Fourier Integral Operator that verifies the assumption of section \ref{section2nd}. Indeed, the main assumption is that our isotropic manifold $\mathcal{I}^n$ is sent onto another isotropic manifold that is still projectable in $y$. This true for $\mathcal{U}_\beta (\mathcal{U}^n)^*$: the isotropic manifold $\mathcal{I}^n$ is really close to $\mathcal{I}_m$ thanks to the estimates of section \ref{phase}, hence its image by $\kappa^{(n)}$ is close to $W^u_{\tilde{\rho}^n}$. But as $\kappa_\beta$ is adapted to $K^\delta$ in the sense of definition \ref{K adapted}, it sends $W^u_{\tilde{\rho}^n}$ onto a manifold that is projectable in $y$. As a consequence, the same is true for $\kappa_\beta (\kappa^{(n)})^{-1}(\mathcal{I}^n)$.
	\medbreak
	The only real change of having in section \ref{section2nd} a classical dynamics that does not come from charts adapted to points is that we do not have lemma \ref{compare dcF} that relates the quadratic phase of the metaplectic operator to the linearized classical dynamics. This lemma was particularly useful when we applied the method of section \ref{section2nd} a large number $n$ of times, to ensure that all the quantities that appear grew in a controlled way with $n$. However, if we only need to apply the method of section \ref{section2nd} once, we do not need precise information on the application $d^c\mathfrak{F}$ that is quantized by the metaplectic operator.

	\appendix
	
	\section{Existence of adapted coordinates}\label{Appendix coord}
	In this appendix, we aim at proving that there exists coordinates satisfying the definition \ref{coord}.
	\medbreak
	Let us start by some facts about the symplectic geometry of $r-$normal hyperbolicity:
	\begin{Lemma}\cite[Lemma 4.1]{Steph}\label{est iso}
		Let us make the normal hyperbolicity assumption (\ref{H1}), then if $\omega$ is the canonical symplectic form on $\R^{2d}$, we have
		\begin{equation*} 
			\forall \rho\in K^\delta,\ \rho'\in W^{u/s}(\rho), \quad \omega_{\rho'} \vert_{T_{\rho'}W^{u/s}_{\rho}}=0,
		\end{equation*}
		i.e. $W^{u/s}_\rho$ are isotropic manifolds of $\R^{2d}$.
		\medbreak
		Moreover, if we assume $r$-normal hyperbolicity (\ref{H1'}), then for any $\rho\in K^\delta$, $V^{u/s}_\rho$ are included in the symplectic orthogonal of $T_{\rho} K^\delta$:
		\begin{equation*} 
			\forall u\in V^{u/s}_\rho,\ v\in T_\rho K^\delta,\quad \omega_\rho(u,v)=0.
		\end{equation*}
	\end{Lemma}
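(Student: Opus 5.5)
The statement to prove is Lemma~\ref{est iso}: the local stable/unstable manifolds are isotropic and, under $r$-normal hyperbolicity, $\mathcal{V}^{u/s}_\rho$ lies in the symplectic orthogonal of $T_\rho K^\delta$. The plan is to use the invariance of $\omega$ under $\Phi^t$ and to play contraction rates against each other.

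For isotropy, fix $\rho\in K^\delta$, $\rho'\in W^u_\rho$ and $u,v\in T_{\rho'}W^u_\rho$. Because $\Phi^t$ is symplectic,
\[
\omega_{\rho'}(u,v)=\omega_{\Phi^{-t}(\rho')}\bigl(d\Phi^{-t}u,\,d\Phi^{-t}v\bigr) .
\]
As $t\to+\infty$, $\Phi^{-t}(\rho')$ converges to $\Phi^{-t}(\rho)\in K^\delta$ at rate $e^{-\nu t}$. I would then show that $d\Phi^{-t}$ restricted to $TW^u_\rho$ contracts at rate $Ce^{-\nu' t}$ for some $\nu'>0$. At $\rho'=\rho$ this is exactly (\ref{nu_min}), since $T_\rho W^u_\rho=\mathcal{V}^u_\rho$. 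At nearby points it should follow from the characterization of $W^u$ via sharply asymptotic points together with the $C^\infty$ regularity in \cite{HPS}, or from the fact that $TW^u$ is an invariant family close to $\mathcal{V}^u$ on which the backward flow uniformly contracts (a cone argument). Since $\omega$ is bounded on the compact neighbourhood, the right-hand side is then $O(e^{-2\nu' t})$, which tends to $0$, so $\omega_{\rho'}(u,v)=0$. The stable case is identical with $t\to-\infty$.

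For the second claim, take $u\in\mathcal{V}^u_\rho$ and $v\in T_\rho K^\delta$, and write again
\[
\omega_\rho(u,v)=\omega\bigl(d\Phi^{-t}u,\,d\Phi^{-t}v\bigr),
\qquad |\omega_\rho(u,v)|\le C\,\|d\Phi^{-t}|_{\mathcal{V}^u}\|\,\|d\Phi^{-t}|_{TK}\| .
\]
The first factor is at most $Ce^{-(\nu_{\min}^\perp-\epsilon)t}$. For the second I need a backward growth rate for the tangential dynamics. The definition (\ref{lambda_c}) only controls forward time, so I would either use the symmetric definition implicit in normal hyperbolicity or argue as follows. Since $T K^\delta$ is symplectic and invariant, $\|d\Phi^{-t}|_{TK}\|\le C\|d\Phi^{t}|_{TK}\|^{2d_\parallel-1}$ by the symplectic inverse formula, which is too crude. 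The better route is to note that $r$-normal hyperbolicity in the sense of \cite{HPS} bounds both directions by $e^{(\lambda^c+\epsilon)|t|}$. Given that, the product is at most $Ce^{-(\nu_{\min}^\perp-\lambda^c-2\epsilon)t}$, which tends to $0$ because $\nu_{\min}^\perp>3\lambda^c\ge\lambda^c$. Hence $\omega_\rho(u,v)=0$, and the same holds for $\mathcal{V}^s$ with forward time.

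The main obstacle is the first step away from the base point. At $\rho'\neq\rho$ the tangent space $T_{\rho'}W^u_\rho$ is not $\mathcal{V}^u$, so I must justify uniform backward contraction of $d\Phi^{-t}$ on it. I would try to obtain this from the invariance and continuity of the family $\{T_{\rho'}W^u_{\rho}\}$, combined with the domination $\nu^\perp_{\min}>\lambda^c$. This gives a cone field around $\mathcal{V}^u$ that is mapped into itself by $d\Phi^{t}$, which delivers the needed uniform rate. Alternatively, one could cite the corresponding argument in \cite{Steph}.
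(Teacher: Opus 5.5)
Your argument is the paper's argument: transport $\omega$ by the flow in the time direction in which the transverse vectors contract, then compare exponential rates. The paper merely asserts the two points you flag, namely uniform backward contraction on $T_{\rho'}W^u_\rho$ away from the base point and a growth bound for $d\Phi^{-t}$ on $TK^\delta$. Your cone-field justification of the first point is a genuine improvement on the paper.

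On the second point, however, you discard the right tool because of a miscalculation. The exponent $2d_{\scalerel*{\parallel}{\perp}}-1$ comes from the adjugate formula, which uses only volume preservation. The symplectic structure gives exponent one.

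Write $\omega_\rho(u,v)=\langle u,\Omega_\rho v\rangle$ for $u,v\in T_\rho K^\delta$, and set $A=d_\rho\Phi^t|_{T_\rho K^\delta}$, with $A^*$ its adjoint for the Euclidean inner products restricted to the two tangent spaces. Invariance of $\omega$ then reads $A^*\Omega_{\Phi^t(\rho)}A=\Omega_\rho$, so $A^{-1}=\Omega_\rho^{-1}A^*\Omega_{\Phi^t(\rho)}$. Since $K^\delta$ is a compact, flow-invariant symplectic submanifold (this is part of (\ref{H1})), $\|\Omega_\rho\|$ and $\|\Omega_\rho^{-1}\|$ are uniformly bounded. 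Applying this at the base point $\Phi^{-t}(\rho)$ gives
\[
\sup_{\rho\in K^\delta}\bigl\|d_\rho\Phi^{-t}|_{T_\rho K^\delta}\bigr\|\le C\sup_{\rho\in K^\delta}\bigl\|d_\rho\Phi^{t}|_{T_\rho K^\delta}\bigr\|\le Ce^{(\lambda^{\text{c}}+\epsilon)t}.
\]
So the one-sided exponent (\ref{lambda_c}) already controls the backward tangential growth you need for $\mathcal{V}^u$.

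Your fallback is weaker than it looks. It amounts to redefining $\lambda^{\text{c}}$ as a two-sided rate, and HPS-type domination does not supply that by itself. It compares $\mathcal{V}^u$ with tangential expansion and $\mathcal{V}^s$ with tangential contraction, which is the opposite pairing of time directions from the one your estimate of $\omega_\rho(u,v)$ uses.

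With the symplectic bound, $|\omega_\rho(u,v)|\le Ce^{-(\nu^\perp_{\text{min}}-\lambda^{\text{c}}-2\epsilon)t}$ follows from the hypotheses exactly as the paper states them. It uses only $\nu^\perp_{\text{min}}>\lambda^{\text{c}}$, not the full $r\geq 3$ condition.
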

	\begin{proof}
		These properties rely on the preservation of the symplectic structure $\omega$ by the flow $\Phi^t$.
		\medbreak
		Let $\rho\in K^\delta$, we start by showing that $V^{u/s}_\rho$ are isotropic subspaces of $T_{\rho} \R^{2d}$:
		\newline
		Let $u$, $v\in V^{u/s}_\rho$ then
		\begin{equation*} 
			\omega_\rho(u,v)=\omega_{\Phi^{\mp t}(\rho)}\left(\d \Phi^{\mp t}_\rho(u),\d \Phi^{\mp t}_\rho(v)\right)\to 0, \quad t\to +\infty,
		\end{equation*}
		as both terms in $\omega_{\Phi^{\mp t}(\rho)}$ decay exponentially fast as $t$ goes to $+\infty$ by assumption.
		\newline
		To show the isotropy of the tangent space to $W^{u/s}_\rho$ at a point $\rho'\in W^{u/s}_\rho$, we use similarly that the flow is contracting along $W^{u/s}_\rho$ and that all points are past/forward asymptotic to $\rho$.
		\medbreak
		If we rather assume (\ref{H1'}) and consider $u\in V^{u/s}_\rho, v\in T_\rho K^{\delta}$ then we also get 
		\begin{equation*} 
			\omega_\rho(u,v)=\omega_{\Phi^{\mp t}(\rho)}\left(\d \Phi^{\mp t}_\rho(u),\d \Phi^{\mp t}_\rho(v)\right)\to 0, \quad t\to +\infty,
		\end{equation*}
		as the left-hand term inside $\omega_{\Phi^{\mp t}(\rho)}$ decay exponentially with a rate big enough to overcome the potential exponential growth of the right-hand term inside $\omega_{\Phi^{\mp t}(\rho)}$.
	\end{proof}
	Recall that $K^\delta$ is symplectic by assumption. Let us pick $\rho\in K^\delta$ that will be the center of chart in definition \ref{coord}.
	We start by considering coordinates $\kappa:\text{neigh}(\rho)\to \text{neigh}(0)$ (non necessary symplectic) such that:
	\begin{enumerate}
		\item\begin{equation} \label{coord K}
			\kappa\left(K^\delta\cap U_{\rho}\right)=\left\{(x',0,\xi',0),(x',\xi')\in \R^{2d_{\scalerel*{\parallel}{\perp}}}\right\}\cap V_{\rho},
		\end{equation}
		\item\begin{equation}\label{coord W^u}  
			\kappa\left(W^u_\rho\cap U_{\rho}\right)=\left\{(0,y',0,0), y'\in \R^{d_\perp}\right\}\cap V_{\rho},
		\end{equation}
		\item\begin{equation}\label{coord W^s}  
			\kappa\left(W^s_\rho\cap U_{\rho}\right)=\left\{(0,0,0,0), \eta'\in \R^{d_\perp}\right\}\cap V_{\rho},
		\end{equation}
		\item $d\kappa$ is symplectic when computed on points of $K^\delta$, $W^u_{\rho}$ or $W^s_{\rho}$.
	\end{enumerate}
	The last point is obtained by adjusting the vector fields associated with the coordinates on these submanifolds. The fact that we can do it is ensured by lemma \ref{est iso}.
	\medbreak
	From these coordinates, we want to construct symplectic ones that satisfy similar properties using Moser's trick. We introduce the symplectic form
	$\omega_1=(\kappa^{-1})^* \Omega_0$ (where $\Omega_0$ is the canonical symplectic form on $\R^{2d}$) and the original one $\omega_0=\sum d x_i\wedge d \xi_i+d y_i \wedge d \eta_i$ on $\R^{2d}$ and define $\omega_t=(1-t)\omega_0+t\omega_1$.
	\medbreak
	By Poincaré's lemma, there exists a 1-form $\alpha$ on a neighborhood $\mathcal{U}$ of $\rho$ such that 
	\begin{equation*} 
		\dot{\omega_t}=\omega_1-\omega_0=\d \alpha,\quad \text{ on } \mathcal{U}.
	\end{equation*}
	As suggested by Moser's trick, we look for a vector field $X_t$ such that 
	\begin{equation*} 
		\iota_{X_t} \omega_t+\alpha=0.
	\end{equation*}
	From the choice of our coordinates $\kappa$, we can see that $\alpha=0$ on $K^\delta$, $W^u_{\rho}$, $W^s_{\rho}$. Hence by non-degeneracy of $\omega_t$,
	\begin{equation*}
		X_t \text{ vanishes on } K^\delta, W^u_{\rho}, W^s_{\rho}.
	\end{equation*}
	We may shrink the neighborhood of $\rho$ so that the flow is defined for all $0\leq t \leq 1$. Now, we set $\phi$ the time-1 map of the flow of $X_t$ which is a diffeomorphism. We compose this diffeomorphism with the chart and check that the properties \ref{coord K}, \ref{coord W^u}, \ref{coord W^s}.

	\section{\texorpdfstring{Study of the classical dynamics associated with the second method when $\rho\notin K^{\delta}$}{Study of the classical dynamics associated with the second method when rho not in K delta}}\label{L_j}
	In this appendix, we would like to show how to obtain a control over quantities involving the classical dynamics that appear during the successive applications of the second method in section \ref{app_2nd}. This is an extension of the proposition 5.1 of \cite{NZ_pression_topo} called ``evolution of Lagrangian leaves''.
	\medbreak
	We will study the evolution of the manifolds $\mathcal{I}_j$ and the composition of parallel metaplectic operators. We can check that they behave well under composition in the case when $\rho\in K$: they respectively give the evolution of the unstable manifold, some parallel part of its linearization. However, the situation is not as clear in the case where $\rho$ only lies in a $\sqrt{h}$ neighborhood of $K^\delta$: this question is the object of this work.
	\medbreak
	\subsection{\texorpdfstring{Control of the functions describing $\mathcal{I}_j$}{Control of the functions describing Ij}}\label{phase}
	Here we would like to show estimates on derivatives of  functions describing the manifold $\mathcal{I}_j$. We will start by showing that they are uniformly bounded (which was a requirement to apply the method).
	\medbreak
	The first step to do so is to show expansiveness in time $1$ in the unstable direction.
	\medbreak
	Let us choose the time $t_0$ big enough so that the sub-matrices of the linearized dynamic at the center points $\tilde{\rho}$ and its iterates verifies (see definition \ref{coord}):
	\begin{equation*} 
		\d \Phi^{(j)}\left(\tilde{\rho}^j\right)=\begin{pNiceArray}{cccc}[last-row,last-col]
			A_j & 0 & E_j & 0 & x\text{ components}
			\\ 0 & B_j & 0 & 0 & y \text{ components}
			\\ F_j& 0 & C_j &0 & \xi \text{ components}
			\\ 0 &0 &0  &D_j & \eta \text{ components}
			\\ \partial_x & \partial_y & \partial_\xi & \partial_\eta &
		\end{pNiceArray},
	\end{equation*}
	with $\|B_j^{-1}\|,\|D_j\|< \nu <1$ and $\|A_j\|,\|C_j\|,\|D_j\|,\|E_j\|<\mu<\nu^{-1}$.
	\medbreak
	Let us write $\mathcal{I}_{n_s}=\left\{(q^{n_s}_x,y^{n_s},p^{n_s}_x,p^{n_s}_y), y^{n_s}\in D_{\epsilon_1}\right\}$ the manifold associated with the time $n_s$ at we change methods. We then show that $\mathcal{I}_j=\kappa^{(j)}\Phi^{(j-n_s)t_0}(\kappa^{(n_s)})^{-1}\left(\mathcal{I}_{n_s}\right)$ can be written in a similar fashion for any $j\in \llbracket n_s,n \rrbracket$ that might depend on $h$: $\mathcal{I}_j=\bigl\{(\overline{x^j}(y^j),y^j,\overline{\xi^j}(y^j), \overline{\eta^j}(y^j)),$\newline $ y^j\in D_{\epsilon_1}\bigr\}$ with derivatives of functions parameterizing it uniformly bounded.
	\medbreak
	Assume that $\mathcal{I}_j=\left\{(\overline{x^j}(y^j),y^j,\overline{\xi^j}(y^j), \overline{\eta^j}(y^j)), y^j\in D_{\epsilon_1} \right\}$ and the derivatives are bounded.
	We can write its image under $\Phi^{(j)}$ as $(x^{j+1},y^{j+1},\xi^{j+1},\eta^{j+1})=\Phi^{(j)}(\overline{x^j}(y^j),y^j,\overline{\xi^j}(y^j), \overline{\eta^j}(y^j))$ verifying
	\begin{align}
		x^{j+1}(y^j)&=\quad\ \ A_j \overline{x^j}(y^j)+ E_j\overline{\xi^j}(y^j)&+\tilde{\alpha_j}\left(\overline{x^j}(y^j),y^j,\overline{\xi^j}(y^j), \overline{\eta^j}(y^j)\right),
		\\ y^{j+1}(y^j)&=\qquad\qquad\ \ B_j y^j&+\tilde{\beta_j}\left(\overline{x^j}(y^j),y^j,\overline{\xi^j}(y^j), \overline{\eta^j}(y^j)\right) \label{syst_lin},
		\\ \xi^{j+1}(y^j)&=\quad\ \  F_j \overline{x^j}(y^j)+ C_j\overline{\xi^j}(y^j)&+\tilde{\gamma_j}\left(\overline{x^j}(y^j),y^j,\overline{\xi^j}(y^j), \overline{\eta^j}(y^j)\right),
		\\\eta_{j+1}(y^j)&=\qquad\quad\ \ D_j \overline{\eta^j}(y^j)&+\tilde{\delta_j}\left(\overline{x^j}(y^j),y^j,\overline{\xi^j}(y^j),\overline{\eta^j}(y^j)\right),
	\end{align}
	where $\tilde{\alpha_j},\tilde{\beta_j},\tilde{\gamma_j},\tilde{\delta_j}$ are smooth functions such that 
	\begin{equation*} 
		\tilde{\alpha_j}(0,0,0,0)=0 \text{ and } \|\tilde{\alpha_j}\|_{C^1(V_{\Phi^{jt_0}(\rho)})}\leq C \epsilon^\gamma,
	\end{equation*}
	and the same is true for $\tilde{\beta_j},\tilde{\gamma_j},\tilde{\delta_j}$.
	
	As a consequence, taking the $y$ coordinates and derivating with respect to $y^{j}$ gives
	\begin{align*}
		\frac{\partial y^{j+1}}{\partial y^j}&= B_j+\frac{\partial \tilde{\beta_j}}{\partial x^j}\frac{\partial \overline{x^j} }{\partial y^j}+ \frac{\partial \tilde{\beta_j}}{\partial y^j}+\frac{\partial \tilde{\beta_j}}{\partial \xi^j}\frac{\partial \overline{\xi^j} }{\partial y^j}+\frac{\partial \tilde{\beta_j}}{\partial\eta^j}\frac{\partial \overline{\eta^j }}{\partial y^j}
		\\&=\left(B_j+ O(\epsilon^\gamma (1+\gamma_1))\right),
	\end{align*}
	where $\gamma_1>0$ is a constant controlling the derivatives of the functions $\overline{x^j},\overline{\xi^j},\overline{\eta^j}$.
	\newline
	Recall that $\|B_j^{-1}\|< \nu <1$ so that if $\epsilon$ is small enough, the matrix above is still expanding. Hence, we can use the $y^{j+1}$ coordinate to parameterize $\mathcal{I}_{j+1}=\d \Phi^{(j)}(\mathcal{I}_j)$ with the implicit function theorem: the app $y^j\mapsto y^{j+1}$ is invertible and its inverse verifies:
	\begin{equation*} 
		\left\|\frac{\partial y^j}{\partial y^{j+1}} \right\|\leq \nu.
	\end{equation*}
	Moreover, we can differentiate the functions $\overline{x^{j+1}}(y^{j+1}),\overline{\xi^{j+1}}(y^{j+1}),\overline{\eta^{j+1}}(y^{j+1})$ and obtain (we can treat similarly $\overline{\xi^{j+1}}$ and $\overline{\eta^{j+1}}$):
	\begin{equation*} 
		\frac{\partial \overline{x^{j+1}}}{\partial y^{j+1}}= \left(\bigl(A_j+\partial_x\tilde{\alpha}\bigr) \frac{\partial \overline{x^j}}{\partial y^j}+\partial_y\tilde{\alpha}+\bigl(E_j+\partial_\xi \tilde{\alpha}\bigr) \frac{\partial \overline{\xi^j}}{\partial y^j}+ \partial_\eta \tilde{\alpha}\frac{\partial \overline{\eta^j}}{\partial y^j} \right) \frac{\partial y^j}{\partial y^{j+1}},
	\end{equation*}
	and for $\epsilon$ small enough we obtain 
	\begin{equation*} 
		\left\| \frac{\partial \overline{x^{j+1}}}{\partial y^{j+1}}\right\| \leq \gamma_1,
	\end{equation*}
	thanks to $\|A_j\|\leq \mu<\nu^{-1}$ and $\left\|\frac{\partial y^j}{\partial y^{j+1}} \right\|\leq \nu$.
	\medbreak
	For the study of higher derivatives of  $\overline{x^{j+1}},\overline{\xi^{j+1}},\overline{\eta^{j+1}}$, we work by induction on the degree $l$ of differentiation and we differentiate $l$ times the equation 
	\begin{equation*} 
		\Phi^{(j)}\left( \overline{x^{j}}(y^j),y^j,\overline{\xi^{j}}(y^j),\overline{\eta^j}(y^j)\right)=\left(\overline{x^{j+1}}(y^{j+1}),y^{j+1},\overline{\xi^{j+1}}(y^{j+1}),\overline{\eta^{j+1}}(y^{j+1})\right).
	\end{equation*}
	It gives for instance for $\overline{x^j}$: 
	\begin{equation*} 
		\frac{\partial^l \overline{x^{j+1}}}{(\partial y^{j+1})^l}=    \left(\bigl(A_j+\partial_x \tilde{\alpha_j}\bigr) \frac{\partial^l \overline{x^j}}{(\partial y^j)^l}+\partial_y\tilde{\alpha_j}+\bigl(E_j+\partial_\xi \tilde{\alpha}\bigr) \frac{\partial^l \overline{\xi^j}}{(\partial y^j)^l}+\partial_\eta \tilde{\alpha_j} \frac{\partial^l \overline{\eta^j}}{(\partial y^j)^l}   \right)  \left(\frac{\partial y^j}{\partial y^{j+1}}\right)^l +P_{l,k},
	\end{equation*}
	with $P_{l,j}$ a polynomial, in lower order derivatives of the functions, with coefficients uniformly bounded in $j$. 
	As we only consider values in a $\epsilon$ disk, there exists a constant $C_l(\gamma_{l-1})>0$ independent of $j$ such that:
	\begin{equation*} 
		\left\|\frac{\partial^l \overline{x^{j+1}}}{(\partial y^{j+1})^l}\right\| \leq \nu^{l-1} \nu\mu \max\left(\left\|\frac{\partial^l \overline{x^j}}{(\partial y^j)^l}\right\|, \left\|\frac{\partial^l \overline{\xi^j}}{(\partial y^j)^l}\right\|, \left\|\frac{\partial^l \overline{\eta^j}}{(\partial y^j)^l}\right\|\right) +C_l(\gamma_{l-1}).
	\end{equation*}
	Hence, if $\|\overline{x^j}\|_{C^l}, \|\overline{\xi^j}\|_{C^l}, \|\overline{\eta^j}\|_{C^l}\leq \gamma_l$ for $\gamma_l$ large enough, we have $\left\|\frac{\partial^l \overline{x^{j+1}}}{(\partial y^{j+1})^l}\right\|\leq \nu \mu \gamma_l<\gamma_l$. 
	\medbreak
	Finally, we would to show the more precise estimates described in theorem \ref{th_trapped_set}. For this, we will use the preliminary section \ref{proof_dyn_class}, controlling the behavior of points under the classical dynamics.
	\newline
	Indeed, we know that the points of $\mathcal{I}_{n}$ relevant in theorem \ref{th_trapped_set} enter in the framework of lemma \ref{proof_dyn_class}. As a consequence, if $y^n\in \R^{d_\perp}$ such that $u_\gamma(y)$ is not $O(h^\infty)$ then 
	\begin{equation*} 
		|\overline{x^n}(y^n)|, |\overline{\xi^n}(y^n)|,|\overline{\eta^n}(y^n) |\leq C \sigma_c^{(t)} h^{\tau}h^{0-\cdot}.
	\end{equation*}
	Now that we also now that all the derivatives of $\overline{x^n},\overline{\xi^n}$ and $\overline{\eta^n}$ are bounded, we can show that their derivatives must satisfy the similar estimates by contradiction: assume that for instance 
	\begin{equation*} 
		\|\overline{x^n}\|_\infty\leq Ch^{\alpha} \text{ and } \|\partial_i\overline{x^n}_j\|_\infty\geq Ch^{\alpha-\beta}, \alpha,\beta>0. 
	\end{equation*}
	Then, using the fact that if $f:\R\to\R$ is smooth and $f,f''$ are bounded then $\|f'\|_{\infty} \leq \sqrt{2 \|f\|_{\infty} \|f''\|_{\infty}}$, we get (for a different $C>0$)
	\begin{equation*} 
		\| \partial^2_i\overline{x^n}_j\|_{\infty} \leq C h^{\alpha-2\beta}.
	\end{equation*}
	Iterating this results until $\alpha-k\beta$ becomes negatives gives a contradiction with $\| \partial^k_i\overline{x^n}_j\|_{\infty}\leq C$.

	\subsection{\texorpdfstring{Study the derivatives of $y^i$ with respect to $y^{i+1}$,link with the Lyapunov exponent}{Study the derivatives of yi with respect to y i+1,link with the Lyapunov exponent}}
	
	To do this we come back to the equation (\ref{syst_lin}) and study its derivative:
	\begin{align*}
		\frac{\partial y^{j+1}}{\partial y^j}&= B_j+\frac{\partial \tilde{\beta_j}}{\partial x^j}\frac{\partial \overline{x^j} }{\partial y^j}+ \frac{\partial \tilde{\beta_j}}{\partial y^j}+\frac{\partial \tilde{\beta_j}}{\partial \xi^j}\frac{\partial \overline{\xi^j} }{\partial y^j}+\frac{\partial \tilde{\beta_j}}{\partial\eta^j}\frac{\partial \overline{\eta^j }}{\partial y^j}
		\\&=\left(B_j+ O(\|\overline{x^j}(y^j),y^j,\overline{\xi^j}(y^j),\overline{\eta^j}\|)\right),
	\end{align*}
	where we have used that $\overline{x^j},\overline{\xi^j},\overline{\eta^j}$ have bounded derivatives, $d\tilde{\beta_j}(0,0,0,0)=0$ and the mean value theorem.
	\medbreak
	We now compute:
	\begin{align*}
		\det\left(\frac{\partial y^n}{\partial y^{n_s}}\right)&= \det \prod_{k=n_s}^{n-1} \left(B_k+ O(\|\overline{x^j}(y^j),y^j,\overline{\xi^j}(y^j),\overline{\eta^j}\|)\right)
		\\&=\det\left(\prod_{k=n_s}^{n-1} B_k\right) \prod_{k=n_s}^{n-1} \det(I+O(\|\overline{x^j}(y^j),y^j,\overline{\xi^j}(y^j),\overline{\eta^j}\|)).
	\end{align*}
	To conclude, we notice that $\sum_{k=n_s}^{n-1}\left\|\overline{x^j}(y^j),y^j,\overline{\xi^j}(y^j),\overline{\eta^j}\right\|$ is bounded independently of $h$: it is the sum of coordinates of the iterates of a point $\rho$ under $\Phi^{t_0}$ which enters in the framework of lemma \ref{proof_dyn_class}, this lemma gives estimates that are sufficient to bound the sum by $O(\epsilon)$.
	\medbreak
	Hence, we obtain:
	\begin{equation*} 
		\det\left( \frac{\partial y^n}{\partial y^n_s}\right)= \det \left(\prod_{k=n_s}^{n-1} B_k\right) (1+O(\epsilon)),
	\end{equation*}
	which gives
	\begin{Lemma}\label{det y}
		\begin{equation*}
			\left|\det\left( \frac{\partial y^{n_s}}{\partial y^n}\right)\right|\leq C(J_u^{((n-n_s)t_0)}(\tilde{\rho}))^{-1}.
		\end{equation*}
	\end{Lemma}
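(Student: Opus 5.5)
The plan is to exploit the product structure just established: $\partial y^n/\partial y^{n_s}$ is the product over $k=n_s,\dots,n-1$ of the one-step Jacobians $\partial y^{k+1}/\partial y^k$. The estimate of Lemma~\ref{det y} for its inverse then follows by inverting the determinant identity $\det(\partial y^n/\partial y^{n_s})=\det(\prod_k B_k)(1+O(\epsilon))$ and identifying $|\det\prod_k B_k|$ with the unstable Jacobian $J_u^{((n-n_s)t_0)}(\tilde\rho)$. The approach is a chain of three steps.

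\textbf{Step 1: control the one-step factors.} Write each factor as $B_k(I+B_k^{-1}O(\|\rho_k\|))$, where $\rho_k=(\overline{x^k}(y^k),y^k,\overline{\xi^k}(y^k),\overline{\eta^k}(y^k))$. Since $\|B_k^{-1}\|<\nu<1$, the correction is still $O(\|\rho_k\|)$. Taking determinants gives $\prod_k|\det(I+O(\|\rho_k\|))|$, and $\log$ of this product is bounded by $C\sum_k\|\rho_k\|$. By Lemma~\ref{proof_dyn_class}, applied to the backward iterates of a point of $\mathcal I_n$ through Remark~\ref{dyn class pas rho}, this sum is a sum of a geometric series $\sum e^{-(n-k)t_0(\nu_{\min}^\perp-2\epsilon_2/3)}$ plus $\sum_k e^{(\lambda^c+2\epsilon_2/3)kt_0}h^\tau$. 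The second sum is $O(h^{\tau}e^{(\lambda^c+\cdot)nt_0})=O(h^{0+})$ because $nt_0<(\tau/\lambda^c-\epsilon)|\log h|$. So the product factor is bounded above and below by constants independent of $h$, and in particular it stays away from zero.

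\textbf{Step 2: identify $\prod_k B_k$ with the unstable Jacobian.} In the adapted charts of Definition~\ref{coord}, $B_k$ is exactly the matrix of $d\Phi^{t_0}(\tilde\rho^k)|_{\mathcal V^u}$ read from the $y$-direction at $\tilde\rho^k$ to the $y$-direction at $\tilde\rho^{k+1}$, because of the block form (\ref{dF0}) and $d\kappa(E^u)=\{(0,y,0,0)\}$. The product therefore equals $d\Phi^{(n-n_s)t_0}(\tilde\rho^{n_s})|_{\mathcal V^u}$ expressed in the charts. Its determinant is $J_u^{((n-n_s)t_0)}(\tilde\rho^{n_s})$ up to the ratio of the volume normalisations of the charts at the two endpoints. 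Since the charts vary in a compact family (compactness of $K^\delta$), that ratio is bounded above and below.

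\textbf{Step 3: pass to the notation of the lemma.} Inverting gives $|\det(\partial y^{n_s}/\partial y^n)|\le C(J_u^{((n-n_s)t_0)}(\tilde\rho^{n_s}))^{-1}$. The shift of base point from $\tilde\rho^{n_s}$ to $\tilde\rho$ is harmless by the cocycle property, $J_u^{(t+s)}(\tilde\rho)=J_u^{(t)}(\Phi^s\tilde\rho)J_u^{(s)}(\tilde\rho)$. Combined with the bound $J_u^{(n_st_0)}\le Ch^{-C\epsilon_s}$, this costs at most an $h^{0-}$ factor; alternatively, one reads $J_u$ in the lemma as based at the iterate, which is how the lemma is used in Lemma~\ref{Ninf_poly}.

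The main obstacle I expect is Step 1, namely the uniform summability of $\|\rho_k\|$. Lemma~\ref{proof_dyn_class} is stated for points with $\Phi^{n_{\max}t_0}$ staying in $K^\delta(\epsilon_1/2)$ and initial distance $h^\tau$. I will need to check that the relevant points of $\mathcal I_n$, those where the amplitude is not $O(h^\infty)$, satisfy these hypotheses with the $n_s$-shift, as argued via the estimate $h^\tau e^{(\lambda_{\max}+\cdot)n_st_0}$ for the initial coordinates. Points elsewhere only contribute $O(h^\infty)$.
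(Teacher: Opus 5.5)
Your proposal is correct and follows the paper's own argument: the paper likewise writes $\partial y^{j+1}/\partial y^{j}=B_j+O\bigl(\|(\overline{x^j}(y^j),y^j,\overline{\xi^j}(y^j),\overline{\eta^j}(y^j))\|\bigr)$, factors $\det(\partial y^n/\partial y^{n_s})$ as $\det\bigl(\prod_k B_k\bigr)\prod_k\det(I+O(\cdot))$, and controls the second product using the summability of the coordinates of the iterates from Lemma~\ref{proof_dyn_class}. Your Steps~2 and~3 make explicit what the paper leaves implicit, namely that $|\det\prod_{k=n_s}^{n-1}B_k|$ is comparable to the unstable Jacobian based at $\tilde\rho^{n_s}$ rather than at $\tilde\rho$; handling this base-point shift through the cocycle identity, or by reading $J_u$ at the iterate, is the right way to reconcile the computation with the statement.
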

	
	Now that we know that this flow is contracting, we can use similar technique as section \ref{phase} to deduce the behavior of its higher derivatives.
	
	\begin{Lemma}\label{der_contract}
		Let $l\geq 1$ then 
		\begin{equation*} 
			\frac{\partial^l y^{n_s}}{(\partial y^n)^l}= O\left((n-n_s)^{l-1} e^{-(n-n_s)t_0(\nu^{\text{min}}_{\text{hyp}}+\epsilon)}\right).
		\end{equation*}
	\end{Lemma}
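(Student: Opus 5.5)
We want to bound the $l$-th derivative of the composite map $y^n\mapsto y^{n_s}$, which is the composition of the one-step inverse maps $g_j\colon y^{j+1}\mapsto y^{j}$ for $j=n_s,\dots,n-1$. The plan is an induction on $l$ using the Faà di Bruno formula for iterated compositions.

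\textbf{Step 1: bounds on each one-step map.} From the study of (\ref{syst_lin}) in section \ref{phase}, each $g_j$ is smooth with $C^l$ norms bounded uniformly in $j$ (and in $h$), since the functions $\overline{x^j},\overline{\xi^j},\overline{\eta^j}$ have uniformly bounded derivatives of every order. Moreover, its differential is $B_j^{-1}\bigl(I+O(\|\rho_j\|)\bigr)$ with $\rho_j=(\overline{x^j}(y^j),y^j,\overline{\xi^j}(y^j),\overline{\eta^j}(y^j))$. We also have $\sum_j\|\rho_j\|=O(1)$ by lemma \ref{proof_dyn_class}, exactly as in the proof of lemma \ref{det y}.

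\textbf{Step 2: the case $l=1$.} Set $G_{m}=g_{m}\circ\dots\circ g_{n-1}$, a map from $y^n$ to $y^m$. Then
\begin{equation*}
dG_m=\prod_{j=m}^{n-1} dg_j .
\end{equation*}
The telescoping argument that controls $\prod (1+O(\|\rho_j\|))$ gives
\begin{equation*}
\|dG_m\|\leq C\prod_{j=m}^{n-1}\|B_j^{-1}\|\leq C e^{-(n-m)t_0(\nu^{\text{min}}_{\text{hyp}}+\epsilon)},
\end{equation*}
with $\epsilon$ absorbed through the choice of $t_0$ in (\ref{lambda max et t0}). The constant $C$ is uniform in $m$ because the correction products converge.

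\textbf{Step 3: induction on $l$.} Write $G_m=g_m\circ G_{m+1}$ and differentiate $l$ times:
\begin{equation*}
d^lG_m=dg_m\, d^lG_{m+1}+\sum_{k=2}^{l}\ \sum_{i_1+\dots+i_k=l} d^kg_m\bigl[d^{i_1}G_{m+1},\dots,d^{i_k}G_{m+1}\bigr],
\end{equation*}
where all $i_r<l$. Set $a_m=\|d^lG_m\|\,e^{(n-m)t_0\nu'}$ with $\nu'=\nu^{\text{min}}_{\text{hyp}}+\epsilon$. By the induction hypothesis, the nonlinear terms are bounded by
\begin{equation*}
C\,(n-m)^{l-2}e^{-(n-m-1)t_0\nu' k},
\end{equation*}
and hence, since $k\geq 2$, by $C(n-m)^{l-2}e^{-(n-m)t_0\nu'}$. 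The first term contributes a factor $\|dg_m\|\leq e^{-t_0\nu'}(1+O(\|\rho_m\|))$. The recursion therefore reads
\begin{equation*}
a_m\leq (1+C\|\rho_m\|)\,a_{m+1}+C(n-m)^{l-2}.
\end{equation*}
Summing over $m$ from $n_s$ to $n-1$, and using once more that $\prod(1+C\|\rho_m\|)=O(1)$, gives $a_{n_s}\leq C(n-n_s)^{l-1}$. This is the claim.

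The main obstacle is Step 3's exponent bookkeeping. In the nonlinear terms, products of $k\geq2$ factors $d^{i_r}G_{m+1}$ each carry their own exponential decay. One must check that keeping only one factor's worth of decay suffices, since the remaining decay is a gain, and that the polynomial factors do not accumulate beyond $(n-n_s)^{l-1}$. If the uniform-in-$j$ $C^l$ bounds on $g_j$ from section \ref{phase} turned out to degrade, I would first re-derive them from the induction there on $\gamma_l$.
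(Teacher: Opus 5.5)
Your proof is correct, but it is organized differently from the paper's. The paper starts from the product formula $\frac{\partial y^{n_s}}{\partial y^n}=\prod_{i=n_s}^{n-1}\frac{\partial y^i}{\partial y^{i+1}}$ and differentiates it $l-1$ times with the Leibniz rule. Each derivative falls on one of the $n-n_s$ one-step factors, producing $\frac{\partial^2 y^j}{(\partial y^{j+1})^2}\frac{\partial y^{j+1}}{\partial y^n}$ with $\bigl\|\frac{\partial y^{j+1}}{\partial y^n}\bigr\|\leq 1$. The factor $(n-n_s)^{l-1}$ then comes from counting terms, each of which is bounded like the full product.

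You instead peel off one step at a time, writing $G_m=g_m\circ G_{m+1}$, expand with Fa\`a di Bruno, and close with a discrete Gr\"onwall inequality for $a_m=\|d^lG_m\|e^{(n-m)t_0\nu'}$, using $a_n=0$ for $l\geq 2$. The polynomial loss then appears as $\sum_{m}(n-m)^{l-2}\leq C(n-n_s)^{l-1}$.

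What each route buys:
\begin{itemize}
\item The paper's route is shorter and makes the origin of $(n-n_s)^{l-1}$ transparent at $l=2$.
\item For $l\geq 3$, however, the paper's derivatives also hit the inner chain-rule factors $\frac{\partial y^{j+1}}{\partial y^n}$. These produce higher derivatives of the intermediate compositions $y^n\mapsto y^{j+1}$, so the paper tacitly needs the lemma at all intermediate times. Your uniform-in-$m$ induction makes exactly this explicit.
\item Your price is the exponent bookkeeping in the nonlinear terms, which you handle correctly: $k\geq 2$ factors give at least double decay, so discarding all but one costs only the constant $e^{2t_0\nu'}$.
\end{itemize}

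One point to state explicitly: $\nu'$ must be a per-step rate, i.e.\ $\|B_m^{-1}\|\leq e^{-t_0\nu'}$ for every $m$, not merely an asymptotic one. The choice of $t_0$ provides this with $\nu'$ slightly below $\nu^{\text{min}}_{\text{hyp}}$; that choice is what your step $\|dg_m\|\leq e^{-t_0\nu'}(1+C\|\rho_m\|)$ relies on.
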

	
	\begin{proof}
		The case $l=1$ can be dealt with in a similar fashion to the proof of lemma \ref{compare_linear}.
		\medbreak
		For the case $l=2$, we start from the formula:
		\begin{equation}\label{der_prod}
			\frac{\partial y^{n_s}}{\partial y^n}=\prod_{i=n_s}^{n-1} \frac{\partial y^i}{\partial y^{i+1}}, 
		\end{equation}
		and we differentiate with respect to $y^n$ which yields:
		\begin{equation*} 
			\frac{\partial^2 y^{n_s}}{(\partial y^n)^2}=\sum_{j=n_s}^{n-1} \prod_{i\neq j} \left(\frac{\partial y^i}{\partial y^{i+1}}\right) \frac{\partial^2 y^j}{(\partial y^{j+1})^2} \frac{\partial y^j}{\partial y^n}.
		\end{equation*}
		Then, using the fact that $|\frac{\partial y^j}{\partial y^n}|\leq 1 $, we get:
		\begin{equation*} 
			\frac{\partial^2 y^{n_s}}{(\partial y^n)^2} =O\left(n e^{-(n-n_s)t_0(\nu^{\text{min}}_{\text{hyp}}+\epsilon)} \right).
		\end{equation*}
		To obtain the result for a general $l$, we simply differentiate $l-1$ times the equation (\ref{der_prod}), obtaining $n^{l-1}$ terms that can all be estimated by $O(\frac{\partial y^{n_s}}{\partial y^n})$, which gives the result.
	\end{proof}

	\subsection{Verification of the extra assumption (\ref{GammadGamma})} \label{proof_GammadGamma}
	\medbreak
	In this paragraph, we will see how to show that 
	\begin{equation*} 
		\left\| \Im \Gamma_{\scalerel*{\parallel}{\perp}}^{-1/2} \frac{\partial \Gamma_{\scalerel*{\parallel}{\perp}}}{\partial y_\alpha}\Im \Gamma_{\scalerel*{\parallel}{\perp}}^{-1/2}\right\|\leq C |\log h| h^{-2\nu},
	\end{equation*}
	with the $\Gamma_{\scalerel*{\parallel}{\perp}}$ is associated with $d^c F^{(i)}(y)$, we claim that
	the  proof to show the  general estimates of (\ref{GammadGamma}) is similar.
	\medbreak
	We write that
	\begin{align*}
		\left\|\Im\Gamma_{\scalerel*{\parallel}{\perp}}^{-1/2}\partial\Gamma_{\scalerel*{\parallel}{\perp}}  \Im\Gamma_{\scalerel*{\parallel}{\perp}}^{-1/2}\right\|^2&=\text{Tr } (\Im\Gamma_{\scalerel*{\parallel}{\perp}}^{-1}\partial\Gamma_{\scalerel*{\parallel}{\perp}}\Im\Gamma_{\scalerel*{\parallel}{\perp}}^{-1}\partial\Gamma_{\scalerel*{\parallel}{\perp}})
		\\&=\text{Tr} ( M \overline{M}^T (\partial N M^{-1} +N \partial (M^{-1}))M \overline{M}^T (\partial N M^{-1} + N\partial (M^{-1}))),
	\end{align*}
	where $M=A+iB$, $N=C+iD$ for $\begin{pmatrix} A & B \\ C & D \end{pmatrix}$ the symplectic matrix associated with $\Gamma_{\scalerel*{\parallel}{\perp}}$.
	\medbreak
	Expanding this expression gives four terms:
	\begin{itemize}
		\item A first term is given by
		\begin{align*}
			|\text{Tr} ( M \overline{M}^T \partial N M^{-1}M \overline{M}^T \partial N M^{-1} )|&=|\text{Tr } (\overline{M}^T\partial N \overline{M}^T \partial N)|
			\\&\leq \|M\|^2 \|\partial N\|^2
			\\&\leq C h^{-4\nu} |\log h|^2.
		\end{align*}
		Hence its absolute value is bounded by $C h^{-4\nu} |\log h|^2$.
		\item The cross-product gives
		\begin{align*}
			\left|\text{Tr} ( M \overline{M}^T N \partial (M^{-1})M \overline{M}^T \partial N M^{-1} )\right|&\leq\left|\text{Tr }( \overline{N}^TM \partial (M^{-1})M \overline{M}^T \partial N  )\right|
			\\&\quad+\left|-2i\text{Tr } (\partial (M^{-1})M \overline{M}^T \partial N M^{-1} )\right|
			\\&\leq\left|-\text{Tr }( \overline{N}^T \partial M \overline{M}^T \partial N )\right|+\left|2i \text{Tr }( M^{-1} \partial M \overline{M}^T \partial N )\right|
			\\&\leq C (\|N\|\|M\|\|\partial M\| \|\partial N\|+\|M\|\|M^{-1}\|\|\partial M\| \|\partial N\|)
			\\&\leq C h^{-4\nu} |\log h|^2,
		\end{align*}
		where we used that $\overline{M}^T N=\overline{N}^TM -2i I_{d_{\scalerel*{\parallel}{\perp}}}$. This gives the same bound $C h^{-4\nu} |\log h|^2$.
		\item The other cross-product gives the same term.
		\item Finally we also have
		\begin{equation*}
		\begin{multlined}
			|\text{Tr }(M \overline{M}^TN \partial M^{-1}M \overline{M}^T N \partial (M^{-1})) |=
			\\|	\text{Tr }(M \overline{N}^T M \partial (M^{-1})M \overline{M}^T N \partial (M^{-1}))  -2i \text{Tr }(M \partial (M^{-1})M \overline{M}^T N \partial(M^{-1}))|
			\\=|-\text{Tr } (M \overline{N}^T \partial M \overline{N}^T M \partial (M^{-1})) +2i \text{Tr } (M \overline{N}^T \partial M \partial (M^{-1})) +2i \text{Tr } (\partial M \overline{M}^T N \partial (M^{-1}))|
			\\\shoveleft{= |\text{Tr } ( \overline{N}^T \partial M \overline{M} \partial M) -4i \text{Tr } ( \overline{N}^T \partial M M \partial M) +8 \text{Tr } (\partial M M^{-1} \partial M M^{-1})|}
			\\\leq \shoveleft{\phantom{}}  C( \|N\| \|M\| \|\partial M\|^{2}+\|M^{-1}\|^{2} \|\partial M\|^{2}),\shoveright{\phantom{}}
		\end{multlined}
		\end{equation*}
		and the same estimation by $C h^{-4\nu} |\log h|^2$.
	\end{itemize}
	
	\printbibliography
	
\end{document}